\documentclass[a4paper,reqno]{amsart}
\usepackage{fullpage}
\usepackage{amsmath,amsthm,amssymb,amsfonts}
\usepackage{stmaryrd}
\usepackage{hyperref}
\usepackage[capitalize]{cleveref}
\usepackage[all]{xy}

\theoremstyle{plain}
\newtheorem{thm}{Theorem}[section]
\newtheorem{lem}[thm]{Lemma}
\newtheorem{cor}[thm]{Corollary}
\newtheorem{prop}[thm]{Proposition}

\theoremstyle{definition}

\newtheorem{defn}[thm]{Definition}
\newtheorem{condition}[thm]{Condition}

\theoremstyle{remark}
\newtheorem{rem}[thm]{Remark}
\newtheorem{eg}[thm]{Example}
\newtheorem*{prf}{Proof}
\newtheorem*{prf:thm:basechange}{Proof of \cref{thm:basechange}}
\numberwithin{equation}{thm}

\newcommand{\sq}{\hfill{\square}}
\crefname{thm}{Theorem}{Theorems}
\crefname{lem}{Lemma}{Lemmas}
\crefname{cor}{Corollary}{Corollaries}
\crefname{prop}{Proposition}{Propositions}
\crefname{notation}{Notation}{Notations}
\crefname{assumption}{Assumption}{Assumptions}
\crefname{defn}{Definition}{Definitions}
\crefname{condition}{Condition}{Conditions}
\crefname{rem}{Remark}{Remarks}
\crefname{eg}{Example}{Examples}
\crefname{equation}{}{}

\newcommand{\Z}{\mathbb{Z}}
\newcommand{\ZC}{\mathcal{Z}}
\newcommand{\R}{\mathbb{R}}
\newcommand{\A}{\mathcal{A}}
\newcommand{\B}{\mathcal{B}}
\newcommand{\C}{\mathbb{C}}
\newcommand{\CC}{\mathcal{C}}
\newcommand{\DD}{\mathsf{D}}
\newcommand{\E}{\mathcal{E}}
\newcommand{\F}{\mathcal{F}}
\newcommand{\G}{\mathcal{G}}
\newcommand{\HC}{\mathcal{H}}
\newcommand{\I}{\mathcal{I}}
\newcommand{\J}{\mathcal{J}}
\newcommand{\K}{\mathcal{K}}
\newcommand{\LC}{\mathcal{L}}
\newcommand{\M}{\mathcal{M}}
\newcommand{\OC}{\mathcal{O}}
\newcommand{\PC}{\mathcal{P}}

\newcommand{\U}{\mathcal{U}}
\newcommand{\V}{\mathcal{V}}
\newcommand{\UM}{\mathcal{UM}}
\newcommand{\m}{\mathsf{m}}
\newcommand{\n}{\mathsf{n}}
\newcommand{\HG}{\phi}
\newcommand{\GF}{\psi}
\newcommand{\FE}{\theta}
\newcommand{\EG}{\imath}
\newcommand{\FH}{\jmath}
\newcommand{\lhaar}{h}

\newcommand{\lrhaar}{h}

\newcommand{\bs}{\backslash}

\newcommand{\isom}{\cong}
\newcommand{\otop}{\mbox{\raisebox{0.55mm}{\small{$\bigcirc$}}\hspace{-1.9ex}\small{$\top$}}}

\renewcommand{\1}{\mathrm{1}\hspace{-0.25em}\mathrm{l}} 
\renewcommand{\hat}{\widehat}
\renewcommand{\tilde}{\widetilde}
\newcommand{\rpb}{{}^*\!}

\DeclareMathOperator{\id}{id}
\DeclareMathOperator{\Ker}{Ker}
\DeclareMathOperator{\Ran}{Im}
\DeclareMathOperator{\ran}{im}

\DeclareMathOperator{\Aut}{Aut}
\DeclareMathOperator{\Dom}{Dom}
\DeclareMathOperator{\Rep}{Rep}
\DeclareMathOperator{\Irr}{Irr}
\DeclareMathOperator{\KK}{KK}
\DeclareMathOperator{\Cor}{Corr}
\DeclareMathOperator{\Calg}{C*alg}
\DeclareMathOperator{\Ad}{Ad}
\DeclareMathOperator{\Ind}{Ind}
\newcommand{\WInd}{\tilde{\Ind}}
\DeclareMathOperator{\op}{op}
\DeclareMathOperator{\cop}{cop}

\providecommand{\tensor}[1]{\underset{#1}{\otimes}}
\providecommand{\outensor}[1]{\underset{#1}{\boxtimes}}
\providecommand{\cotensor}[1]{\underset{#1}{\square}}
\providecommand{\coequalizer}[2]{\Ker\left({#1}\,,\,{#2}\right)}
\providecommand{\redltimes}{\underset{r}{\ltimes}}
\newcommand{\barotimes}{\bar{\otimes}}

\providecommand{\norm}[1]{\left|\!\left|#1\right|\!\right|}
\newcommand{\bra}{\langle}
\newcommand{\ket}{\rangle}
\providecommand{\clin}[1]{\left[#1\right]}
\providecommand{\mult}[1]{\M(#1)} 

\newcommand{\rsurj}{\twoheadrightarrow}
\providecommand{\xrarr}[1]{\xrightarrow{#1}}

\begin{document}
	\title{Induced coactions along a homomorphism of locally compact quantum groups}
	\author{Kan Kitamura}
	\address{Graduate School of Mathematical Sciences, The University of Tokyo, 3-8-1 Komaba Meguro-ku Tokyo 153-8914, Japan}
	\email{kankita@ms.u-tokyo.ac.jp}
	\date{}
	\subjclass[2020]{Primary 46L67, Secondary 16T15, 46L08, 19K35}
	\keywords{locally compact quantum group; comodule algebra; induction; reconstruction}
	
	\begin{abstract}
		We consider induced coactions on C*-algebras 
		along a homomorphism of locally compact quantum groups which need not give a closed quantum subgroup. 
		Our approach generalizes the induced coactions constructed by Vaes, 
		and also includes certain fixed point algebras. 
		We focus on the case when the homomorphism satisfies a quantum analogue of properness. 
		Induced coactions along such a homomorphism still admit the natural formulations of various properties 
		known in the case of a closed quantum subgroup, 
		such as imprimitivity and adjointness with restriction. 
		Also, we show a relationship of induced coactions and restriction which is analogous to base change formula for modules over algebras. 
		As an application, we give an example that shows several kinds of 
		1-categories of coactions with forgetful functors 
		cannot recover the original quantum group. 
	\end{abstract}
	
	\maketitle
	\setcounter{tocdepth}{1}
	\tableofcontents
	\section{Introduction}\label{sec:intro}
	
	Induced coactions of a locally compact group from its closed subgroup on C*-algebras 
	were studied by Rieffel~\cite{Rieffel:ind}, 
	with a motivation coming from Mackey's works on imprimitivity. 
	Induced coactions from compact subgroups play an essential role in the reformulation of the Baum--Connes conjecture by Meyer--Nest~\cite{Meyer-Nest:bctri}. 
	Vaes~\cite{Vaes:impr} has developed the theory of induction and imprimitivity for a locally compact quantum group and its closed quantum subgroup. 
	This induction procedure for quantum groups gives a fundamental tool to study the quantum analogue of Baum--Connes conjecture in 
	\cite{Voigt:bcfo}, 
	\cite{Voigt:cpxss} for example. 
	
	For a quantum group $G$ and its closed quantum subgroup $H$ with suitable conditions, 
	the quantum homogeneous space $G/H$ can be given by the induced $G$-coaction $\Ind_H^G \C$ from $\C$ with a trivial $H$-coaction. 
	In the classical case, a homogeneous space $G/H$ can be also constructed by taking the quotient space of $G$ with the right $H$-action, 
	or equivalently by taking the fixed point algebra of $C_0(G)$ with the right $H$-coaction. 
	When the quantum subgroup $H$ is compact, this method still works as shown in~\cite{Soltan:noncptaction}. 
	However, in the general non-compact case, we have technical subtleties to construct $C^r_0(G/H)$ as a fixed point algebra. 
	There is difficulty in finding a good notion of ``properness" for coactions on C*-algebras 
	to integrate sufficiently many elements of a C*-algebra by the Haar measure, which can be an infinite measure. 
	See~\cite{Rieffel:fixedptalg}, 
	\cite{Buss:fixedptalg} 
	for some approaches to construct fixed point algebras. 
	
	Another interpretation for induced coactions and fixed point algebras is to regard them as (the coordinate rings of) balanced products of non-commutative spaces with quantum group actions, 
	which should be realized as C*-algebraic cotensor products if exist. 
	For C*-algebras with coactions of a compact quantum group, such a construction has appeared in~\cite{deRijdt-vanderVennet:moneq},~\cite{Voigt:bcfo}. 
	Again when the quantum group is non-compact, it becomes much harder to find appropriate constructions for cotensor products of C*-algebras. 
	
	In this paper, we give a unified approach to induced coactions for a closed quantum subgroup and fixed point algebras. 
	We introduce a generalized concept of induced coactions along a general homomorphism of locally compact quantum groups, in a similar spirit as in~\cite{Kasprzak:qghmg}. 
	Next, we introduce a class of homomorphisms with a measure theoretic condition which behaves particularly nicely with our framework. 
	This class of homomorphisms has appeared in~\cite{Kasprzak-Khosravi-Soltan:integrable} in a different formulation, 
	and classically it corresponds to the class of continuous homomorphisms of locally compact groups 
	that are proper as continuous maps of topological spaces. 
	We would like to call such a homomorphism a proper homomorphism in this paper. 
	We construct induced coactions along a proper homomorphism. 
	We do this by decomposing the homomorphism into a closed quantum subgroup and the Pontryagin dual of an open quantum subgroup. 
	Moreover, we use this decomposition to investigate the properties of induced coactions. 
	Thus our work heavily depends on the techniques developed in~\cite{Vaes:impr}, where induced coactions along closed quantum subgroup were constructed, and those in~\cite{Kalantar-Kasprzak-Skalski:open}, where the concept of open quantum subgroups was investigated. 
	Also, we should mention that many motivating examples and ideas in this paper come from~\cite{Nest-Voigt:eqpd}. 
	
	For a locally compact quantum group $G$, we have a new quantum group called the quantum double of $G$, which contains $G$ as a closed quantum subgroup. 
	It is observed in~\cite{Nest-Voigt:eqpd} that the procedure of taking induced coactions from a closed quantum subgroup $H$ of $G$ 
	is compatible with the coactions of their quantum doubles $\DD(G)$ and $\DD(H)$. 
	More precisely, if the C*-algebra $A$ has an $H$-coaction which extends to a $\DD(H)$-coaction, then 
	this extension induces a $\DD(G)$-coaction on the induced $G$-coaction $\Ind_H^G A$. 
	In this paper, we show slightly stronger statement holds for more general settings. 
	This can be seen as a comodule analogue of base change formula for modules over algebras, 
	and it can be stated as follows if we could use cotensor products 
	(see \cref{thm:basechange} for the precise statement). 
	Consider a commutative diagram of locally compact quantum groups and homomorphisms, 
	\begin{align*}
		\xymatrix{
			F\ar[r]\ar[d] & E\,\ar[d] \\
			H\ar[r] & G, }
	\end{align*}
	such that $C^r_0(G)\isom C^r_0(E)\cotensor{F}C^r_0(H)$ canonically. 
	Then for a C*-algebra $A$ with an $H$-coaction, 
	it holds $C^r_0(G)\cotensor{H}A\isom C^r_0(E)\cotensor{F}A$, 
	if this holds when $A=\C$. 
	Although its conclusion looks quite intuitive, the verification involves several technically naive points. 
	Practically, we consider the situation when $E$ and $F$ are closed quantum subgroups of $G$ and $H$, respectively. 
	Then it will give a good tool for calculation which reduces the induction procedure to smaller quantum groups. 
	
	As already mentioned, 
	a quantum homogeneous space $G/H$ 
	has two (candidates of) interpretations. 
	One is an induced coaction of $\C$ along $H\to G$, and 
	the other is a fixed point algebra of $C^r_0(G)$ with the right $H$-coaction. 
	Our approach of induced coactions gives a way to compare them 
	when we formulate the latter interpretation 
	as an induced coaction of $C^r_0(G)$ along a trivial homomorphism $H\to 1$, where $1$ is the trivial group. 
	We give a criterion for their coincidence, and 
	we will also see this coincidence gives a sufficient condition for the case $A=\C$ of the analogue of base change formula above. 
	
	For monoidally equivalent two compact quantum groups, it is proved that various properties are preserved in 
	\cite{deRijdt-vanderVennet:moneq},~\cite{Voigt:bcfo} by using cotensor products with the biGalois object. 
	It can be hoped that the techniques of the generalized induced coactions 
	will give some tool to compare two quantum groups 
	which need not be monoidally equivalent but still have certain similarities. 
	As an application, we consider a certain problem of recovering a locally compact quantum group 
	from its category of a certain kind of $G$-comodules with the forgetful functor, in a similar spirit with Tannaka--Krein reconstruction theorem. 
	For compact quantum groups, a celebrated result was obtained by Woronowicz~\cite{Woronowicz:reconstruction}. 
	But if we consider possibly non-compact cases, the situation becomes significantly difficult, 
	although Tatsuuma managed to give a reconstruction theorem in~\cite{Tatuuma:reconstruction} for locally compact groups. 
	
	On the other hand, there is another approach taken by~\cite{Meyer-Roy-Woronowicz:qghom}, as an answer for a question by Debashish Goswami. 
	Instead of the category $\Rep(G)$ of unitary representations, they consider the category $\CC^G$ of $G$-equivariant C*-algebras with $G$-equivariant morphisms of C*-algebras 
	for a locally compact quantum group $G$ 
	(or more generally a C*-bialgebra associated with a manageable multiplicative unitary). 
	Then they show that there is a bijective correspondence of 
	a homomorphism $H\to G$ of locally compact quantum groups 
	and a functor $\CC^G\to \CC^H$ which commutes with the forgetful functors $\CC^G\to \CC^1 \leftarrow \CC^H$. 
	See~\cite[Theorem~6.1]{Meyer-Roy-Woronowicz:qghom} for detail. 
	We consider similar situations for the $1$-categories of coactions with two different kinds of morphisms. 
	One is the $1$-category of isomorphism classes of equivariant correspondences $\Cor^G$, 
	and the other is the equivariant Kasparov category $\KK^G$. 
	For $\CC=\Cor$ or $\KK$ and a certain pair $(G,H)$ of locally compact quantum groups, 
	we will construct a categorical equivalence $\CC^G\xrarr{\sim} \CC^H$ 
	that commutes with the forgetful functors 
	$\CC^G\to \CC^1 \leftarrow \CC^H$ up to a natural isomorphism in \cref{cor:fiberfunc}. 
	We also see even when $G$, $H$ are non-monoidally equivalent finite quantum groups, they can form such a pair. 
	Here we should note that our situation is slightly weaker than that considered in~\cite{Meyer-Roy-Woronowicz:qghom}, 
	where they impose that the commutativity with forgetful functors must be equality, not up to a natural isomorphism. 
	
	We explain the organization of this paper. 
	In \cref{sec:pre}, we recall the basic concepts about locally compact quantum groups defined by~\cite{Kustermans-Vaes:lcqg}. 
	We also fix the notation. 
	In \cref{sec:formal}, we define the formal generalization of induced coactions from closed quantum subgroups. 
	We prove few properties including induction in stages (\cref{thm:indstages}). 
	We also discuss the relationship between two interpretations of quantum homogeneous spaces in \cref{ssec:hmgquot}. 
	From \cref{sec:proper}, we focus on the case of a proper homomorphism $\HG:H\to G$ of locally compact quantum groups. 
	In \cref{ssec:properind}, we show the existence of induced coactions along $\HG$ when $G,H$ are regular, as a consequence of induction in stages. 
	As a supplementary topic, we give another description of induced coactions when $H=K$ is compact in \cref{ssec:cptind}. 
	This will justify the interpretation of induced coactions as cotensor products for the compact case. 
	In \cref{sec:basechange}, we prove the analogue of base change formula for induced coactions (\cref{thm:basechange}). 
	We also give some concrete situations of this theorem by using the terminologies from 
	\cref{sec:appendix} about double crossed products and Yetter--Drinfeld C*-algebras. 
	For completeness in \cref{sec:MK}, we show several properties of induced coactions along a proper homomorphism $\HG$ 
	that were previously known in the case of closed quantum subgroups by~\cite{Vaes:impr},~\cite{Nest-Voigt:eqpd}. 
	We emphasize the functorial aspects of induced coactions. 
	We consider a relationship of induced coactions and twisted tensor products (\cref{prop:projfml}), 
	imprimitivity for reduced crossed products (\cref{prop:impr}), 
	and adjointness of induction and restriction (\cref{prop:adjoint}). 
	Finally in \cref{sec:eg}, we use these techniques to compare categories of coactions of certain double crossed products with a discrete abelian group. 
	Especially, we show there is a pair of non-isomorphic finite quantum groups whose categories of coactions are equivalent in a compatible way with forgetful functors (\cref{cor:fiberfunc}). 
	\\
	{\bf Acknowledgements:} 
	This work contains the master's thesis of the author. 
	He appreciates Yasuyuki Kawahigashi, who is the supervisor of the author, for invaluable supports. 
	He is also grateful to Yuki Arano for helpful advices and discussions. 
	This work was supported by JSPS KAKENHI Grant Number JP21J21283, and the WINGS-FMSP program at the University of Tokyo. 
	
	\section{Preliminaries}\label{sec:pre}
	
	In this section, we recall and fix notation and terminologies about quantum groups and their coactions on C*-algebras. 
	We roughly follow~\cite{Takesaki:book} and~\cite{Lance:book} for general notation about operator algebras and Hilbert C*-modules. 
	
	Let $A,B$ be C*-algebras. 
	For a non-degenerate c.p.~map $f:A\to \mult{B}$, 
	we abuse notation and simply write $f:\mult{A}\to\mult{B}$ for 
	$\mult{f}:\mult{A}\to\mult{B}$ 
	which is the unique u.c.p.~map extending $f$ such that 
	it is strictly continuous on the unit ball of $\mult{A}$. 
	We also write $1_A$ for the unit of the multiplier C*-algebra $\mult{A}$. 
	Unless stated otherwise, $A\otimes B$ always means the spatial tensor product of $A$ and $B$. 
	We will use $\barotimes$ for von Neumann algebraic tensor products. 
	For C*-algebras $A,B$ and Hilbert spaces $\HC,\HC'$, we write 
	$\sigma$ and $\Sigma$ 
	for the flipping maps 
	$\sigma:A\otimes B\isom B\otimes A $ and $\Sigma:\HC\otimes \HC' \isom \HC' \otimes \HC$, respectively. 
	We also use $\sigma$ for the multipliers of tensor products of C*-algebras and the tensor products of von Neumann algebras. 
	We also use the so-called leg-numbering notation. 
	By using this we can write $\sigma=(-)_{21}$. 
	Often we ignore the leg of $\C$. 
	
	When $X$, $Y$ are sets and $f,g:X\to Y$ are maps, we write 
	$\Ker(f,g):=\{x\in X \, | \, f(x)=g(x) \}$. 
	When $Y$ is a subset of a Banach space $X$, 
	we write $\clin{Y}$ for the norm-closure of the linear span of $Y$ in $X$.

	\subsection{Locally compact quantum groups and multiplicative unitaries}\label{ssec:lcqg}
	
	Next, we recall and fix the notation about locally compact quantum groups and multiplicative unitaries by following 
	\cite{Kustermans-Vaes:lcqg},~\cite{Baaj-Skandalis:mltu},~\cite{Woronowicz:mltu}. 
	
	\begin{defn}
		Let $A$ be a C*-algebra and $\Delta : A\to \mult{A\otimes A}$ be an injective non-degenerate $*$-homomorphism. 
		\begin{enumerate}
			\item
			We say $(A,\Delta)$ is a \emph{C*-bialgebra} 
			if $(\Delta \otimes \id_A)\Delta = (\id_A\otimes \Delta)\Delta$. 
			\item
			A C*-bialgebra $(A,\Delta)$ is \emph{bisimplifiable} if 
			$\clin{\Delta(A)(1_A\otimes A)}=\clin{\Delta(A)(A\otimes 1_A)}=A\otimes A$. 
			\item 
			For C*-bialgebras $(A,\Delta_A), (B,\Delta_B)$ and a non-degenerate $*$-homomorphism $f:A\to\mult{B}$, 
			we say $f$ is a \emph{homomorphism of C*-bialgebras} if 
			$\Delta_B f = (f\otimes f)\Delta_A$. 
			\item
			When $M$ is a von Neumann algebra and 
			$\Delta:M\to M\barotimes M$ is an injective normal unital $*$-homomorphism, 
			$(M,\Delta)$ is a \emph{von Neumann bialgebra} 
			if $(\Delta\barotimes\id_M)\Delta=(\id_M\barotimes\Delta)\Delta$. 
			\end{enumerate}
	\end{defn}
	
	\begin{defn}[Locally compact quantum group]
		Let $G=(M,\Delta)$ be a von Neumann bialgebra, and $\lrhaar:M_+ \to[0,\infty]$ be a normal semi-finite faithful weight on $M$. 
		\begin{enumerate}
			\item
			The weight $\lrhaar$ is called a \emph{left} [resp.~\emph{right}] \emph{invariant} if 
			for all $x\in \lrhaar^{-1}(\R_{\geq 0})$ and for all $0\leq \omega\in M_*$ 
			it holds that 
			$\omega(1)\lrhaar(x)=\lrhaar((\omega\barotimes \id_M)\Delta(x))$
			[resp.~$\omega(1)\lrhaar(x)=\lrhaar((\id_M\barotimes \omega)\Delta(x))$]. 
			\item
			The von Neumann bialgebra $G=(M,\Delta)$ is called a 
			\emph{locally compact quantum group} if 
			there exist a normal semi-finite faithful left invariant weight and a normal semi-finite faithful right invariant weight. 
			Then we write $(L^{\infty}(G),\Delta_G):=(M,\Delta)$, 
			and its left [resp.~right] invariant weight is called a \emph{left} [resp.~\emph{right}] \emph{Haar weight}. 
			\end{enumerate}
	\end{defn}
	
	Let $G=(L^{\infty}(G),\Delta_G)$ be a locally compact quantum group. 
	The left or right Haar weight is unique up to a scalar multiplication. 
	Take the left Haar weight $\lhaar$ of $G$. 
	Via the GNS construction of $\lhaar$, we obtain a Hilbert space $L^2(G)$ with 
	$\Lambda:\{x\in L^{\infty}(G) \,|\, \lhaar(x^*x)<\infty\}\to L^2(G)$, and 
	a normal faithful $*$-representation $M\to \B(L^2(G))$. 
	Moreover we can obtain a multiplicative unitary $W^G \in \U(L^2(G)^{\otimes 2})$ in the sense that 
	$W^G_{23}W^G_{12}=W^G_{12}W^G_{13}W^G_{23}$, 
	which is characterized by $W^G{}^*(\Lambda(x)\otimes \Lambda(y))=(\Lambda\otimes \Lambda)\Delta_G(y)(x\otimes 1)$ 
	for any $x,y\in \Dom(\Lambda)$, 
	where $\Lambda\otimes \Lambda$ means 
	the map given by the GNS construction of $h\otimes h$. 
	It holds $\Delta_G=\Ad W^G{}^*(-)_2$. 
	We shall write $\K(G)=\K(L^2(G))$ 
	for short, and often identify $\K(G)^*$ as $\B(L^2(G))_*$. 
	
	We write $C^r_0(G):=\clin{(\id_{\K(G)}\otimes\K(G)^*) (W^G)}\subset \B(L^2(G))$, which is a well-defined C*-algebra with $C^r_0(G)''=L^{\infty}(G)$. 
	We still write $\Delta_G$ for the restriction of $\Delta_G$ on $C^r_0(G)$, and it holds $(C^r_0(G),\Delta_G)$ is a well-defined bisimplifiable C*-bialgebra. 
	
	There is a locally compact quantum group $\hat{G}$, which we call the dual of $G$, such that 
	the GNS construction of the left Haar weight of $\hat{G}$ can be canonically identified with $L^2(G)$ 
	and $W^{\hat{G}}=\Sigma W^G{}^*\Sigma$ under this identification. 
	We write $\hat{W}^{G}:=W^{\hat{G}}$. 
	When we write $\Delta_G^{\cop}:=\sigma \Delta_G$, then
	$G^{\op}:=(L^{\infty}(G),\Delta_G^{\cop})$ and $G^{\cop}:=(L^{\infty}(G)^{\op},\Delta_G)$ are also locally compact quantum groups, where 
	$L^{\infty}(G)^{\op}$ means the opposite algebra of $L^{\infty}(G)$.
	We write $J^G$ for the modular conjugation of the left Haar weight of $G$. 
	Then we have an isomorphism of von Neumann bialgebras
	$R^G:=\hat{J}^{G}(-)^*\hat{J}^{G}:L^\infty(G)\to L^\infty(G^{\op,\cop})$ 
	called the unitary antipode, 
	where $\hat{J}^{G}:=J^{\hat{G}}$ is the modular conjugation of the left Haar weight of $\hat{G}$. 
	Sometimes we use the fact that $J^G\hat{J}^G$ is a scalar multiplication of $\hat{J}^GJ^G$. 
	
	For a locally compact quantum group $G$ we also have the universal version $(C^u_0(G),\Delta^u_G)$ of the C*-bialgebra $(C^r_0(G),\Delta_G)$, 
	and a canonical homomorphism of C*-bialgebras $\lambda_G :C^u_0(G)\to C^r_0(G)$ with $\lambda_G(C^u_0(G))=C^r_0(G)$. 
	
	\begin{defn}
		Let $G$ be a locally compact quantum group. 
		\begin{enumerate}
			\item
			We say $G$ is a \emph{compact quantum group} if $C^r_0(G)$ is unital. 
			\item
			We say $G$ is a \emph{discrete quantum group} if $\hat{G}$ is a compact quantum group. 
			\item
			We say $G$ is \emph{regular} if the multiplicative unitary $W^G\in \U(L^2(G)\otimes L^2(G))$ is \emph{regular}, 
			\\
			i.e.~
			$\clin{(\id\otimes\K(G)^*)(\Sigma W^G)}=\K(G)$. 
			\end{enumerate}
	\end{defn}
	
	When a locally compact quantum group $G$ is regular, then $\hat{G}$ and $G^{\op}$ are also regular, and we have $\clin{C^r_0(G)_1 W^G \K(G)_2}=\clin{\K(G)_2 W^G C^r_0(G)_1}=C^r_0(G)\otimes \K(G)$. 
	
	We also have a multiplicative unitary 
	$V^G=(\hat{J}^{G}\otimes \hat{J}^{G})
	\hat{W}^{G}(\hat{J}^{G}\otimes \hat{J}^{G}) 
	\in\M(\hat{J}^{G}C^r_0(\hat{G})\hat{J}^{G}\otimes C^r_0(G))$ 
	with respect to the right regular representation of $G$. 
	This satisfies $\Delta_G=\Ad V^G(-)_1$. 
	We warn that in some references the symbol $W$ is used instead of $V$, 
	which roughly corresponds to the different choice of left or right 
	for the GNS construction of Haar weights. 
	
	\subsection{Homomorphisms and bicharacters}\label{ssec:bichar}
	We recall several forms of homomorphisms between quantum groups 
	by following~\cite{Meyer-Roy-Woronowicz:qghom}. 
	
	\begin{defn}
		Let $G,H$ be locally compact quantum groups. 
		\begin{enumerate}
			\item
			A unitary $X\in \U\mult{C^r_0(H)\otimes C^r_0(\hat{G})}$ is called a \emph{bicharacter} if 
			$(\Delta_{H}\otimes \id_{C^r_0(\hat{G})}) X_{12}=X_{13}X_{23}$ and $(\id_{C^r_0(H)}\otimes \Delta_{\hat{G}}) X_{12}=X_{13}X_{12}$. 
			\item
			A \emph{homomorphism} of locally compact quantum groups from $H$ to $G$ is a homomorphism of C*-bialgebras $C^u_0(G)\to \mult{C^u_0(H)}$. 
			We use a formal symbol $\HG:H\to G$ to indicate a homomorphism from $H$ to $G$, 
			and we write $\HG^{u}:C^u_0(G)\to \mult{C^u_0(H)}$ for the homomorphism of C*-bialgebras associated with $\HG$. 
			\item
			For a homomorphism $\HG:H\to G$, 
			we define $\HG^r:C^r_0(G)\to \mult{C^r_0(H)}$ as a $*$-homomorphism with 
			$\HG^r\lambda_G=\lambda_H\HG^{u}$, if it exists. 
			Such a $*$-homomorphism is unique if it exists, 
			and $\HG^r$ is necessarily a homomorphism of C*-bialgebras. 
		\end{enumerate}
	\end{defn}
	
	Let $G,H$ be locally compact quantum groups. 
	For a homomorphism $\HG:H\to G$, 
	it holds that 
	there is the non-degenerate $*$-homomorphism $\HG^*\Delta_G:C^r_0(G)\to \mult{C^r_0(H)\otimes C^r_0(G)}$ with 
	$(\HG^*\Delta_{G})\lambda_G = 
	(\lambda_H\otimes \lambda_G)(\HG^{u}\otimes \id)\Delta^u_G$, and that 
	there is the bicharacter $W^\HG\in \U\mult{C^r_0(H)\otimes C^r_0(\hat{G})}$ 
	which satisfies $(\HG^*\Delta_G\otimes \id)W^G=W^{\HG}_{13}W^G_{23}$. 
	It also holds that $\HG^*\Delta_G=\Ad W^{\HG}{}^*(-)_{2}$. 
	The correspondence $\HG\mapsto W^\HG$ is bijective 
	as a map from the set of homomorphisms $H\to G$ to the set of bicharacters in $\U\mult{C^r_0(H)\otimes C^r_0(\hat{G})}$. 
	Any homomorphism $C^r_0(G)\to \mult{C^r_0(H)}$ of C*-bialgebras has the form of $\HG^r$ for some homomorphism $\HG:H\to G$, 
	and we have $W^\HG=(\HG^{r}\otimes \id)(W^G)$. 
	
	For a homomorphism $\HG:H\to G$, there are well-defined homomorphisms 
	$\hat{\HG}:\hat{G}\to \hat{H}$ and $\HG^{\op}:H^{\op}\to G^{\op}$
	such that $W^{\hat{\HG}}=\Sigma W^{\HG}{}^*\Sigma$ and $(\HG^{\op}){}^{u}=\HG^{u}:C^u_0(G)\to C^u_0(H)$. 
	When $\HG:H\to G$ and $\GF:G\to F$ are homomorphisms of locally compact quantum groups, their composition $\GF\HG:H\to F$ can be given by $(\GF\HG)^{u}:=\HG^{u}\GF^{u}$. 
	This is a well-defined homomorphism of locally compact quantum groups and the corresponding bicharacter $W^{\GF\HG}$ satisfies 
	$W^{\GF}_{23}W^{\HG}_{12}=W^{\HG}_{12}W^{\GF\HG}_{13}W^{\GF}_{23}$. 
	
	We have the trivial homomorphism $1_{H\to G}:H\to G$ 
	determined by the bicharacter $1\in\U\mult{C^r_0(H)\otimes C^r_0(\hat{G})}$. 
	The identity homomorphism $\id_G:G\to G$ which is given by $\id_{C^r_0(G)}$ satisfies $W^{\id_G}=W^G$. 
	
	Similarly to $\HG^*\Delta_G$, for a homomorphism $\HG:H\to G$ there is the non-degenerate $*$-homomorphism 
	$\Delta_G\rpb \HG:C^r_0(G)\to\M(C^r_0(G)\otimes C^r_0(H))$ with 
	$(\Delta_G\rpb \HG)\lambda_G = (\lambda_G\otimes \lambda_H)(\id\otimes \HG^{u})\Delta^u_G$. 
	Then we can take the unitary 
	$V^\HG\in\U\mult{\hat{J}^{G}C^r_0(\hat{G})\hat{J}^{G}\otimes C^r_0(H)}$ such that 
	$(\id\otimes\Delta_G\rpb \HG)V^G = V^G_{12}V^{\HG}_{13}$. 
	It holds $\sigma(R^H\otimes R^G)(\HG^*\Delta_G)=(\Delta_G\rpb \HG)R^G$, and 
	$V^\HG = (\hat{J}^{G}\otimes \hat{J}^{H})\hat{W}^{\HG}(\hat{J}^{G}\otimes \hat{J}^{H})$. 
	It also follows that 
	$V^\HG=(\id\otimes \HG^{r})V^G$ if $\HG^r$ exists and that 
	$V^{\HG}=(\hat{\HG}^{r}{}'\otimes \id)V^H$ if $\hat{\HG}^r$ exists, 
	where we write 
	$\hat{\HG}^{r}{}':=\hat{J}^{G}\hat{\HG}^{r}(\hat{J}^{H}(-)\hat{J}^{H})\hat{J}^{G}$. 
	We write $\hat{V}^{G}:=V^{\hat{G}}$, $\hat{V}^{\HG}:=V^{\hat{\HG}}$ 
	and $\hat{W}^{\HG}:=W^{\hat{\HG}}$ just as $\hat{W}^G$ and $\hat{J}^G$. 
	
	\subsection{Coactions}\label{ssec:coaction}
	
	Next, we collect definitions about coactions on C*-algebras of quantum groups. 
	
	\begin{defn}\label{def:coaction}
		Let $G$ be a locally compact quantum group, and $A$ be a C*-algebra.
		Let $\alpha:A\to \mult{C^r_0(G)\otimes A}$ be an injective non-degenerate $*$-homomorphism. 
		\begin{enumerate}
			\item
			We say $\alpha$ is a \emph{left $G$-coaction} on $A$ if
			$(\id\otimes \alpha) \alpha = (\Delta_G\otimes \id) \alpha$. 
			\item
			When $\alpha$ is a left $G$-coaction on $A$, 
			we say $\alpha$ is \emph{continuous (in the strong sense)} if 
			\\
			$\clin{(C^r_0(G)\otimes 1_A) \alpha(A)}=C^r_0(G)\otimes A$. 
			In this situation, we say $A=(A,\alpha)$ is a \emph{left $G$-C*-algebra}. 
			\item
			When $\alpha$ is a left $G$-coaction on $A$, 
			we say $\alpha$ is \emph{continuous in the weak sense} 
			if 
			\\
			$\clin{(\K(G)^*\otimes \id_A)(\alpha(A))}= A$. 
			\item 
			Let $(A,\alpha), (B,\beta)$ be pairs of C*-algebras and left $G$-coactions, and $f:A\to \mult{B}$ be a $*$-homomorphism. 
			When $f$ is non-degenerate, 
			$f$ is a \emph{left $G$-$*$-homomorphism} if 
			it holds that $\beta f=(\id\otimes f)\alpha$. 
			When $\alpha$ is continuous, 
			we say a $*$-homomorphism $f:A\to \mult{B}$ 
			(which need not be non-degenerate) is a \emph{left $G$-$*$-homomorphism} 
			if for all $a\in A$ and $x\in C^r_0(G)$, it holds that $(x\otimes 1_A)(\beta f(a))=(\id\otimes f)((x\otimes 1_A)\alpha(a))$. 
			\item 
			Consider a von Neumann algebra $M$ and an injective normal unital $*$-homomorphism $\alpha:M\to L^\infty(G)\barotimes M$. 
			We say $\alpha$ is a \emph{von Neumann algebraic left $G$-coaction} on $M$ if
			$(\id\barotimes \alpha) \alpha = (\Delta_G\barotimes \id) \alpha$. 
			\end{enumerate}
		
		We define a right $G$-coaction $\alpha:A\to \mult{A\otimes C^r_0(G)}$ on a C*-algebra $A$ 
		so that $\sigma\alpha$ is a left $G^{\op}$-coaction on $A$, 
		and a von Neumann algebraic right coaction in a similar way. 
		Likewise, we also define continuity and a $G$-$*$-homomorphism for a right coaction. 
	\end{defn}
	
	For a locally compact quantum group $G$, a left $G$-coaction is continuous in the weak sense if it is continuous in the strong sense, 
	and when $G$ is regular the converse also holds by~\cite[Proposition~5.8]{Baaj-Skandalis-Vaes:nonsemireg}. 
	
	For a locally compact quantum group $G$ and a left $G$-C*-algebra $(A,\alpha)$, 
	we can consider $\alpha(A)$ as a left $G$-C*-algebra with 
	$\Delta_G\otimes \id_A$, 
	and $\alpha:A\to \alpha(A)$ is a left $G$-$*$-isomorphism. 
	When $C$ is a C*-algebra (with a trivial coaction), $A\otimes C$ is a left $G$-C*-algebra with $\alpha\otimes\id$. 
	Also, $C^r_0(G)$ can be considered as a left $G$-C*-algebra with $\Delta_G$ and as a right $G$-C*-algebra again with $\Delta_G$. 
	Unless stated otherwise, we always equip these algebras with the coactions described here. 
	
	For a left $G$-C*-algebra $(A,\alpha)$, 
	we can define the reduced crossed product $G\redltimes A:=[C^r_0(\hat{G})_1 \alpha(A)]\subset \M(\K(G)\otimes A)$, 
	which is a C*-algebra with the continuous right $\hat{G}$-coaction $\Ad \hat{V}^{G}_{13}(-)_{12}$. 
	For a homomorphism $\HG:H\to G$ and a left $G$-C*-algebra $(A,\alpha)$, 
	there is the left continuous $H$-coaction $\HG^*\alpha$ on $A$ such that 
	$(\id\otimes \alpha)\HG^*\alpha=(\HG^*\Delta_G\otimes \id)\alpha$. 
	We call $\HG^*\alpha$ the \emph{restriction (or pullback) of $\alpha$ along $\HG$}. 
	We sometimes write $\HG^*A$ in this situation to make it clear what quantum group coacts on $A$. 
	Similarly for a right $G$-C*-algebra $(A,\alpha)$, we can also define 
	the restriction $\alpha\rpb \HG$ of $\alpha$ along $\HG$ as the right continuous $H$-coaction on $A$ which satisfies 
	$(\alpha\otimes\id)\alpha\rpb\HG=(\id\otimes\Delta_G\rpb\HG)\alpha$. 
	Also, we sometimes write $A\rpb\HG$. 
	%This notation is consistent with $\HG^*\Delta_G$ and $\Delta_G\rpb\HG$. 
	
	Now we recall and fix terminologies about coactions on Hilbert C*-modules and C*-correspondences. 
	We refer~\cite{Baaj-Skandalis:eqkk} for the details of the definitions and the constructions. 
	Consider a locally compact quantum group $G$ and left $G$-C*-algebras $(A,\alpha)$, $(B,\beta)$. 
	For a right $B$-Hilbert module $\E$, we can define a left or right coaction and its continuity in a similar way as in \cref{def:coaction}, but we additionally require $\left[\kappa(\mathcal{E})(C^r_0(G)\otimes 1_B)\right]=C^r_0(G)\otimes \mathcal{E}$ for a left $G$-coaction $\kappa$ of $\mathcal{E}$ to be continuous. 
	A left $G$-coaction $\kappa$ on $\E$ gives rise to 
	the left coaction on the linking algebra $\K_B(\E\oplus B)$ whose restrictions to $B$ and $\E$ equal $\beta$ and $\kappa$, respectively. 
	By abusing notation we continue to write $\kappa$ for this coaction on $\K_B(\E\oplus B)$. 
	It is shown in~\cite[Remark~12.5]{Vaes:impr} that 
	when $G$ is a regular locally compact quantum group and $(B,\beta)$ is a left $G$-C*-algebra, 
	a left $G$-coaction $\kappa$ on a right Hilbert $B$-module $\E$ is automatically continuous, 
	whose corresponding left coaction on $\K_B(\E\oplus B)$ is also continuous. 
	
	For an $(A,B)$-correspondence $(\E,\pi)$ (i.e.~a right Hilbert $B$-module with a $*$-homomorphism $\pi:A\to\LC_B(\E)$), 
	a continuous left $G$-coaction $\kappa$ on $\E$ 
	is a \emph{continuous left $G$-coaction on $(\E,\pi)$} if $\pi:A\to\LC_B(\E)=\mult{\K_B(\E)}$ is a left $G$-$*$-homomorphism. 
	In this case we say $(\E,\pi)$ is a \emph{left $G$-$(A,B)$-correspondence}. 
	The inner tensor product of left $G$-C*-correspondences has a canonical structure of a left $G$-C*-correspondence. 
	We say a left $G$-$(A,B)$-correspondence $\E$ is a left \emph{$G$-$(A,B)$-imprimitivity bimodule} if $\E$ is also an imprimitivity bimodule. 
	In such a situation, $A$ and $B$ are said to be \emph{left $G$-Morita equivalent}. 
	
	\subsection{Closed and open quantum subgroups}\label{ssec:clopen}
	
	We recall the notions of closed quantum subgroups and open quantum subgroups by following~\cite{Daws-Kasprzak-Skalski-Soltan:closed},~\cite{Kalantar-Kasprzak-Skalski:open}. 
	
	\begin{defn}\label{def:clopen}
		Let $G,H$ be locally compact quantum groups and $\HG:H\to G$ be a homomorphism. 
		\begin{enumerate}
			\item 
			$\HG$ gives a \emph{closed quantum subgroup} if 
			$\hat{\HG}^r$ exists and $\hat{\HG}^r$ extends to a normal injective unital
			$*$-homomorphism 
			$L^{\infty}(\hat{H})\to L^{\infty}(\hat{G})$. 
			We still write $\hat{\HG}^r$ for this extension. 
			\item
			$\HG$ gives an \emph{open quantum subgroup} if 
			$\HG^r$ exists and $\HG^r$ extends to a normal surjective unital
			$*$-homomorphism 
			$L^{\infty}(G)\to L^{\infty}(H)$. 
			We still write $\HG^r$ for this extension. 
			\end{enumerate}
	\end{defn}
	
	We have followed the definition of a closed quantum subgroup by Vaes. 
	By~\cite[Theorem~3.5, Theorem~3.6]{Daws-Kasprzak-Skalski-Soltan:closed}, 
	if $\HG:H\to G$ gives a closed quantum subgroup 
	then $\HG^{u}:C^u_0(G)\to C^u_0(H)$ is surjective. 
	An open quantum subgroup is a closed quantum subgroup by~\cite[Theorem~3.6]{Kalantar-Kasprzak-Skalski:open}. 
	
	We collect some properties about open quantum subgroups and their consequences (see Section~2 in~\cite{Kalantar-Kasprzak-Skalski:open}). 
	Consider a homomorphism $\iota:H\to G$ 
	giving an open quantum subgroup. 
	Let $P=P^{\iota}\in\PC\ZC L^{\infty}(G)$, where $\PC$ indicates the set of projections, be the central support of $\iota^{r}:L^{\infty}(G)\rsurj L^{\infty}(H)$. 
	Then $P$ satisfies $[W^G,P\otimes P]=0$, and $\Delta_{G}(P)(P\otimes 1)=P\otimes P=\Delta_{G}(P)(1\otimes P)$ in $\B(L^2(G)^{\otimes 2})$. 
	It follows $H=(L^\infty(H),\Delta_H)$ can be canonically identified as $(PL^\infty(G), (P\otimes P)\Delta_{G}(-))$, 
	with the left and right Haar weights given by restricting those of $G$ via $P$. 
	We write $p=p^{\iota}$ for $P$ considered as a coisometry $L^2(G)\rsurj L^2(H)$, so we have $P=p^*p$. 
	It holds 
	$\iota^r=p(-)p^*:L^{\infty}(G)\to L^{\infty}(H)=pL^{\infty}(G)p^*$, and 
	$(p\otimes p)W^G=W^H(p\otimes p)$. 
	We also remark that the compatibility of the left Haar weights shows $pJ^G=J^Hp$. 
	The calculations 
	\begin{align*}
		&
		p_2W^Gp_2^*=p_2W^G\Delta_G(P)p_2^*=p_2W^G(P\otimes P)p_2^*
		=p_2 (p\otimes p)^*W^H(p\otimes p)p_2^*
		=p_1^*W^Hp_1 , 
		\\&
		W^H=(p\otimes p)W^G (p\otimes p)^*
		=p_2 W^{\iota} p_2^*=p_2(\id\otimes \hat{\iota}^{r})(W^{H})p_2^* , 
	\end{align*}
	show that $\hat{\iota}^r(p(-)p^*):C^r_0(\hat{G})\rsurj \hat{\iota}^r C^r_0(\hat{H})$ 
	is a well-defined conditional expectation by taking $\omega\otimes\id$ for each $\omega$ in $\B(L^2(G))_*$ or $\B(L^2(H))_*$ to them. 
	This map also gives the conditional expectations  
	$L^{\infty}(\hat{G})\rsurj \hat{\iota}^rL^{\infty}(\hat{H})$ and 
	$\mult{C^r_0(\hat{G})}\rsurj \hat{\iota}^r\mult{C^r_0(\hat{H})}$. 
	
	Finally, we remark that if $G$ is regular, then $H$ is also regular. 
	To see this, since we have 
	\begin{align*}
		&
		\clin{(\id\otimes \K(H)^*)(\Sigma W^H)}
		=
		\clin{(\id\otimes \K(H)^*)\bigl((p\otimes p)\Sigma W^G(p\otimes p)^*\bigr)}
		\\&
		\subset
		p\clin{(\id\otimes \K(G)^*)(\Sigma W^G)}p^*
		=p\K(G)p^*=\K(H), 
	\end{align*}
	it suffices to show $\K(H)\subset \clin{(\id\otimes \K(H)^*)(\Sigma W^H)}$, which is called \emph{semi-regularity} of $H$. 
	Note that $H$ is semi-regular if and only if so is $\hat{H}$. 
	But as pointed out in the proof of~\cite[Proposition~9.3]{Baaj-Vaes:doublecrossed}, 
	it follows from~\cite[Proposition~5.7]{Baaj-Skandalis-Vaes:nonsemireg} 
	that semi-regularity passes to any closed quantum subgroups. 
	
	\section{Formal aspects of induced coactions}\label{sec:formal}
	
	First, we recall the description in~\cite{Vaes:impr} of induced coactions along a homomorphism $\HG:H\to G$ of regular locally compact quantum groups which gives a closed quantum subgroup. 
	We write the von Neumann algebra 
	$L^\infty(G/H):=\{x\in L^\infty(G)\,|\,\Ad V^{\HG}x_1=x\otimes 1\}$, and 
	when $G,H$ are regular there is a C*-subalgebra $C^r_0(G/H)\subset L^\infty(G/H)$. 
	The restrictions of $\Delta_G$ give a von Neumann algebraic left $G$-coaction on $L^\infty(G/H)$ and a continuous left $G$-coaction on $C^r_0(G/H)$. 
	We note $\Ad V^G_{12}(-)_1$ gives 
	$L^\infty(\hat{G})\vee L^\infty(G/H)\xrarr{\sim}G\ltimes L^\infty(G/H)$ and 
	$[C^r_0(\hat{G})C^r_0(G/H)]\xrarr{\sim}G\redltimes C^r_0(G/H)$. 
	These $*$-isomorphisms are from algebras with one leg and algebras with two legs, 
	and it will be convenient to distinguish these algebras for later usages of the leg-numbering notation. 
	Let 
	\begin{align*}
		&
		\I^\HG:=\{v\in\B(L^2(H),L^2(G)) \,|\, 
		v\hat{J}^H x\hat{J}^H=\hat{J}^G\hat{\HG}^{r}(x)\hat{J}^G v 
		\mathrm{\ for\ all\ } x\in L^\infty (\hat{H}) \} , 
	\end{align*}
	which is an $(L^\infty(\hat{G})\vee L^\infty(G/H),L^{\infty}(\hat{H}))$-imprimitivity W*-bimodule (see the remark after \cite[Definition~4.2]{Vaes:impr}). 
	Now suppose $G,H$ are regular. 
	There is $I^\HG\subset \I^\HG$ which is a 
	$([C^r_0(\hat{G}) C^r_0(G/H)],C^r_0(\hat{H}))$-imprimitivity (C*-)bimodule 
	by Theorem~6.2 (or more generally Theorem~7.3) and the argument from Theorem~8.2 in~\cite{Vaes:impr}. 
	Actually $I^\HG$ is constructed before $C^r_0(G/H)$, and then $C^r_0(G/H)$ is constructed by using~\cite[Theorem~6.7]{Vaes:impr}, which is the quantum version of Landstad's theorem. 
	It follows that $I^\HG\subset \I^\HG$ is dense with respect to the strong* operator topology of $\B(L^2(G)\oplus L^2(H))$, and that
	\begin{align}\label{eq:recallclosedind1}
		&
		\clin{I^\HG L^2(H)} = \clin{\I^\HG L^2(H)} = L^2(G), 
		\quad\mathrm{and}\quad
		\clin{I^\HG{}^* L^2(G)} = \clin{\I^\HG{}^* L^2(G)} = L^2(H). 
	\end{align}
	More generally for a left $H$-C*-algebra $(A,\alpha)$, 
	there exists the C*-subalgebra $\Ind_{\HG}A\subset\M(C^r_0(G)\otimes A)$ with the induced left $G$-coaction. 
	This is constructed as a certain C*-subalgebra of 
	the C*-algebra that will turn out to be $\M(G\redltimes\Ind_{\HG}A)$, 
	again by using~\cite[Theorem~6.7]{Vaes:impr}. 
	Also, by Theorem~7.3 and the proof of Theorem~8.2 in~\cite{Vaes:impr}, 
	it holds that 
	$[I^\HG_1\alpha(A)]\subset \LC_A(L^2(H)\otimes A, L^2(G)\otimes A)$ is a right 
	$\hat{G}$-$([C^r_0(\hat{G})_1 \Ind_\HG A],H\redltimes A)$-imprimitivity bimodule 
	with the continuous right $\hat{G}$-coaction given by 
	$\hat{V}^{G}_{13}(-)_{12}\hat{V}^{\HG}_{13}{}^*$. 
	When $A=\C$ we have $\Ind_{\HG}\C=C^r_0(G/H)$. 
	
	\begin{rem}\label{rem:strreg}
		We note that the imprimitivity theorem for full crossed products is also shown in~\cite{Vaes:impr} and locally compact quantum groups are assumed to be strongly regular. 
		But as long as we concern only with the reduced crossed products, regularity is enough to show the results quoted above. 
		Indeed, it is already pointed out by the remark after Theorem~6.2 in the original paper~\cite{Vaes:impr} and by Remark~1.5 in~\cite{Kalantar-Kasprzak-Skalski:open}, 
		that the existence of the imprimitivity bimodule $I^\HG$ do not require strong regularity. 
		We can continue to check the proofs of the quoted facts still work only with regularity. 
		We list the results of~\cite{Vaes:impr} which were stated under the assumption of strong regularity, that we will use hereafter: 
		Theorem~7.2, Theorem~7.3 for reduced crossed products, the proof of Theorem~8.2. 
		We only need these results for injective coactions. 
	\end{rem}
	
	\subsection{Definition and first properties}\label{ssec:formal}
	The left $G$-C*-algebra $\Ind_\HG A$ above satisfies the characterization due to~\cite[Theorem~7.2]{Vaes:impr}. 
	In order to consider induced coactions in more general settings, 
	we formally extend that characterization as follows. 
	
	\begin{defn}\label{def:ind}
		Let $\HG:H\to G$ be a homomorphism of locally compact quantum groups, 
		and $(A,\alpha)$ be a C*-algebra with a left $H$-coaction which is continuous in the weak sense. 
		We define 
		\begin{align*}
			\WInd_\HG A:=\{x\in\mult{\K(G)\otimes A} \,|\, 
			x\in (C^r_0(G)'\otimes 1_A)' \mathrm{\ and\ } 
			\Ad V^{\HG}_{12}(x_{13})=(\id_{\K(G)}\otimes \alpha)(x) \} . 
		\end{align*}
		A C*-algebra $C$ with the left $G$-coaction given below is called an \emph{induced coaction of $(A,\alpha)$ along $\HG$} if following three conditions hold. 
		
		\begin{enumerate}
			\item[(i)] 
			$C\subset \mult{\K(G)\otimes A}$ is a non-degenerate C*-subalgebra 
			with $C\subset \WInd_{\HG}A$. 
			\item[(ii)]
			The restriction of $\Ad V^G_{12}(-)_{13}:\mult{\K(G)\otimes A}\to \mult{\K(G)\otimes \K(G)\otimes A}$ gives a well-defined left $G$-coaction on $C$ which is continuous in the weak sense. 
			\item[(iii)]
			$\Ad V^G_{12}(-)_{13}: \WInd_{\HG}A\to \mult{\K(G)\otimes C}$ is a well-defined $*$-homomorphism whose restriction to the unit ball is continuous from the strict topology of $\mult{\K(G)\otimes A}$ to the strict topology of $\mult{\K(G)\otimes C}$. 
		\end{enumerate}
	\end{defn}
	
	We can also define an induced coaction of a right coaction that is continuous in the weak sense along a homomorphism of locally compact quantum groups, 
	by reversing left and right in the definition above. 
	
	\begin{rem}\label{rem:indunique}
		In the situation of \cref{def:ind} above, 
		an induced coaction $C$ of $(A,\alpha)$ along $\HG$ is unique if it exists 
		by the same argument from~\cite[Remark~6.8]{Vaes:impr}. 
		Indeed, for any C*-algebra $D$ satisfying (i) and (ii), 
		it holds that 
		\begin{align*}
			&
			\clin{CD}=\clin{C\bigl((\K(G)^*\otimes\id\otimes\id)(\Ad V^G_{12}D_{13})\bigr)}=C . 
		\end{align*}
		If $D$ additionally satisfies (iii), we also have $C=\clin{DC}=D$ by symmetry. 
		We will write $\Ind_{\HG}A$ for \emph{the} unique induced coaction $C$ of $(A,\alpha)$ along $\HG$ if it exists, 
		and $\Ind_\HG \alpha$ for its left $G$-coaction. 
	\end{rem}
	
	\begin{rem}\label{rem:indmulthom}
		Let $\HG:H\to G$ be a homomorphism of locally compact quantum groups and 
		$(A,\alpha)$, $(B,\beta)$ be C*-algebras with left $H$-coactions which are continuous in the weak sense. 
		Suppose $\Ind_\HG A$ and $\Ind_\HG B$ exist. 
		It holds $\Ind_\HG A\subset \mult{C^r_0(G)\otimes A}$ is a non-degenerate C*-subalgebra since 
			\begin{align*}
				&\clin{(\Ind_\HG A) (C^r_0(G)\otimes A)}
				=
				\clin{
					(\K(G)^*\otimes \id\otimes\id)\bigl(
					V^G_{12} (\Ind_\HG A)_{13} V^G_{12}{}^* (\K(G)\otimes C^r_0(G)\otimes A) \bigr) 
				}
				\\&
				= C^r_0(G)\otimes A. 
			\end{align*}
		When $f:A\to \M(B)$ is a non-degenerate left $H$-$*$-homomorphism, 
		$(\id\otimes f)\Ind_\HG A$ satisfies 
		(i) and (ii) for the definition of $\Ind_\HG B$. 
		It follows from the previous \cref{rem:indunique} that
		$\id\otimes f:\Ind_\HG A\to \mult{\Ind_\HG B}$ is a well-defined non-degenerate left $G$-$*$-homomorphism. 
	\end{rem}
	
	As already mentioned, when $\HG:H\to G$ is a homomorphism of regular locally compact quantum groups giving a closed quantum subgroup, then 
	the construction of the induced coaction $\Ind_{\HG} A$ due to Vaes satisfies \cref{def:ind} by~\cite[Theorem~7.2]{Vaes:impr}. 
	Next, we give a few examples of this formal generalization of induced coactions when the homomorphism does not give a closed quantum subgroup. 
	
	\begin{eg}\label{eg:formalind}
		Let $\HG:H\to G$ be a homomorphism of locally compact quantum groups. 
		\begin{enumerate}
			\item
			Consider the case of a trivial homomorphism $1_{H\to 1}:H\to 1=G$. 
			Assume $H$ is a locally compact group and 
			$X$ is a locally compact Hausdorff space with a proper left action of $H$. 
			Then $C_0(H\bs X)$ gives $\Ind_{1_{H\to 1}}C_0(X)$. 
			Indeed, we observe that $\mult{C_0(H\bs X)}\xrarr{\sim} \WInd_{1_{H\to 1}}C_0(X)$ via the quotient map $X\rsurj H\bs X$, 
			which shows (i) of \cref{def:ind}. (ii) is trivial. 
			We show (iii). Take a net $(f_\lambda)_\lambda$ in the unit ball of $\WInd_{1_{H\to 1}}C_0(X)$ and $f\in\WInd_{1_{H\to 1}}C_0(X)$ 
			such that $\norm{f_\lambda g -fg} \xrarr{\lambda} 0$ for any $g\in C_0(X)$. 
			Since for any $0\leq h\in C_0(H\bs X)$ and any $\varepsilon>0$ there is some $0\leq h_\varepsilon\in C_c(X)$ such that 
			for all $x\in X$ we have 
			$\sup\limits_{t\in H}h_\varepsilon(tx)>h(x)-\varepsilon$, it follows $\norm{f_\lambda h -fh}\xrarr{\lambda} 0$. 
			\item
			As a tautological example, 
			for any homomorphism $\HG:H\to G$ of locally compact quantum groups, 
			$(\Delta_G\rpb \HG) C^r_0(G)$ gives $\Ind_{\HG}C^r_0(H)$. 
			The conditions (i) and (ii) in \cref{def:ind} are clear. 
			The restriction of the map $\Ad V^{\HG}_{12}(-)_1: \mult{\K(G)}\to \mult{\K(G)\otimes C^r_0(H)}$ 
			to the unit ball is a homeomorphism onto its image with respect to the strict topologies. 
			Indeed, the inverse is given by $(\id\otimes\omega)\Ad V^{\HG}_{12}{}^*$ for some state $\omega$ on $C^r_0(H)$. 
			Since $\WInd_{\HG}C^r_0(H)\subset (\Delta_G\rpb \HG) L^\infty(G)$
			by the following \cref{lem:vNind}, 
			(iii) of \cref{def:ind} follows from the next commutative diagram 
			\begin{align*}
				\xymatrix{
					\WInd_{\HG}C^r_0(H) \ar[rr]^-{\Ad V^G_{12}(-)_{13}}
					\ar[d]_-{(\id\otimes\omega)\Ad V^{\HG}_{12}{}^*}&& 
					\M(\K(G)\otimes C^r_0(G)\otimes C^r_0(H))\\
					\M(\K(G))\ar[r]^-{\Ad V^{G}(-)_1}& 
					\M(\K(G)\otimes C^r_0(G)) 
					\ar[r]^-{\Ad V^{\HG}_{23}(-)_{12}}&
					\M(\K(G)\otimes \Delta\rpb\HG C^r_0(G)) , 
					\ar[u]_-{\cup}
				}
			\end{align*}
			whose vertical arrows are homeomorphic on the unit balls with respect to the strict topologies. 
		\end{enumerate}
	\end{eg}
	
	\begin{lem}\label{lem:vNind}
		For a homomorphism $\HG:H\to G$ of locally compact quantum groups, it holds
		\begin{align*}
			(L^\infty(G)\barotimes L^\infty(H))\cap 
			\coequalizer{\Delta_{G}\rpb \HG\barotimes \id}{\id\barotimes \Delta_H}
			= (\Delta_{G}\rpb \HG)(L^\infty(G)) . 
		\end{align*}
	\end{lem}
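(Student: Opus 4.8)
The plan is to prove the two inclusions separately. The inclusion $(\Delta_G \rpb \HG)(L^\infty(G)) \subset (L^\infty(G)\barotimes L^\infty(H))\cap \coequalizer{\Delta_G\rpb\HG\barotimes\id}{\id\barotimes\Delta_H}$ is the easy direction and follows formally from the defining property of $\Delta_G\rpb\HG$: the coassociativity-type identity $(\Delta_G\rpb\HG\barotimes\id)(\Delta_G\rpb\HG) = (\id\barotimes\Delta_H)(\Delta_G\rpb\HG)$ should be extracted from the relation $(\id\otimes\Delta_G\rpb\HG)V^G = V^G_{12}V^\HG_{13}$ together with the pentagon equation $(\Delta_G\otimes\id)V^\HG = V^\HG_{13}V^\HG_{23}$ recorded for bicharacters (dualized via $\hat{W}^\HG$); this is essentially the statement that a homomorphism of C*-bialgebras, extended to the von Neumann level, is coassociative against $\Delta_H$. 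First I would spell out this compatibility on the von Neumann algebraic level, using $\Delta_G\rpb\HG=\Ad V^G(\,\cdot\,)_1$ composed appropriately, or more cleanly by transporting everything through $R^G$ using $\sigma(R^H\otimes R^G)(\HG^*\Delta_G)=(\Delta_G\rpb\HG)R^G$ and the analogous known identity $(\id\barotimes\HG^*\Delta_G)\alpha$-type coassociativity for the left version $\HG^*\Delta_G$. So the real content is the reverse inclusion.

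For the reverse inclusion, the key idea is to produce a conditional-expectation-type retraction onto $(\Delta_G\rpb\HG)(L^\infty(G))$ and show it fixes every element of the equalizer. Concretely, let $\omega$ be a state on $L^\infty(H)$ (for instance coming from a normal state, so that slicing is normal) and consider the normal map $E:=(\id\barotimes\id\barotimes\omega)\circ(\Delta_G\rpb\HG\barotimes\id):L^\infty(G)\barotimes L^\infty(H)\to L^\infty(G)\barotimes L^\infty(H)$, where I identify the first two legs after slicing the third — more precisely I would use that $\Delta_G\rpb\HG$ is injective with a left inverse given by $(\id\otimes\omega_H)(\Delta_G\rpb\HG)=\id$ for $\omega_H$ the counit-like state on $C^r_0(H)$ (exploited already in \cref{eg:formalind}(2)), so that the composite $(\id\barotimes\omega_H)\circ(\text{something})$ lands back in $L^\infty(G)$ and then gets pushed forward by $\Delta_G\rpb\HG$. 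For $x$ in the equalizer, the relation $(\Delta_G\rpb\HG\barotimes\id)(x)=(\id\barotimes\Delta_H)(x)$ lets me rewrite $(\id\barotimes\id\barotimes\omega_H)$ applied to the right-hand side using left-invariance / the counit property of $H$, collapsing the last two legs of $x$ back to $x$ itself; this forces $x=(\Delta_G\rpb\HG)(y)$ where $y=(\id\barotimes\omega_H)(x)\in L^\infty(G)$.

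The main obstacle I anticipate is making the ``slice and collapse'' argument rigorous at the von Neumann algebra level: $C^r_0(H)$ has a counit only in favorable (e.g. coamenable) cases, so I cannot literally slice by a counit on $L^\infty(H)$. The fix is to avoid the counit entirely and instead mimic the standard proof that $L^\infty(G/H)=\{x\mid \Ad V^\HG x_1 = x\otimes 1\}$ equals a fixed-point/image description: apply $(\id\barotimes\id\barotimes\id)$ composed with the map $\Delta_H$ again to one copy and use bisimplifiability of $(C^r_0(H),\Delta_H)$, i.e. $\clin{\Delta_H(C^r_0(H))(1\otimes C^r_0(H))}=C^r_0(H)\otimes C^r_0(H)$, to cancel. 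Equivalently, I would realize the equalizer as a fixed-point algebra of the von Neumann algebraic right $H$-coaction $\id\barotimes\Delta_H$ restricted suitably and invoke that $(\Delta_G\rpb\HG)(L^\infty(G))$ is exactly the fixed points, using that $\Delta_G\rpb\HG$ intertwines this coaction with the trivial one — this is the Galois/crossed-product style argument and it only needs the multiplicative unitary $V^\HG$ and its pentagon relation with $V^G$, not any counit. Once the fixed-point characterization is set up, both inclusions become a direct manipulation of $V^G_{12}V^\HG_{13}$ identities, and the lemma follows. $\sq$
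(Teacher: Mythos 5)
The easy inclusion and your diagnosis that counit-slicing fails are both fine, but your proposed repair does not close the hard inclusion, and it mischaracterizes the structure involved. The set $M:=(L^\infty(G)\barotimes L^\infty(H))\cap \coequalizer{\Delta_{G}\rpb \HG\barotimes \id}{\id\barotimes \Delta_H}$ is \emph{not} a fixed-point algebra of the coaction $\id\barotimes\Delta_H$ in the sense $\beta(x)=x\otimes 1$ (by ergodicity of $\Delta_H$ that fixed-point algebra would just be $L^\infty(G)\otimes\C 1$); it is an equalizer/cotensor-type condition, so there is no ``$(\Delta_G\rpb\HG)(L^\infty(G))$ is exactly the fixed points'' statement to invoke. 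Likewise, bisimplifiability of $(C^r_0(H),\Delta_H)$ is a norm-density statement at the C*-level and does not by itself yield the $\sigma$-weak cancellation you need inside $L^\infty(G)\barotimes L^\infty(H)$; and the assertion that the reverse inclusion ``becomes a direct manipulation of $V^G_{12}V^{\HG}_{13}$ identities'' is exactly the step you have not supplied. Slicing the identity $(\id\barotimes\Delta_H)(x)=(\Delta_G\rpb\HG\barotimes\id)(x)$ in the last leg only shows that $(\id\barotimes(\id\otimes\omega)\Delta_H)(x)$ lies in the image of $\Delta_G\rpb\HG$; recovering $x$ itself from such slices is equivalent to a genuine generation property, not to the pentagon equation, and approximating the identity by the convolution operators $(\id\otimes\omega)\Delta_H$ would again require something like coamenability.

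The missing ingredient is the following two-step argument. First, by the easy inclusion, $(\id\barotimes\Delta_H)(M)=(\Delta_G\rpb\HG\barotimes\id)(M)\subset \bigl((\Delta_G\rpb\HG)L^\infty(G)\bigr)\barotimes L^\infty(H)\subset M\barotimes L^\infty(H)$, so $\id\barotimes\Delta_H$ restricts to a bona fide von Neumann algebraic right $H$-coaction on the von Neumann algebra $M$ itself. Second, one invokes the (genuinely nontrivial) generation theorem for arbitrary von Neumann algebraic coactions, \cite[Theorem~2.6.1]{Vaes:unitaryimplement}, which gives $(1\otimes1\otimes\B(L^2(H)))\vee(\id\otimes\Delta_H)(M)=M\barotimes\B(L^2(H))$; since the left-hand side is contained in $\bigl((\Delta_G\rpb\HG)L^\infty(G)\bigr)\barotimes\B(L^2(H))$, slicing the last leg yields $M\subset(\Delta_G\rpb\HG)(L^\infty(G))$. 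That implementation theorem (which rests on the Haar weights) is the engine of the proof and cannot be replaced by pentagon manipulations or a counit; without identifying it, the reverse inclusion remains open in your write-up.
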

	
	\begin{proof}
		We write $M$ for the left hand side. 
		It is clear that $M\supset (\Delta_{G}\rpb \HG)(L^\infty(G))$. 
		As for the converse inclusion, we observe $\id\barotimes \Delta_H$ 
		is a right von Neumann algebraic $H$-coaction on $M$. 
		Therefore 
		by~\cite[Theorem~2.6.1]{Vaes:unitaryimplement} 
		it holds 
		$(1\otimes1\otimes\B(L^2(H)))\vee (\id\otimes \Delta_H)M=M\barotimes\B(L^2(H))$, 
		but its left hand side is contained in $((\Delta_{G}\rpb \HG)L^\infty(G))\barotimes\B(L^2(H))$ by the definition of $M$. 
	\end{proof}
	
	Next, we are going to show the induction in stages \cref{thm:indstages}, 
	which gives a foundation of the remaining part. 
	We begin with the following lemma. 
	
	\begin{lem}\label{lem:indtensorcpt}
		Let $\HG:H\to G$ be a homomorphism of locally compact quantum groups, $(A,\alpha)$ be a C*-algebra 
		with a left $H$-coaction which is continuous in the weak sense, 
		and $\HC$ be a Hilbert space. 
		If $\Ind_\HG A$ exists, then 
		$\Ind_\HG (A\otimes \K(\HC))$ 
		also exists and is given by $(\Ind_\HG A)\otimes \K(\HC)$. 
	\end{lem}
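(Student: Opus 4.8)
The plan is to take $C:=(\Ind_\HG A)\otimes\K(\HC)$ with the injective non-degenerate $*$-homomorphism $\Ind_\HG\alpha\otimes\id_{\K(\HC)}$ and check that it satisfies (i)--(iii) of \cref{def:ind} for the weakly continuous left $H$-coaction $\alpha\otimes\id_{\K(\HC)}$ on $A\otimes\K(\HC)$; then \cref{rem:indunique} identifies it with $\Ind_\HG(A\otimes\K(\HC))$. First I would set up the matrix picture: fixing an orthonormal basis $(e_i)_{i\in I}$ of $\HC$, an element of $\mult{\K(G)\otimes A\otimes\K(\HC)}$ is the same thing as an $I\times I$ matrix $(y_{ij})$ over $\mult{\K(G)\otimes A}$ defining a bounded adjointable operator on $\HC\otimes L^2(G)\otimes A$, and $y\mapsto y_{ij}$ is strictly continuous on bounded sets. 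Since $V^{\HG}$, $V^G$, the coaction $\alpha$, and the commutant $C^r_0(G)'\otimes 1$ all act trivially on the $\K(\HC)$-leg, the two conditions defining $\WInd_{\HG}(A\otimes\K(\HC))$ are entrywise; concretely $(C^r_0(G)'\otimes 1_{A\otimes\K(\HC)})'=(C^r_0(G)'\otimes 1_A)'\barotimes\B(\HC)$, and the maps $\Ad V^{\HG}_{12}(-)_{134}$ and $\id\otimes\alpha\otimes\id$ act on $(y_{ij})$ as the corresponding maps for $A$ applied in each entry. Thus
\[
\WInd_{\HG}(A\otimes\K(\HC))=\{\, y\in\mult{\K(G)\otimes A\otimes\K(\HC)} \mid y_{ij}\in\WInd_{\HG}A \text{ for all } i,j\in I \,\},
\]
and for such $y$ the matrix entries of $\Ad V^G_{12}(y_{134})$ are $\Ad V^G_{12}((y_{ij})_{13})$.

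\textbf{Conditions (i) and (ii).} Since $\Ind_\HG A\subset\mult{\K(G)\otimes A}$ is a non-degenerate C*-subalgebra by (i) of \cref{def:ind}, its minimal tensor product $C=(\Ind_\HG A)\otimes\K(\HC)$ is a non-degenerate C*-subalgebra of $\mult{\K(G)\otimes A\otimes\K(\HC)}$; every element of $C$ has matrix entries in $\Ind_\HG A\subset\WInd_\HG A$, so $C\subset\WInd_{\HG}(A\otimes\K(\HC))$ by the display above, which is (i). For (ii), evaluating on elementary tensors $x\otimes k$ with $x\in\Ind_\HG A$, $k\in\K(\HC)$ gives $\Ad V^G_{12}((x\otimes k)_{134})=(\Ind_\HG\alpha(x))\otimes k$, so the restriction of $\Ad V^G_{12}(-)_{134}$ to $C$ is $\Ind_\HG\alpha\otimes\id_{\K(\HC)}$; this is a weakly continuous left $G$-coaction because $\Ind_\HG\alpha$ is, by the standard fact (cf.\ \cref{ssec:coaction}) that tensoring a left $G$-C*-algebra with a trivially coacted C*-algebra again yields a left $G$-C*-algebra.

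\textbf{Condition (iii), and the main obstacle.} Write $\Phi:=\Ad V^G_{12}(-)_{13}$. By (iii) for $\Ind_\HG A$ this is a well-defined $*$-homomorphism $\WInd_\HG A\to\mult{\K(G)\otimes\Ind_\HG A}$, strictly continuous on the unit ball, whose restriction to $\Ind_\HG A$ is the coaction $\Ind_\HG\alpha$; since $\Ind_\HG\alpha$ is a non-degenerate $*$-homomorphism, a short computation with closed linear spans shows $\Phi$ is itself non-degenerate, i.e.\ $\clin{\Phi(\WInd_\HG A)(\K(G)\otimes\Ind_\HG A)}=\K(G)\otimes\Ind_\HG A$. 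Now $\Ad V^G_{12}(-)_{134}$ is a $*$-homomorphism on all of $\mult{\K(G)\otimes A\otimes\K(\HC)}$, strictly continuous on bounded sets, and by the entrywise description it carries $\WInd_{\HG}(A\otimes\K(\HC))$ into $\mult{\K(G)\otimes C}=\mult{\K(G)\otimes\Ind_\HG A\otimes\K(\HC)}$, acting there as the multiplier extension of $\Phi\otimes\id_{\K(\HC)}$. It then remains to see that this extension is strictly continuous into the strict topology of $\mult{\K(G)\otimes C}$; I would deduce this from the fact that a non-degenerate $*$-homomorphism of C*-algebras extends to a strictly-continuous-on-bounded-sets map of the multiplier algebras of its stabilizations, checking compatibility entrywise, where the corresponding compatibility of strict topologies is precisely what (iii) for $\Ind_\HG A$ provides. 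I expect the genuine obstacle to be exactly this last step: making rigorous that $\WInd_{\HG}(A\otimes\K(\HC))$ and $\mult{\K(G)\otimes\Ind_\HG A\otimes\K(\HC)}$ carry the expected multiplier structure and that the relevant strict topologies --- the one inherited from $\mult{\K(G)\otimes A\otimes\K(\HC)}$ (resp.\ $\mult{\K(G)\otimes\K(G)\otimes A\otimes\K(\HC)}$) and the intrinsic one --- agree on the relevant bounded sets; this is the somewhat delicate bookkeeping of multiplier algebras of tensor products with $\K(\HC)$, and is the reason the statement is isolated as a separate lemma.
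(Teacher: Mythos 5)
Your handling of (i), (ii), and the entrywise description of $\WInd_{\HG}(A\otimes \K(\HC))$ is consistent with the paper, and you correctly identify (iii) as the crux; but the mechanism you propose for (iii) fails, and this is a genuine gap rather than bookkeeping. Since $1\in\WInd_\HG A$, the algebra $\WInd_\HG A$ is unital, so an element of $\mult{(\WInd_\HG A)\otimes\K(\HC)}$ must have columns converging in \emph{norm} in $\WInd_\HG A$; by contrast the columns of a general $y\in\WInd_{\HG}(A\otimes\K(\HC))$ converge only strictly relative to $\K(G)\otimes A$. Hence $\WInd_{\HG}(A\otimes\K(\HC))\not\subset\mult{(\WInd_\HG A)\otimes\K(\HC)}$ and there is no ``multiplier extension of $\Phi\otimes\id_{\K(\HC)}$'' available to apply. (Already for the trivial homomorphism $1\to G$ and $A=\C$ one has $\WInd_{1\to G}\C=L^\infty(G)$ and $\WInd_{1\to G}\K(\HC)=L^\infty(G)\barotimes\B(\HC)$, which contains $\sum_i p_i\otimes e_{i1}$ for an infinite family of mutually orthogonal projections $p_i$ with $\sum_i p_i=1$; this is not a multiplier of $L^\infty(G)\otimes\K(\HC)$.) For the same reason the entrywise description does not by itself give $\Ad V^G_{12}(y)_{134}\in\mult{\K(G)\otimes(\Ind_\HG A)\otimes\K(\HC)}$: multiplying by $d\otimes e_{jk}$ with $d\in\K(G)\otimes\Ind_\HG A$ produces a column whose entries lie in $\K(G)\otimes\Ind_\HG A$, but whose norm convergence is precisely the assertion to be proved, and entrywise membership only yields strict convergence of the partial sums. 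Moreover, the continuity demanded by (iii) is from the strict topology of $\mult{\K(G)\otimes A\otimes\K(\HC)}$, which is far weaker than the topology for which a multiplier extension of a stabilization would be continuous.

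The paper closes this gap by a two-step approximation which your proposal does not contain. For finite-dimensional $\V\subset\HC$ one literally has $\WInd_{\HG}(A\otimes\K(\V))=(\WInd_\HG A)\otimes\K(\V)$ and (iii) is immediate. For general $\HC$, given a bounded net $x_\lambda\to x$ strictly in $\WInd_{\HG}(A\otimes\K(\HC))$, one compresses the positive elements $(x_\lambda-x)^*(x_\lambda-x)$ by $(1_\V)_3$, applies the finite-dimensional case to these compressions, and upgrades to norm convergence of $\bigl(\Ad V^G_{12}(x_\lambda-x)_{134}\bigr)z$ for $z\in\K(G)\otimes(\Ind_\HG A)\otimes\K(\HC)$ via the C*-identity $\norm{wz}^2=\norm{z^*w^*wz}$. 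Well-definedness of $\Ad V^G_{12}(x)_{134}$ as a multiplier of $\K(G)\otimes(\Ind_\HG A)\otimes\K(\HC)$ is then obtained by applying this convergence to the net $x_\V:=(1_\V)_3x(1_\V)_3$, which is bounded, converges strictly to $x$, and falls under the finite-dimensional case. If you wish to keep your matrix picture, you must supply an argument of exactly this kind to control all entries of a column simultaneously; the multiplier-extension formalism cannot do it.
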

	
	\begin{proof}
		It is clear that 
		$(\Ind_\HG A)\otimes \K(\HC)$ satisfies (i) and (ii) in \cref{def:ind} for $\Ind_\HG (A\otimes \K(\HC))$. 
		If $\HC$ is finite dimensional, (iii) also holds since $\WInd_\HG (A\otimes \K(\HC))=(\WInd_\HG A)\otimes \K(\HC)$. 
		We consider the general case and for each finite dimensional subspace $\V\subset \HC$ we write $1_{\V}$ for the unit of $\K(\V)$. 
		
		Fix arbitrarily a net $(x_\lambda)_{\lambda\in \Lambda}$ and an element $x$ in the unit ball of $\WInd_{\HG}(A\otimes \K(\HC))$ 
		such that $x_\lambda \xrightarrow{\lambda\in \Lambda} x$ 
		with respect to the strict topology of $\mult{\K(G)\otimes A\otimes \K(\HC)}$. 
		For each finite dimensional subspace $\V\subset \HC$ 
		we have 
		$(1_{\V})_3(x_\lambda-x)^*(x_\lambda-x)(1_{\V})_3
		\xrarr{\lambda}0$ 
		strictly in $\mult{\K(G)\otimes A\otimes \K(\V)}$ 
		and
		\begin{align*}
			&
			V^G_{12} (1_{\V})_4
			(x_\lambda-x)_{134}^*(x_\lambda-x)_{134}
			(1_{\V})_4 V^G_{12}{}^*\xrarr{\lambda} 0
		\end{align*}
		strictly in $\mult{\K(G)\otimes (\Ind_{\HG}A)\otimes \K(\V)}$ by the finite dimensional case. 
		Thus for any $z\in \K(G)\otimes (\Ind_{\HG}A) \otimes \K(\HC)$, it holds 
		\begin{align*}
			&
			\norm{\bigl(\Ad V^G_{12}(x_\lambda -x)_{134}\bigr)z}
			=\norm{z^* V^G_{12} (x_\lambda-x)^*_{134}(x_\lambda-x)_{134} V^G_{12}{}^* z}^{\frac{1}{2}}
			\xrarr{\lambda} 0 . 
		\end{align*}
		By replacing $x$ and $x_\lambda$ with $x^*$ and $x_{\lambda}^*$, respectively, we also get 
		$\norm{\bigl(\Ad V^G_{12}(x_{\lambda}^* -x^*)_{134}\bigr)z}\xrarr{\lambda}0$. 
		
		It remains to show $\Ad V^G_{12}(x)_{134}\in \mult{\K(G)\otimes (\Ind_\HG A)\otimes \K(\HC)}$ for any $x\in\WInd_{\HG} (A\otimes \K(\HC))$. 
		Let $\Lambda$ be the poset of all finite dimensional subspace of $\HC$ with respect to the inclusions, 
		and for each $\V\in\Lambda$ put 
		$x_\V := (1_{\V})_3x(1_{\V})_3$. 
		Then $(x_{\V})_{\V}$ is a norm-bounded net in $\WInd_{\HG} (A\otimes \K(\HC))$ 
		strictly converging to $x$ in $\mult{\K(G)\otimes A\otimes \K(\HC)}$. 
		For any $z\in \K(G)\otimes (\Ind_{\HG}A) \otimes \K(\HC)$, 
		the argument above shows 
		$(\Ad V^G_{12}x_{134})z$ 
		is a norm-limit of 
		$(\Ad V^G_{12}(x_{\V})_{134})z \in \K(G)\otimes (\Ind_{\HG}A) \otimes \K(\HC)$. 
		Similarly it holds 
		$(\Ad V^G_{12}x^*_{134})z \in \K(G)\otimes (\Ind_{\HG}A) \otimes \K(\HC)$, 
		and thus we get 
		$\Ad V^G_{12}x_{134}\in \mult{\K(G)\otimes (\Ind_\HG A)\otimes \K(\HC)}$. 
	\end{proof}
	
	Later we will see the stronger compatibility of induced coactions with tensor products holds for the class of proper homomorphism 
	in \cref{cor:properind} and \cref{prop:projfml}. 
	
	\begin{thm}\label{thm:indstages}
		Let $\HG:H\to G$, $\GF:G\to F$ be homomorphisms of locally compact quantum groups and $(A,\alpha)$ be a C*-algebra with a left $H$-coaction which is continuous in the weak sense. 
		Suppose $\Ind_{\HG}A$ and $\Ind_{\GF}(\Ind_{\HG}A)$ exist. 
		Then $\Ind_{\GF\HG}A$ also exists and 
		$\Ad V^{\GF}_{12}(-)_{13}$ gives a well-defined left $H$-$*$-isomorphism 
		$\Ind_{\GF\HG}A\xrarr{\sim} \Ind_{\GF}(\Ind_{\HG}A)$. 
	\end{thm}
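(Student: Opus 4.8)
The plan is to produce an explicit model of $\Ind_{\GF\HG}A$ out of $\Ind_\GF(\Ind_\HG A)$ and the unitary $V^\GF$, verify the three conditions of \cref{def:ind}, and then conclude by the uniqueness statement in \cref{rem:indunique}. Write $B:=\Ind_\HG A\subset\M(\K(G)\otimes A)$ with its induced left $G$-coaction $\beta:=\Ind_\HG\alpha$, and $D:=\Ind_\GF B\subset\M(\K(F)\otimes B)\subset\M(\K(F)\otimes\K(G)\otimes A)$ with its induced left $F$-coaction $\Ind_\GF\beta$; both exist by hypothesis. Consider
\begin{align*}
	\Psi:=\Ad V^\GF_{12}(-)_{13}\colon\M(\K(F)\otimes A)\longrightarrow\M(\K(F)\otimes\K(G)\otimes A),
\end{align*}
with the first leg of $V^\GF$ on $\K(F)$. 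As $V^\GF$ is a unitary multiplier and $(-)_{13}$ is non-degenerate, $\Psi$ is the multiplier extension of an injective non-degenerate $*$-homomorphism; in particular it is injective, and its restriction to the unit ball is strictly continuous. Put $C:=\{x\in\M(\K(F)\otimes A)\mid\Psi(x)\in D\}$, a C*-subalgebra of $\M(\K(F)\otimes A)$, equipped with $\Ad V^F_{12}(-)_{13}$.

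The central step is that $\Psi$ restricts to a $*$-isomorphism $C\xrarr{\sim}D$ compatible with the $\WInd$-data. Its main ingredient is the composition identity for the right regular representations, i.e.\ the $V$-analogue of $W^\GF_{23}W^\HG_{12}=W^\HG_{12}W^{\GF\HG}_{13}W^\GF_{23}$, obtained from the latter by means of $V^\HG=(\hat J^G\otimes\hat J^H)\hat W^\HG(\hat J^G\otimes\hat J^H)$ and the analogous formulas. Combining this identity with the defining equations, a computation with the leg-numbering notation shows that, for $x\in\M(\K(F)\otimes A)$, the defining equation $\Ad V^{\GF\HG}_{12}(x_{13})=(\id\otimes\alpha)(x)$ of $\WInd_{\GF\HG}A$ is equivalent to $\Psi(x)$ satisfying $\Ad V^\GF_{12}(\Psi(x)_{13})=(\id\otimes\beta)(\Psi(x))$ together with the analogous commutation condition, and that $\Psi$ respects the conditions with $C^r_0(F)'\otimes1$. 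Hence $\Psi$ carries $\WInd_{\GF\HG}A$ bijectively onto the analogue of $\WInd_\GF B$ with $B$ replaced by $\WInd_\HG A$, the latter being equipped with $\Ad V^G_{12}(-)_{13}$, which is legitimate there by condition~(iii) of \cref{def:ind} for $\Ind_\HG A$. As $D=\Ind_\GF B$ sits inside $\WInd_\GF B$ and $B\subset\WInd_\HG A$, the algebra $D$ lies in the image of $\Psi$; with the injectivity of $\Psi$ this gives $\Psi\colon C\xrarr{\sim}D$ and, in particular, $C\subset\WInd_{\GF\HG}A$. That $C$ is a non-degenerate C*-subalgebra of $\M(\K(F)\otimes A)$ follows by pulling the non-degeneracy of $D$ in $\M(\K(F)\otimes B)$ back through $\Psi$, as in \cref{rem:indmulthom}. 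This gives condition~(i).

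For condition~(ii), the coaction identity for $\Ad V^F_{12}(-)_{13}$ on $\M(\K(F)\otimes A)$ holds for the same purely $V^F$-theoretic reason as in the construction of $\Ind_\GF$; what must be verified is that this map sends $C$ into $\M(C^r_0(F)\otimes C)$ and is continuous in the weak sense there. Both are obtained by transport along the isomorphism $\Psi$ of the central step: using a further instance of the composition identity and the compatibility of $\Delta_F\rpb\GF$ with $\Delta_F$, one checks that $\Psi$ intertwines the left $F$-coaction $\Ad V^F_{12}(-)_{13}$ on $C$ with $\Ind_\GF\beta$ on $D$ up to the canonical reordering of the two inner legs, so that the relevant properties of $\Ind_\GF\beta$ — valid because $D=\Ind_\GF B$ exists — pass to $C$. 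In particular $\Psi$ respects these left $F$-coactions, hence also the left $H$-coactions they induce by restriction along $\GF\HG$; combined with $\Psi\colon C\xrarr{\sim}D$ this produces the asserted left $H$-$*$-isomorphism.

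Condition~(iii) — that $\Ad V^F_{12}(-)_{13}\colon\WInd_{\GF\HG}A\to\M(\K(F)\otimes C)$ is well-defined and strictly continuous on the unit ball — is the point where I expect the main obstacle. The strategy is again to go through $\Psi$: by the central step $\Psi$ maps $\WInd_{\GF\HG}A$ into the ``two-stage'' $\WInd$ object built over $\WInd_\HG A$, and $\Psi$ is strictly continuous on bounded sets; composing with condition~(iii) for $\Ind_\GF$ — applied over $\WInd_\HG A$, which is permissible thanks to condition~(iii) for $\Ind_\HG$ — produces the desired map together with its strict continuity, and one then descends from $\M(\K(F)\otimes D)$ back to $\M(\K(F)\otimes C)$ along $\id\otimes\Psi^{-1}$. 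Since $\WInd_\HG A$ need not be a $G$-C*-algebra, this two-stage $\WInd$ object cannot be fed directly into \cref{def:ind} and has to be treated by hand; making the target land in $\M(\K(F)\otimes C)$ rather than a larger multiplier algebra, and controlling the strict limit, is carried out by a net approximation argument in the spirit of \cref{lem:indtensorcpt}. Once conditions~(i)--(iii) are in place, \cref{rem:indunique} identifies $C$ with $\Ind_{\GF\HG}A$, and the $*$-isomorphism $\Psi\colon C\xrarr{\sim}D$ from the central step is precisely the left $H$-$*$-isomorphism $\Ad V^\GF_{12}(-)_{13}\colon\Ind_{\GF\HG}A\xrarr{\sim}\Ind_\GF(\Ind_\HG A)$ of the statement.
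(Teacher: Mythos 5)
Your overall architecture is the same as the paper's: realize $\Ind_{\GF}(\Ind_{\HG}A)$ as $\Ad V^{\GF}_{12}(C)_{13}$ for a C*-algebra $C\subset\M(\K(F)\otimes A)$, check (i)--(iii) of \cref{def:ind} for $C$, and conclude by uniqueness. However, the critical step of your ``central step'' is missing. A leg-numbering computation with the composition identity can only show that, for $x\in\M(\K(F)\otimes A)$, membership of $x$ in $\WInd_{\GF\HG}A$ is \emph{equivalent} to membership of $\Psi(x)$ in the two-stage object; this yields $\Psi(\WInd_{\GF\HG}A)=(\mathrm{range\ of\ }\Psi)\cap(\mathrm{two\text{-}stage\ object})$, not that $\Psi$ maps \emph{onto} the two-stage object. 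Yet your conclusion that $D=\Ind_\GF(\Ind_\HG A)$ lies in the range of $\Psi$ --- on which the entire construction $C=\Psi^{-1}(D)$ hinges --- needs exactly that surjectivity: one must show that every element of $\M(\K(F)\otimes\K(G)\otimes A)$ commuting with $L^\infty(F)'\barotimes L^\infty(G)'\otimes 1_A$ and satisfying $\Ad V^{\GF}_{12}(-)_{134}=\Ad V^{G}_{23}(-)_{124}$ is already of the form $\Ad V^{\GF}_{12}(x)_{13}$. This is not a formal consequence of the bicharacter identities: after slicing the $A$-leg it amounts to $(L^\infty(F)\barotimes L^\infty(G))\cap\coequalizer{\Delta_F\rpb\GF\barotimes\id}{\id\barotimes\Delta_G}=(\Delta_F\rpb\GF)(L^\infty(F))$, which is \cref{lem:vNind} and rests on the nontrivial von Neumann algebraic fact $(1\otimes\B(L^2(G)))\vee\Delta_G(M)=M\barotimes\B(L^2(G))$ from the theory of unitary implementation. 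Without it, $\Psi\colon C\to D$ need not be surjective and the argument collapses.

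The second gap is condition (iii), which you correctly identify as the main obstacle but do not actually resolve. One cannot ``apply condition (iii) for $\Ind_\GF$ over $\WInd_\HG A$, which is permissible thanks to condition (iii) for $\Ind_\HG$'': condition (iii) for $\Ind_\HG A$ controls a map out of $\M(\K(G)\otimes A)$ into $\M(\K(G)\otimes\Ind_\HG A)$, whereas the two-stage object lives in $\M(\K(F)\otimes\K(G)\otimes A)$, and the strict topology there is not governed by (iii) with an extra leg tacked on. The paper's resolution is to observe that $\Ad V^{\GF}_{12}(\WInd_{\GF\HG}A)_{13}\subset(\WInd_{\HG}(A\otimes\K(F)))_{231}$ for the trivial coaction on $\K(F)$, and then to invoke \cref{lem:indtensorcpt}, i.e.\ $\Ind_{\HG}(A\otimes\K(F))=(\Ind_{\HG}A)\otimes\K(F)$ together with its condition (iii), whose proof is itself a nontrivial net argument; one then recovers the identity on the two-stage object as $(\id\otimes\omega\otimes\id\otimes\id)\Ad(V^{\GF}_{12}{}^*V^{G}_{23})(-)_{124}$ to land in $\M(\K(F)\otimes\Ind_\HG A)$ before feeding the result into (iii) for $\Ind_\GF(\Ind_\HG A)$. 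Your phrase ``a net approximation argument in the spirit of \cref{lem:indtensorcpt}'' names the right tool but supplies neither the reduction that makes it applicable nor the argument itself.
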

	
	\begin{proof}
		It follows from \cref{lem:vNind} that 
		\begin{align*}
			&
			\mult{\K(F)\otimes \K(G)\otimes A} 
			\cap (L^\infty(F)'\barotimes L^\infty(G)'\otimes 1_A)' 
			\cap 
			\coequalizer{\Ad V^{\GF}_{12}(-)_{134}}{\Ad V^G_{23}(-)_{124}}
			\\&\subset 
			\Ad V^{\GF}_{12}\left(
			\mult{\K(F)\otimes 1_{\K(G)}\otimes A}
			\cap 
			(L^\infty(F)'\otimes1_{\K(G)}\otimes 1_{A})'\right) .
		\end{align*} 
		Indeed, for any element $x$ in the left hand side and any $\omega\in A^*$ we have $(\Ad V^{\GF}{}^* \otimes \omega)x\in L^\infty(F)\otimes1_{\K(G)}$, 
		which shows 
		$\Ad V^{\GF}_{12}{}^*x \in (L^\infty(F)'\barotimes \B(L^2(G)) \otimes 1_A)'$. 
		Thus there is some C*-subalgebra $C \subset\mult{\K(F)\otimes A}\cap (L^\infty(F)'\otimes1_A)'$ such that 
		$\Ind_{\GF}(\Ind_{\HG}A)=\Ad V^{\GF}_{12}C_{13}$. 
		Since for any $z\in \mult{\K(F)\otimes A}$ it holds 
		\begin{align}\label{prf:thm:indstages1}
			&
			\Ad (V^{F}_{12}V^{\GF}_{13})(z)_{14}
			=\Ad (V^{F}_{12}V^{\GF}_{13}V^{\GF}_{23})(z)_{14}
			=\Ad (V^{\GF}_{23}V^{F}_{12})(z)_{14} ,
		\end{align}
		it follows from (ii) of \cref{def:ind} for $\Ind_{\GF}(\Ind_{\HG}A)$ that 
		$\Ad V^{F}_{12}(-)_{13}$ gives a well-defined left $F$-coaction on $C$ 
		such that $\Ad V^{\GF}_{12}:C\xrarr{\sim} \Ind_{\GF}(\Ind_{\HG}A)$ is a left $F$-$*$-isomorphism. 
		In particular, this left $F$-coaction on $C$ is continuous in the weak sense. 
		Therefore it is enough to show that $C$ satisfies (i) and (iii) for \cref{def:ind} of $\Ind_{\GF\HG}A$. 
		
		Since 
		$\Ind_{\GF}(\Ind_{\HG}A)\subset\mult{\K(F)\otimes \K(G)\otimes A}$ is non-degenerate, 
		\begin{align*}
		&C
		=\clin{(\id\otimes \K(G)^*\otimes \id)
			\Bigl(\Ad V^{\GF}_{12}{}^*(\Ind_{\GF}(\Ind_{\HG}A))\Bigr)}
		\subset \mult{\K(F)\otimes A}
		\end{align*} 
		is also non-degenerate. 
		For any $z\in C$ we have 
		$\Ad V^{\GF}_{12}(z)_{13}\in\mult{\K(F)\otimes \Ind_{\HG}A}$, 
		and thus 
		\begin{align*}
			&
			\Ad (V^{\GF\HG}_{13})(z)_{14}
			=
			\Ad (V^{\GF}_{12}{}^* V^{\HG}_{23}V^{\GF}_{12}V^{\HG}_{23}{}^* )(z)_{14}
			\\&
			=
			\Ad V^{\GF}_{12}{}^* \bigl((\id\otimes\id\otimes \alpha) (\Ad V^{\GF}_{12}(z)_{13})\bigr)
			=
			\bigl((\id\otimes \alpha)(z)\bigr)_{134}. 
		\end{align*}
		It follows $C\subset\coequalizer{\Ad V^{\GF\HG}_{12}(-)_{13}}{\id\otimes \alpha}$. 
		Therefore $C$ satisfies (i). 
		
		We show (iii). 
		By the strict continuity of $\Ad V^{\GF}_{12}(-)_{13}:\mult{\K(F)\otimes A}\to\mult{\K(F)\otimes\K(G)\otimes A}$ 
		and by \cref{prf:thm:indstages1}, it suffices to show that
		\begin{align*}
			&
			\Ad V^{F}_{12}(-)_{134}:\Ad V^{\GF}_{12}(\WInd_{\GF\HG}A)_{13}
			\to \mult{\K(F)\otimes (\Ad V^{\GF}_{12}C_{13})}
			= \mult{\K(F)\otimes \Ind_{\GF}(\Ind_{\HG}A)}
		\end{align*}
		is a well-defined $*$-homomorphism and continuous on the unit ball from the strict topology of 
		$\mult{\K(F)\otimes \K(G)\otimes A}$ 
		to the strict topology of $\mult{\K(F)\otimes \Ind_{\GF}(\Ind_{\HG}A)}$. 
		Moreover, by (iii) of $\Ind_{\GF}(\Ind_{\HG}A)$, it is enough to show that 
		\begin{align*}
			&
			\Ad V^{\GF}_{12}(\WInd_{\GF\HG}A)_{13} 
			\subset \WInd_{\GF}(\Ind_{\HG}A) 
			\quad\mathrm{in}\quad \mult{\K(F)\otimes \K(G)\otimes A}, 
		\end{align*}
		and 
		that this inclusion is continuous on the unit ball 
		from the strict topology of $\mult{\K(F)\otimes \K(G)\otimes A}$ 
		to the strict topology of $\mult{\K(F)\otimes \Ind_{\HG}A}$. 
		
		Fix a normal state $\omega$ on $\B(L^2(G))$. 
		Since $\Ad V^{\GF}_{12}(-)_{134}=\Ad V^{G}_{23}(-)_{124}$ on $\Ad V^{\GF}_{12}(\WInd_{\GF\HG}A)_{13}$, 
		we have 
		\begin{align}\label{prf:thm:indstages2}
			&\id_{\Ad V^{\GF}_{12}(\WInd_{\GF\HG}A)_{13}} 
			= (\id\otimes \omega\otimes \id\otimes\id)\Ad (V^{\GF}_{12}{}^*V^{G}_{23})(-)_{124} . 
		\end{align}
		We also have 
		\begin{align*}
			&\Ad V^{\GF}_{12}(\WInd_{\GF\HG}A)_{13}
			\subset \bigl(\WInd_{\HG}(A\otimes \K(F))\bigr)_{231}
			\subset \mult{\K(F)\otimes \K(G)\otimes A} 
		\end{align*}
		when we equip $\K(F)$ with a trivial coaction, 
		because for any $z\in\WInd_{\GF\HG}A$ it holds 
		\begin{align*}
			&\Ad(V^{\HG}_{23}V^{\GF}_{12}) (z)_{14} 
			=
			\Ad(V^{\GF}_{12}V^{\GF\HG}_{13}V^{\HG}_{23}) (z)_{14}
			=
			\Ad V^{\GF}_{12}((\id\otimes \alpha)(z))_{134}
			\\&
			=(\id\otimes\id\otimes \alpha)\Ad V^{\GF}_{12}(z)_{13}. 
		\end{align*}
		By \cref{lem:indtensorcpt}, it follows from 
		(iii) for $\Ind_{\HG}(A\otimes \K(F))=(\Ind_{\HG}A)\otimes \K(F)$
		that 
		\begin{align*}
			\Ad V^{G}_{23}(-)_{124}:\Ad V^{\GF}_{12}(\WInd_{\GF\HG}A)_{13} \to \mult{\K(F)\otimes \K(G)\otimes \Ind_{\HG}A}
		\end{align*}
		is well-defined and continuous on the unit ball from the strict topology of $\M(\K(F)\otimes \K(G)\otimes A)$ to the strict topology of $\M(\K(F)\otimes\K(G)\otimes\Ind_\HG A)$. 
		
		Therefore by composing this map with the strictly continuous map 
		\begin{align*}
			(\id \otimes \omega\otimes \id\otimes\id)\Ad V^{\GF}_{12}{}^* :
			\mult{\K(F)\otimes \K(G)\otimes \Ind_{\HG}A}\to
			\mult{\K(F)\otimes \Ind_{\HG}A}, 
		\end{align*}
		we obtain that the right hand side of \cref{prf:thm:indstages2} is a well-defined map into $\mult{\K(F)\otimes \Ind_{\HG}A}$ with the desired continuity. 
		Now it is easy to check 
		$\Ad V^{\GF}_{12}(\WInd_{\GF\HG}A)_{13}\subset \WInd_{\GF}(\Ind_{\HG}A)$. 
	\end{proof}
	
	\subsection{Quantum homogeneous spaces as quotients}\label{ssec:hmgquot}
	Before going to the next section, 
	we briefly discuss the coincidence of the two possibilities for quantum homogeneous spaces. 
	In the following, for a C*-algebra $A$ with a right coaction $\alpha$ of a locally compact quantum group $G$ which is continuous in the weak sense, 
	we write $\M(A)^G:=\{x\in \M(A)\,|\, \alpha(x)=x\otimes1\}$. 
	When the induced coaction 
	$D\subset \M(A)\isom \M(A\otimes\C)$ of $A$ along $1_{G\to 1}$ exists, we write $A^G:=D$. 
	In other words, 
	$A^G\subset \M(A)$ is the non-degenerate C*-subalgebra such that $\M(A^G)=\M(A)^G$ and their strict topologies coincide on the unit balls. 
	If $A^G$ exists, it is a good candidate for the fixed point algebra of $A$ with the right coaction $G$ (see (1) of \cref{eg:formalind}). 
	For a homomorphism $\HG:H\to G$ of locally compact quantum group, we want to know when $C^r_0(G)^H$ and $\Ind_\HG \C$ exist and coincide 
	(see also the condition (d${}^+$) in \cref{rem:hmgquot}). 
	As another consequence of \cref{lem:indtensorcpt}, we record a criterion for their coincidence. 
	
	\begin{prop}\label{prop:hmgquot}
		Let $\HG:H\to G$ be a homomorphism of locally compact quantum groups and 
		suppose $C^r_0(G)^H$ exists. 
		If the restriction of $\Delta_G$ gives a left $G$-coaction on $C^r_0(G)^H$ which is continuous in the weak sense, 
		then $\Ind_\HG \C$ also exists and $C^r_0(G)^H=\Ind_\HG \C$. 
	\end{prop}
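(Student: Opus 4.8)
The plan is to verify that $C^r_0(G)^H$, equipped with the restriction of $\Delta_G$, satisfies conditions (i)--(iii) of \cref{def:ind} for an induced coaction of $\C$ along $\HG$; by \cref{rem:indunique} this then yields $\Ind_\HG\C=C^r_0(G)^H$. Write $D:=C^r_0(G)^H\subset\M(C^r_0(G))$, so that $\M(D)=\M(C^r_0(G))^H=\{x\in\M(C^r_0(G))\,|\,(\Delta_G\rpb\HG)(x)=x\otimes1\}$ and the strict topology of $\M(D)$ coincides, on bounded sets, with the one inherited from $\M(C^r_0(G))$. Specializing \cref{def:ind} to $A=\C$, the algebra $\WInd_\HG\C$ is $\{x\in L^\infty(G)\,|\,\Ad V^{\HG}(x_1)=x\otimes1\}$ and the map $\Ad V^G_{12}(-)_{13}$ on $\M(\K(G)\otimes\C)=\B(L^2(G))$ is $\Ad V^G(-)_1$, i.e.\ the normal extension of $\Delta_G$. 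I would also record the identity $\Delta_G\rpb\HG=\Ad V^{\HG}(-)_1$ on $\B(L^2(G))$ — proved from $\sigma(R^H\otimes R^G)(\HG^*\Delta_G)=(\Delta_G\rpb\HG)R^G$, $\HG^*\Delta_G=\Ad W^{\HG}{}^*(-)_2$ and $V^{\HG}=(\hat{J}^G\otimes\hat{J}^H)\hat{W}^{\HG}(\hat{J}^G\otimes\hat{J}^H)$ by the same manipulation of modular conjugations that yields $\Delta_G=\Ad V^G(-)_1$ — so that $\M(D)=\WInd_\HG\C\cap\M(C^r_0(G))$.

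Conditions (i) and (ii) are then short. For (i): $D\subset\M(D)=\WInd_\HG\C\cap\M(C^r_0(G))\subset\WInd_\HG\C$, and $D$ is non-degenerate in $\M(\K(G))$ because $C^r_0(G)$ acts non-degenerately on $L^2(G)$ and $D$ is non-degenerate in $\M(C^r_0(G))$, whence $\clin{D\,\K(G)}=\clin{D\,C^r_0(G)\,\K(G)}=\clin{C^r_0(G)\,\K(G)}=\K(G)$. For (ii): the map $\Ad V^G_{12}(-)_{13}$ restricted to $D$ is the restriction of $\Delta_G$ to $D$, which is a left $G$-coaction continuous in the weak sense by hypothesis.

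The substance is (iii): that $\Delta_G$ carries all of $\WInd_\HG\C$ — typically much larger than $\M(D)$ — into $\M(\K(G)\otimes D)$, strictly continuously on the unit ball. The key is to realize $\M(\K(G)\otimes D)$ as a fixed-point algebra. I would apply \cref{lem:indtensorcpt} in its right-coaction form to $(C^r_0(G),\Delta_G\rpb\HG)$ with $\HC=L^2(G)$: since $D=\Ind_{1_{H\to1}}(C^r_0(G),\Delta_G\rpb\HG)$ exists by hypothesis, the lemma gives $\Ind_{1_{H\to1}}(\K(G)\otimes C^r_0(G),\id\otimes\Delta_G\rpb\HG)=\K(G)\otimes D$, hence
\[
\M(\K(G)\otimes D)=\{T\in\M(\K(G)\otimes C^r_0(G))\,|\,(\id\otimes\Delta_G\rpb\HG)(T)=T\otimes1\},
\]
with the strict topologies of $\M(\K(G)\otimes D)$ and $\M(\K(G)\otimes C^r_0(G))$ agreeing on bounded subsets of the former. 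Now for $x\in\WInd_\HG\C\subset L^\infty(G)$: since $V^G\in\U\M(\K(G)\otimes C^r_0(G))$ one has $\Delta_G(x)=V^G(x\otimes1)V^G{}^*\in\M(\K(G)\otimes C^r_0(G))$, and from $(\id\otimes\Delta_G\rpb\HG)V^G=V^G_{12}V^{\HG}_{13}$ together with $\Ad V^{\HG}(x_1)=x\otimes1$,
\[
(\id\otimes\Delta_G\rpb\HG)(\Delta_G(x))=V^G_{12}V^{\HG}_{13}(x_1)V^{\HG}_{13}{}^*V^G_{12}{}^*=V^G_{12}(x_1)V^G_{12}{}^*=\Delta_G(x)\otimes1,
\]
so $\Delta_G(x)\in\M(\K(G)\otimes D)$. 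Finally, $\Ad V^G(-)_1\colon\M(\K(G))\to\M(\K(G)\otimes C^r_0(G))$ is strictly continuous on the unit ball — it is $x\mapsto x\otimes1$ followed by conjugation by the unitary multiplier $V^G$ — and by the agreement of strict topologies on bounded sets this descends to $\Delta_G\colon\WInd_\HG\C\to\M(\K(G)\otimes D)$, which establishes (iii).

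The step I expect to be the main obstacle is precisely (iii): because $\WInd_\HG\C$ is a von Neumann algebra one must control $\Delta_G$ on all of it, not merely on $\M(D)$, and \cref{lem:indtensorcpt} is what makes this tractable, turning well-definedness into the bicharacter relation for $V^G$ and $V^{\HG}$ and turning continuity into strict continuity of $\Ad V^G(-)_1$ on $\M(\K(G))$. One should also take care of the bookkeeping identities $\Delta_G\rpb\HG=\Ad V^{\HG}(-)_1$ (so the defining condition of $\WInd_\HG\C$ matches that of $\M(D)$) and $V^G\in\U\M(\K(G)\otimes C^r_0(G))$ (so that $\Delta_G(x)$ stays inside $\M(\K(G)\otimes C^r_0(G))$), which are what make the two descriptions line up.
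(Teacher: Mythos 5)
Your proposal is correct and follows essentially the same route as the paper: conditions (i) and (ii) are immediate, and the substance of (iii) is handled by applying \cref{lem:indtensorcpt} to the right $H$-coaction $\Delta_G\rpb\HG$ on $C^r_0(G)$ (the paper phrases it as the left $H^{\op}$-coaction $\sigma(\Delta_G\rpb\HG)$) to identify $\K(G)\otimes C^r_0(G)^H$ with $(\K(G)\otimes C^r_0(G))^H$ together with the agreement of strict topologies on bounded sets. Your explicit verification that $\Delta_G(x)$ lands in the fixed-point algebra via the bicharacter relation $(\id\otimes\Delta_G\rpb\HG)V^G=V^G_{12}V^{\HG}_{13}$ is exactly the computation the paper leaves as ``easy to see.''
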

	
	\begin{proof}
		It is clear that $C^r_0(G)^H$ satisfies (i) and (ii) of \cref{def:ind} for $\Ind_\HG\C$. 
		We show (iii). 
		If we consider $C^r_0(G)^H$ as the induced coaction of the left $H^{\op}$-C*-algebra 
		$(C^r_0(G), \sigma(\Delta_G\rpb\HG))$ along $1_{H^{\op}\to 1}$, 
		by \cref{lem:indtensorcpt} we get 
		$\K(G)\otimes (C^r_0(G)^H)=(\K(G)\otimes C^r_0(G))^H$. 
		Here we have equipped $\K(G)$ with a trivial coaction. 
		Therefore $\M(\K(G)\otimes C^r_0(G))^H =\M(\K(G)\otimes C^r_0(G)^H)$  and on the unit balls these strict topologies coincide by (iii). 
		Now it is easy to see $\Ad V^G_{12}(-)_1: \WInd_\HG\C\to \M(\K(G)\otimes C^r_0(G))^H =\M(\K(G)\otimes C^r_0(G)^H)$ 
		is well-defined and continuous on the unit ball with respect to the strict topologies of $\M(\K(G))$ and $\M(\K(G)\otimes C^r_0(G)^H)$. 
	\end{proof}
	
	We do not know this coincidence holds in general, but it is true in many tractable cases. 
	
	\begin{eg}\label{eg:hmgquot}
		Let $\HG:H\to G$ be a homomorphism of locally compact quantum groups giving a closed quantum subgroup. 
			Then $C^r_0(G/H)=\Ind_{\HG}\C$ and $C^r_0(G)^H$ exist and coincide in the following cases. 
			\begin{itemize}
				\item
				When $G$ is a locally compact group by (1) of \cref{eg:formalind}. 
				\item
				Suppose $G,H$ are regular and $\HG$ gives an open quantum subgroup. 
				By the proof of Lemma~5.7 and Proposition~6.2 in~\cite{Kalantar-Kasprzak-Skalski:open}, 
				up to some unitary equivalence 
				$L^2(G)\isom \bigoplus\limits_{\lambda\in\Lambda}\HC_\lambda\otimes\HC'_\lambda$ for some Hilbert spaces $\HC_\lambda, \HC'_\lambda$, 
				it holds $C^r_0(G/H)\isom \bigoplus\limits_{\lambda\in\Lambda} 1_{\HC_\lambda}\otimes\K(\HC'_\lambda)$. 
				Therefore 
				$L^\infty(G/H)=\M(C^r_0(G/H))$, and the strict topologies with respect to 
				$\M(\K(G))$ and $\M(C^r_0(G/H))$ coincide on the unit balls. 
				Now it follows from the non-degenerate inclusion 
				$\M(C^r_0(G/H))\subset \M(C^r_0(G))^H\subset L^\infty(G/H)=\M(C^r_0(G/H))$ 
				that $C^r_0(G/H)$ satisfies the definition of $C^r_0(G)^H$. 
				Especially this holds when $G$ is discrete. 
				\item
				This is also the case when $H$ is a compact. 
				This is essentially proved in~\cite[Theorem~5.1]{Soltan:noncptaction}. 
				See also the remark after \cref{thm:cptind}. 
			\end{itemize}
	\end{eg}
	
	\section{Induced coactions along a proper homomorphism}\label{sec:proper}
	
	Next, we define proper homomorphisms of locally compact quantum groups, in a similar spirit as closed or open quantum subgroups. 
	
	\begin{defn}\label{def:proper}
		Let $G,H$ be locally compact quantum groups and $\HG:H\to G$ be a homomorphism. 
		We say $\HG$ is \emph{proper} if 
		$\hat{\HG}^r$ exists and $\hat{\HG}^r$ extends to a normal unital
		$*$-homomorphism 
		$L^{\infty}(\hat{H})\to L^{\infty}(\hat{G})$. 
		We still write $\hat{\HG}^r$ for this extension. 
	\end{defn}
	
	Let $\HG:H\to G$ be a proper homomorphism of locally compact quantum groups in the sense of \cref{def:proper} above. 
	Then we can define a von Neumann bialgebra 
	$F$ by $(L^{\infty}(F),\Delta_{F}):= ( \hat{\HG}^r(L^{\infty}(\hat{H})), \Delta_{\hat{G}} )$. 
	This is a well-defined locally compact quantum group by~\cite[Proposition~10.5]{Baaj-Vaes:doublecrossed}, 
	because $\hat{\HG}^r(L^{\infty}(\hat{H}))$ is closed under $R^{\hat{G}}$ and the scaling automorphism $\tau^{\hat{G}}_t$ by~\cite[Proposition~5.45]{Kustermans-Vaes:lcqg}. 
	By~\cite[Theorem~3.3]{Daws-Kasprzak-Skalski-Soltan:closed} and~\cite[Proposition~2.3]{Kalantar-Kasprzak-Skalski:open}, 
	$L^{\infty}(F)\subset L^{\infty}(\hat{G})$ identifies $\Ran\HG:=\hat{F}$ as a closed quantum subgroup of $G$, and 
	the surjection $\hat{\HG}^r:L^{\infty}(\hat{H})\to L^{\infty}(F)$ identifies $\hat{\Ran\HG}$ as an open quantum subgroup of $\hat{H}$
	(actually the similar argument as in~\cite[Proposition~2.3]{Kalantar-Kasprzak-Skalski:open} shows that 
	any homomorphism $L^\infty(\hat{H})\to L^\infty(\hat{G})$ of von Neumann bialgebras is $\hat{\HG}^r$ for some proper homomorphism $\HG:H\to G$). 
	Therefore we have obtained the decomposition of $\HG$ into 
	$\xymatrix{H\ar[r]^-{\bar{\HG}}& \Ran\HG\ar[r]^-{\ran\HG}& G}$, 
	where 
	$\bar{\HG}: H\to \Ran \HG$ is a homomorphism whose dual gives an open quantum subgroup, and  
	$\ran\HG: \Ran\HG\to G$ is a homomorphism giving a closed quantum subgroup. 
	This decomposition of $\HG$ is clearly unique (up to a natural isomorphism which makes the suitable diagram commute). 
	Moreover if $H$ is regular, then $\Ran\HG$ is also regular 
	since its dual is regular by the last remark of \cref{ssec:clopen}. 
	
	We also note the result~\cite[Theorem~3.2]{Kalantar-Kasprzak-Skalski-Soltan:openind} that 
	if $\HG:H\to G$ gives an open quantum subgroup, then $\hat{\HG}^{u}:C^u_0(\hat{H})\to C^u_0(\hat{G})$ is injective. 
	Therefore for a proper homomorphism $\HG:H\to G$, 
	it follows that $\bar{\HG}^u$ is injective. 
	Since it also holds that $(\ran\HG)^u(C^u_0(G))=C^u_0(\Ran\HG)$ by the remark after \cref{def:clopen},  
	we see $C^u_0(\Ran\HG)$ can be canonically identified with the image $\HG^u(C^u_0(G))\subset C^u_0(H)$. 
	
	Finally, we remark 
	it directly follows from~\cite[Theorem~4.2]{Daws-Kasprzak-Skalski-Soltan:closed} and~\cite[Corollary~7.4]{Kalantar-Kasprzak-Skalski:open} 
	via the decompositions of homomorphisms, 
	that when $G,H$ are locally compact groups, 
	a continuous group homomorphism $\HG:H\to G$ is proper in the sense of \cref{def:proper} 
	if and only if it is proper as a continuous map of topological spaces. 
	In this sense, \cref{def:proper} can be regarded as a quantum counterpart of properness (from a measure theoretic viewpoint). 
	See~\cite{Kasprzak-Khosravi-Soltan:integrable} for other aspects of this kind of homomorphisms. 
	
	\subsection{The general case}\label{ssec:properind}
	
	We prove the existence of induced coactions along a proper homomorphism of regular locally compact quantum groups. 
	First, we give a detailed description of induced coactions along the dual of an open quantum subgroup. 
	
	\begin{lem}\label{lem:propersurjind}
		Let $\HG:H\to G$ be a homomorphism of locally compact quantum groups with $\hat{\HG}$ giving an open quantum subgroup, 
		and $(A,\alpha)$ be a C*-algebra with a left $H$-coaction which is continuous in the weak sense. 
		Then $\Ind_\HG A$ exists. 
		Moreover, when we let $p=p^{\hat{\HG}}$ 
		be the coisometry associated with $\hat{\HG}:\hat{G}\to\hat{H}$ as in \cref{ssec:clopen} 
		and put 
		\[A^\HG:=\clin{(\K(G)^*\otimes \id_A)(p_1\alpha(A)p_1^*)} \subset \mult{A} , \] 
		then the followings hold. 
		\begin{enumerate}
			\item
			$A^\HG$ is a well-defined C*-algebra, and 
			$\alpha^\HG:=p_1\alpha(-)p_1^* :A^\HG\to \mult{C^r_0(G)\otimes A^\HG}$ is a well-defined left $G$-coaction on $A^\HG$ 
			which is continuous in the weak sense. 
			\item
			$\Ind_{\HG}A=\alpha^\HG (A^\HG) =p_1\alpha(A)p_1^*$, 
			and $(\HG^r\otimes\id_A)(\Ind_\HG A)=\alpha(A^\HG)$. 
			\item
			$(\alpha^\HG)^{-1}(p_1\alpha(-)p_1^*): A\to A^\HG$ is a well-defined conditional expectation. 
			\item
			$A^\HG=\{a\in A\,|\, \alpha(a)\in \M(\HG^rC^r_0(G)\otimes A)\}$. 
		\end{enumerate}
	\end{lem}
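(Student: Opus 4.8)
The plan is to realise $\Ind_{\HG}A$ as the corner $p_1\alpha(A)p_1^{*}$ and to deduce the four assertions from the conditional expectation attached to the open quantum subgroup $\hat\HG$. Write $p:=p^{\hat\HG}\in\B(L^2(H),L^2(G))$ for the coisometry of \cref{ssec:clopen}, so $pp^{*}=1_{L^2(G)}$ and $p^{*}p=P^{\hat\HG}\in\PC\ZC L^\infty(\hat H)$. Since $\hat\HG$ gives an open, hence closed, quantum subgroup, $\HG^{r}$ exists and extends to a normal injective unital $*$-homomorphism $L^\infty(G)\hookrightarrow L^\infty(H)$ with $\HG^{r}C^r_0(G)\subset C^r_0(H)$; running the computations of \cref{ssec:clopen} for $\hat\HG$ gives $pC^r_0(H)p^{*}=C^r_0(G)$, $pL^\infty(H)p^{*}=L^\infty(G)$, and that $E_H:=\HG^{r}(p(-)p^{*})$ is a conditional expectation from $C^r_0(H)$ (resp.\ $\mult{C^r_0(H)}$, resp.\ $L^\infty(H)$) onto $\HG^{r}C^r_0(G)$ (resp.\ its multiplier, resp.\ von Neumann, completion). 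Since $\HG^{r}$ is injective, this yields
\[
p_1\alpha(a)p_1^{*}=(\HG^{r}\otimes\id_A)^{-1}\big((E_H\otimes\id_A)\alpha(a)\big)\in\mult{C^r_0(G)\otimes A}\qquad(a\in A),
\]
so $A^{\HG}$ is a well-defined subset of $\mult{A}$; combined with $V^{\HG}=(\id\otimes\HG^{r})V^{G}$ (hence $\Ad V^{\HG}_{12}(x_{13})=(\id\otimes\HG^{r}\otimes\id)\big(\Ad V^{G}_{12}(x_{13})\big)$ for $x$ living in legs $1,3$) and the fact that $\HG^{r}$ is a homomorphism of C*-bialgebras, it gives the coaction identities used below.

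I would then prove (4) and, simultaneously, that $A^{\HG}$ is a C*-algebra, by a von Neumann algebraic argument in the spirit of \cref{lem:vNind}: at the von Neumann level, transporting along the isomorphism $\HG^{r}\otimes\id$ turns the defining equation $\Ad V^{\HG}_{12}(x_{13})=(\id\barotimes\tilde\alpha)(x)$ of $\WInd_{\HG}A$ into the statement that $(\HG^{r}\otimes\id)(x)$ lies in $\HG^{r}L^\infty(G)\barotimes A''$ and in the equalizer of $\Delta_H\barotimes\id$ and $\id\barotimes\tilde\alpha$, which equalizer equals $\tilde\alpha(A'')$ by the same use of \cite[Theorem~2.6.1]{Vaes:unitaryimplement} as in \cref{lem:vNind}. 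Hence $\WInd_{\HG}A=p_1\{a\in A\,|\,\alpha(a)\in\mult{\HG^{r}C^r_0(G)\otimes A}\}p_1^{*}$, and slicing by $\K(G)^{*}$ gives $A^{\HG}=\{a\in A\,|\,\alpha(a)\in\mult{\HG^{r}C^r_0(G)\otimes A}\}$, visibly a C*-subalgebra of $A$; this is (4). Because $\HG^{r}$ is a homomorphism of C*-bialgebras, $A^{\HG}$ is an $\alpha$-invariant C*-subalgebra, so $\alpha$ restricts to a weakly continuous left $H$-coaction on $A^{\HG}$ with values in $\mult{\HG^{r}C^r_0(G)\otimes A^{\HG}}$; transporting along $\HG^{r}\otimes\id$ produces a weakly continuous left $G$-coaction $\gamma$ on $A^{\HG}$, and the displayed identity shows $\gamma=p_1\alpha(-)p_1^{*}=:\alpha^{\HG}$ on $A^{\HG}$. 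This is (1). For (3): the identity $(\id\otimes E_H)\Delta_H=\Delta_H E_H$, which follows from the definition of $E_H$ and the open-quantum-subgroup structure, lets one check $(E_H\otimes\id)(\alpha(A))\subset\alpha(A)$, so that $a\mapsto(\alpha^{\HG})^{-1}(p_1\alpha(a)p_1^{*})=\gamma^{-1}\circ(\HG^{r}\otimes\id)^{-1}\circ(E_H\otimes\id)\circ\alpha$ is the transport of the conditional expectation $E_H\otimes\id$ along the isomorphisms $\HG^{r}\otimes\id$ and $\gamma$, hence a conditional expectation $A\to A^{\HG}$; one also reads off $p_1\alpha(A)p_1^{*}=p_1\alpha(A^{\HG})p_1^{*}=\alpha^{\HG}(A^{\HG})$.

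For (2) it remains to check that $C:=p_1\alpha(A)p_1^{*}=\alpha^{\HG}(A^{\HG})$ satisfies (i)--(iii) of \cref{def:ind}; then $C=\Ind_{\HG}A$ by \cref{rem:indunique}, and $(\HG^{r}\otimes\id)(C)=\alpha(A^{\HG})$ finishes (2). Condition (i): the inclusion $C\subset\WInd_{\HG}A$ follows, for $a\in A^{\HG}$, from $\Ad V^{G}_{12}(\gamma(a)_{13})=(\id\otimes\gamma)\gamma(a)$, the relation $\Ad V^{\HG}_{12}(x_{13})=(\id\otimes\HG^{r}\otimes\id)\big(\Ad V^{G}_{12}(x_{13})\big)$, and $(\HG^{r}\otimes\id)\gamma=\alpha$; non-degeneracy of $C\subset\mult{C^r_0(G)\otimes A}$ is as in \cref{rem:indmulthom} (or from $pp^{*}=1$). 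Condition (ii) is immediate: $\alpha^{\HG}$ being a weakly continuous left $G$-coaction, its image $C=\alpha^{\HG}(A^{\HG})$ carries the coaction $\Delta_G\otimes\id_{A^{\HG}}=\Ad V^{G}_{12}(-)_{13}$, which is exactly the restriction to $C$ of the map in (ii) and is weakly continuous.

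The main obstacle is condition (iii): one must show $\Ad V^{G}_{12}(-)_{13}\colon\WInd_{\HG}A\to\mult{\K(G)\otimes C}$ is well-defined and strictly continuous on the unit ball. Here I would mimic (2) of \cref{eg:formalind}: using the von Neumann description above to write $\WInd_{\HG}A=(\HG^{r}\otimes\id)^{-1}\big((E_H\otimes\id)(\text{a corner of }\mult{C^r_0(H)\otimes A})\big)$, one factors $\Ad V^{G}_{12}(-)_{13}$ on $\WInd_{\HG}A$ through maps already known to be strictly continuous and inverts it on the unit ball by applying a slice $(\id\otimes\omega)\Ad V^{\HG}_{12}{}^{*}$ for a fixed state $\omega$ on $C^r_0(H)$, assembling everything into a commutative diagram whose vertical arrows restrict to strict homeomorphisms of unit balls. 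Thus, besides this strict-continuity bookkeeping, the only genuinely technical inputs are the von Neumann algebraic identification of $\WInd_{\HG}A$ in the presence of the coisometry $p$ and the equivariance properties of $E_H$; everything else is formal manipulation with $\HG^{r}$, $E_H$, $V^{G}$ and $V^{\HG}$.
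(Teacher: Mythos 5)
Your strategy is the paper's: realize $\Ind_\HG A$ as the corner $p_1\alpha(A)p_1^*$ and exploit the conditional expectation $E_H:=\HG^r(p(-)p^*)$ attached to the open quantum subgroup $\hat{\HG}$; the identity $(\id\otimes\alpha)(p_1\alpha(-)p_1^*)=(\id\otimes\HG^r\otimes\id)(\Delta_G\otimes\id)(p_1\alpha(-)p_1^*)$ is also the paper's first step, and your treatment of (i), (ii) and the reduction of (iii) is in the right spirit. But there is a genuine gap at the central point, which is exactly where the paper spends most of its effort. Everything rests on the inclusion $p_1\alpha(A)p_1^*\subset p_1\alpha(A^\HG)p_1^*$, equivalently $(E_H\otimes\id)(\alpha(A))\subset\alpha(A^\HG)$ with $A^\HG$ a \emph{norm-closed} subalgebra of $A$. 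You derive this from an equivariance identity for $E_H$ combined with a von Neumann algebraic equalizer argument in the spirit of \cref{lem:vNind}. Even granting the (nontrivial) passage from the weakly continuous C*-coaction $\alpha$ to a normal coaction $\tilde{\alpha}$ on some $A''$, that argument only places $(E_H\otimes\id)\alpha(a)$ in the equalizer of $\Delta_H\barotimes\id$ and $\id\barotimes\tilde{\alpha}$, hence in $\tilde{\alpha}(A'')$, respectively $\alpha(\mult{A})$. It does not put it in $\alpha(A)$, let alone in $\alpha(A^\HG)$, where $A^\HG$ is by definition the closed span of slices of $p_1\alpha(a)p_1^*$ over all $a\in A$. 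For the same reason your derivation of (4) by ``slicing $\WInd_\HG A$'' does not close: identifying $\WInd_\HG A$ with $p_1\alpha(\{a\in\mult{A}\,|\,\alpha(a)\in\mult{\HG^rC^r_0(G)\otimes A}\})p_1^*$ says nothing a priori about that closed span. Without this step, (3) is not a conditional expectation onto a C*-subalgebra of $A$, and (2) and (4) remain unproved.

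The paper closes this gap by a direct C*-level computation rather than by descent: using $(\Delta_H\otimes\id)\alpha=(\id\otimes\alpha)\alpha$, the intertwining relations $p_2W^H=W^\HG p_2$ and $(p\otimes p)W^H=W^G(p\otimes p)$, weak continuity of $\alpha$, and crucially the fact that $\B(L^2(H))_*\restriction_{\HG^rL^\infty(G)}=\B(L^2(G))_*\restriction_{L^\infty(G)}$ (normality and injectivity of $\HG^r$, which lets one trade slices over $\K(H)^*$ for slices over $\K(G)^*$), one obtains the chain
\begin{align*}
\clin{p_1\alpha(A)p_1^*}
=\bigl[(\K(G)^*\otimes\id\otimes\id)\bigl((p\otimes p)_{12}\bigl((\id\otimes\alpha)\alpha(A)\bigr)(p\otimes p)_{12}^*\bigr)\bigr]
=\clin{p_1\alpha(A^\HG)p_1^*},
\end{align*}
and then removes the closures using that $p_1(-)p_1^*$ is isometric on $\mult{\HG^rC^r_0(G)\otimes A}$. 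Some quantitative computation of this kind is indispensable; as written, your proposal asserts its conclusion rather than proving it. If you wish to keep the descent framing, you would still need such a density argument to return from the von Neumann level to $A$ itself, plus a justification of the normal extension of $\alpha$ that \cref{lem:vNind} only provides in the special case $A=C^r_0(H)$.
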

	
	In the proof, we write $P=p^*p\in \PC\ZC L^\infty (\hat{H})=\PC\ZC (L^\infty (\hat{H})')$. 
	We recall from \cref{ssec:clopen} that 
	\begin{align*}
		&[W^H,P\otimes P]=0 ,\quad
		(p\otimes p)W^H=W^{G}(p\otimes p) ,\quad
		p_2W^H=W^{\HG}p_2 ,\quad
		\mathrm{and}\quad p\hat{J}^H=\hat{J}^Gp.
	\end{align*}
	It follows from that 
	$[V^H,P\otimes P]=0$, 
	$(p\otimes p)V^H=V^G(p\otimes p)$ and $p_1 V^H= V^{\HG}p_1$, 
	because $V^H=(\hat{J}^H\otimes\hat{J}^H)\hat{W}^H(\hat{J}^H\otimes\hat{J}^H)$ 
	and so are $V^G$ and $V^{\HG}$. 
	
	\begin{proof}
		First, we note 
		$A^\HG$ is a non-degenerate closed $*$-linear subspace of $A=\clin{(\K(H)^*\otimes \id)\alpha(A)}$ 
		by definition. 
		Since $p_1(-)p_1:C^r_0(H)\to C^r_0(G)\isom \HG^rC^r_0(G)$ is a conditional expectation, it holds that 
		$p_1\alpha(-)p_1^*:A\to \mult{C^r_0(G)\otimes A}$ is a well-defined non-degenerate c.c.p.~map. 
		We also have as maps from $A$ to 
		$\mult{\K(G)\otimes C^r_0(H)\otimes A}$ that 
		\begin{align}\label{prf:lem:propersurjind1}
			&
			\begin{aligned}
				&
				(\id\otimes \alpha)\circ(p_1\alpha(-)p_1^*)
				= p_1 V^{H}_{12} \alpha(-)_{13} V^{H}_{12}{}^* p_1^*
				= V^{\HG}_{12} p_1 \alpha(-)_{13} p_1^* V^{\HG}_{12}{}^* 
				\\&
				= (\id\otimes \HG^{r}\otimes \id)(\Delta_{G}\otimes \id)(p_1\alpha(-)p_1^*) . 
			\end{aligned}
		\end{align}
		It follows 
		$\alpha(A^\HG) 
		\subset \alpha(A)\cap 
		\mult{\HG^{r}C^r_0(G)\otimes A}$. 
		Next, we show $p_1\alpha(A)p_1^* \subset p_1\alpha(A^\HG)p_1^*$, which will imply 
		\begin{align}\label{prf:lem:propersurjind2}
			p_1\alpha(A^\HG)p_1^* 
			= p_1\left(\alpha(A)\cap \mult{\HG^{r}C^r_0(G)\otimes A}\right)p_1^* 
			= p_1\alpha(A)p_1^* . 
		\end{align}
		Since $\B(L^2(H))_*\restriction_{\HG^{r}L^{\infty}(G)} = \B(L^2(G))_*\restriction_{L^{\infty}(G)}$ 
		by the injectivity and normality of $\HG^{r}$, we get 
		\begin{align*}
			&
			\clin{p_1 \alpha(A) p_1^*}
			=\clin{(\K(H)^*\otimes \id\otimes\id)\left(
				p_2 W^{H}_{12}{}^* \alpha(A)_{23} W^{H}_{12} p_2^* \right)}
			\\&
			=\bigl[(\K(H)^*\otimes \id\otimes\id)\bigl(
				W^{\HG}_{12}{}^* p_2 \alpha(A)_{23} p_2^* W^{\HG}_{12} \bigr)\bigr]
			\\&
			=\clin{(\K(G)^*\otimes \id\otimes\id)\left(
				W^{G}_{12}{}^* p_2 \alpha(A)_{23} p_2^* W^{G}_{12} \right)}
			\\&
			=
			\clin{(\K(G)^*\otimes \id\otimes\id)\left(
				(p\otimes p)_{12} W^{H}_{12}{}^* \alpha(A)_{23} W^{H}_{12} (p\otimes p)_{12}^*\right)}
			\\&
			=\clin{(\K(G)^*\otimes \id\otimes\id)\bigl(
				(p\otimes p)_{12} \bigl((\id\otimes \alpha)\alpha(A)\bigr) (p\otimes p)_{12}^*\bigr)}
			= \clin{p_1\alpha(A^{\HG})p_1^*} . 
		\end{align*}
		Here we note 
		$p_1(-)p_1^*$ is an injective $*$-homomorphism on $\M(\HG^rC^r_0(G)\otimes A)$, which contains $\alpha(A^\HG)$. 
		Thus 
		$p_1\alpha(-)p_1$ is an isometry on $A^\HG$, 
		and we have 
		$\clin{p_1\alpha(A^{\HG})p_1^*}
		=p_1\alpha(A^{\HG})p_1^*$. 
		Therefore \cref{prf:lem:propersurjind2} follows. 
		By the second part of \cref{prf:lem:propersurjind2} 
		we see $A^\HG$ is actually a C*-algebra. 
		It also follows that $\alpha^\HG:=p_1\alpha(-)p_1$ 
		is a well-defined injective non-degenerate $*$-homomorphism of 
		$A^\HG\to\M(C^r_0(G)\otimes A)$. 
		Now it is easy to see (3) and (4). 
		
		We show (1). 
		By composing $p_2(-)p_2^*$ after \cref{prf:lem:propersurjind1}, 
		it follows $\alpha^\HG:A^{\HG} \to \mult{C^r_0(G)\otimes A^{\HG}}$ is a well-defined left $G$-coaction on $A^{\HG}$. 
		This is continuous in the weak sense since by using \cref{prf:lem:propersurjind2} we get 
		\begin{align*}
			&
			\clin{(\K(G)^*\otimes\id)(p_1\alpha(A^\HG)p_1^*)}
			=
			\clin{(\K(G)^*\otimes\id)(p_1\alpha(A)p_1^*)}
			=
			A^\HG . 
		\end{align*}
		
		Now it suffices to show that $p_1\alpha(A^{\HG})p_1^*$ satisfies \cref{def:ind} for $\Ind_{\HG}A$. 
		(i) and (ii) follow from \cref{prf:lem:propersurjind1} and (1). 
		We show (iii). 
		Since on $\WInd_{\HG} A$ we have 
		\begin{align}\label{prf:lem:propersurjind3}
			\Ad V^G_{12}(-)_{13} =p_2(\Ad V^{\HG}_{12}(-)_{13})p_2^* =p_2\bigl((\id\otimes \alpha)(-)\bigr)p_2^* , 
		\end{align}
		it holds 
		\begin{align*}
			&
			\bigl[(\K(G)^*\otimes \id)(\WInd_{\HG} A)\bigr]
			=
			\bigl[(\K(G)^*\otimes \K(G)^*\otimes \id)\bigl(
				V^G_{12}(\WInd_{\HG} A)_{13}V^G_{12}{}^* \bigr)\bigr]
			\\&=
			\bigl[(\K(G)^*\otimes \K(G)^*\otimes \id)\bigl(
				p_2\bigl((\id\otimes \alpha)\WInd_{\HG} A\bigr)p_2^* \bigr)\bigr]
			\\&\subset
			\clin{(\K(G)^*\otimes \id)\bigl(
				p_1 (\alpha\mult{A})p_1^* \bigr)}
			\subset\mult{A^\HG}, 
		\end{align*} 
		where for the last inclusion we have used the fact by (3) that 
		$p_1 \alpha(-)p_1^*:\mult{A}\to \mult{\alpha^\HG(A^\HG)}$ 
		is a well-defined c.p.~map which is strictly continuous on the unit ball. 
		It follows $\WInd_{\HG} A\subset \mult{\K(G)\otimes A^\HG}$ because 
		$A^\HG\subset A$ is a non-degenerate C*-subalgebra. 
		Again by \cref{prf:lem:propersurjind3}, 
		the map 
		$\Ad V^{G}_{12}(-)_{13}:\WInd_\HG A\to \M(\K(G)\otimes \alpha^\HG(A^\HG))$ 
		is well-defined with the desired continuity. 
	\end{proof}
	
	Now we have all pieces to show the existence of induced coactions along a proper homomorphism. 
	
	\begin{thm}\label{thm:properind}
		Let $\HG:H\to G$ be a proper homomorphism of regular locally compact quantum groups. 
		Then $\Ind_\HG A$ exists for any left $H$-C*-algebra $(A,\alpha)$. 
	\end{thm}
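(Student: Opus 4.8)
The plan is to build $\Ind_\HG A$ in two stages along the canonical factorisation of a proper homomorphism recorded just before this subsection, namely $\HG = \ran\HG\circ\bar{\HG}$ with $H\xrarr{\bar{\HG}}\Ran\HG\xrarr{\ran\HG}G$, where $\hat{\bar{\HG}}$ gives an open quantum subgroup of $\hat{H}$ and $\ran\HG$ gives a closed quantum subgroup of $G$, and then to glue the two stages together by means of induction in stages (\cref{thm:indstages}).

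First I would handle the dual-of-open stage. Since $\hat{\bar{\HG}}$ gives an open quantum subgroup and a left $H$-coaction is in particular continuous in the weak sense, \cref{lem:propersurjind} applies to $\bar{\HG}$ and produces $\Ind_{\bar{\HG}}A$, a non-degenerate C*-subalgebra of $\M(C^r_0(\Ran\HG)\otimes A)$ equipped with the left $\Ran\HG$-coaction $\Ind_{\bar{\HG}}\alpha$, continuous in the weak sense. As already noted in the discussion of the decomposition, $\Ran\HG$ is regular because $H$ is; hence weak continuity upgrades to strong continuity by~\cite[Proposition~5.8]{Baaj-Skandalis-Vaes:nonsemireg}, so $(\Ind_{\bar{\HG}}A,\Ind_{\bar{\HG}}\alpha)$ is genuinely a left $\Ran\HG$-C*-algebra.

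Next I would apply Vaes's construction to the closed-quantum-subgroup stage: since $\ran\HG:\Ran\HG\to G$ gives a closed quantum subgroup and both $\Ran\HG$ and $G$ are regular, the induced coaction $\Ind_{\ran\HG}(\Ind_{\bar{\HG}}A)$ exists and satisfies \cref{def:ind}, by~\cite[Theorem~7.2]{Vaes:impr} together with \cref{rem:strreg} (which observes that regularity alone suffices for the reduced-crossed-product versions of the results quoted at the start of \cref{sec:formal}). With $\Ind_{\bar{\HG}}A$ and $\Ind_{\ran\HG}(\Ind_{\bar{\HG}}A)$ both in hand, \cref{thm:indstages} applied to the pair of homomorphisms $\bar{\HG}$ and $\ran\HG$ yields the existence of $\Ind_{\ran\HG\circ\bar{\HG}}A = \Ind_\HG A$, which is the desired conclusion.

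The substantive work is entirely contained in \cref{lem:propersurjind} and \cref{thm:indstages}, so no serious obstacle remains at this point; the only points demanding care are the bookkeeping ones just used, namely that $\Ran\HG$ inherits regularity from $H$ (so that both partial inductions are defined and the hypotheses of Vaes's theorem and of \cref{thm:indstages} are met) and that the a priori only weakly continuous coaction $\Ind_{\bar{\HG}}\alpha$ is in fact strongly continuous (so that $\Ind_{\bar{\HG}}A$ is a valid input for the second stage). Once these are in place, the statement follows formally from the three cited results.
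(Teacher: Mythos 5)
Your proposal is correct and follows exactly the paper's own argument: decompose $\HG=(\ran\HG)\bar{\HG}$, apply \cref{lem:propersurjind} to the dual-of-open stage and \cite[Theorem~7.2]{Vaes:impr} (with \cref{rem:strreg}) to the closed-subgroup stage, and conclude via \cref{thm:indstages}. Your explicit remarks on the regularity of $\Ran\HG$ and the upgrade from weak to strong continuity are the same bookkeeping points the paper leaves implicit.
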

	
	\begin{proof}
		We decompose $\HG=(\ran\HG)\bar{\HG}$ as in the remark after \cref{def:proper}. 
		Note that $\Ran\HG$ is also regular. 
		Then by \cref{thm:indstages}, the statement is a direct consequence of~\cite[Theorem~7.2]{Vaes:impr} for $\ran\HG$ 
		and \cref{lem:propersurjind} for $\bar{\HG}$. 
	\end{proof}
	
	Next, we record miscellaneous properties of induced coactions along 
	a proper homomorphism $\HG:H\to G$ of regular locally compact groups, 
	which will be used later. 
	Again we decompose $\HG=(\ran\HG)\bar{\HG}$. 
	We recall that $I^{\ran\HG}$ is the 
	$([C^r_0(\hat{G})C^r_0(G/\Ran\HG)],C^r_0(\hat{\Ran\HG}))$-imprimitivity bimodule 
	as in the beginning of \cref{sec:formal}. 
	
	\begin{cor}\label{cor:properind}
		Let $\HG:H\to G$ be a proper homomorphism of regular locally compact quantum groups, 
		$(A,\alpha)$ be a left $H$-C*-algebra, 
		and $B$ be a C*-algebra (with a trivial coaction). 
		When we put $p:=p^{\hat{\bar{\HG}}}$ for the coisometry associated with $\hat{\bar{\HG}}$, 
		the followings hold as subsets of $\mult{\K(G)\otimes A}$ or $\mult{\K(G)\otimes A\otimes B}$. 
		\begin{enumerate}
			\item
			$\Ind_{\HG}A
			=\Ind_{\ran \HG}A^{\bar{\HG}}
			=\Ind_{\HG}\bar{\HG}^* A^{\bar{\HG}}$. 
			\item
			$\clin{C^r_0(G)_1 \Ind_\HG A}= C^r_0(G)\otimes A^{\bar{\HG}}$. 
			\item
			$\bigl[C^r_0(\hat{G})_1 \Ind_\HG A\bigr]
			=\bigl[I^{\ran\HG}_1p_1 \alpha(A) p_1^* I^{\ran\HG}_1{}^*\bigr]$. 
			\item
			$\Ind_{\HG}(A\otimes B)=(\Ind_{\HG}A)\otimes B$. 
		\end{enumerate}
	\end{cor}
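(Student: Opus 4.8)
The plan is to run everything through the canonical factorisation $\HG=(\ran\HG)\bar\HG$ from the remark after \cref{def:proper}, in which $\bar\HG:H\to\Ran\HG$ has $\hat{\bar\HG}$ giving an open quantum subgroup, $\ran\HG:\Ran\HG\to G$ gives a closed quantum subgroup, and $\Ran\HG$ is again regular. By \cref{lem:propersurjind}(2) we have $\Ind_{\bar\HG}A=p_1\alpha(A)p_1^*=\alpha^{\bar\HG}(A^{\bar\HG})$, and $\alpha^{\bar\HG}$ is a left $\Ran\HG$-$*$-isomorphism from $(A^{\bar\HG},\alpha^{\bar\HG})$ onto $\Ind_{\bar\HG}A$ equipped with $\Delta_{\Ran\HG}\otimes\id$. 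Combining \cref{thm:indstages} (induction in stages for $\bar\HG$ followed by $\ran\HG$) with \cref{rem:indmulthom} applied to $\alpha^{\bar\HG}$ and to $(\alpha^{\bar\HG})^{-1}$ yields an abstract left $G$-$*$-isomorphism $\Ind_\HG A\isom\Ind_{\ran\HG}A^{\bar\HG}$; the work in (1) is to see that, inside $\mult{\K(G)\otimes A}$, this is a genuine equality of C*-subalgebras. Granting (1), parts (2)--(4) will reduce to standard facts about $\Ind_{\ran\HG}$ for the closed quantum subgroup $\ran\HG$, all available from~\cite{Vaes:impr}.

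For the first equality in (1): $\Ind_{\ran\HG}A^{\bar\HG}$ is a C*-subalgebra of $\mult{C^r_0(G)\otimes A^{\bar\HG}}\subset\mult{\K(G)\otimes A}$, non-degenerate in $\mult{\K(G)\otimes A}$ by \cref{rem:indmulthom} together with $\clin{A^{\bar\HG}A}=A$. By the uniqueness statement in \cref{rem:indunique} it suffices to verify that $\Ind_{\ran\HG}A^{\bar\HG}$ satisfies conditions (i)--(iii) of \cref{def:ind} for $\Ind_\HG A$. Since the quantum group $G$ and the formula $\Ad V^G_{12}(-)_{13}$ are unchanged, conditions (ii) and (iii) are inherited verbatim from the fact that $\Ind_{\ran\HG}A^{\bar\HG}$ is an induced coaction along $\ran\HG$, once one knows that $\WInd_{\ran\HG}A^{\bar\HG}=\WInd_\HG A$ as subsets of $\mult{\K(G)\otimes A}$; this identity (which also gives (i)) is the technical heart of the whole corollary. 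Unwinding the definitions, it amounts to showing, for $x\in(C^r_0(G)'\otimes 1_A)'$, the equivalence of ``$x\in\mult{\K(G)\otimes A}$ and $\Ad V^\HG_{12}(x_{13})=(\id\otimes\alpha)(x)$'' with ``$x\in\mult{\K(G)\otimes A^{\bar\HG}}$ and $\Ad V^{\ran\HG}_{12}(x_{13})=(\id\otimes\alpha^{\bar\HG})(x)=p_2(\id\otimes\alpha)(x)p_2^*$''. This is a leg-numbering computation: one combines the cocycle relation among $V^\HG$, $V^{\bar\HG}$, $V^{\ran\HG}$ (the one already used in the proof of \cref{thm:indstages}) with the compatibilities of $p$ with $V^H$, $V^{\Ran\HG}$ and $V^{\bar\HG}$ recorded before the proof of \cref{lem:propersurjind}, and with the description $A^{\bar\HG}=\{a\in A\mid\alpha(a)\in\mult{\bar\HG^rC^r_0(\Ran\HG)\otimes A}\}$ from \cref{lem:propersurjind}(4). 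For the second equality in (1) one applies the first equality to the left $H$-C*-algebra $\bar\HG^*A^{\bar\HG}$; here one checks $(\bar\HG^*A^{\bar\HG})^{\bar\HG}=A^{\bar\HG}$ as left $\Ran\HG$-C*-algebras using \cref{lem:propersurjind}(3)--(4): for $a\in A^{\bar\HG}$ one has $(\bar\HG^*\alpha^{\bar\HG})(a)=(\bar\HG^r\otimes\id)(\alpha^{\bar\HG}(a))\in\mult{\bar\HG^rC^r_0(\Ran\HG)\otimes A^{\bar\HG}}$, so $a\in(\bar\HG^*A^{\bar\HG})^{\bar\HG}$, and since $pp^*=1$ the compression $p_1(\bar\HG^r\otimes\id)(-)p_1^*$ is the identity on $\alpha^{\bar\HG}(A^{\bar\HG})$, so the two $\Ran\HG$-coactions agree.

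For (2)--(4) I would use (1) freely. For (2): $\clin{C^r_0(G)_1\Ind_\HG A}=\clin{C^r_0(G)_1\Ind_{\ran\HG}A^{\bar\HG}}=C^r_0(G)\otimes A^{\bar\HG}$, the last equality being the corresponding density for induced coactions along closed quantum subgroups from~\cite{Vaes:impr}. For (3): by the imprimitivity bimodule for $\ran\HG$ recalled at the start of \cref{sec:formal}, $\clin{C^r_0(\hat G)_1\Ind_{\ran\HG}A^{\bar\HG}}$ equals $\clin{I^{\ran\HG}_1\alpha^{\bar\HG}(A^{\bar\HG})(I^{\ran\HG}_1\alpha^{\bar\HG}(A^{\bar\HG}))^*}$, which equals $\clin{I^{\ran\HG}_1\alpha^{\bar\HG}(A^{\bar\HG})I^{\ran\HG}_1{}^*}$ because $\alpha^{\bar\HG}(A^{\bar\HG})$ is a C*-algebra; now substitute $\alpha^{\bar\HG}(A^{\bar\HG})=p_1\alpha(A)p_1^*$ from \cref{lem:propersurjind}(2). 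For (4): applying (3) to the left $H$-C*-algebra $(A\otimes B,\alpha\otimes\id)$ and noting that $p_1$, $I^{\ran\HG}_1$ and the coaction leave the $B$-leg untouched gives $\clin{C^r_0(\hat G)_1\Ind_\HG(A\otimes B)}=\clin{C^r_0(\hat G)_1\Ind_\HG A}\otimes B=\clin{C^r_0(\hat G)_1((\Ind_\HG A)\otimes B)}$; since the associated reduced crossed products $G\redltimes\Ind_\HG(A\otimes B)$ and $G\redltimes((\Ind_\HG A)\otimes B)$ thereby coincide together with their canonical dual $\hat G$-coactions (the formula $\Ad\hat{V}^G_{13}(-)_{12}$ for the dual coaction does not involve the algebra), the quantum version of Landstad's theorem~\cite[Theorem~6.7]{Vaes:impr} forces $\Ind_\HG(A\otimes B)=(\Ind_\HG A)\otimes B$.

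The step I expect to be the real obstacle is the equality $\WInd_{\ran\HG}A^{\bar\HG}=\WInd_\HG A$ in part (1): promoting the conceptual picture ``induce in stages, then restrict back from $\Ran\HG$'' to an on-the-nose equality of operator algebras is precisely where one must pin down the interaction of the three unitaries $V^\HG,V^{\bar\HG},V^{\ran\HG}$ with the coisometry $p=p^{\hat{\bar\HG}}$ and its support projection $P=p^*p$; everything else is bookkeeping on top of \cref{thm:indstages}, \cref{lem:propersurjind}, and the closed-quantum-subgroup theory of~\cite{Vaes:impr}.
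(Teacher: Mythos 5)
Your decomposition $\HG=(\ran\HG)\bar\HG$ and the reduction of (2)--(4) to the closed-quantum-subgroup case is exactly the paper's strategy, and your treatment of (3) matches the paper's. The gap is in (1). You correctly isolate the crux as comparing $\Ind_\HG A$ with $\Ind_{\ran\HG}A^{\bar\HG}$ inside $\M(\K(G)\otimes A)$, but the identity you propose to establish for this, $\WInd_{\ran\HG}A^{\bar\HG}=\WInd_\HG A$, is neither proved in your text nor actually needed. The inclusion $\WInd_\HG A\subset\M(\K(G)\otimes A^{\bar\HG})$ is not a leg-numbering computation: it requires the averaging argument from the last part of the proof of \cref{lem:propersurjind} (slicing the relation $\Ad V^{G}_{12}(-)_{13}=p_2\bigl((\id\otimes\alpha)(-)\bigr)p_2^*$ so as to land in $\M(A^{\bar\HG})$), transported from the first leg $\K(\Ran\HG)$ to $\K(G)$ --- in effect you would be re-proving the substance of \cref{thm:indstages} and \cref{lem:propersurjind} rather than citing them. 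The paper sidesteps all of this: \cref{thm:indstages} already supplies the concrete $*$-isomorphism $\Ad V^{\ran\HG}_{12}(-)_{13}:\Ind_\HG A\xrarr{\sim}\Ind_{\ran\HG}(\Ind_{\bar\HG}A)$, \cref{lem:propersurjind} gives $\Ind_{\bar\HG}A=\alpha^{\bar\HG}(A^{\bar\HG})$, and the defining relation of $\WInd_{\ran\HG}A^{\bar\HG}$ gives $(\id\otimes\alpha^{\bar\HG})(x)=\Ad V^{\ran\HG}_{12}(x_{13})$, whence $\Ind_{\ran\HG}\alpha^{\bar\HG}(A^{\bar\HG})=\Ad V^{\ran\HG}_{12}(\Ind_{\ran\HG}A^{\bar\HG})_{13}$; the equality of the two subsets of $\M(\K(G)\otimes A)$ then follows from injectivity of $\Ad V^{\ran\HG}_{12}(-)_{13}$, with no new analysis and no discussion of $\WInd$ at all.

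Two smaller points. In (2) you invoke ``the corresponding density for induced coactions along closed quantum subgroups'' from~\cite{Vaes:impr}; the statement actually needed, $\clin{(\K(G)^*\otimes\id)\Ind_{\ran\HG}B}=B$, is not simply quotable, and the paper derives it from the imprimitivity bimodule via \cref{eq:recallclosedind1}, namely $\clin{(\K(G)^*\otimes\id)(I^{\ran\HG}_1\beta(B)I^{\ran\HG}_1{}^*)}=\clin{(\K(\Ran\HG)^*\otimes\id)\beta(B)}=B$; your proposal should supply this step. In (4) your appeal to Landstad's theorem would require verifying the Landstad conditions for $(\Ind_\HG A)\otimes B$ relative to the common crossed product, which you do not address; the paper instead recovers both algebras from the equality of (3) by applying the single slice map $\bigl[(\B(L^2(\hat{G}))_*\otimes\id\otimes\id\otimes\id)\bigl(\Ad V^G_{12}(-)_{134}\bigr)\bigr]$, which uses only the weak continuity (condition (ii) of \cref{def:ind}) that both sides manifestly satisfy.
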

	
	\begin{proof}
		As for (1), 
		by \cref{thm:indstages} and \cref{lem:propersurjind} we have 
		\begin{align*}
			&
			\Ad V^{\ran \HG}_{12}(\Ind_{\HG}A)_{13}
			=\Ind_{\ran \HG}\Ind_{\bar{\HG}}A 
			= \Ind_{\ran \HG}\alpha^{\bar{\HG}}(A^{\bar{\HG}})
			=\Ad V^{\ran \HG}_{12}(\Ind_{\ran \HG}A^{\bar{\HG}})_{13} , 
		\end{align*}
		and it follows $\Ind_{\HG}A
		=\Ind_{\ran \HG}A^{\bar{\HG}}$. 
		Thus $\Ind_{\HG}\bar{\HG}^* A^{\bar{\HG}} = \Ind_{\ran \HG} (\bar{\HG}^* A^{\bar{\HG}})^{\bar{\HG}} = \Ind_{\ran \HG} A^{\bar{\HG}}$
		by (4) of \cref{lem:propersurjind}. 
		
		Next, we show (2). 
		By (1) and regularity of $V^G$, we have 
		\begin{align*}
			&
			\clin{C^r_0(G)_1 \Ind_\HG A} 
			= \bigl[C^r_0(G)_1 \Ind_{\ran \HG}A^{\bar{\HG}}\bigr] 
			\\&
			= \bigl[(\K(G)^*\otimes \id\otimes\id)\bigl(
				C^r_0(G)_2 V^G_{12} (\Ind_{\ran \HG}A^{\bar{\HG}})_{13} V^G_{12}{}^* \bigr)\bigr] 
			\\&
			= \bigl[(\K(G)^*\otimes \id\otimes\id)\bigl(
				C^r_0(G)_2 (\Ind_{\ran \HG}A^{\bar{\HG}})_{13} \bigr) \bigr]
			\\&
			= \bigl[ C^r_0(G) \odot \bigl((\K(G)^*\otimes \id)\Ind_{\ran \HG}A^{\bar{\HG}}\bigr)\bigr] . 
		\end{align*}
		
		Thus it is enough to show 
		$\clin{(\K(G)^*\otimes \id_A)\Ind_{\HG}A}
		=A$ when $\HG$ gives a closed quantum subgroup. 
		Here we note that when $\HG$ gives a closed quantum subgroup, 
		$\Ran \HG$ is canonically identified with $H$ itself and $A^{\bar{\HG}}=A^{\id_H}=A$. 
		By using \cref{eq:recallclosedind1}, we see 
		\begin{align*}
			&
			\clin{(\K(G)^*\otimes \id) 
				\Ind_{\HG} A }
			=
			\bigl[(\K(G)^*\otimes \id) 
				(C^r_0(\hat{G})_1 \Ind_{\HG} A) \bigr]
			\\&=
			\bigl[(\K(G)^*\otimes \id) 
				\bigl(I^{\HG}_1\alpha(A) I^{\HG}_1{}^* \bigr)\bigr]
			=
			\clin{(\K(H)^*\otimes \id) \alpha(A)}
			=A . 
		\end{align*}
		
		(3) holds since 
		\begin{align*}
			&
			\bigl[C^r_0(\hat{G})_1 \Ind_\HG A\bigr]
			=\bigl[C^r_0(\hat{G})_1 \Ind_{\ran\HG} A^\HG\bigr]
			=\bigl[I^\HG_1 p_1 \alpha(A^\HG) p_1^* I^\HG_1{}^*\bigr]
			=\bigl[I^\HG_1 p_1 \alpha(A) p_1^* I^\HG_1{}^*\bigr] . 
		\end{align*}
		
		Now it is easy to see (4) by taking 
		$\bigl[(\B(L^2(\hat{G}))_*\otimes\id\otimes\id\otimes\id)\bigl(\Ad V^G_{12}(-)_{134}\bigr)\bigr]$ to 
		the following equality assured by (3), 
		$\bigl[C^r_0(\hat{G})_1\Ind_\HG (A\otimes B)\bigr]
		=\bigl[C^r_0(\hat{G})_1\Ind_\HG A\bigr]\otimes B$. 
	\end{proof}
	
	\subsection{The compact case}\label{ssec:cptind}
	
	We give a description of induced coactions along a homomorphism $\HG:K\to G$ from a compact quantum group $K=H$ to a locally compact quantum group $G$ by using cotensor products. 
	Although the content of this section might be well-known to the experts, we contain it since we could not find appropriate references. 
	First, we fix some notation about compact quantum groups. 
	For details, see~\cite{Neshveyev-Tuset:book} for example. 
	Then we introduce cotensor products of C*-algebras by following~\cite{Podles:quantumsymmetry},~\cite{deRijdt-vanderVennet:moneq},~\cite{Voigt:bcfo}. 
	Especially, we closely follow the notation from Section~2 of~\cite{Voigt:bcfo}. 
	
	Consider a compact quantum group $K$ 
	with its faithful Haar state $h$ on $C^r_0(K)$. 
	A pair $\rho=(\HC^\rho,u^\rho)$ of a finite dimensional Hilbert space $\HC^\rho$
	and a unitary matrix $u^\rho\in \U(\K(\HC^\rho)\otimes C^r_0(K))$ 
	is called a finite dimensional unitary (right) representation of $K$ if 
	when we express $u^\rho=(u^\rho_{i,j})_{i,j}$ with respect to some orthonormal basis of $\HC^\pi$, it holds 
	$\Delta_K(u^\rho_{i,j})
	=\sum\limits_{k=1}^{\dim\HC^\rho}u^\rho_{i,k}u^\rho_{k,j}$ for all $i,j$. 
	We write $\Rep(K)$ for the C*-tensor category of finite dimensional unitary representations, 
	$\Irr(K)\subset \Rep(K)$ for the set of representatives of isomorphism classes of irreducible objects, 
	and $\1=\1_K\in \Irr(K)$ for the trivial $1$-dimensional representation of $K$. 
	Any $\rho\in\Rep(K)$ can be written as a direct sum of elements in $\Irr(K)$, and for $\pi\in\Irr(K)$ 
	we write $\pi\leq \rho$ if $\rho$ contains $\pi$ as a direct summand. 
	For each $\pi\in\Irr(K)$, we define the positive definite matrix $F^\pi=(F^\pi_{i,j})_{i,j}\in\B(\HC^\pi)$
	as the unique invertible intertwiner from the representation $\pi$ to its double contragredient representation $\pi^{cc}$ (which need not be unitary) 
	with $\mathrm{tr}F^\pi=\mathrm{tr}((F^\pi)^{-1})$. 
	Then the quantum dimension of $\pi$ is denoted by $\dim_q \pi:=\mathrm{tr}F^\pi$. 
	As for $F^\pi$, all we need here is the following orthogonality relation. 
	For all $\pi,\varpi\in\Irr(K)$, $1\leq i, j \leq\dim\HC^\pi$, and $1\leq k,l \leq\dim\HC^\varpi $, it holds 
	\begin{align*}
		h(u^\pi_{i,j} u^\varpi_{k,l}{}^*)
		&=
		(\dim_q\pi)^{-1}\delta_{\pi,\varpi} \delta_{i,k} F^\pi_{l,j} . 
	\end{align*} 
	Hereafter for each $\pi\in\Irr(K)$, we choose an orthonormal basis 
	$(e^\pi_i)_i$ of $\HC^\pi$ such that it diagonalizes $(F^\pi_{i,j})_{i,j}$. 
	We define $\OC(K):=\bigoplus\limits_{\pi\in\Irr(K)}\OC(K)_{\pi}\subset C^r_0(K)$, 
	where 
	$\OC(K)_\rho:=(\B(\HC^\rho)_* \otimes \id)(u^\rho)$ for each $\rho\in\Rep(K)$. 
	It is a consequence of Peter--Weyl theorem 
	that $\OC(K)$ is a norm-dense Hopf $*$-subalgebra of $C^r_0(K)$ 
	and 
	$(u^\pi_{i,j}\,|\,\pi\in\Irr(K),1\leq i,j\leq\dim\HC^\pi)$ forms its linear basis. 
	
	Fix a right $K$-C*-algebra $(A,\alpha)$. 
	Its spectral subspace of each $\rho\in \Rep(K)$ is denoted by 
	$A_\rho:=\{a\in A \,|\, \alpha(a)\in A\odot\OC(K)_\rho \}$. 
	For $a\in A_\rho$, we take the elements $a_{i,j}^\pi\in A$ for 
	$\pi\in \Irr(K)$ with $\pi\leq \rho$ and $1\leq i,j\leq\dim\HC^\pi$ 
	such that 
	$\alpha(a) = \sum\limits_{\rho\geq \pi\in\Irr(K)} \sum\limits_{i,j=1}^{\dim\HC^\pi}a^\pi_{i,j}\otimes u^\pi_{j,i}$. 
	We observe 
	\begin{align*}
		\sum\limits_{\rho\geq \pi\in\Irr(K)}
		\sum\limits_{i,j=1}^{\dim\HC^\pi}\alpha(a^\pi_{i,j})\otimes u^{\pi}_{j,i}
		=\sum\limits_{\rho\geq \pi\in\Irr(K)}
		\sum\limits_{i,j,k=1}^{\dim\HC^\pi}a^\pi_{i,k}\otimes u^{\pi}_{k,j} \otimes u^{\pi}_{j,i} . 
	\end{align*}
	Then we see that 
	$\alpha(a^\pi_{i,j})=\sum\limits_{k=1}^{\dim\HC^\pi}a^\pi_{i,k}\otimes u^\pi_{k,j}$, 
	and 
	$a=\sum\limits_{\rho\geq \pi\in\Irr(K)} \sum\limits_{i=1}^{\dim\HC^\pi}a^\pi_{i,i}$, 
	by the linear independence of $\{u^\pi_{i,j}\}_{\pi,i,j}$ and the injectivity of $\alpha$. 
	Especially $a^\pi_{i,j}\in A_\pi$. 
	
	It holds that $(\id_A\otimes h)\alpha:A\twoheadrightarrow A_{\1}$ is a conditional expectation onto the non-degenerate C*-subalgebra $A_{\1}\subset A$. 
	More generally for any $\rho\in\Rep(G)$, 
	a calculation using $a^\pi_{i,j}$ shows that 
	the continuous $A_{\1}$-bimodule map defined by 
	\begin{align}\label{eq:coten1}
		&
		p^A_\rho:A\ni a\mapsto 
		\sum\limits_{\rho\geq\pi\in\Irr(K)}
		\sum\limits_{i=1}^{\dim\HC^\pi} \frac{\dim_q\pi}{F^\pi_{i,i}}
		(\id_A \otimes h)(\alpha(a)(1_A\otimes u^{\pi}_{i,i}{}^*)) 
		\in A, 
	\end{align}
	satisfies $p^A_\rho\restriction_{A_\rho}=\id_{A_\rho}$ and $p^A_\rho(A)=A_\rho$. 
	It turns out that the $*$-subalgebra 
	$\bigoplus\limits_{\pi\in\Irr(K)}A_\pi \subset A$ is dense 
	since $\clin{(\id\otimes h)(\alpha(A)\OC(K)_2)}=A$. 
	
	Similar things hold for a left $K$-C*-algebra $(B,\beta)$. 
	For $\rho\in\Rep(K)$, we let 
	${}_\rho B:=\{b\in B \,|\, \beta(b)\in \OC(K)_\rho\odot B \}$. 
	For $b\in {}_\rho B$, 
	the elements $b^\pi_{i,j}\in {}_\pi B$ with 
	$\beta(b) = \sum\limits_{\rho\geq \pi\in\Irr(K)}
	\sum\limits_{i,j=1}^{\dim\HC^\pi}u^{\pi}_{j,i}\otimes b^\pi_{i,j}$, 
	also satisfy 
	$\beta(b^\pi_{i,j})=\sum\limits_{k=1}^{\dim\HC^\pi}u^\pi_{i,k}\otimes b^\pi_{k,j}$ and 
	$b=\sum\limits_{\rho\geq \pi\in\Irr(K)} \sum\limits_{i=1}^{\dim\HC^\pi}b^\pi_{i,i}$. 
	The $*$-subalgebra 
	$\bigoplus\limits_{\pi\in\Irr(K)}{}_\pi B \subset B$ is dense. 
	
	Now we define the cotensor product by 
	$A\cotensor{K}B 
	:= 
	\Bigl[\bigoplus\limits_{\pi\in\Irr(K)}A\cotensor{\pi}B\Bigr]
	\subset A\otimes B$, 
	where we let 
	\begin{align*}
		&
		A\cotensor{\rho}B:= 
		\bigl\{ x\in A\otimes B \,\bigl|\, (\alpha\otimes\id_B)(x)=(\id_A\otimes\beta)(x)\in \bigl((A\otimes B)\odot \OC(K)_\rho\bigr)_{132} \bigr\}, 
	\end{align*}
	for each $\rho\in\Rep(K)$. By a similar calculation as that for $p^A_\rho$, we can show the following lemma. 
	\begin{lem}\label{lem:cotenproj}
		Let $K$ be a compact quantum group, 
		$(A,\alpha)$ be a right $K$-C*-algebra, $(B,\beta)$ be a left $K$-C*-algebra, and $\rho\in \Rep(K)$. 
		For any $x\in A\otimes B$, we define 
		$r^{A,B}_\rho(x) =r_\rho(x)\in A\otimes B$ 
		by 
		\begin{align*}
			&
			r_{\rho}(x):= 
			\sum\limits_{\rho\geq\pi\in\Irr(K)}
			\sum\limits_{i=1}^{\dim\HC^\pi}\frac{\dim_q \pi}{F^\pi_{i,i}}
			(\id_A\otimes h\otimes h\otimes \id_B)
			\Bigl(\bigl((\alpha\otimes\beta)(x)\bigr) (1_A\otimes \Delta_K(u^\pi_{i,i})^* \otimes 1_B) \Bigr). 
		\end{align*} 
		Then $r_\rho
		:A\otimes B\to A\cotensor{\rho}B$ 
		is a well-defined continuous surjective idempotent $A_{\1} \otimes {}_{\1} B$-bimodule map. 
	\end{lem}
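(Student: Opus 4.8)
The plan is to reduce to the case $\rho=\pi\in\Irr(K)$ and then to carry out a direct computation parallel to the one sketched for $p^A_\rho$ in \eqref{eq:coten1}, the orthogonality relations for $h$ supplying essentially all of its content. Write $r_\rho=\sum_{\rho\geq\pi\in\Irr(K)}r_\pi$, where $r_\pi$ denotes the map given by the displayed formula with the outer sum restricted to $\pi$. Several points are immediate: $\sum_{\pi\leq\rho}A\cotensor{\pi}B\subseteq A\cotensor{\rho}B$, since $\OC(K)_\rho=\sum_{\pi\leq\rho}\OC(K)_\pi$; the subspace $A\cotensor{\rho}B$ is norm-closed in $A\otimes B$, both defining conditions being closed (the second because $\OC(K)_\rho$ is finite-dimensional and the $u^\pi_{i,j}$ are linearly independent); and $r_\rho$ is a finite sum of maps, each obtained by applying the $*$-homomorphism $\alpha\otimes\beta$, multiplying by a fixed multiplier, and applying a slice map $\id_A\otimes h\otimes h\otimes\id_B$, hence a norm-bounded map $A\otimes B\to\mult{A\otimes B}$. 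On the dense $*$-subalgebra $\bigoplus_{\sigma,\tau}A_\sigma\odot{}_\tau B$ the strong continuity of $\alpha$ and $\beta$ forces its output into $A\otimes B$, so by boundedness $r_\rho$ is a bounded map $A\otimes B\to A\otimes B$; and since $\alpha(c)=c\otimes1$, $\beta(d)=1\otimes d$ for $c\in A_{\1}$ and $d\in{}_{\1}B$, while a slice map is a bimodule map over the legs it does not act on, $r_\rho$ is an $A_{\1}\otimes{}_{\1}B$-bimodule map. It therefore remains to prove (A) $r_\pi(A\otimes B)\subseteq A\cotensor{\pi}B$ for each $\pi\in\Irr(K)$, and (B) $r_\rho(x)=x$ for $x\in A\cotensor{\rho}B$: then (A) gives $r_\rho(A\otimes B)\subseteq\sum_{\pi\leq\rho}A\cotensor{\pi}B\subseteq A\cotensor{\rho}B$, (A) and (B) give idempotence, and (B) gives $A\cotensor{\rho}B=r_\rho(A\cotensor{\rho}B)\subseteq r_\rho(A\otimes B)$, i.e.~surjectivity.

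For (A) I would evaluate $r_\pi$ on a homogeneous elementary tensor $a\otimes b$ with $a\in A_\sigma$ and $b\in{}_\tau B$. Inserting the spectral expansions of $\alpha(a)$ and $\beta(b)$ together with $\Delta_K(u^\pi_{m,m})^*=\sum_n u^\pi_{m,n}{}^*\otimes u^\pi_{n,m}{}^*$ into the formula, applying $\id_A\otimes h\otimes h\otimes\id_B$, and using the orthogonality relation for $h$ twice, the whole expression vanishes unless $\sigma=\tau=\pi$; and for $\sigma=\tau=\pi$ it collapses — using that the chosen basis diagonalizes $F^\pi$ and that $\mathrm{tr}\,F^\pi=\dim_q\pi$ — to $(\dim_q\pi)^{-1}\sum_{i,m}F^\pi_{i,i}\,a^\pi_{i,m}\otimes b^\pi_{m,i}$. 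From the identities $\alpha(a^\pi_{i,m})=\sum_k a^\pi_{i,k}\otimes u^\pi_{k,m}$ and $\beta(b^\pi_{m,i})=\sum_k u^\pi_{m,k}\otimes b^\pi_{k,i}$ one checks directly that $\alpha\otimes\id_B$ and $\id_A\otimes\beta$ agree on this element and place it in $\bigl((A\otimes B)\odot\OC(K)_\pi\bigr)_{132}$, so $r_\pi(a\otimes b)\in A\cotensor{\pi}B$. Since such tensors span a dense subspace, $r_\pi$ is norm-continuous, and $A\cotensor{\pi}B$ is norm-closed, (A) follows.

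For (B), fix $x\in A\cotensor{\rho}B$ and put $z:=(\alpha\otimes\id_B)(x)=(\id_A\otimes\beta)(x)$. Combining this defining relation with the right coaction identity $(\alpha\otimes\id_K)\alpha=(\id_A\otimes\Delta_K)\alpha$ yields the key identity $(\alpha\otimes\beta)(x)=(\id_A\otimes\Delta_K\otimes\id_B)(z)$. Since $A\otimes B$ is naturally a right $K$-C*-algebra via $\alpha$ and $x$ lies in its $\rho$-spectral subspace, the reconstruction recalled before the lemma gives $z=\sum_{\rho\geq\varpi\in\Irr(K)}\sum_{k,l}(x^\varpi_{k,l})_{13}(u^\varpi_{l,k})_2$ with $x=\sum_{\rho\geq\varpi\in\Irr(K)}\sum_m x^\varpi_{m,m}$ in $A\otimes B$. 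Substituting $(\id_A\otimes\Delta_K\otimes\id_B)(z)$ into the formula for $r_\rho$ and applying the orthogonality relations as in (A), the summand indexed by $\pi\leq\rho$ annihilates all contributions with $\varpi\neq\pi$ and reduces to $\sum_m x^\pi_{m,m}$; hence $r_\rho(x)=\sum_{\rho\geq\pi\in\Irr(K)}\sum_m x^\pi_{m,m}=x$.

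The hard part will be bookkeeping rather than ideas: keeping the leg-numbering in $\alpha\otimes\beta$, $\Delta_K$ and $(-)_{132}$ consistent, and threading the indices through the two applications of the orthogonality relation in (A) and (B). On the analytic side the single delicate point is that $r_\rho$ really maps $A\otimes B$ into itself — which is where the strong continuity of $\alpha$ and $\beta$ is used, and which is what makes the density argument closing (A) legitimate. The two structural inputs that make (B) work are the identity $(\alpha\otimes\beta)(x)=(\id_A\otimes\Delta_K\otimes\id_B)(\alpha\otimes\id_B)(x)$ valid on the cotensor product, and the observation that $A\cotensor{\rho}B$ sits inside a spectral subspace of $A\otimes B$, so that the expansion $x=\sum_{\varpi}\sum_m x^\varpi_{m,m}$ is available.
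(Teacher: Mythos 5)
Your proposal is correct and follows essentially the same route as the paper: the identity $(\alpha\otimes\beta)(x)=(\id_A\otimes\Delta_K\otimes\id_B)(\alpha\otimes\id_B)(x)$ on the cotensor product for idempotence, the orthogonality relations evaluated on homogeneous tensors $A_\sigma\odot{}_\tau B$ (yielding the same element $\sum_{i,j}\frac{F^\pi_{j,j}}{\dim_q\pi}a^\pi_{j,i}\otimes b^\pi_{i,j}$) for the range statement, and density plus norm-continuity to conclude. The only cosmetic difference is that the paper handles $r_\rho\restriction_{A\cotensor{\rho}B}=\id$ by identifying $r_\rho$ there with the already-constructed spectral projection $p^{A\otimes 1^*B}_\rho$ via $h=(h\otimes h)\Delta_K$, whereas you redo the orthogonality computation directly; these are the same argument.
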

	
	\begin{proof}
		It is clear that $r_\rho : A\otimes B\to A\otimes B$ is a well-defined continuous $A_{\1} \otimes {}_{\1} B$-bimodule map. 
		It holds $r_\rho=\id$ on $A\cotensor{\rho}B$, because 
		for any $x\in A\cotensor{\rho}B$ we have 
		$(\alpha\otimes \beta)(x)
		= (\id\otimes\Delta_K\otimes\id)(\alpha\otimes\id)(x)$ 
		and thus 
		\begin{align*}
			&
			r_\rho(x)=\sum\limits_{\rho\geq\pi\in\Irr(K)}
			\sum\limits_{i=1}^{\dim\HC^\pi}\frac{\dim_q \pi}{F^\pi _{i,i}}
			\bigl(\id\otimes ((h\otimes h)\Delta_K)\otimes \id\bigr)
			\Bigl(\bigl((\alpha\otimes\id)(x)\bigr) (1\otimes u^\pi_{i,i}{^*} \otimes 1) \Bigr)
			\\&
			= p^{A\otimes 1^*B}_\rho (x)
			= x , 
		\end{align*}
		where $A\otimes 1^*B$ is the right $K$-C*-algebra $(A\otimes B,\sigma_{12}(\id\otimes\alpha)\sigma)$. 
		For all $\pi, \varpi\in \Irr(G)$ with either $\pi\neq \varpi$ or $\pi\not\leq\rho$, 
		we see $r_\rho(A_{\pi}\otimes {}_{\varpi} B)=0$. 
		For any $a\in A_\rho$ and $b\in {}_\rho B$, by using $a^\pi_{i,j}$ and $b^\pi_{i,j}$ we calculate 
		\begin{align*}
			& r_\rho (a\otimes b) 
			= \sum\limits_{\rho\geq \pi\in\Irr(K)} \sum\limits_{i,j,k,l,m,n=1}^{\dim\HC^\pi}\frac{\dim_q \pi}{F^\pi _{i,i}}
			(\id\otimes h\otimes h\otimes \id)
			\Bigl((a^\pi_{l,k}\otimes u^\pi_{k,l}\otimes u^\pi_{n,m}\otimes b^\pi_{m,n}) 
			(u^\pi_{i,j}{^*} \otimes u^\pi_{j,i}{^*})_{23} \Bigr)
			\\
			& = \sum\limits_{\rho\geq \pi\in\Irr(K)} \sum\limits_{i,j=1}^{\dim\HC^\pi}\frac{F^\pi _{j,j}}{\dim_q \pi}
			a^\pi_{j,i}\otimes b^\pi_{i,j}. 
		\end{align*}
		Thus we get 
		$r_\rho (A_\rho\odot {}_\rho B)\subset A\cotensor{\rho}B$. 
		Therefore it holds $r_\rho (A\otimes B)\subset A\cotensor{\rho}B$ 
		by the continuity of $r_\rho$ and the density of 
		$\bigoplus\limits_{\pi\in\Irr(K)}A_\pi \odot \bigoplus\limits_{\varpi\in\Irr(K)}{}_\varpi B \subset A\otimes B$, 
		which completes the proof. 
	\end{proof}
	
	\begin{prop}\label{prop:coten}
		Let $G$ be a locally compact quantum group, $K$ be a compact quantum group, 
		$(A,\alpha)$ be a right $K$-C*-algebra 
		and $(B,\beta)$ be a left $K$-C*-algebra. 
		Suppose $\gamma$ is a left $G$-coaction on $A$ 
		which is continuous in the strong sense, such that 
		$(\gamma\otimes\id)\alpha=(\id\otimes\alpha)\gamma$. 
		Then $A\cotensor{K}B \subset A\otimes B$ 
		is a non-degenerate C*-subalgebra, and 
		$(A\cotensor{K}B , \gamma\otimes\id)$ is a well-defined left $G$-C*-algebra. 
	\end{prop}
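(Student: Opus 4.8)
The plan is to reduce the statement to the spectral-subspace picture, to verify the $*$-algebra structure of $A\cotensor{K}B$ by Hopf-algebraic bookkeeping, and then to transport the $G$-coaction from $A\otimes B$ to $A\cotensor{K}B$ by commuting it past the projections $r_\rho$ of \cref{lem:cotenproj}.

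\textbf{C*-subalgebra and non-degeneracy.} First I would note that $A\cotensor{K}B$ is a C*-subalgebra. Since $\OC(K)_\rho\cdot\OC(K)_\varpi\subseteq\OC(K)_{\rho\otimes\varpi}$ and $\OC(K)_\rho{}^*\subseteq\OC(K)_{\bar\rho}$, and since the $*$-homomorphisms $\alpha\otimes\id_B$ and $\id_A\otimes\beta$ agree on each $A\cotensor{\rho}B$, one gets $(A\cotensor{\rho}B)(A\cotensor{\varpi}B)\subseteq A\cotensor{\rho\otimes\varpi}B$ and $(A\cotensor{\rho}B)^*\subseteq A\cotensor{\bar\rho}B$; decomposing $\rho\otimes\varpi$ and $\bar\rho$ into irreducibles and using $A\cotensor{\sigma}B=\bigl[\bigoplus_{\pi\leq\sigma}A\cotensor{\pi}B\bigr]$ (clear from the proof of \cref{lem:cotenproj}, since the formula for $r_\sigma$ only sees the isotypic components $\pi\leq\sigma$) these are contained in $A\cotensor{K}B$, which is norm-closed by definition. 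For non-degeneracy of $A\cotensor{K}B\subset A\otimes B$ I would use the piece $\rho=\1$: one has $A_{\1}\otimes{}_{\1}B\subseteq A\cotensor{\1}B\subseteq A\cotensor{K}B$, while $[A_{\1}A]=A$ and $[{}_{\1}B\,B]=B$ because $A_{\1}\subset A$ and ${}_{\1}B\subset B$ are non-degenerate, so $[(A\cotensor{K}B)(A\otimes B)]\supseteq[(A_{\1}\otimes{}_{\1}B)(A\otimes B)]=A\otimes B$.

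\textbf{The coaction.} The key observation is that $\gamma\otimes\id_B$ commutes with $\id_{C^r_0(G)}\otimes r_\rho$ (acting on the $A\otimes B$ legs) for every $\rho\in\Rep(K)$: the map $r_\rho$ is assembled from $\alpha$, $\beta$, the Haar state $h$ and multiplication by a fixed multiplier, all of which extend to multiplier algebras, so $\id_{C^r_0(G)}\otimes r_\rho$ is a well-defined bounded idempotent on $\mult{C^r_0(G)\otimes A\otimes B}$ with range inside $\mult{C^r_0(G)\otimes(A\cotensor{\rho}B)}$; and $\gamma$ commutes with $\beta$ and $h$ (which live on legs untouched by $\gamma$) and with $\alpha$ by the hypothesis $(\gamma\otimes\id)\alpha=(\id\otimes\alpha)\gamma$. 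Hence, for $x\in A\cotensor{\rho}B$,
\[(\id_{C^r_0(G)}\otimes r_\rho)\bigl((\gamma\otimes\id_B)(x)\bigr)=(\gamma\otimes\id_B)\bigl(r_\rho(x)\bigr)=(\gamma\otimes\id_B)(x),\]
so $(\gamma\otimes\id_B)(x)\in\mult{C^r_0(G)\otimes(A\cotensor{\rho}B)}\subseteq\mult{C^r_0(G)\otimes(A\cotensor{K}B)}$. Thus $\gamma\otimes\id_B$ restricts to an injective $*$-homomorphism $A\cotensor{K}B\to\mult{C^r_0(G)\otimes(A\cotensor{K}B)}$, and the coaction identity is inherited verbatim from that of $\gamma$ by tensoring with $\id_B$.

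\textbf{Strong continuity.} This is the point I expect to be delicate, and the only place where the strong continuity of $\gamma$ and its compatibility with $\alpha$ are genuinely used. Using $A\cotensor{\rho}B=r_\rho(A\otimes B)$, the commutation above, the strong continuity of $\gamma$, and \cref{lem:cotenproj}, I would compute, for each $\rho$,
\begin{align*}
	\bigl[(C^r_0(G)\otimes 1_A\otimes 1_B)(\gamma\otimes\id_B)(A\cotensor{\rho}B)\bigr]
	&=\bigl[(\id_{C^r_0(G)}\otimes r_\rho)\bigl((C^r_0(G)\otimes 1_A\otimes 1_B)(\gamma\otimes\id_B)(A\otimes B)\bigr)\bigr]\\
	&=\bigl[(\id_{C^r_0(G)}\otimes r_\rho)(C^r_0(G)\otimes A\otimes B)\bigr]
	=C^r_0(G)\otimes(A\cotensor{\rho}B),
\end{align*}
the span manipulations being legitimate because $\id_{C^r_0(G)}\otimes r_\rho$ is bounded and idempotent (closed range, and the relevant subspaces are invariant under it). Taking the closed union over $\rho\in\Rep(K)$ and using $A\cotensor{K}B=\overline{\bigcup_\rho A\cotensor{\rho}B}$ gives $[(C^r_0(G)\otimes1_A\otimes1_B)(\gamma\otimes\id_B)(A\cotensor{K}B)]=C^r_0(G)\otimes(A\cotensor{K}B)$, i.e.\ $\gamma\otimes\id$ is continuous in the strong sense; non-degeneracy of $\gamma\otimes\id$ into $\mult{C^r_0(G)\otimes(A\cotensor{K}B)}$ then follows by taking adjoints in this last identity. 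The remaining work is the multiplier-algebra bookkeeping behind the claims above, which is routine and of the kind used throughout the paper.
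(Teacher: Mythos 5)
Your proposal is correct and follows essentially the same route as the paper: non-degeneracy via the unit piece $A_{\1}\otimes{}_{\1}B\subset A\cotensor{K}B$, and well-definedness plus strong continuity of $\gamma\otimes\id$ via the commutation $(\gamma\otimes\id)r_\rho=(\id\otimes r_\rho)(\gamma\otimes\id)$ together with the computation $\bigl[C^r_0(G)_1(\gamma\otimes\id)(A\cotensor{\rho}B)\bigr]=\bigl[C^r_0(G)\odot(A\cotensor{\rho}B)\bigr]$. The only cosmetic difference is that the paper derives membership of the image in $\mult{C^r_0(G)\otimes(A\cotensor{K}B)}$ directly from this density identity rather than by extending $r_\rho$ to multiplier algebras, which lets you drop your separate "the coaction" paragraph.
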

	
	\begin{proof}
		First, we note $A_{\1}\otimes {}_{\1}B\subset A\otimes B$ is a non-degenerate C*-subalgebra. 
		We can check 
		\begin{align*}
			&
			A\cotensor{K}B 
			=
			\bigl[\bigl\{x\in A\otimes B \,\bigl|\, (\alpha\otimes\id)(x)=(\id\otimes\beta)(x)\in \bigl((A\otimes B)\odot\OC(K)\bigr)_{132} \bigr\}\bigr]
			\supset A_{\1}\otimes {}_{\1}B 
		\end{align*}
		by \cref{lem:cotenproj}. 
		It follows that $A\cotensor{K}B \subset A\otimes B$ 
		is a non-degenerate C*-subalgebra. 
		
		For each $\rho\in\Rep(K)$ it holds 
		$(\gamma\otimes\id)r_\rho=(\id\otimes r_\rho)(\gamma\otimes\id)$ 
		on $A\otimes B$ by definition, 
		and we see 
		\begin{align*}
			&
			\bigl[ C^r_0(G)_1 \bigl((\gamma\otimes \id)(A\cotensor{\rho}B)\bigr) \bigr]
			=
			\clin{C^r_0(G)_1 \bigl((\id\otimes r_\rho)(\gamma\otimes\id) (A\otimes B)\bigr)}
			\\&
			=
			\clin{(\id\otimes r_\rho)(C^r_0(G) \otimes A\otimes B)}
			= 
			\bigl[C^r_0(G)\odot (A\cotensor{\rho}B)\bigr] . 
		\end{align*}
		Thus it follows $\gamma\otimes\id:A\cotensor{K}B\to \mult{C^r_0(G)\otimes(A\cotensor{K}B)}$ 
		is a well-defined non-degenerate $*$-homomorphism 
		which gives a continuous left $G$-coaction. 
	\end{proof}
	
	\begin{thm}\label{thm:cptind}
		Consider a homomorphism $\HG:K\to G$ of locally compact quantum groups and assume $K$ is compact. Let $(A,\alpha)$ be a left $K$-C*-algebra. 
		Then $\Ind_\HG A$ exists and is given by $C^r_0(G)\rpb\HG\cotensor{K}A$. 
		Its left $G$-coaction $\Delta_G\otimes\id$ is continuous in the strong sense.  
	\end{thm}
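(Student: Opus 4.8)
The plan is to exhibit $C:=C^r_0(G)\rpb\HG\cotensor{K}A$, with the coaction $\Delta_G\otimes\id_A$, as a C*-algebra satisfying the three conditions of \cref{def:ind} for $\Ind_\HG A$; uniqueness of induced coactions (\cref{rem:indunique}) then gives $\Ind_\HG A=C$ and $\Ind_\HG\alpha=\Delta_G\otimes\id_A$. First I would set up the input for \cref{prop:coten}. View $C^r_0(G)$ as a right $K$-C*-algebra via the restriction $\Delta_G\rpb\HG$ along $\HG$ of its right regular $G$-coaction, and as a left $G$-C*-algebra via $\Delta_G$, which is continuous in the strong sense because $(C^r_0(G),\Delta_G)$ is bisimplifiable. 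Coassociativity of $\Delta_G$ together with the defining property of the restriction gives $(\Delta_G\otimes\id_{C^r_0(K)})(\Delta_G\rpb\HG)=(\id_{C^r_0(G)}\otimes\Delta_G\rpb\HG)\Delta_G$, which is exactly the compatibility required in \cref{prop:coten} with $\gamma=\Delta_G$. Hence \cref{prop:coten} yields that $C\subset C^r_0(G)\otimes A$ is a non-degenerate C*-subalgebra (so also non-degenerate in $\M(\K(G)\otimes A)$) and that $(C,\Delta_G\otimes\id_A)$ is a left $G$-C*-algebra; in particular $\Delta_G\otimes\id_A$ is a continuous (strong, hence weak, sense) left $G$-coaction on $C$.

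Using $\Delta_G=\Ad V^G(-)_1$, this coaction is the restriction to $C$ of $\Ad V^G_{12}(-)_{13}:\M(\K(G)\otimes A)\to\M(\K(G)\otimes\K(G)\otimes A)$; and from the description of $V^\HG$ one has $\Delta_G\rpb\HG=\Ad V^\HG(-)_1$ on $C^r_0(G)$, so that $(\Delta_G\rpb\HG\otimes\id_A)(x)=\Ad V^\HG_{12}(x_{13})$ for $x\in C^r_0(G)\otimes A$. Conditions (i) and (ii) are then essentially immediate. Since $C\subset C^r_0(G)\otimes A$ and $C^r_0(G)\subset L^\infty(G)=(C^r_0(G)')'$, every element of $C$ lies in $(C^r_0(G)'\otimes1_A)'$. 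For $c$ in the dense algebraic cotensor product $\bigoplus_{\rho\in\Irr(K)}C^r_0(G)\rpb\HG\cotensor{\rho}A$, the defining identity $(\Delta_G\rpb\HG\otimes\id_A)(c)=(\id_{C^r_0(G)}\otimes\alpha)(c)$ reads, by the previous sentence, $\Ad V^\HG_{12}(c_{13})=(\id_{\K(G)}\otimes\alpha)(c)$, and this persists on all of $C$ by norm-continuity. Thus $C\subset\WInd_\HG A$, which gives (i); and (ii) follows since $\Ad V^G_{12}(-)_{13}|_C=\Delta_G\otimes\id_A$ is a strong- (hence weak-) continuous left $G$-coaction on $C$.

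The substantive point is condition (iii): that $\Ad V^G_{12}(-)_{13}$ carries $\WInd_\HG A$ into $\M(\K(G)\otimes C)$ and is, on the unit ball, strictly continuous from $\M(\K(G)\otimes A)$ to $\M(\K(G)\otimes C)$. Here the compactness of $K$ enters decisively, and the plan is to use the $K$-spectral (Peter--Weyl) structure. For $x\in\WInd_\HG A$, the relation $\Ad V^\HG_{12}(x_{13})=(\id\otimes\alpha)(x)$ together with the faithful Haar state $\lrhaar$ of $K$ and the orthogonality relations — concretely the continuous idempotents $p^A_\rho$ of \cref{eq:coten1} and $r_\rho$ of \cref{lem:cotenproj} — should let one produce, for each $\rho\in\Rep(K)$, a component whose image under $\Ad V^G_{12}(-)_{13}$ visibly lies in $\M(\K(G)\otimes C^r_0(G)\rpb\HG\cotensor{\rho}A)\subset\M(\K(G)\otimes C)$, and then control all of $\WInd_\HG A$ by testing against the dense algebraic cotensor product $\bigoplus_{\rho}C^r_0(G)\rpb\HG\cotensor{\rho}A$ of $C$, so that only finite spectral supports intervene; the strict-continuity clause is handled by the same machinery, arguing much as in the proofs of \cref{thm:indstages}, \cref{lem:propersurjind}, and \cref{eg:formalind}. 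Finally, $\Delta_G\otimes\id_A$ is continuous in the strong sense by \cref{prop:coten}, as asserted.

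I expect (iii) to be the main obstacle, and the delicacy is entirely at the multiplier level. One must keep careful track of leg positions and of the spectral supports of the elements tested against, and — since the Peter--Weyl expansion of a multiplier need not converge strictly — work with the closed span of spectral pieces rather than with naive Fourier expansions, in order to land inside $\M(\K(G)\otimes C)$ (and not merely inside $(C^r_0(G)'\otimes1_A)'$) and to match the two strict topologies on the unit ball.
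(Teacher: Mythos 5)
Your treatment of conditions (i) and (ii) is exactly the paper's: apply \cref{prop:coten} with $\gamma=\Delta_G$ and the compatibility $(\Delta_G\otimes\id)(\Delta_G\rpb\HG)=(\id\otimes\Delta_G\rpb\HG)\Delta_G$, then check the intertwining relation on the dense algebraic cotensor product. That part is fine. The problem is condition (iii), which you correctly identify as the crux but for which your plan, as stated, does not go through.

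The obstruction is this. Fix $x\in\WInd_\HG A$ and set $y:=\Ad V^G_{12}(x_{13})$. Testing $y$ against $z=c\otimes a$ with $c\in\K(G)\otimes C^r_0(G)_{\1}$ and $a\in{}_{\1}A$ does show that $yz$ satisfies the cotensor \emph{relation} (both $K$-coactions act trivially on $z$, so the relation for $y$ passes to $yz$). But membership in $C^r_0(G)\rpb\HG\cotensor{K}A$ is membership in the \emph{closed span} $\bigl[\bigoplus_{\pi}(\cdots)\cotensor{\pi}(\cdots)\bigr]$, not in the equalizer of the two coactions, and the paper is deliberately careful never to identify these two sets. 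Multiplying by a finitely supported $z$ does not truncate the spectral support of $yz$: that support is governed by $x$, which a priori is an arbitrary multiplier. So ``testing against the dense algebraic cotensor product so that only finite spectral supports intervene'' does not produce elements of finite spectral support, and your appeal to $p^A_\rho$ and $r_\rho$ has no element of finite support to act on. The missing idea in the paper's proof is to truncate $x$ itself \emph{before} applying $\Ad V^G_{12}$: writing $V^K=\bigl(\sum_{i,j}E^\pi_{i,j}\otimes u^\pi_{i,j}\mid\pi\in\Irr(K)\bigr)$ and letting $P^\Pi$ be the central finite-rank projections of $\hat J^K C^r_0(\hat K)\hat J^K$ attached to finite $\Pi\subset\Irr(K)$, one cuts $x$ to $\hat{\HG}^r{}'(P^\Pi)_1\,x\,\hat{\HG}^r{}'(P^\Pi)_1$. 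Because $P^\Pi_1V^K\in\M(\K(K))\otimes\OC(K)_\rho$ with $\rho=\bigoplus_{\pi\in\Pi}\pi$, the relation $\Ad V^{\HG}_{12}(x_{13})=(\id\otimes\alpha)(x)$ forces the cut element, after $\Ad V^G_{12}$, to have spectral support in $\tilde\rho=\rho\,\otop\,\overline{\rho}$, hence to land (after multiplying by $z$) in $(\K(G)\otimes C^r_0(G))\cotensor{\tilde\rho}A$, which is inside the closed span by definition; one then recovers $y$ as a strict limit since $\hat{\HG}^r{}'(P^\Pi)\to1$ strictly. Without this truncation step (or an equivalent device forcing finite spectral support), the argument for (iii) is incomplete.
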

	
	Especially, by letting $A=\C$ we get the last case of \cref{eg:hmgquot}. 
	Note that we do not need regularity on $G$. 
	We also note that a homomorphism $\HG:K\to G$ of locally compact quantum groups is always proper when $K$ is compact, because $C^r_0(\hat{K})$ is a direct sum of finite dimensional C*-algebras. 
	
	\begin{proof}
		By \cref{prop:coten}, $C^r_0(G)\cotensor{K}A$ satisfies (i) and (ii) of \cref{def:ind} for $\Ind_\HG A$, and its left $G$-coaction is continuous. 
		We are going to show 
		$(\Delta_G\otimes\id)\WInd_{\HG}A \subset \mult{\K(G)\otimes (C^r_0(G)\cotensor{K}A)}$, 
		from which (iii) follows because 
		$\Ad V^{G}_{12}(-)_{13}:\mult{\K(G)\otimes A}\to \mult{\K(G)\otimes C^r_0(G)\otimes A}$ 
		is strictly continuous and 
		$C^r_0(G)\cotensor{K}A\subset C^r_0(G)\otimes A$ is a non-degenerate C*-subalgebra. 
		
		Via the identification 
		$L^2(G)=\bigoplus\limits_{\pi\in\Irr(K)}(\HC^{\pi})^{\oplus\dim\HC^\pi}$
		by Peter--Weyl theorem, 
		for each $\pi\in\Irr(K)$ and $1\leq i,j\leq \dim\HC^\pi$ 
		we put 
		$E^\pi_{i,j}:= \left(e^\pi_{i} \bra e^\pi_{j} , -\ket \right)^{\oplus\dim\HC^\pi} \in \K(K)$. 
		Then we can write 
		\begin{align*}
			&
			V^K=
			\biggl(\sum\limits_{i,j=1}^{\dim\HC^\pi}E^\pi_{i,j}\otimes u^\pi_{i,j} 
			\,\biggl|\, \pi\in\Irr(K)\biggr) 
			\in\UM\left(\hat{J}^{K}C^r_0(\hat{K})\hat{J}^{K}\otimes C^r_0(K)\right) . 
		\end{align*}
		For a finite subset $\Pi\subset \Irr(K)$ we let 
		$P^\Pi:=
		\sum\limits_{\pi\in\Pi}\sum\limits_{i=1}^{\dim\HC^\pi}E^{\pi}_{i,i} \in \PC\ZC (\hat{J}^{K}C^r_0(\hat{K})\hat{J}^{K})$. 
		Now we recall the homomorphism of C*-bialgebras $\hat{\HG}^r{}':
		\hat{J}^KC^r_0(\hat{K})\hat{J}^K\to \M(\hat{J}^G C^r_0(\hat{G})\hat{J}^G)$
		with $V^{\HG} = (\hat{\HG}^r{}'\otimes\id)(V^K)$. 
		Then for any $x\in \WInd_\HG A$ it holds 
		$\hat{\HG}^r{}'(P^{\Pi})_1 x \hat{\HG}^r{}'(P^{\Pi})_1
		\xrarr{\Pi\subset \Irr(K)} x$ strictly in $\mult{\K(G)\otimes A}$. 
		Moreover we have 
		\begin{align}\label{prf:thm:cptind1}
			&
			\begin{aligned}
				&
				\Ad V^{\HG}_{23}\Ad V^{G}_{12} \left( \hat{\HG}^r{}'(P^{\Pi})_1 x_{14} \hat{\HG}^r{}'(P^{\Pi})_1 \right)
				=
				\Ad V^{G}_{12} \Ad V^{\HG}_{13} \left( \hat{\HG}^r{}'(P^{\Pi})_1 x_{14} \hat{\HG}^r{}'(P^{\Pi})_1 \right)
				\\&
				=
				V^{G}_{12} 
				\left( (\hat{\HG}^r{}' \otimes\id) (P^{\Pi}_1V^K) \right)_{13} 
				x_{14} 
				\left( (\hat{\HG}^r{}' \otimes\id) (V^K{}^*P^{\Pi}_1) \right)_{13} 
				V^{G}_{12}{}^*
				\\&
				=
				V^{G}_{12} 
				\hat{\HG}^r{}' (P^{\Pi})_1 
				\left( (\id\otimes\alpha)x \right)_{134} 
				\hat{\HG}^r{}' (P^{\Pi})_1 
				V^{G}_{12}{}^*
				=
				(\id\otimes\id\otimes\alpha)\Ad V^{G}_{12} (\hat{\HG}^r{}'(P^{\Pi})_1 x_{13} \hat{\HG}^r{}'(P^{\Pi})_1) . 
			\end{aligned}
		\end{align}
		Put $\rho:=\bigoplus\limits_{\pi\in\Pi}\pi\in\Rep(K)$ 
		and $\tilde{\rho}:=\rho\,\otop\,\overline{\rho}\in \Rep(K)$, 
		where $\overline{\rho}\in\Rep(K)$ is the unitary conjugate of $\rho$. 
		Since $P^\Pi_1 V^K \in \M(\K(G))\otimes \OC(K)_\rho$, \cref{prf:thm:cptind1} is an element in 
		$\bigl(\M(\K(G)\otimes C^r_0(G)\otimes A) \odot \OC(K)_{\tilde{\rho}}\bigr)_{1243}$. 
		Thus for any $c\in\K(G)\otimes (C^r_0(G)_{\1})$ and $a\in{}_{\1}A$, it holds that 
		\begin{align*}
			& 
			(c\otimes a)V^{G}_{12} \bigl( \hat{\HG}^r{}'(P^{\Pi})_1 x \hat{\HG}^r{}'(P^{\Pi})_1 \bigr)_{13} V^{G}_{12}{}^*
			\in
			\bigl(\K(G)\otimes C^r_0(G)\bigr)\cotensor{\tilde{\rho}} A
			=
			\bigl[\K(G)\odot\bigl(C^r_0(G)\cotensor{\tilde{\rho}} A\bigr)\bigr] , 
		\end{align*}
		where we have equipped $\K(G)$ with a trivial coaction, 
		and used $r_{\tilde{\rho}}^{\K(G)\otimes C^r_0(G),\, A}
		=\id\otimes r_{\tilde{\rho}}^{C^r_0(G),\, A}$. 
		It follows from the strict continuity of 
		$\Ad V^{G}_{12}(-)_{13}:\mult{\K(G)\otimes A}\to \mult{\K(G)\otimes C^r_0(G)\otimes A}$ that 
		$(c\otimes a) V^{G}_{12} x_{13} V^{G}_{12}{}^* \in 
		\K(G)\otimes(C^r_0(G)\cotensor{K} A)$, 
		and thus $\Ad V^{G}_{12} x_{13} \in 
		\mult{\K(G)\otimes (C^r_0(G)\cotensor{K} A)}$ 
		by the non-degeneracy of $C^r_0(G)_{\1}\otimes {}_{\1}A\subset C^r_0(G)\cotensor{K}A$. 
	\end{proof}
	
	\section{Analogue of base change}\label{sec:basechange}
	
	We go back to the case of induced coactions along general proper homomorphisms. 
	From now we restrict our attention to regular locally compact quantum groups to assure the existence of induced coactions. 
	Throughout this section, we consider the following situation, 
	which changes quantum groups coacting on C*-algebras. 
	
	\begin{condition}\label{cond:square}
		Consider the following diagram of regular locally compact quantum groups and homomorphisms, 
		\begin{align}\label{diagram:square}
			\vcenter{\xymatrix{
				F \ar[r]^-{\FE}\ar[d]_-{\FH} & E\,\ar[d]^-{\EG} \\
				H \ar[r]^-{\HG} & G, 
			}} 
		\end{align}
		which commutes in the sense of $\HG\FH=\EG \FE$, 
		such that there is a $*$-isomorphism $\delta:C^r_0(G)\xrarr{\sim}\Ind_{\FE}\FH^*C^r_0(H)$ 
		which makes the following diagram commutes, 
		\begin{align}\label{diagram:delta}
			\vcenter{\xymatrix{
				C^u_0(G) \ar[r]^-{\Delta^u_{G}}\ar@{->>}[d]_-{\lambda_G} & 
				\mult{C^u_0(G)\otimes C^u_0(G)} 
				\ar[r]^-{\EG^{u}\otimes \HG^{u}} & 
				\mult{C^u_0(E)\otimes C^u_0(H)}\, \ar@{->>}[d]^-{\lambda_E\otimes\lambda_H} \\
				C^r_0(G) \ar[rr]^-{\delta} & & 
				\mult{C^r_0(E)\otimes C^r_0(H)}.
			}}
		\end{align}
	\end{condition}
	
	Here we remark that by using the fact $\clin{C^r_0(G)_1(\lambda_G\otimes\id)\Delta^u_G (C^u_0(G))}=C^r_0(G)\otimes C^u_0(G)$, 
	it can be seen that the diagram \cref{diagram:delta} commutes if and only if 
	$(\id\otimes\delta)\Delta_G=(\Delta_G\rpb\EG\otimes \id)\Delta_G\rpb\HG$ as maps from $C^r_0(G)$. 
	Similarly, this is also equivalent to 
	$(\delta\otimes\id)\Delta_G=(\id\otimes\HG^*\Delta_G)\EG^*\Delta_G$ as maps from $C^r_0(G)$. 
	
	We want to compare $\Ind_\HG A$ and $\Ind_\FE \FH^*A$ for a left $H$-C*-algebra $A$. 
	First, we are going to construct the map to compare two induced coactions formally. 
	It immediately turns out that it is isomorphic in some special case. 
	
	\begin{lem}\label{lem:formalinclusion}
		Consider the square \cref{diagram:square} with \cref{cond:square}, and let $(A,\alpha)$ be a left $H$-C*-algebra. 
		If $\Ind_{\HG}A$ and $\Ind_{\FE}\FH^*A$ exist, 
		then the followings hold. 
		\begin{enumerate}
			\item
			$(\id_{\K(E)}\otimes\alpha)^{-1}(\delta\otimes \id_A): \EG^*\Ind_{\HG}A\to \mult{\Ind_{\FE}\FH^*A}$ 
			is a well-defined injective non-degenerate left $E$-$*$-homomorphism. 
			\item
			Suppose $\EG$ gives an open quantum subgroup (or more generally $\hat{\EG}$ is proper). 
			When $A=\C$, the map in (1) is a $*$-isomorphism of 
			$\Ind_{\HG}\C \xrarr{\sim} \Ind_{\FE}\C$. 
		\end{enumerate}
	\end{lem}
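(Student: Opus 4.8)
The plan is to produce the map in (1) explicitly and reduce the whole statement to a single nontrivial point. First I would rewrite membership in $\Ind_\HG A$ in a form to which $\delta\otimes\id$ can be applied legwise: by \cref{rem:indmulthom} we have $\Ind_\HG A\subset\mult{C^r_0(G)\otimes A}$, and on $C^r_0(G)\subset\B(L^2(G))$ the right $H$-coaction $\Delta_G\rpb\HG$ is implemented by conjugation with $V^\HG$ just as $\Delta_G=\Ad V^G(-)_1$, so for $x\in\mult{C^r_0(G)\otimes A}$ the relation $\Ad V^\HG_{12}(x_{13})=(\id\otimes\alpha)(x)$ defining $\WInd_\HG A$ is the same as
\[
(\Delta_G\rpb\HG\otimes\id_A)(x)=(\id_{C^r_0(G)}\otimes\alpha)(x)\qquad\text{in }\mult{C^r_0(G)\otimes C^r_0(H)\otimes A}.
\]
Next I would record that $\delta$ intertwines the right $H$-coaction $\Delta_G\rpb\HG$ on $C^r_0(G)$ with the right $H$-coaction $\id\otimes\Delta_H$ on $\delta(C^r_0(G))=\Ind_\FE\FH^*C^r_0(H)$, i.e.\ $(\delta\otimes\id_{C^r_0(H)})(\Delta_G\rpb\HG)=(\id_{C^r_0(E)}\otimes\Delta_H)\delta$, and likewise that $\delta$ intertwines $\EG^*\Delta_G$ on $C^r_0(G)$ with the induced left $E$-coaction on $\delta(C^r_0(G))$. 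Both are obtained by passing to the universal level: starting from $\delta\lambda_G=(\lambda_E\otimes\lambda_H)(\EG^u\otimes\HG^u)\Delta^u_G$ (diagram \eqref{diagram:delta}) and using coassociativity of $\Delta^u_G$ together with the fact that $\HG^u,\EG^u$ are homomorphisms of C*-bialgebras, both sides of each identity collapse to the same map; equivalently, these are contained in the two reformulations of \cref{cond:square} noted after its statement.

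For $x\in\EG^*\Ind_\HG A$ I would then set $y:=(\delta\otimes\id_A)(x)\in\mult{\Ind_\FE\FH^*C^r_0(H)\otimes A}$ and apply $\delta\otimes\id_{C^r_0(H)}\otimes\id_A$ to the displayed relation; using the first intertwining property of $\delta$ on the left-hand side this yields
\[
(\id_{C^r_0(E)}\otimes\Delta_H\otimes\id_A)(y)=(\id_{C^r_0(E)}\otimes\id_{C^r_0(H)}\otimes\alpha)(y),
\]
which says exactly that the $C^r_0(H)$- and $A$-legs of $y$ assemble into an element of $\alpha(A)$ --- in the compact case this would be $y\in(\Ind_\FE\FH^*C^r_0(H))\cotensor{H}A=\Ind_\FE\FH^*A$. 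The main obstacle is upgrading this to the honest statement $y\in(\id_{\K(E)}\otimes\alpha)(\mult{\K(E)\otimes A})$, with the resulting preimage lying in $\mult{\Ind_\FE\FH^*A}$: a coaction identity alone does not cut $\alpha(A)$ out of $\mult{C^r_0(H)\otimes A}$ --- already $\WInd_{\id_H}A$ is strictly larger than $\alpha(A)$ --- so one must invoke the strict-continuity condition~(iii) of \cref{def:ind} for $\Ind_\HG A$ together with regularity of $H$, arguing with norm-bounded nets as in the proof of \cref{thm:indstages} and transferring between $\Delta_H$ on the $\delta$-leg and $\alpha$ on the $A$-leg via the displayed identity and the non-degeneracy of $\Ind_\FE\FH^*C^r_0(H)\subset\mult{C^r_0(E)\otimes C^r_0(H)}$.

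Granting this, I would set $\Phi(x):=(\id_{\K(E)}\otimes\alpha)^{-1}(y)$. It is a $*$-homomorphism, being $\delta\otimes\id_A$ followed by the inverse of the injective $\id\otimes\alpha$; it is injective since $\delta$ and $\alpha$ are; and it is non-degenerate since $\delta$, $\Ind_\HG A\subset\mult{C^r_0(G)\otimes A}$ and $\alpha$ all are. It is a left $E$-$*$-homomorphism because the left $E$-coactions in sight all act only on the $\K(E)$-leg, on which $\id\otimes\alpha$ has no effect, so the equivariance follows from the second intertwining property of $\delta$ (and from the fact that the restriction along $\EG$ of $\Ad V^G_{12}(-)_{13}$ corresponds, on the $C^r_0(G)$-leg, to $\EG^*\Delta_G$). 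This settles (1).

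For (2), take $A=\C$; then $\id_{\K(E)}\otimes\alpha$ is the canonical identification, so $\Phi$ is just the strict extension of $\delta$ restricted to $\Ind_\HG\C\subset\mult{C^r_0(G)}$, already landing in $\mult{\Ind_\FE\C}$ by (1), and what remains is to see that its image is all of $\Ind_\FE\C$. Here the extra hypothesis that $\EG$ gives an open quantum subgroup --- or more generally that $\hat\EG$ is proper --- lets one describe restriction along $\EG$ and the passage from $\Ind_\FE\FH^*C^r_0(H)$ to $\Ind_\FE\C$ concretely, applying the explicit formula for induction along the dual of an open quantum subgroup from \cref{lem:propersurjind} to the canonical decomposition of the proper homomorphism $\hat\EG$, together with \cref{lem:indtensorcpt}. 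One then checks directly that $\delta(\Ind_\HG\C)$ satisfies conditions (i)--(iii) of \cref{def:ind} for $\Ind_\FE\FH^*\C=\Ind_\FE\C$ --- the hypothesis on $\EG$ and the choice $A=\C$ being precisely what makes the strict-continuity condition (iii) hold, which is the point that fails for a general $A$ --- so by the uniqueness in \cref{rem:indunique} we get $\delta(\Ind_\HG\C)=\Ind_\FE\C$, and $\Phi$ is the asserted $*$-isomorphism.
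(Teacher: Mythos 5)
Your setup for (1) is correct up to the point you yourself flag as ``the main obstacle,'' but that obstacle is exactly where the proof lives, and the resolution you point to would not work as described. Establishing the coaction identity $(\id\otimes\Delta_H\otimes\id)(y)=(\id\otimes\id\otimes\alpha)(y)$ for $y=(\delta\otimes\id_A)(x)$ is the easy half; what is missing is a proof that $y$ actually lies in $\mult{\Ind_\FE\FH^*(\alpha(A))}$, and condition (iii) of \cref{def:ind} for $\Ind_\HG A$ is not the relevant tool: it concerns the map $\Ad V^G_{12}(-)_{13}$ into $\mult{\K(G)\otimes\Ind_\HG A}$ and says nothing about the $A$-leg of $y$. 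The paper's proof does three things instead. First, it replaces $A$ by $\alpha(A)$, using that $\id\otimes\alpha:\Ind_\FE\FH^*A\to\Ind_\FE\FH^*(\alpha(A))$ is a left $E$-$*$-isomorphism, so one never has to invert $\alpha$ on elements. Second, it writes $(\delta\otimes\id)\Ind_\HG A=\clin{(\K(G)^*\otimes\id\otimes\id\otimes\id)\bigl((\id\otimes\delta\otimes\id)(\Delta_G\otimes\id)\Ind_\HG A\bigr)}$ and applies the identity $(\id\otimes\delta)\Delta_G=(\Delta_G\rpb\EG\otimes\id)\Delta_G\rpb\HG$ together with the defining relation of $\WInd_\HG A$, which places the image inside $\mult{C^r_0(E)\otimes\alpha(A)}$ and hence exhibits it as a non-degenerate C*-subalgebra of $\WInd_\FE\FH^*(\alpha(A))$ carrying the coaction $\Delta_E\otimes\id\otimes\id$. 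Third, non-degeneracy in $\mult{\Ind_\FE\FH^*(\alpha(A))}$ follows from the multiplication argument of \cref{rem:indunique}, which uses (i)--(ii) for the image together with (iii) for $\Ind_\FE\FH^*(\alpha(A))$ --- not for $\Ind_\HG A$. Your sketch supplies none of these steps, so (1) is not yet proved.

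Part (2) has the same problem. Verifying (i)--(iii) of \cref{def:ind} for $\delta(\Ind_\HG\C)$ as a candidate for $\Ind_\FE\C$ would indeed finish by uniqueness, but (iii) is precisely the condition you do not address, and invoking \cref{lem:propersurjind} for ``the canonical decomposition of $\hat{\EG}$'' is off target: the induction in question is along $\FE$, about whose dual nothing is assumed, and the hypothesis on $\EG$ enters only through the existence of the normal extension $\EG^r:L^\infty(G)\to L^\infty(E)$. The paper uses this concretely: $\delta$ restricts to a $*$-isomorphism $\M(C^r_0(G))^H\xrarr{\sim}\M(\Ind_\FE C^r_0(H))\cap\bigl(\M(C^r_0(E))^F\otimes1\bigr)$, which yields a left $E$-$*$-homomorphism $f:=\delta^{-1}((-)\otimes1):\Ind_\FE\C\to\M(C^r_0(G))$; one then shows $\clin{(\Ind_\HG\C)f(\Ind_\FE\C)}=\Ind_\HG\C$ by writing $f(\Ind_\FE\C)=\clin{(\K(E)^*\otimes f)\Delta_E(\Ind_\FE\C)}$, pushing one inclusion through $\EG^r$ and (iii) for $\Ind_\HG\C$, and getting the reverse inclusion from an approximate unit of $\Ind_\FE\C$. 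Combined with the non-degeneracy from (1), this gives $\delta(\Ind_\HG\C)=(\Ind_\FE\C)\otimes1$. Without an argument of this kind, surjectivity in (2) remains unestablished.
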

	
	\begin{proof}
		We show (1). 
		Since its injectivity and equivariance are automatic, we prove well-definedness and non-degeneracy. 
		Also, since $(\id\otimes\alpha):\Ind_\FE\FH^* A\xrarr{\sim} \Ind_{\FE}\FH^*(\alpha(A))$ is a left $E$-$*$-isomorphism, 
		it is enough to show the map 
		$(\delta\otimes \id): \EG^*\Ind_{\HG}A\to \mult{\Ind_{\FE}\FH^*(\alpha A)}$ 
		is a well-defined non-degenerate $*$-homomorphism. 
		
		We have a non-degenerate inclusion
		\begin{align*}
			&
			(\delta\otimes \id)\Ind_\HG A
			=
			\clin{(\K(G)^*\otimes\id\otimes\id\otimes\id)(\id\otimes \delta\otimes \id)(\Delta_G\otimes \id)\Ind_\HG A}
			\\&
			\subset
			\clin{(\K(G)^*\otimes \id\otimes\id\otimes\id)
				\bigl((\Delta_G\rpb\EG\otimes \id)(\id\otimes \alpha)
				\mult{C^r_0(G)\otimes A}\bigr)}
			\\&
			\subset
			\mult{C^r_0(E)\otimes \alpha(A)}. 
		\end{align*}
		It follows 
		$(\delta\otimes \id)\Ind_\HG A
		\subset \WInd_\HG \FH^*(\alpha A)$ is a non-degenerate C*-subalgebra, and 
		$\Delta_E\otimes\id\otimes\id$ gives well-defined continuous $E$-coaction on it. 
		Therefore the argument of \cref{rem:indunique} shows 
		\begin{align*}
			&
			\clin{((\delta\otimes \id)\Ind_\HG A)\Ind_{\FE}\FH^*(\alpha A)} 
			= \Ind_{\FE}\FH^*(\alpha A) . 
		\end{align*}
		
		Next, we show (2). By (1) it is enough to show 
		$\clin{\delta(\Ind_{\HG} \C) \bigl((\Ind_{\FE}\C)\otimes 1_{\K(H)}\bigr)} 
		= \delta(\Ind_{\HG} \C)$. 
		First, we observe that 
		the restriction of $\delta$ gives a $*$-isomorphism 
		\begin{align*}
			&
			\delta:\M(C^r_0(G))^H\xrarr{\sim} \M(\Ind_\FE C^r_0(H))\cap \bigl(\M(C^r_0(E))^F\otimes 1_{\K(H)}\bigr) . 
		\end{align*}
		Then by \cref{rem:indmulthom} for $\C\to \mult{C^r_0(H)}$ we get an injective non-degenerate $*$-homomorphism 
		\begin{align*}
			&
			f:\Ind_{\FE}\C\xrarr{-\otimes1} 
			\M(\Ind_\FE C^r_0(H))\cap \bigl(\M(C^r_0(E))^F\otimes 1\bigr)
			\xrarr{\delta^{-1}} 
			\M(C^r_0(G)) .  
		\end{align*}
		By construction $f$ is a left $E$-$*$-homomorphism. 
		Thus by the normality of $\EG^r$ and (iii) of \cref{def:ind} for $\Ind_\HG\C$ we obtain 
		\begin{align}\label{eq:lem:formalinclusion}
			&
			\begin{aligned}
				&
				\clin{(\Ind_{\HG} \C) \bigl(f(\Ind_{\FE}\C)\bigr)} 
				= 
				\clin{(\Ind_{\HG} \C)  
					\bigl((\K(E)^*\otimes f)\Delta_E(\Ind_{\FE}\C) \bigr)} 
				\\&
				= 
				\clin{(\Ind_{\HG} \C) 
					\bigl((\K(E)^*\otimes \id)(\EG^r\otimes\id)\Delta_G 
					f(\Ind_{\FE}\C) \bigr)} 
				\\&
				\subset 
				\clin{(\Ind_{\HG} \C) 
					\bigl( (\K(G)^*\otimes \id)\Delta_G 
					f(\Ind_{\FE}\C) \bigr)} 
				\subset \Ind_{\HG} \C . 
			\end{aligned}
		\end{align}
		As for the converse inclusion, let $(e_\lambda)_\lambda$ be an approximate unit of $\Ind_{\FE}\C$ and $\omega$ be a normal state on $\B(L^2(E))$. 
		Then we have 
		$(\omega\EG^r\otimes\id)\Delta_G f(e_\lambda)\xrarr{\lambda}1$ strictly in $\mult{C^r_0(G)}$, 
		and thus the two inclusions in \cref{eq:lem:formalinclusion} are equalities. 
		By taking $\delta$ the desired claim follows. 
	\end{proof}
	
	Next, we suppose $\HG$ and $\FE$ are proper, 
	and state the analogue of base change formula.
	
	\begin{thm}\label{thm:basechange}
		Consider the square \cref{diagram:square} with \cref{cond:square}, and let $\HG$ and $\FE$ be proper. 
		Then the followings are equivalent. 
		\begin{enumerate}
			\item[(a)]
			For any left $H$-C*-algebra $(A,\alpha)$, 
			it holds $(\id_{\K(E)}\otimes\alpha)^{-1}(\delta\otimes \id_A): \EG^*\Ind_\HG A\to \Ind_{\FE}\FH^*A$ 
			is a well-defined left $E$-$*$-isomorphism. 
			\item[(b)]
			(a) holds for $A=\C$. 
		\end{enumerate}
	\end{thm}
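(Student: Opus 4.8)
The implication (a)$\Rightarrow$(b) is the special case $A=\C$ (note $\FH^*\C=\C$), so the content is (b)$\Rightarrow$(a). By \cref{lem:formalinclusion}(1) we already have, for every left $H$-C*-algebra $(A,\alpha)$, the canonical injective non-degenerate left $E$-$*$-homomorphism $\Phi_A:=(\id_{\K(E)}\otimes\alpha)^{-1}(\delta\otimes\id_A):\EG^*\Ind_\HG A\to\mult{\Ind_\FE\FH^*A}$, and the proof of that lemma shows that $\Phi_A(\EG^*\Ind_\HG A)$ is a non-degenerate C*-subalgebra of $\WInd_\FE\FH^*A$ on which $\Ad V^E_{12}(-)_{13}$ restricts to a continuous left $E$-coaction; that is, it satisfies conditions (i) and (ii) of \cref{def:ind} for $\Ind_\FE\FH^*A$. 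By the uniqueness mechanism of \cref{rem:indunique}, it therefore suffices to check that $\Phi_A(\EG^*\Ind_\HG A)$ also satisfies (iii); once this is done it equals $\Ind_\FE\FH^*A$, so $\Phi_A$ is a left $E$-$*$-isomorphism, which is exactly (a). Thus the whole problem is to transport the defining strict-continuity property (iii) across $\Phi_A$.

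The plan is to reduce this to trivial-coefficient tensor factors. The coaction $\alpha$ is itself a non-degenerate left $H$-$*$-homomorphism $(A,\alpha)\to(C^r_0(H)\otimes A,\Delta_H\otimes\id)$, so by functoriality of induction (\cref{rem:indmulthom}) and naturality of $\Phi$ in the coefficient algebra, $\Phi_A$ fits into a commuting square together with $\Phi_{C^r_0(H)\otimes A}$ and the maps $\id\otimes\alpha$ and $\id\otimes\FH^*\alpha$. For the corner $\Phi_{C^r_0(H)\otimes A}$ one has $\Ind_\HG(C^r_0(H)\otimes A)=\bigl((\Delta_G\rpb\HG)C^r_0(G)\bigr)\otimes A$ by \cref{cor:properind}(4) and \cref{eg:formalind}(2), and $\Ind_\FE\FH^*(C^r_0(H)\otimes A)=\bigl(\Ind_\FE\FH^*C^r_0(H)\bigr)\otimes A$ again by \cref{cor:properind}(4); using the identification $\Delta_G\rpb\HG:C^r_0(G)\xrarr{\sim}\Ind_\HG C^r_0(H)$, the isomorphism $\delta:C^r_0(G)\xrarr{\sim}\Ind_\FE\FH^*C^r_0(H)$ of \cref{cond:square}, and the equivalent form $(\delta\otimes\id)(\Delta_G\rpb\HG)=(\id\otimes\Delta_H)\delta$ of the commutativity of \cref{diagram:delta} (checked on the universal level from $\lambda_G$), one sees that $\Phi_{C^r_0(H)}$ coincides with $\delta$ under these identifications, so $\Phi_{C^r_0(H)\otimes B}$ is an isomorphism for every C*-algebra $B$ with no appeal to (b).

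Because $\alpha$ admits no left $H$-equivariant retraction, the isomorphy of $\Phi_{C^r_0(H)\otimes A}$ does not formally force $\Phi_A$ to be onto $\Ind_\FE\FH^*A$, and it is here that hypothesis (b) enters. I would proceed by decomposing $\HG=(\ran\HG)\bar{\HG}$ and $\FE=(\ran\FE)\bar{\FE}$ as in the remark after \cref{def:proper}, so that $\Ind_\HG A=\Ind_{\ran\HG}A^{\bar{\HG}}$ and $\Ind_\FE\FH^*A=\Ind_{\ran\FE}(\FH^*A)^{\bar{\FE}}$ by \cref{cor:properind}(1), identifying the fixed-point parts $A^{\bar{\HG}}$ and $(\FH^*A)^{\bar{\FE}}$ by means of \cref{lem:propersurjind}, and so reducing to the case in which $\HG$ and $\FE$ give closed quantum subgroups; there $\Ind$ is the construction of Vaes and \cref{cor:properind}(3) exhibits $[C^r_0(\hat{G})_1\Ind_\HG A]$ through the imprimitivity bimodule $I^{\ran\HG}$ of \cref{sec:formal}, which I would use to transport property (iii), the only remaining input being the case $A=\C$ supplied by (b) (equivalently, by the resulting identification $\Ind_\HG\C\cong\Ind_\FE\C$). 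I expect this last transport --- juggling the multiplier algebras, the $\WInd$'s and the various strict topologies so that (iii) for $\Ind_\FE\FH^*A$ drops out of (iii) for $\Ind_\HG A$ together with the $A=\C$ case --- to be the technically heaviest step of the argument.
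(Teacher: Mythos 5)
Your overall skeleton is the same as the paper's: the trivial direction is (a)$\Rightarrow$(b), the canonical comparison map comes from \cref{lem:formalinclusion}(1), and the substantive direction is handled by decomposing $\HG=(\ran\HG)\bar{\HG}$ and $\FE=(\ran\FE)\bar{\FE}$ so as to split off the duals of open quantum subgroups and reduce to the case where both $\HG$ and $\FE$ give closed quantum subgroups. Your observation that $\Phi_{C^r_0(H)\otimes B}$ is always an isomorphism is correct but, as you yourself note, it is a dead end; the paper does not use it. Two remarks on the reduction step: you should not take the identification of the fixed-point parts for granted, since the equality $\FH^*(\alpha(A^{\bar{\HG}}))=\bar{\FE}^*(\FH^*(\alpha A)^{\bar{\FE}})$ rests on the computation $(\FH^*C^r_0(H))^{\bar{\FE}}=C^r_0(H)^{\bar{\HG}}=\bar{\HG}^rC^r_0(\Ran\HG)$, which itself uses $\delta$ and \cref{eg:formalind}(2); and you also need to produce the isomorphism $\bar{\delta}$ making the right square of the decomposed diagram satisfy \cref{cond:square} and to check that (b) descends to it. These are the contents of \cref{lem:basechangedecomp}(2)--(4) and are not automatic.

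The genuine gap is the closed-quantum-subgroup case, which you explicitly leave as ``the technically heaviest step'' --- but that step is essentially the whole theorem, and ``transporting (iii) through $I^{\ran\HG}$'' is not how it goes; indeed the paper never verifies condition (iii) of \cref{def:ind} for the image of the comparison map. Instead one forms the crossed-product-type algebras $C^\HG=[C^r_0(\hat{E})_1((\delta\otimes\id)\Ind_\HG A)]$ and $C^\FE=[C^r_0(\hat{E})_1\Ind_\FE\FH^*(\alpha A)]$ and proves they are \emph{equal} as subsets of $\mult{\K(E)\otimes\K(H)\otimes A}$; the identity of the induced algebras then falls out by applying $[(\K(E)^*\otimes\id)(\Ad V^E_{12}(-)_{134})]$. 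Hypothesis (b) enters at one precise point that your proposal does not identify: it yields a non-degenerate $*$-homomorphism $C^r_0(E/F)\to\delta\mult{C^r_0(G/H)}\to\mult{C^\HG}$, hence the non-degeneracy $[I^\FE_1 I^{\FE}_1{}^*C^\HG I^\FE_1 I^{\FE}_1{}^*]=C^\HG$. Only with this in hand can one apply the quantum Landstad theorem (after checking, via the argument of \cite[Theorem~9.2]{Vaes:impr}, that $[I^\FE_1{}^*C^\HG I^\FE_1]$ carries a continuous right $\hat{F}$-coaction) to manufacture a left $F$-C*-algebra $B$ with $F\redltimes B$ identified with a conjugate of $[I^\FE_1{}^*C^\HG I^\FE_1]$, and then carry out the computation showing $B=\Ad W^{\FH}_{12}{}^*\alpha(A)_{23}$, which is what finally gives $C^\HG=C^\FE$. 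None of this is present in your proposal, so as written it is a correct plan down to, but not including, the step where all the mathematical content lives.
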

	
	\begin{rem}\label{rem:basechangecondition}
		As in \cref{thm:basechange} above, consider the situation of \cref{cond:square} with $\HG$ and $\FE$ proper. 
		Then the condition (b) in the theorem is equivalent to the following conditions. 
		\begin{enumerate}
			\item[(c)]
			\textit{There is some non-degenerate C*-subalgebra $D\subset\mult{C^r_0(G)}$ which is a left $G$-C*-algebra by $\Delta_G$, such that $\delta(D)=(\Ind_\FE \C)\otimes 1_{\K(H)}$. }
			\item[(d)]
			\textit{$(\Ind_{\FE}\C)\otimes 1_{\K(H)} 
				\subset \delta\mult{\Ind_{\HG}\C}$ is a non-degenerate C*-subalgebra. }
		\end{enumerate}
		Indeed, clearly (b) implies (c). 
		(c) implies (d) because 
		by (iii) of \cref{def:ind} for $\Ind_{\HG}\C$ it holds 
		\begin{align*}
			&
			D=\clin{(\K(G)^*\otimes \id_{C^r_0(G)})\Delta_{G}(D)}
			\subset \clin{(\K(G)^*\otimes \id_{C^r_0(G)})\mult{\K(G)\otimes \Ind_{\HG}\C}} . 
		\end{align*}
		It is proved at the beginning of the proof of (2) in \cref{lem:formalinclusion} that (d) implies (b). 
	\end{rem}
	
	To prove \cref{thm:basechange}, we decompose the homomorphisms as we did in \cref{thm:properind}, 
	and reduce the statement to the cases of closed quantum subgroups and duals of open quantum subgroups. 
	In (4) of \cref{lem:basechangedecomp} next, we prove the latter case at the same time with the reduction step. 
	
	\begin{lem}\label{lem:basechangedecomp}
		Consider the square \cref{diagram:square} with \cref{cond:square}, and let $\HG$ and $\FE$ be proper. Then the followings hold. 
		\begin{enumerate}
			\item
			There is a unique homomorphism $\bar{\FH}:\Ran \HG\to\Ran \FE$ such that the two squares in the following diagram are commutative, 
			\begin{align}\label{diagram:decompsuare}
				\vcenter{\xymatrix{
					F \ar[r]^-{\bar{\FE}}\ar[d]_-{\FH} & 
					\Ran \FE \ar[r]^-{\ran \FE}\ar@{-->}[d]^-{\bar{\FH}} & 
					E\, \ar[d]^-{\EG} \\
					H \ar[r]^-{\bar{\HG}} & \Ran \HG \ar[r]^-{\ran \HG} & G. 
				}}
			\end{align}
			\item
			The left square of \cref{diagram:decompsuare} satisfies \cref{cond:square}. 
			The well-defined left $\Ran\FE$-$*$-isomorphism 
			\begin{align*}
				&
				(\id_{C^r_0(\Ran \FE)}\otimes \bar{\HG}^{r})(\bar{\FH}^*\Delta_{\Ran \HG}) : 
				C^r_0(\Ran \HG)\xrarr{\sim}\Ind_{\bar{\FE}}\FH^*C^r_0(H) 
			\end{align*}
			makes the diagram \cref{diagram:delta} for this square commute. 
			This square satisfies (b) of \cref{thm:basechange} 
			automatically. 
			\item
			The right square of \cref{diagram:decompsuare} satisfies \cref{cond:square}. 
			The well-defined left $E$-$*$-isomorphism  
			\begin{align*}
				&
				\bar{\delta}:=(\id_{C^r_0(E)}\otimes\bar{\HG}^r)^{-1}\delta : 
				C^r_0(G)\xrarr{\sim}\Ind_{\ran \FE}\bar{\FH}^*C^r_0(\Ran \HG)
			\end{align*}
			makes the diagram \cref{diagram:delta} for this square commute. 
			This square satisfies (b) of \cref{thm:basechange} 
			if the original square \cref{diagram:square} satisfies (b). 
			\item
			If (a) of \cref{thm:basechange} holds for the right square of \cref{diagram:decompsuare}, 
			then (a) for the original square \cref{diagram:square} also holds. 
		\end{enumerate}
	\end{lem}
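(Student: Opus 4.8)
The plan is to mimic the proof of \cref{thm:properind}: using the decompositions $\HG=(\ran\HG)\bar{\HG}$ and $\FE=(\ran\FE)\bar{\FE}$, everything reduces to the closed case (handled separately in the proof of \cref{thm:basechange}) and the dual‑of‑open case, for which the explicit formulas of \cref{lem:propersurjind} are available. For (1), recall from the discussion after \cref{def:proper} that $C^u_0(\Ran\HG)$ is canonically the image $\HG^u(C^u_0(G))\subseteq C^u_0(H)$, with $\bar{\HG}^u$ the inclusion and $(\ran\HG)^u$ the corestriction of $\HG^u$, and similarly for $\Ran\FE$. The commutativity $\HG\FH=\EG\FE$ gives $\FH^u\HG^u=\FE^u\EG^u$, so $\FH^u(C^u_0(\Ran\HG))=\FE^u(\EG^u(C^u_0(G)))\subseteq\bar{\FE}^u(\mult{C^u_0(\Ran\FE)})$ by non‑degeneracy of $\FE^u=\bar{\FE}^u(\ran\FE)^u$. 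I would then set $\bar{\FH}^u:=(\bar{\FE}^u)^{-1}\,\FH^u\restriction_{C^u_0(\Ran\HG)}$, a non‑degenerate homomorphism of C*-bialgebras $C^u_0(\Ran\HG)\to\mult{C^u_0(\Ran\FE)}$, and let $\bar{\FH}$ be the corresponding homomorphism; commutativity of the two squares in \cref{diagram:decompsuare} is immediate at the $u$-level, and uniqueness follows because $\ran\HG\bar{\FH}=\EG\ran\FE$ already determines $\bar{\FH}^u$ after precomposing with the surjection $(\ran\HG)^u$. Both sub‑squares are diagrams of regular quantum groups since $\Ran\HG$ and $\Ran\FE$ are regular, as noted before \cref{ssec:properind}.

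For (2), note first that $\bar{\HG}^r$ and $\bar{\FE}^r$ exist, since $\hat{\bar{\HG}}$ and $\hat{\bar{\FE}}$ give open — hence closed — quantum subgroups. The workhorse will be the ``square identity''
\[
(\FH^*\Delta_H)\bar{\HG}^r=(\bar{\FE}^r\otimes\bar{\HG}^r)(\bar{\FH}^*\Delta_{\Ran\HG}),
\]
which I would verify on $\lambda_{\Ran\HG}(C^u_0(\Ran\HG))$ using $\FH^u\bar{\HG}^u=\bar{\FE}^u\bar{\FH}^u$ and the defining relations of the restricted comultiplications. Combined with (4) of \cref{lem:propersurjind} it identifies $(\FH^*C^r_0(H))^{\bar{\FE}}$ with $\bar{\HG}^r(C^r_0(\Ran\HG))$: the inclusion ``$\supseteq$'' is read off the identity, and ``$\subseteq$'' from the conditional expectation $\bar{\HG}^r\bigl(p^{\hat{\bar{\HG}}}(-)(p^{\hat{\bar{\HG}}})^{*}\bigr)$ of \cref{ssec:clopen}. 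By (2) of \cref{lem:propersurjind}, $(\bar{\FE}^r\otimes\id)(\Ind_{\bar{\FE}}\FH^*C^r_0(H))=(\FH^*\Delta_H)\bigl((\FH^*C^r_0(H))^{\bar{\FE}}\bigr)=(\bar{\FE}^r\otimes\bar{\HG}^r)(\bar{\FH}^*\Delta_{\Ran\HG}(C^r_0(\Ran\HG)))$ by the square identity, so injectivity of $\bar{\FE}^r$ shows $\Ind_{\bar{\FE}}\FH^*C^r_0(H)$ is exactly the range of the injective, left $\Ran\FE$-equivariant $*$-homomorphism $(\id\otimes\bar{\HG}^r)(\bar{\FH}^*\Delta_{\Ran\HG})$, which is the isomorphism asserted in (2); commutativity of \cref{diagram:delta} for the left square is the companion relation $(\id\otimes\delta')\Delta_{\Ran\HG}=(\Delta_{\Ran\HG}\rpb\bar{\FH}\otimes\id)(\Delta_{\Ran\HG}\rpb\bar{\HG})$, another $u$-level check. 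Since $\C^{\bar{\HG}}=\C^{\bar{\FE}}=\C$ and the coisometries $p^{\hat{\bar{\HG}}},p^{\hat{\bar{\FE}}}$ are surjective, \cref{lem:propersurjind} gives $\Ind_{\bar{\HG}}\C\isom\C\isom\Ind_{\bar{\FE}}\FH^*\C$ with trivial coactions, and the displayed isomorphism restricts to the identity between these copies of $\C$; hence (b) of \cref{thm:basechange} holds automatically for the left square.

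For (3), I factor $\delta=(\id_{C^r_0(E)}\otimes\bar{\HG}^r)\bar{\delta}$. Comparing the coaction $\alpha^{\bar{\FE}}$ with $\bar{\FH}^*\Delta_{\Ran\HG}$ via the square identity shows $\bar{\HG}^r$ is a left $\Ran\FE$-$*$-isomorphism $\bar{\FH}^*C^r_0(\Ran\HG)\xrarr{\sim}(\FH^*C^r_0(H))^{\bar{\FE}}$, so by (1) of \cref{cor:properind} and \cref{rem:indmulthom}, $\id\otimes\bar{\HG}^r$ restricts to a $*$-isomorphism $\Ind_{\ran\FE}\bar{\FH}^*C^r_0(\Ran\HG)\xrarr{\sim}\Ind_\FE\FH^*C^r_0(H)$ and $\bar{\delta}$ is well defined and an isomorphism. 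Using $\Delta_G\rpb\HG=(\id\otimes\bar{\HG}^r)(\Delta_G\rpb\ran\HG)$ (checked on $\lambda_G(C^u_0(G))$ from $\HG^u=\bar{\HG}^u(\ran\HG)^u$) and injectivity of $\bar{\HG}^r$, the \cref{cond:square}-relation $(\id\otimes\delta)\Delta_G=(\Delta_G\rpb\EG\otimes\id)(\Delta_G\rpb\HG)$ for the original square becomes equivalent to $(\id\otimes\bar{\delta})\Delta_G=(\Delta_G\rpb\EG\otimes\id)(\Delta_G\rpb\ran\HG)$, i.e.~to commutativity of \cref{diagram:delta} for the right square. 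For the last clause I would use criterion (d) of \cref{rem:basechangecondition}: since $\Ind_\FE\C=\Ind_{\ran\FE}\C$, $\Ind_\HG\C=\Ind_{\ran\HG}\C$ and $\delta=(\id\otimes\bar{\HG}^r)\bar{\delta}$ with $\bar{\HG}^r$ injective, the non‑degenerate inclusion $(\Ind_\FE\C)\otimes1_{\K(H)}\subseteq\delta\mult{\Ind_\HG\C}$ furnished by (b) for the original square is equivalent to $(\Ind_{\ran\FE}\C)\otimes1_{\K(\Ran\HG)}\subseteq\bar{\delta}\mult{\Ind_{\ran\HG}\C}$, which is (d) for the right square.

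For (4), assume (a) of \cref{thm:basechange} holds for the right square and fix a left $H$-C*-algebra $(A,\alpha)$. By (1) of \cref{cor:properind}, $\Ind_\HG A=\Ind_{\ran\HG}(A^{\bar{\HG}})$ in $\mult{\K(G)\otimes A}$ and $\Ind_\FE\FH^*A=\Ind_{\ran\FE}(\FH^*A)^{\bar{\FE}}$ in $\mult{\K(E)\otimes A}$, and applying (a) for the right square to the left $\Ran\HG$-C*-algebra $A^{\bar{\HG}}$ yields a left $E$-$*$-isomorphism $\EG^*\Ind_{\ran\HG}(A^{\bar{\HG}})\xrarr{\sim}\Ind_{\ran\FE}\bar{\FH}^*(A^{\bar{\HG}})$ implemented by $(\id\otimes\alpha^{\bar{\HG}})^{-1}(\bar{\delta}\otimes\id)$. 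It then remains to identify $\bar{\FH}^*(A^{\bar{\HG}})$ with $(\FH^*A)^{\bar{\FE}}$ as left $\Ran\FE$-C*-algebras and to check that the resulting composite agrees with $(\id\otimes\alpha)^{-1}(\delta\otimes\id)$ through $\delta=(\id\otimes\bar{\HG}^r)\bar{\delta}$. This identification — which is the ``$A\ne\C$'' version of the dual‑of‑open case, proved here simultaneously with the reduction — is where I expect the real work to lie: by (4) of \cref{lem:propersurjind} both sides are the subalgebras of $A$ cut out respectively by $\alpha(a)\in\mult{\bar{\HG}^r C^r_0(\Ran\HG)\otimes A}$ and $(\FH^*\alpha)(a)\in\mult{\bar{\FE}^r C^r_0(\Ran\FE)\otimes A}$, and showing that these sets and their induced coactions coincide amounts to transporting through the commuting (dual) square the intertwining relations $(p^{\hat{\bar{\HG}}}\otimes p^{\hat{\bar{\HG}}})W^H=W^{\Ran\HG}(p^{\hat{\bar{\HG}}}\otimes p^{\hat{\bar{\HG}}})$, $p^{\hat{\bar{\FE}}}_1 V^F=V^{\bar{\FE}}p^{\hat{\bar{\FE}}}_1$ and the associated conditional expectations of \cref{ssec:clopen}; the bookkeeping of leg positions and multiplier‑algebra subtleties, rather than any conceptual point, is the main obstacle.
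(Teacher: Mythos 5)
Your overall strategy --- decompose $\HG$ and $\FE$ through $\Ran\HG$ and $\Ran\FE$, handle the dual-of-open part explicitly and defer the closed part to the proof of \cref{thm:basechange} --- is exactly the paper's, and your part (1) matches its one-line argument. The genuine gap is the identity
\[
(\FH^*C^r_0(H))^{\bar{\FE}}=\bar{\HG}^{r}C^r_0(\Ran\HG),
\]
on which your parts (2), (3) and (4) all rest. You get ``$\supseteq$'' from the square identity, which is fine, but you claim ``$\subseteq$'' follows from the conditional expectation $\bar{\HG}^r(p^{\hat{\bar{\HG}}}(-)p^{\hat{\bar{\HG}}}{}^*)$. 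That argument never invokes $\delta$, and the identity is simply false for a commuting square that merely commutes: take $F=E=G=1$ and $H$ a nontrivial compact quantum group, where the left-hand side is all of $C^r_0(H)$ while the right-hand side is $\C$, even though the square identity and the conditional expectation are both available there. The paper's proof of this identity (the chain \cref{prf:lem:basechangedecomp1}) is precisely where \cref{cond:square} enters: one writes $C^r_0(H)^{\bar{\HG}}=\clin{(\K(G)^*\otimes\id)\Ind_\HG C^r_0(H)}$ using (2) of \cref{cor:properind} together with the tautological $\Ind_\HG C^r_0(H)=(\Delta_G\rpb\HG)C^r_0(G)$ from \cref{eg:formalind}, converts $(\Delta_G\rpb\HG)C^r_0(G)$ into $\delta(C^r_0(G))=\Ind_\FE\FH^*C^r_0(H)$ via $(\id\otimes\delta)\Delta_G=(\Delta_G\rpb\EG\otimes\id)\Delta_G\rpb\HG$, and slices again to land in $(\FH^*C^r_0(H))^{\bar{\FE}}$.

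The same omission resurfaces in your (4): the identification $\alpha(A^{\bar{\HG}})=\FH^*(\alpha A)^{\bar{\FE}}$, which you defer as ``bookkeeping'' with intertwining relations of the coisometries, is not bookkeeping --- the two spectral conditions $\alpha(a)\in\mult{\bar{\HG}^rC^r_0(\Ran\HG)\otimes A}$ and $(\FH^*\alpha)(a)\in\mult{\bar{\FE}^rC^r_0(\Ran\FE)\otimes A}$ live over different quantum groups, and there is no a priori commuting square of coisometries relating $p^{\hat{\bar{\HG}}}$ to $p^{\hat{\bar{\FE}}}$; the only bridge is $\delta$. The paper deduces this step from the displayed identity above, by substituting $(\FH^*C^r_0(H))^{\bar{\FE}}$ for $\bar{\HG}^rC^r_0(\Ran\HG)$ inside $\alpha(A)\cap\mult{\cdots\otimes A}$ and then applying the $\bar{\FE}$-conditional expectation $p_1(-)p_1^*$ via \cref{lem:propersurjind}. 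Once that identity is supplied, the remainder of your outline --- the factorization $\delta=(\id\otimes\bar{\HG}^r)\bar{\delta}$ in (3), the transfer of conditions (b)/(d), and the reduction of (a) to the right square --- does track the paper's proof.
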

	
	\begin{proof}
		(1) follows from the fact that 
		$C^u_0(\Ran \HG)$ can be canonically identified with the image of $\HG^{u}:C^u_0(G)\to C^u_0(H)$, 
		and so is $C^u_0(\Ran \FE)$. 
		
		We show (2). 
		The commutativity of the diagram \cref{diagram:delta} for this square is easy. 
		By using (2) of \cref{eg:formalind} and \cref{cor:properind} we calculate 
		\begin{align*}
			&
			C^r_0(H)^{\bar{\HG}} 
			= 
			\clin{(\K(G)^*\otimes \id)\bigl(\Ind_\HG C^r_0(H)\bigr)}
			= 
			\clin{(\K(G)^*\otimes \id)\bigl((\Delta_G\rpb \HG)C^r_0(G)\bigr)}
			\\&
			=
			\clin{(\K(G)^*\otimes \K(E)^*\otimes \id)
				\bigl(\Ad V^{\EG}_{12}
				((\Delta_G\rpb \HG)C^r_0(G))_{13}\bigr)}
			\\&
			=
			\clin{(\K(G)^*\otimes \K(E)^*\otimes \id)
				(\id\otimes \delta )
				\Delta_G C^r_0(G)}
			=
			\clin{(\K(E)^*\otimes \id)
				(\delta C^r_0(G))}
			\\&
			=
			\clin{(\K(E)^*\otimes \id)
				\bigl(\Ind_{\FE} \FH^* C^r_0(H)\bigr)}
			=
			(\FH^* C^r_0(H))^{\bar{\FE}}. 
		\end{align*}
		Thus again a similar calculation shows 
		\begin{align}\label{prf:lem:basechangedecomp1}
			&
			\begin{aligned}
				&
				(\FH^* C^r_0(H))^{\bar{\FE}}
				=
				C^r_0(H)^{\bar{\HG}} 
				=
				\clin{(\K(\Ran \HG)^*\otimes \bar{\HG}^{r}) \bigl(\Delta_{\Ran\HG} C^r_0(\Ran \HG)\bigr)}
				=
				\bar{\HG}^{r} C^r_0(\Ran \HG) . 
			\end{aligned}
		\end{align}
		From this and \cref{lem:propersurjind} we obtain 
		\begin{align*}
			&
			(\bar{\FE}^{r}\otimes \bar{\HG}^{r})(\bar{\FH}^*\Delta_{\Ran \HG}) C^r_0(\Ran \HG) 
			=(\FH^*\Delta_H) \bar{\HG}^{r} C^r_0(\Ran \HG) 
			=
			(\FH^*\Delta_H) (\FH^* C^r_0(H)^{\bar{\FE}})
			\\&
			=
			(\bar{\FE}^{r}\otimes \id)
			((\FH^*\Delta_H)^{\bar{\FE}}) (\FH^* C^r_0(H)^{\bar{\FE}})
			=
			(\bar{\FE}^{r}\otimes \id) \Ind_{\bar{\FE}}\FH^* C^r_0(H) . 
		\end{align*}
		Therefore the desired $*$-isomorphism follows from the injectivity of $\bar{\FE}^{r}\otimes \id$. 
		Now (b) of \cref{thm:basechange} for this square holds because of (1) in \cref{lem:formalinclusion} and the fact 
		$\Ind_{\bar{\HG}}\C\isom \C\isom \Ind_{\bar{\FE}}\C$. 
		
		As for (3), 
		$\bar{\delta}$ is well-defined as a map of $C^r_0(G)\to \M(C^r_0(E)\otimes C^r_0(\Ran\HG))$ 
		because 
		\begin{align*}
			&
			\delta C^r_0(G)
			=
			(\lambda_E\otimes\lambda_{H})(\EG^u\otimes\HG^u)\Delta^u_G C^u_0(G)
			=
			(\id\otimes\bar{\HG}^r)(\lambda_E\otimes\lambda_{\Ran\HG})(\EG^u\otimes(\ran\HG)^u)\Delta^u_G C^u_0(G)
			\\&
			\subset
			(\id\otimes\bar{\HG}^r)\mult{C^r_0(E)\otimes C^r_0(\Ran \HG)} . 
		\end{align*}
		By definition, $\bar{\delta}$ makes the diagram \cref{diagram:delta} commute. 
		By using \cref{cor:properind} and \cref{prf:lem:basechangedecomp1}, we get
		\begin{align*}
			&
			(\id\otimes \bar{\HG}^{r})
			\Ind_{\ran \FE}\bar{\FH}^* C^r_0(\Ran \HG)
			=
			(\id\otimes \bar{\HG}^{r})
			\Ind_{\FE}(\bar{\FH}\bar{\FE})^* C^r_0(\Ran \HG)
			\\&
			=
			(\id\otimes \bar{\HG}^{r})
			\Ind_{\FE}(\bar{\HG}\FH)^* C^r_0(\Ran \HG)
			=
			\Ind_{\FE}\bar{\FE}^*(\FH^* C^r_0(H)^{\bar{\FE}})
			=
			\Ind_{\FE}\FH^* C^r_0(H) . 
		\end{align*}
		Therefore $\bar{\delta}:C^r_0(G)\to \Ind_{\ran \FE}\bar{\FH}^* C^r_0(\Ran \HG)$ is a well-defined $*$-isomorphism 
		because $\delta$ is a $*$-isomorphism by the assumption. 
		(b) of \cref{thm:basechange} for the original square \cref{diagram:square} implies 
		(b) for the right square of \cref{diagram:decompsuare} 
		because 
		$\Ind_{\ran \FE}\C = \Ind_{\FE}\C$ and $\Ind_{\ran \HG}\C = \Ind_{\HG}\C$ hold by \cref{cor:properind}. 
		
		Finally, we prove (4). 
		Take a left $H$-C*-algebra $(A,\alpha)$ arbitrarily. 
		By \cref{lem:propersurjind} and \cref{prf:lem:basechangedecomp1}, we calculate 
		\begin{align*}
			&
			(\FH^*\Delta_H\otimes\id)\alpha(A^{\bar{\HG}})
			=
			(\FH^*\Delta_H\otimes\id)
			\clin{
				\alpha(A)\cap\M(\bar{\HG}^rC^r_0(\Ran\HG)\otimes A) }
			\\&
			=
			(\FH^*\Delta_H\otimes\id)
			\bigl[\alpha(A)\cap\M(\FH^* C^r_0(H)^{\bar{\FE}}\otimes A)\bigr]
			\\&
			=
			(\bar{\FE}^r\otimes\id\otimes\id)
			\bigl(p_1 \bigl((\FH^*\Delta_H\otimes\id)\alpha(A)\bigr) p_1^*\bigr)
			=
			(\FH^*\Delta_H\otimes\id)\bigl(\FH^*(\alpha A)^{\bar{\FE}}\bigr) , 
		\end{align*}
		where we have put $p:=p^{\hat{\bar{\FE}}}$ associated with $\hat{\bar{\FE}}$. 
		Thus we get $\FH^*(\alpha(A^{\bar{\HG}}))=\bar{\FE}^*(\FH^*(\alpha A)^{\bar{\FE}})$ as left $F$-C*-algebras. 
		If (a) of \cref{thm:basechange} for the right square of \cref{diagram:decompsuare} holds, 
		then $(\bar{\delta}\otimes\id)(\Ind_{\ran\HG}B)=\Ind_{\ran\FE}\bar{\FH}^*(\beta B)$ for any left $(\Ran\HG)$-C*-algebra $(B,\beta)$. 
		Therefore we obtain 
		\begin{align*}
			&
			(\delta\otimes \id)\Ind_{\HG}A
			=
			(\delta\otimes \id)\Ind_{\ran \HG}A^{\bar{\HG}}
			=
			(\id\otimes \bar{\HG}^{r}\otimes \id)
			\Ind_{\ran \FE}\bar{\FH}^*(\alpha^{\bar{\HG}}(A^{\bar{\HG}}))
			\\&
			=
			(\id\otimes \bar{\HG}^{r}\otimes \id)
			\Ind_{\FE}(\bar{\FH}\bar{\FE})^*(\alpha^{\bar{\HG}}(A^{\bar{\HG}}))
			=
			(\id\otimes \bar{\HG}^{r}\otimes \id)
			\Ind_{\FE}(\bar{\HG}\FH)^*(\alpha^{\bar{\HG}}(A^{\bar{\HG}}))
			\\&
			=
			\Ind_{\FE}\FH^*(\alpha(A^{\bar{\HG}}))
			=
			\Ind_{\FE}\bar{\FE}^* (\FH^*(\alpha A)^{\bar{\FE}})
			=
			\Ind_{\FE}\FH^*(\alpha A) 
			=
			(\id\otimes\alpha)\Ind_{\FE}\FH^*A . 
		\end{align*}
		Now (a) for the original square \cref{diagram:square} follows since $\delta\otimes \id_A$ is an injective non-degenerate left $E$-$*$-homomorphism. 
	\end{proof}
	
	We proceed to prove \cref{thm:basechange}. 
	The remaining case is the case when $\HG$ and $\FE$ give closed quantum subgroups. 
	In order to prove this, we will concretely realize a left $F$-C*-algebra $B$ with $\Ind_\FE B\isom \EG^*\Ind_{\HG}A$ 
	by following the proof of~\cite[Theorem~9.2]{Vaes:impr}, 
	and reduce the conclusion to $\FH^*\alpha(A)\isom B$. 
	
	\begin{prf:thm:basechange}
		We only have to prove (b) implies (a). 
		By (3) and (4) of \cref{lem:basechangedecomp}, it suffices to consider 
		the case when $\HG$ and $\FE$ give closed quantum subgroups. 
		Consider the C*-algebras  
		\begin{align*}
			\begin{aligned}
				&
				C^\HG:=\clin{C^r_0(\hat{E})_1 \bigl((\delta\otimes \id_A) \Ind_{\HG}A\bigr)}, 
				\quad\mathrm{and}\quad
				C^\FE:=\clin{C^r_0(\hat{E})_1 (\Ind_{\FE}\FH^*(\alpha A))}, 
			\end{aligned}
		\end{align*}
		which are well-defined right $\hat{E}$-C*-algebras with the coactions given by the restrictions of 
		$\Ad \hat{V}^{E}_{14}(-)_{123}$. 
		The restrictions of $\Ad V^E_{12}(-)_{134}$ give 
		right $\hat{E}$-$*$-isomorphisms from $C^\HG$ and $C^\FE$ to 
		$E\redltimes \bigl((\delta\otimes \id) \Ind_{\HG}A\bigr)$ and 
		$E\redltimes \Ind_{\FE}\FH^*(\alpha A)$, respectively. 
		
		Let us assume (b). 
		Then we can consider a non-degenerate $*$-homomorphism 
		\begin{align*}
			C^r_0(E/F)\xrarr{-\otimes1_{\K(H)}}
			\delta \mult{C^r_0(G/H)} \xrarr{-\otimes 1_A}
			\M(C^\HG) , 
		\end{align*}
		where the last map is well-defined and non-degenerate because 
		$C^r_0(G/H)\otimes 1_A\subset \M(\Ind_{\HG}A)$ is a non-degenerate C*-subalgebra by (iii) for \cref{def:ind} of $\Ind_\HG A$. 
		It follows $\clin{C^r_0(\hat{E})C^r_0(E/F)}_1\subset \mult{C^\HG}$ is a non-degenerate C*-subalgebra, and thus 
		$\clin{I^{\FE}_1I^{\FE}_1{}^* C^\HG I^{\FE}_1I^{\FE}_1{}^*} =C^\HG$. 
		
		It turns out $\clin{I^{\FE}_1{}^* C^\HG I^{\FE}_1}$ is a right $\hat{E}$-C*-algebra with $\Ad \hat{V}^{\FE}_{14}(-)_{123}$, 
		because $\hat{V}^{E}(-)_1\hat{V}^{\FE}{}^*$ gives a well-defined continuous right $\hat{E}$-coaction on $I^\FE$. 
		Therefore $\Ad \hat{V}^{F}_{14}(-)_{123}$ is a well-defined continuous right $\hat{F}$-coaction on $\clin{I^{\FE}_1{}^* C^\HG I^{\FE}_1}$ 
		by the argument from~\cite[Theorem~9.2]{Vaes:impr} (see \cref{lem:coactionclosedsubqg} below). 
		Since $W^F_{12}\in\U\mult{C^r_0(F)\otimes [I^{\FE}_1{}^* C^\HG I^{\FE}_1]}$ satisfies 
		$(\Delta_F\otimes \id)W^F=W^F_{13}W^F_{23}$ and $\Ad \hat{V}^{F}_{25}(W^F_{12})=W^F_{15}W^F_{12}$, 
		we can use (the proof of)~\cite[Theorem~6.7]{Vaes:impr}. 
		As a consequence, it holds that 
		\begin{align*}
			&
			B:=\Bigl[(\K(F)^*\otimes \id_{\K(F)\otimes \K(H)\otimes A}) 
				\Bigl(\Ad (W^{F}_{12}{}^* \hat{V}^{F}_{21}) 
				\clin{I^{\FE}_1{}^* C^\HG I^{\FE}_1}_{234}\Bigr)\Bigr]
			\subset \mult{\K(F)\otimes \K(H)\otimes A}
		\end{align*}
		with $\Ad W^{F}_{12}{}^*(-)_{234}$ is a well-defined left $F$-C*-algebra, and that 
		$F\redltimes B = 
		\Ad (W^{F}_{12}{}^* \hat{V}^{F}_{21})\clin{I^{\FE}_1{}^* C^\HG I^{\FE}_1}_{234}$. 
		
		We note that up to a scalar it holds 
		$V^F \Sigma (\hat{J}^{F}J^{F})_1 = W^F{}^* \hat{V}^{F}_{21}$, 
		since up to a scalar $1=(\Sigma (J^F\hat{J}^{F})_2V^F)^3$ 
		by~\cite[(2.2)]{Baaj-Skandalis-Vaes:nonsemireg}. 
		Combining this with $I^\FE_1 V^F{}^* = ((\hat{\FE}^{r}{}'\otimes \id)V^F)^* I^\FE_1 = V^\FE{}^* I^\FE_1$, we have 
		\begin{align*}
			&
			\Ad (W^{F}_{12}{}^* \hat{V}^{F}_{21})\clin{I^{\FE}_1{}^* C^\HG I^{\FE}_1}_{234}
			=\Ad V^F_{12}
			\clin{I^{\FE}_1{}^* C^\HG I^{\FE}_1}_{134}
			=\clin{I^{\FE}_1{}^* V^{\FE}_{12} C^\HG_{134} V^{\FE}_{12}{}^* I^{\FE}_1} . 
		\end{align*}
		Since $\Ad V^{\FE}_{12}(C^\HG_{134})=\Ad W^{\FH}_{23}{}^*(C^\HG_{134})$ by the definition of $\delta$, 
		we obtain from the arguments so far, 
		\begin{align*}
			&
			\clin{I^\FE_1 C^r_0(\hat{F})_1 W^{F}_{12}{}^* B_{234} W^{F}_{12} I^\FE_1{}^*}
			=
			\clin{I^\FE_1 I^{\FE}_1{}^* W^{\FH}_{23}{}^* C^\HG_{134} W^{\FH}_{23} I^{\FE}_1 I^{\FE}_1{}^*}
			=
			\Ad W^{\FH}_{23}{}^*(C^\HG_{134}). 
		\end{align*}
		
		On the other hand, we also have 
		\begin{align*}
			&
			\clin{I^\FE_1 C^r_0(\hat{F})_1 (\Ad(W^{F}_{12}{}^* W^{\FH}_{23}{}^*)\alpha(A)_{34}) I^\FE_1{}^*}
			\\&
			=
			\clin{I^\FE_1 C^r_0(\hat{F})_1 (\Ad(W^{\FH}_{23}{}^* W^{\FH}_{13}{}^*)\alpha(A)_{34}) I^\FE_1{}^*}
			=
			\Ad W^{\FH}_{23}{}^* (C^\FE_{134}) . 
		\end{align*}
		Therefore if it holds 
		\begin{align}\label{prf:thm:basechange2}
			B=\Ad W^{\FH}_{12}{}^* \alpha(A)_{23}
			\quad\mathrm{in}\quad \mult{\K(F)\otimes \K(H)\otimes A}, 
		\end{align}
		then $C^\HG =C^\FE\subset\mult{\K(E)\otimes \K(H)\otimes A}$, and 
		by taking $\clin{(\K(E)^*\otimes \id_{\K(E)\otimes \K(H)\otimes A})\bigl(\Ad V^{E}_{12}(-)_{134}\bigr)}$ 
		it follows $(\delta\otimes \id)\Ind_\HG A=\Ind_\FE \FH^*(\alpha A)$, which will complete the proof. 
		But we calculate by writing $A_\K:=\K(F)\otimes \K(H)\otimes A$, 
		\begin{align*}
			&
			B
			=
			\clin{(\K(F)^*\otimes \id_{A_\K})\left(
				C^r_0(\hat{F})_1 \bigl(\Ad W^{F}_{12}{}^*(B)_{234}\bigr)\right)}
			\\&=
			\clin{(\K(E)^*\otimes \id_{A_\K})\left(
				I^{\FE}_1 C^r_0(\hat{F})_1 \bigl(\Ad W^{F}_{12}{}^*(B)_{234}\bigr) I^{\FE}_1{}^* \right)}
			\\&=
			\clin{(\K(E)^*\otimes \id_{A_\K})\left(
				\Ad W^{\FH}_{23}{}^* C^{\HG}_{134} \right)}
			=
			\clin{(\K(E)^*\otimes \id_{A_\K})\Bigl(
				\Ad W^{\FH}_{23}{}^* 
				((\delta\otimes \id_A)\Ind_{\HG}A)_{134} \Bigr)}
			\\&=
			\clin{(\K(G)^*\otimes \K(E)^*\otimes \id_{A_\K})\Bigl(
				\Ad W^{\FH}_{34}{}^* 
				\bigl((\id_{\K(G)}\otimes \delta\otimes \id_A)(\Delta_G\otimes \id_A)\Ind_{\HG}A\bigr)_{1245} \Bigr)}
			\\&=
			\clin{(\K(G)^*\otimes \K(E)^*\otimes \id_{A_\K})\left(
				\Ad (V^{\EG}_{12}W^{\FH}_{34}{}^*V^{\HG}_{14}) 
				(\Ind_{\HG}A)_{15} \right)}
			\\&=
			\clin{(\K(G)^*\otimes \id_{A_\K})\left(
				\Ad (W^{\FH}_{23}{}^*V^{\HG}_{13}) 
				(\Ind_{\HG}A)_{14} \right)}
			\\&=
			\Ad W^{\FH}_{12}{}^* 
			\clin{(\K(G)^*\otimes \id_{\K(H)\otimes A})\bigl(\Ind_\HG \alpha(A)\bigr)}_{23}
			=\Ad W^{\FH}_{12}{}^*\alpha(A)_{23} , 
		\end{align*}
		and thus \cref{prf:thm:basechange2} holds as desired. 
	$\sq$\end{prf:thm:basechange}
	
	In the proof above, we have used an argument taken from~\cite[Theorem~9.2]{Vaes:impr}. 
	We record this for the convenience of the reader. 
	
	\begin{lem}\label{lem:coactionclosedsubqg}
		Let $\HG:H\to G$ be a homomorphism of regular locally compact quantum groups which gives a closed quantum subgroup, 
		$\HC$ be a Hilbert space, 
		and $A\subset \B(\HC)$ be a non-degenerate C*-subalgebra. 
		Let $\alpha:A\to \B(\HC)\barotimes L^\infty(\hat{H})$ be a $*$-homomorphism such that $(\id\barotimes\hat{\HG}^r)\alpha$ is a continuous right $\hat{G}$-coaction on $A$. 
		Then $\alpha$ is a continuous right $\hat{H}$-coaction on $A$. 
	\end{lem}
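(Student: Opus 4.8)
The plan is to separate the von Neumann algebraic content of the statement, which is essentially formal, from the C*-algebraic content, where the actual argument of~\cite[Theorem~9.2]{Vaes:impr} is needed. Write $M:=A''\subseteq\B(\HC)$ and $\beta:=(\id\barotimes\hat{\HG}^{r})\alpha$, and recall that since $\HG$ gives a closed quantum subgroup, $\hat{\HG}^{r}:L^\infty(\hat{H})\hookrightarrow L^\infty(\hat{G})$ is a normal injective unital homomorphism of von Neumann bialgebras; in particular it intertwines $\Delta_{\hat{H}}$ with $\Delta_{\hat{G}}$.

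First I would settle the von Neumann level. Since $\beta$ is a continuous right $\hat{G}$-coaction on the non-degenerately represented C*-algebra $A$, it has a normal extension $\bar{\beta}:M\to M\barotimes L^\infty(\hat{G})$, and $\beta(A)\subseteq\M(A\otimes C^r_0(\hat{G}))\subseteq M\barotimes L^\infty(\hat{G})$. On the other hand each $\beta(a)=(\id\barotimes\hat{\HG}^{r})\alpha(a)$ lies in $\B(\HC)\barotimes\hat{\HG}^{r}(L^\infty(\hat{H}))$, and the commutant identity $(M\barotimes L^\infty(\hat{G}))\cap\bigl(\B(\HC)\barotimes\hat{\HG}^{r}(L^\infty(\hat{H}))\bigr)=M\barotimes\hat{\HG}^{r}(L^\infty(\hat{H}))$ forces $\alpha(A)\subseteq M\barotimes L^\infty(\hat{H})$; by normality the same holds for the normal extension $\bar{\alpha}:=(\id\barotimes\hat{\HG}^{r})^{-1}\bar{\beta}$ of $\alpha$. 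Injectivity of $\alpha$ is immediate from injectivity of $\id\barotimes\hat{\HG}^{r}$ and of $\beta$. The coaction identity $(\bar{\alpha}\barotimes\id)\bar{\alpha}=(\id\barotimes\Delta_{\hat{H}})\bar{\alpha}$ follows by applying the injective homomorphism $\id\barotimes\hat{\HG}^{r}\barotimes\hat{\HG}^{r}$ to both sides: the left side maps to $(\bar{\beta}\barotimes\id)\bar{\beta}$, and, using that $\hat{\HG}^{r}$ intertwines the comultiplications, the right side maps to $(\id\barotimes\Delta_{\hat{G}})\bar{\beta}$; these agree because $\bar{\beta}$ is a $\hat{G}$-coaction.

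It then remains to promote $\alpha$ to a genuine \emph{continuous} right $\hat{H}$-coaction on $A$, i.e.\ to show $\alpha(A)\subseteq\M(A\otimes C^r_0(\hat{H}))$ together with $\clin{\alpha(A)(1\otimes C^r_0(\hat{H}))}=A\otimes C^r_0(\hat{H})$. Here weak continuity of $\alpha$ comes for free: slicing $\alpha(a)$ by $\omega\in L^\infty(\hat{H})_*$ equals slicing $\beta(a)$ by any $\tilde{\omega}\in L^\infty(\hat{G})_*$ with $\tilde{\omega}\circ\hat{\HG}^{r}=\omega$, and such $\tilde{\omega}$ always exists because the predual map of a normal injective $*$-homomorphism of von Neumann algebras is surjective; hence $\clin{(\id_A\otimes\K(\hat{H})^*)\alpha(A)}=\clin{(\id_A\otimes\K(\hat{G})^*)\beta(A)}=A$. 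The substantive point is the remaining strong-continuity and multiplier statement, and this is where I would follow~\cite[Theorem~9.2]{Vaes:impr}. The idea is to exploit the fine structure of the closed quantum subgroup: the imprimitivity bimodule $I^\HG\subseteq\B(L^2(H),L^2(G))$ with $\clin{I^\HG L^2(H)}=L^2(G)$ and $\clin{I^\HG{}^*L^2(G)}=L^2(H)$, the von Neumann Morita equivalence between $L^\infty(\hat{G})\vee L^\infty(G/H)$ and $L^\infty(\hat{H})$ implemented by $\I^\HG$, and its C*-level counterpart $\clin{I^\HG{}^*[C^r_0(\hat{G})C^r_0(G/H)]I^\HG}=C^r_0(\hat{H})$, can be used to transport the $\hat{G}$-Podle\'s condition $\clin{\beta(A)(1\otimes C^r_0(\hat{G}))}=A\otimes C^r_0(\hat{G})$ into the desired $\clin{\alpha(A)(1\otimes C^r_0(\hat{H}))}=A\otimes C^r_0(\hat{H})$, which then also forces $\alpha(A)\subseteq\M(A\otimes C^r_0(\hat{H}))$. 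Regularity of $G$, hence of $H$, is what makes $I^\HG$, $C^r_0(G/H)$ and these C*-level identities available in the first place (cf.~\cref{rem:strreg}); and once $\alpha(A)\subseteq\M(A\otimes C^r_0(\hat{H}))$ is known, strong continuity could alternatively be read off from the weak continuity above via~\cite[Proposition~5.8]{Baaj-Skandalis-Vaes:nonsemireg} and regularity of $H$. I expect this last step to be the main obstacle: the von Neumann picture and weak continuity are soft, whereas producing an honest continuous C*-coaction---one landing in the multiplier algebra and satisfying the Podle\'s density condition---genuinely requires the multiplicative-unitary and imprimitivity-bimodule machinery of~\cite{Vaes:impr}.
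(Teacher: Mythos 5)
Your soft steps are fine and match the paper: injectivity is immediate, coassociativity follows by applying $\id\otimes\hat{\HG}^r\otimes\hat{\HG}^r$ and using that $\hat{\HG}^r$ intertwines the comultiplications, and the slice-density $\clin{(\id_A\otimes\K(H)^*)\alpha(A)}=A$ follows by extending normal functionals along the injective normal map $\hat{\HG}^r$. The von Neumann--algebraic detour through $M=A''$ and the intersection formula for tensor products is harmless but not needed. The problem is that the actual content of the lemma --- that $\alpha$ lands non-degenerately in $\mult{A\otimes C^r_0(\hat{H})}$ and satisfies the Podle\'s condition $\clin{\alpha(A)(1\otimes C^r_0(\hat{H}))}=A\otimes C^r_0(\hat{H})$ --- is exactly the part you do not prove: you say the imprimitivity machinery ``can be used to transport'' the $\hat{G}$-Podle\'s condition and that you ``expect this last step to be the main obstacle.'' That is the whole lemma; as written the proposal has a genuine gap there, and the transport-through-$I^\HG$ route you gesture at is not what the paper does and would need real work to make precise (one would have to control $\clin{\J\,C^r_0(\hat{G})}$ against $C^r_0(\hat{H})$, which is not one of the quoted identities).

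For comparison, the paper closes this gap with two short, separate arguments. First, it introduces the intertwiner space $\J:=\{v\in\B(L^2(G),L^2(H))\mid xv=v\hat{\HG}^r(x)\ \text{for all}\ x\in L^\infty(\hat{H})\}$, for which $\clin{\J L^2(G)}=L^2(H)$ by \cref{eq:recallclosedind1}; the intertwining relation gives $\alpha(a)(1\otimes v)=(1\otimes v)\beta(a)$ with $\beta=(\id\barotimes\hat{\HG}^r)\alpha$, so
\begin{align*}
\clin{\alpha(A)(A\otimes L^2(H))}=\clin{(1\otimes\J)\,\beta(A)(A\otimes L^2(G))}=\clin{(1\otimes\J)(A\otimes L^2(G))}=A\otimes L^2(H),
\end{align*}
which already yields that $\alpha:A\to\mult{A\otimes\K(H)}$ is well defined and non-degenerate. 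Second, the Podle\'s condition is \emph{not} transported from $\hat{G}$ at all: it is deduced from coassociativity together with regularity of $\hat{H}$, via $\clin{\alpha(A)C^r_0(\hat{H})_2}=\clin{(\id\otimes\id\otimes\K(H)^*)(\hat{W}^H_{23}{}^*\alpha(A)_{13}\hat{W}^H_{23}\,C^r_0(\hat{H})_2)}=A\otimes C^r_0(\hat{H})$. So the missing idea is to work at the level of the Hilbert spaces $L^2(G)$, $L^2(H)$ with the intertwiner space (rather than with the C*-imprimitivity bimodule and $C^r_0(G/H)$), and then to invoke regularity of $\hat{H}$ for the density condition. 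Your closing remark that weak continuity plus regularity would finish the job once the multiplier property is known is correct, but the multiplier property is precisely what must be, and is not, established.
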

	
	\begin{proof}
		Clearly $\alpha$ is injective on $A$. 
		By putting 
		\begin{align*}
			&
			\J:=\{v\in\B(L^2(G),L^2(H))\,|\, xv=v\hat{\HG}^r(x)\mathrm{\ for\ all\ }x\in L^\infty(\hat{H})\} , 
		\end{align*}
		we have 
		\begin{align*}
			&
			\clin{\alpha(A) L^2(H)_2} 
			=
			\clin{\alpha(A) (\J L^2(G))_2} 
			= \bigl[\J_2 \bigl((\id\barotimes\hat{\HG}^r)\alpha(A)\bigr) L^2(G)_2\bigr] 
			\\&
			= \clin{\J_2 (A\otimes L^2(G))} 
			= A\otimes L^2(H). 
		\end{align*}
		Thus $\alpha:A\to \M(A\otimes\K(H))$ 
		is a well-defined non-degenerate $*$-homomorphism. 
		By the injectivity of $\hat{\HG}^r$ it holds 
		$(\alpha\otimes\id_{\K(H)})\alpha=(\id_{\B(\HC)}\barotimes\Delta_{\hat{H}})\alpha$. 
		Then it follows from the regularity of $\hat{H}$ that 
		\begin{align*}
			&
			[\alpha(A)C^r_0(\hat{H})_2 ]
			= \bigl[(\id\otimes\id\otimes\K(H)^*)
				\bigl(\hat{W}^H_{23}{}^*\alpha(A)_{13}\hat{W}^H_{23} C^r_0(\hat{H})_2 \bigr)\bigr]
			= A\otimes C^r_0(\hat{H}), 
		\end{align*}
		which shows the desired statement. 
	\end{proof}
	
	\begin{rem}\label{rem:hmgquot}
		Consider the square \cref{diagram:square} with \cref{cond:square} and let $\HG$ and $\FE$ be proper as in \cref{thm:basechange}. 
		To use \cref{thm:basechange}, the condition (b) looks still technical. 
		But in many practical cases, this follows automatically from other assumptions, as we list here. 
		\begin{itemize}
			\item
			As observed by~\cite{Vaes:impr} and~\cite{Nest-Voigt:eqpd}, (c) of \cref{rem:basechangecondition} 
			holds when the square \cref{diagram:square} is given by (generalized) quantum doubles. 
			See \cref{eg:ydbasechange} below for detail. 
			This is our motivating example of \cref{thm:basechange}. 
			\item
			(b) holds when $\EG$ gives an open quantum subgroup by (2) of \cref{lem:formalinclusion}. 
			\item
			(b) holds when $\hat{\HG}$ gives an open quantum subgroup by (2) of \cref{lem:basechangedecomp}. 
			This is also contained in the following case. 
			\item
			(b) holds if the next condition holds. 
			\begin{itemize}
				\item[(d${}^+$)]
				\textit{ $\Ind_{\HG}\C$ also gives the 
				induced coaction $C^r_0(G)^H$ of the right $H$-C*-algebra $C^r_0(G)\rpb \HG$ along the homomorphism $1_{H\to1}$. }
			\end{itemize}
			To see this, we observe as in the beginning of the proof of (2) in \cref{lem:formalinclusion} that  
			\begin{align*}
				&
				(\Ind_{\FE}\C) \otimes 1_{\K(H)} 
				\subset 
				\delta\M(C^r_0(G))^H
				= \delta\mult{C^r_0(G)^H} . 
			\end{align*}
			This inclusion is non-degenerate by (iii) of \cref{def:ind} for $C^r_0(G)^H$, 
			which shows (d) of \cref{rem:basechangecondition}. 
			As we saw in \cref{eg:hmgquot}, (d${}^+$) holds 
			when $G$ is a locally compact group, $\HG$ gives an open quantum subgroup, or $H$ is a compact quantum group. 
		\end{itemize}
	\end{rem}
	
	We give some examples. From now, we use the notation and terminologies from \cref{sec:appendix} except \cref{prop:twiten} and \cref{eg:twiten}, which concern twisted tensor products. 
	
	\begin{eg}\label{eg:doublebasechange}
		Let $F,G,H$ be regular locally compact quantum groups, 
		$\m\in\Aut(C^r_0(G)\otimes C^r_0(F))$ extend to a matching on $G$ and $F$, and 
		$\n\in\Aut(C^r_0(H)\otimes C^r_0(F))$ extend to a matching on $H$ and $F$. 
		Consider a proper homomorphism $\HG:H\to G$ satisfying \cref{eq:prop:doublehom1}. 
		Then we have a homomorphism 
		$\HG^{\op}\bowtie\id_F:\DD(\n)\to \DD(\m)$ 
		by \cref{prop:doublehom} and 
		the commutative square 
		\begin{align*}
			\xymatrix{
				H^{\op}\ar[r]^-{\HG^{\op}}\ar[d] & G^{\op}\ar[d] \\
				\DD(\n) \ar[r]^-{\HG^{\op}\bowtie\id_F} & \DD(\m)
			}
		\end{align*}
		satisfies \cref{cond:square}. 
		Indeed, $\delta$ is given by $(\sigma\HG^*\Delta_G)\otimes \id$ 
		and $C^r_0(\DD(\n))\isom C^r_0(H^{\op})\otimes C^r_0(\hat{F})$ with $\Delta_H^{\cop}\otimes\id$ as left $H^{\op}$-C*-algebras. 
		For example, when $\HG$ gives an open quantum subgroup it follows 
		that $\HG^{\op}\bowtie\id$ also gives an open quantum subgroup by \cref{prop:doublehom}, 
		and thus (a) of \cref{thm:basechange} holds by \cref{rem:hmgquot}. 
	\end{eg}
	
	\begin{eg}\label{eg:ydbasechange}
		Consider a homomorphism $\HG:H\to G$ of strongly regular locally compact quantum groups that gives a closed quantum subgroup and
		a $H$-Yetter--Drinfeld C*-algebra $(A,\alpha,\hat{\alpha})$. 
		It is shown by~\cite[Proposition~3.4]{Nest-Voigt:eqpd} that
		$\Ind_\HG ((\id\bowtie\hat{\HG})^*A)$ has a canonical structure of an $G$-Yetter--Drinfeld C*-algebra. 
		Note that with \cref{rem:strreg} taken into account, this statement only requires regularity. 
		
		More generally, let $\HG:H\to G$, $\GF:G\to F$ be homomorphisms of regular locally compact quantum groups with $\HG$ being proper. 
		By the previous example, the square 
		\begin{align*}
			\xymatrix{
				H\ar[r]^-{\HG}\ar[d] & G\ar[d] \\
				\DD(\GF\HG) \ar[r]^-{\HG\bowtie\id_{\hat{F}}} & \DD(\GF)
			}
		\end{align*}
		satisfies \cref{cond:square}. 
		By \cref{eg:doublehom}, $\HG\bowtie\id_{\hat{F}}:\DD(\GF\HG)\to \DD(\GF)$ is a well-defined proper homomorphism. 
		We want to show this satisfies (a) in \cref{thm:basechange}. 
		
		By the proof of~\cite[Theorem~8.2]{Vaes:impr}, it holds $\Ind_\HG\C =C^r_0(G/\Ran\HG)$ with $\Ad \hat{W}^G{}^*(-)_2$ is a well-defined left $\hat{G}$-C*-algebra. 
		By taking $\hat{\GF}^*$, it follows 
		$(C^r_0(G/\Ran\HG),\Delta_G,\Ad \hat{W}^{\GF}{}^*(-)_2)$ is a well-defined $\GF$-Yetter--Drinfeld C*-algebra. 
		Also, it is easy to see 
		$C^r_0(G/\Ran\HG)\otimes 1_{\K(F)}\subset \M(C^r_0(\DD(\GF)))$ is a left $\DD(\GF)$-$*$-homomorphism. 
		This means (c) of \cref{rem:basechangecondition}, because 
		\begin{align*}
			&
			\delta(C^r_0(G/\Ran\HG)\otimes 1_{\K(F)})
			=\Ad V^{\HG}_{12}(C^r_0(G/\Ran\HG))_1
			=C^r_0(G/\Ran\HG)_1
			\subset \delta\M(C^r_0(\DD(\GF))) . 
		\end{align*}
		
		Therefore (a) holds, and for any $\GF\HG$-Yetter--Drinfeld C*-algebra $(A,\alpha,\hat{\alpha})$, 
		$\Ind_{\HG\bowtie\id}A \isom \Ind_\HG A$ as left $G$-C*-algebras. 
		Via this $*$-isomorphism, 
		$(\Ind_\HG A,\Delta_G, \sigma_{12}\Ad W^{\GF}_{12}(\id\otimes\hat{\alpha}))$ 
		is a $\GF$-Yetter--Drinfeld C*-algebra. 
	\end{eg}
	
	\begin{eg}\label{eg:exactbasechange}
		Consider a sequence of regular locally compact quantum groups with $K$ being compact, 
		\begin{align*}
			\xymatrix{
				1\ar[r] &
				K\ar[r]^-{\iota} &
				H\ar[r]^-{\HG} &
				G\ar[r] & 1
			}, 
		\end{align*}
		which is exact in the sense of $\iota,\hat{\HG}$ giving closed quantum subgroup and $C^r_0(H/K) =\HG^{r}C^r_0(G)$ 
		(see (3) of \cref{eg:doublehom}). 
		Then 
		\begin{align*}
			\xymatrix{
				K\ar[r]\ar[d]_-{\iota} & 1 \ar[d]\\
				H\ar[r]^-{\HG} & G
			}
		\end{align*}
		satisfies \cref{cond:square}, and condition (d) of \cref{thm:basechange} by \cref{rem:hmgquot}. 
		Therefore for any left $H$-C*-algebra $A$, 
		it holds $A^\HG=\Ind_{K\to 1} \iota^*A = {}_{\1_K}\iota^*A$. 
	\end{eg}
	
	\section{Morita and \texorpdfstring{$K$}{K}-theoretic properties} \label{sec:MK}
	In this section, we consider the functors between several kinds of the categories of coactions which are induced by induced coactions along a proper homomorphism $\HG$. 
	We prove several properties of such functors which are known when $\HG$ gives a closed quantum subgroup in~\cite{Nest-Voigt:eqpd} and~\cite{Vaes:impr}. 
	We say left $G$-$(A,B)$-correspondences $(\E,\pi)$ and $(\F,\varpi)$, 
	are \emph{left $G$-unitarily isomorphic} 
	if there is a unitary $U\in\U\LC_B(\E,\F)$ 
	satisfying $(\Ad U)\pi=\varpi$, such that 
	$\Ad \left(\begin{array}{cc}
		U & 0 \\ 0 & 1_B \\
	\end{array}\right):\K_B(\E\oplus B)\xrarr{\sim}\K_B(\F\oplus B)$ is a left $G$-$*$-isomorphism. 
	We will also use the terminologies from \cref{sec:appendix}, especially those about twisted tensor products. 
	
	\begin{defn}\label{def:categories}
		Let $G$ be a regular locally compact quantum group. 
		\begin{enumerate}
			\item
			Let $\Calg^G$ be the category of left $G$-C*-algebras as objects, and $G$-$*$-homomorphisms $A\to B$ as morphisms from $A$ to $B$ for $A,B\in\Calg^G$. 
			\item
			Let $\Cor^G$ be the category of left $G$-C*-algebras as objects, and morphisms as follows. 
			For $A,B\in\Cor^G$, a morphism from $A$ to $B$ is 
			a left $G$-unitary isomorphism class of 
			left $G$-$(A,B)$-correspondences $(\E,\pi)$ with non-degenerate $\pi$. 
			The composition of morphisms is given by inner tensor products. 
			\item
			Suppose $C^r_0(G)$ is separable. 
			Let $\KK^G$ be the category of separable left $G$-C*-algebras as objects, 
			and the $G$-equivariant bivariant $K$-group $\KK^G(A,B)$ as the set of morphisms from $A$ to $B$ for $A,B\in\KK^G$. 
			The composition of morphisms is given by Kasparov products. 
			We refer~\cite{Baaj-Skandalis:eqkk} for details. 
		\end{enumerate} 
	\end{defn}
	
	We will use the following facts from \cite{Nest-Voigt:eqpd} about $\KK^G$ for a regular locally compact quantum group $G$ with $C^r_0(G)$ separable. 
	Let $F:\Calg^G\to \A$ be a functor to an additive category $\A$ with following three properties. 
	\begin{itemize}
		\item
		For $A,B\in\Calg^G$ and a left $G$-$*$-homomorphism 
		$f:A\to B\otimes C([0,1])$, it holds $F(f_0)=F(f_1):F(A)\to F(B)$, 
		where $f_t(a)$ denotes $(f(a))(t)$ for all $a\in A$ and $t\in [0,1]$. 
		\item
		For a split exact sequence of left $G$-$*$-homomorphisms 
		$\xymatrix@C=1.2em{
			0\ar[r]& I\ar[r]^-{\iota}& A\ar@<0.3ex>[r]& 
			B\ar[r]\ar@<0.3ex>[l]^-{s} & 0
		}$
		with $I,A,B\in \Calg^G$, 
		it holds $F(\iota)\oplus F(s):F(I)\oplus F(B)\xrarr{\sim}F(A)$ is isomorphic. 
		\item
		For $A\in\Calg^G$ and left $G$-Hilbert spaces $\HC,\HC'$, 
		it holds $F(\iota\otimes \id_A): F(\K_A(\HC\otimes A))\xrarr{\sim} F\bigl(\K_A((\HC\oplus\HC')\otimes A)\bigr)$ is isomorphic, 
		where we write $\iota:\K(\HC)\to \K(\HC\oplus\HC')$ for the canonical inclusion. 
	\end{itemize}
	Then there is an additive functor $\bar{F}:\KK^G\to \A$ such that 
	$\bar{F}\iota\isom F$, 
	where we write $\iota:\Calg^G\to\KK^G$ for the canonical functor. 
	Moreover, such $\bar{F}$ is unique up to a natural isomorphism and $\iota$ itself satisfies the three properties above. 
	Also, there is a canonical structure of a triangulated category on $\KK^G$. 
	
	\begin{rem}\label{rem:functors}
		Consider a homomorphism $\HG:H\to G$ of regular locally compact quantum groups, and $\CC\in\{\Calg, \Cor, \KK\}$, where we additionally assume 
		separability of $C^r_0(G)$ and $C^r_0(H)$ when $\CC=\KK$. 
		\begin{enumerate}
			\item
			We still write $\HG^*:\CC^G\to\CC^H$ for the functor given by restrictions of left coactions along $\HG$. 
			\item
			When $\HG$ is proper, induced coactions canonically induces the well-defined functor $\HG_*:\Calg^H\to\Calg^G$. 
			When $\HG$ gives a closed quantum subgroup, this is shown in the last paragraph of Section~2 in~\cite{Nest-Voigt:eqpd}. 
			When $\hat{\HG}$ gives an open quantum subgroup, this follows 
			from $f(A^{\bar{\HG}})\subset B^{\bar{\HG}}$ 
			for any left $H$-$*$-homomorphism $f:A\to B$ of left $H$-C*-algebras. 
			The general case holds by \cref{thm:indstages}. 
			We can also check that this functor preserves split exact sequences. 
			\item
			Suppose $\CC=\Cor$ or $\KK$. 
			We also write $\HG_*:\CC^H\to\CC^G$ 
			for the well-defined functor given by 
			$\hat{G}^{\op}\redltimes(\hat{\HG}^{\op}{}^*(H\redltimes-))$. 
			This construction is compatible with compositions of homomorphisms by Baaj--Skandalis duality. 
		\end{enumerate}
	\end{rem}
	
	As for (3), we note that when $\HG$ does not give a closed quantum subgroup 
	induced coactions along $\HG$ need not induce a functor $\CC^H\to\CC^G$ in a direct way (see the remark before \cref{prop:impr}). 
	
	\subsection{Induced coactions and twisted tensor products}
	
	We want to regard the functors $\HG_*:\CC^H\to\CC^G$ of (3) in \cref{rem:functors} 
	as the functors induced by induced coactions. 
	This is justified because it holds 
	$\HG_*(A)\isom \Ind_\HG(\hat{G}^{\op}\redltimes G\redltimes A)$ 
	as a special case of \cref{prop:projfml} next by (2) of \cref{eg:twiten}. 
	
	\begin{prop}\label{prop:projfml}
		Let $\HG:H\to G$, $\GF:G\to F$, $\FE:F\to E$ be homomorphisms of regular locally compact quantum groups and suppose $\HG$ is proper. 
		Then for any $\GF\HG$-Yetter--Drinfeld C*-algebra $A$ and any $\FE$-Yetter--Drinfeld C*-algebra $B$, 
		it holds 
		$\Ad W^{\GF}_{12}{}^*\sigma_{12}:(\Ind_{\HG}A)\outensor{F}B \xrarr{\sim}\Ind_{\HG}(A\outensor{F}B)$ 
		is a well-defined left $\DD(\FE\GF)$-$*$-isomorphism. 
	\end{prop}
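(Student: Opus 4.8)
The plan is to characterise the right-hand side through \cref{def:ind} and then invoke the uniqueness of induced coactions. Write $(A,\alpha,\hat\alpha)$ and $(B,\beta,\hat\beta)$ for the Yetter--Drinfeld triples. The twisted tensor product $A\outensor{F}B$, formed in \cref{sec:appendix} from the $\hat F$-coaction $\hat\alpha$ of $A$ and the $F$-coaction $\beta$ of $B$, is a $\FE\GF\HG$-Yetter--Drinfeld C*-algebra; in particular it carries a left $H$-coaction $\gamma$ coming from $\alpha$, continuous in the strong sense, so $\Ind_{\HG}(A\outensor{F}B)$ exists by \cref{thm:properind}. Moreover, by \cref{eg:ydbasechange}, $\Ind_{\HG}A$ is a $\GF$-Yetter--Drinfeld C*-algebra with $G$-coaction $\Ind_{\HG}\alpha$ and $\hat F$-coaction $\sigma_{12}\Ad W^{\GF}_{12}(\id_{\K(G)}\otimes\hat\alpha)$, so $(\Ind_{\HG}A)\outensor{F}B$ makes sense. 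I would then show that the C*-subalgebra $\Ad W^{\GF}_{12}{}^*\sigma_{12}\bigl((\Ind_{\HG}A)\outensor{F}B\bigr)$ of $\mult{\K(G)\otimes(A\outensor{F}B)}$, equipped with the restriction of $\Ad V^{G}_{12}(-)_{13}$, satisfies conditions (i)--(iii) of \cref{def:ind} for $\Ind_{\HG}(A\outensor{F}B)$; then \cref{rem:indunique} forces equality, and the displayed map---being the composite of a leg flip with conjugation by the unitary $W^{\GF}$---is automatically an injective $*$-homomorphism, hence a $*$-isomorphism.

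For the verification I would unwind the definition of the twisted tensor product. Since $(\Ind_{\HG}A)\outensor{F}B$ is built from the $\hat F$-coaction $\sigma_{12}\Ad W^{\GF}_{12}(\id_{\K(G)}\otimes\hat\alpha)$ of $\Ind_{\HG}A$ and from $\beta$, conjugating by $\sigma_{12}$ and then by $W^{\GF}{}^*$ in the first two legs exactly cancels the $\sigma_{12}\Ad W^{\GF}_{12}$ built into that coaction and returns the plain $\hat F$-coaction $\hat\alpha$ of $A$, i.e.~the data used to form $A\outensor{F}B$; the bicharacter relations $(\Delta_{G}\otimes\id)W^{\GF}=W^{\GF}_{13}W^{\GF}_{23}$ and $(\id\otimes\Delta_{\hat F})W^{\GF}=W^{\GF}_{13}W^{\GF}_{12}$ are what make this rearrangement legal across the tensor legs. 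This identifies $\Ad W^{\GF}_{12}{}^*\sigma_{12}$ as a map into $\mult{\K(G)\otimes(A\outensor{F}B)}$ whose image commutes with $C^r_0(G)'\otimes 1$ and is intertwined by $V^{\HG}$ with the $H$-coaction $\gamma$ of $A\outensor{F}B$ as required by \cref{def:ind}, giving (i), while $\Ad V^{G}_{12}(-)_{13}$ restricts there to a weakly continuous left $G$-coaction, giving (ii). Condition (iii), strict continuity on bounded sets, follows since $\Ad W^{\GF}_{12}{}^*\sigma_{12}$ and its inverse are strictly continuous on bounded sets while (iii) holds for $(\Ind_{\HG}A)\outensor{F}B$ (inherited from (iii) of \cref{def:ind} for $\Ind_{\HG}A$ together with the corresponding continuity property of $\outensor{F}$ in \cref{sec:appendix}). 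Surjectivity onto $\Ind_{\HG}(A\outensor{F}B)$ is then \cref{rem:indunique}.

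It remains to check equivariance for the $\DD(\FE\GF)$-structures; it suffices that the map intertwine the $G$-coactions and the $\hat E$-coactions, the full $\DD(\FE\GF)$-equivariance following from those two. The $G$-coactions are on both sides restrictions of $\Ad V^{G}_{12}(-)_{13}$, and by \cref{eg:ydbasechange} applied with $\FE\GF:G\to E$ in the role of the homomorphism there, the $\hat E$-coaction of $\Ind_{\HG}(A\outensor{F}B)$ is $\sigma_{12}\Ad W^{\FE\GF}_{12}(\id_{\K(G)}\otimes\hat\gamma)$ for the $\hat E$-coaction $\hat\gamma$ of $A\outensor{F}B$ induced by $\hat\beta$, whereas that of $(\Ind_{\HG}A)\outensor{F}B$ is produced directly from $\hat\beta$ through $\outensor{F}$; reconciling the two amounts to the composition relation $W^{\FE}_{23}W^{\GF}_{12}=W^{\GF}_{12}W^{\FE\GF}_{13}W^{\FE}_{23}$ for the bicharacters of $\GF$, $\FE$ and $\FE\GF$ together with the bicharacter relations for $W^{\GF}$ already used. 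I expect the main obstacle to be exactly this leg-and-cocycle bookkeeping, together with keeping the multiplier-algebra domains and the strict-continuity clause of (iii) under control: the conceptual statement that induction along $\HG$ commutes with twisting over $F$ is transparent, and the real work is in tracking the unitaries $W^{\GF}$, $W^{\FE}$, $W^{\FE\GF}$, $W^{F}$, $V^{G}$, $V^{\HG}$ through the several tensor legs. If a shorter route is wanted, one can instead decompose $\HG=(\ran\HG)\bar{\HG}$ as in \cref{thm:properind} and treat the two factors separately: for $\bar{\HG}$ the induced coaction is the compression $p_{1}(-)p_{1}^{*}$ of \cref{lem:propersurjind}, which visibly commutes with twisted tensor products, and the closed-quantum-subgroup factor $\ran\HG$ is treated by the techniques of~\cite{Vaes:impr} and~\cite{Nest-Voigt:eqpd}.
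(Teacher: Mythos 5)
Your overall strategy---verify conditions (i)--(iii) of \cref{def:ind} for the candidate algebra $\Ad W^{\GF}_{12}{}^*\sigma_{12}\bigl((\Ind_{\HG}A)\outensor{F}B\bigr)$ and then invoke the uniqueness of \cref{rem:indunique}---is legitimate in principle, and your identification of the relevant Yetter--Drinfeld structures via \cref{eg:ydbasechange} is correct. But the verification is exactly where the content of the proposition lies, and your justifications do not close it. Condition (iii) is a statement about the map $\Ad V^G_{12}(-)_{13}$ on the \emph{whole} of $\WInd_{\HG}(A\outensor{F}B)$: one must show it lands in $\mult{\K(G)\otimes C}$ for your candidate $C$, with strict continuity on the unit ball. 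This does not follow from (iii) for $\Ind_{\HG}A$ together with strict continuity of the conjugating unitary, because $\WInd_{\HG}(A\outensor{F}B)$ is not obtained from $\WInd_{\HG}A$ by any tensoring operation; even the untwisted statement $\Ind_{\HG}(A\otimes B)=(\Ind_{\HG}A)\otimes B$ in \cref{cor:properind} is proved by passing to reduced crossed products rather than by such an inheritance argument. Likewise, the non-degeneracy in (i) requires showing that products of $(\id\otimes\hat{\alpha})(\Ind_{\HG}A)$-factors with $\Ad W^{\GF}_{12}{}^*\beta(B)$-factors fill out $\K(G)\otimes(A\outensor{F}B)$; this is not mere leg bookkeeping, since the twisting unitary $W^{\GF}_{12}$ straddles the $\K(G)$-leg and the $\K(F)$-leg and cannot simply be cancelled against the generators of $A\outensor{F}B$. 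The commutation one actually needs is $\bigl[I^{\HG}_1(\HG^*\beta)(B)\bigr]=\bigl[\beta(B)I^{\HG}_1\bigr]$ from the proof of Theorem~8.2 of~\cite{Vaes:impr} (recorded as \cref{eq:prop:projfml1}), which the paper explicitly flags as the crucial point and which never appears in your argument.

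The paper avoids checking (i)--(iii) for the candidate altogether: since $\Ind_{\HG}(A\outensor{F}B)$ is already known to exist, it suffices to prove the two algebras coincide, and this is done by computing $\bigl[C^r_0(\hat{G})_1(-)\bigr]$ of both sides using the description $\bigl[C^r_0(\hat{G})_1\Ind_{\HG}A\bigr]=\bigl[I^{\ran\HG}_1p_1\alpha(A)p_1^*I^{\ran\HG}_1{}^*\bigr]$ of \cref{cor:properind}, moving the $\beta(B)$-factor through $I^{\ran\HG}_1{}^*$ via \cref{eq:prop:projfml1}, and then recovering the algebras by slicing with $\bigl[(\K(G)^*\otimes\id)\Ad V^G_{12}(-)\bigr]$. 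Your fallback suggestion---decompose $\HG=(\ran\HG)\bar{\HG}$, observe that the compression $p_1(-)p_1^*$ of \cref{lem:propersurjind} commutes with $\outensor{F}$, and quote~\cite{Nest-Voigt:eqpd} for the closed-subgroup factor---is much closer to a working proof (and to what the crossed-product formula of \cref{cor:properind} encodes), but as written it is only an assertion, and the closed-subgroup case is precisely where the Vaes commutation relation does all the work.
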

	
	When $\HG$ gives a closed quantum subgroup, this is proved in~\cite[Theorem~3.6]{Nest-Voigt:eqpd}. 
	Their proof can be easily adapted to our situation, 
	but nevertheless we include the proof 
	since their statement is made in a slightly different way. 
	The crucial point for the proof is an argument from~\cite[Theorem~8.2]{Vaes:impr} that 
	for any left $G$-C*-algebra $(B,\beta)$, it holds 
	\begin{align}\label{eq:prop:projfml1}
		\bigl[I^\HG_1(\HG^*\beta)(B)\bigr]=\bigl[\beta(B)I^\HG_1\bigr]. 
	\end{align} 
	We again note that the argument is still valid when $G$ and $H$ are regular. 
	
	\begin{proof}
		We put $p:=p^{\hat{\bar{\HG}}}$ associated with $\hat{\bar{\HG}}$. 
		Therefore by \cref{cor:properind}, we have 
		\begin{align*}
			&
			\bigl[C^r_0(\hat{G})_1 \Ad W^{\GF}_{12}{}^*\sigma_{12} \bigl((\Ind_{\HG}A)\outensor{F}B\bigr)\bigr]
			\\&
			=
			\bigl[C^r_0(\hat{G})_1 
				\bigl( \Ad W^{\GF}_{12}{}^* \Ad W^{\GF}_{12} ((\id\otimes\hat{\alpha}) \Ind_{\HG}A) \bigr)_{123}
				\bigl(\Ad W^{\GF}_{12}{}^* \beta(B)_{24}\bigr)\bigr]
			\\&
			=
			\bigl[\bigl( (\id\otimes\hat{\alpha}) 
				(I^{\ran\HG}_1 p_1\alpha(A)p_1^* I^{\ran\HG}_1{}^*) \bigr)_{123}
				\bigl((\id\otimes\beta) (\GF^*\beta)B\bigr)_{124}\bigr]
			\\&
			=
			\bigl[I^{\ran\HG}_1 p_1 
				\bigl( \Ad W^{\GF\HG}_{12}{}^* 
				\sigma_{12} (\id\otimes\alpha) \hat{\alpha}(A) \bigr)_{123} 
				p_1^* I^{\ran\HG}_1{}^* 
				\bigl( (\id\otimes\beta) (\GF^*\beta)B \bigr)_{124}\bigr]
			\\&
			=
			\bigl[I^{\ran\HG}_1 p_1 
				\bigl( \Ad W^{\GF\HG}_{12}{}^* 
				\sigma_{12} (\id\otimes\alpha) \hat{\alpha}(A) \bigr)_{123} 
				p_1^* \bigl((\id\otimes\beta)((\ran\HG)^*\GF^*\beta)(B)\bigr)_{124} I^{\ran\HG}_1{}^*\bigr]
			\\&
			=
			\bigl[I^{\ran\HG}_1 p_1 \bigl(\Ad W^{\GF\HG}_{12}{}^* 
				\sigma_{12} (\id\otimes\alpha) \hat{\alpha}(A) \bigr)_{123} 
				\bigl(\Ad W^{\GF\HG}_{12}{}^*\beta(B)_{24}\bigr) 
				p_1^* I^{\ran\HG}_1{}^*\bigr]
			\\&
			=\sigma_{23}\bigl[C^r_0(\hat{G})_1 \Ind_{\HG}(A\outensor{F}B)\bigr]. 
		\end{align*}
		Here for the last three equalities we have used \cref{eq:prop:projfml1}, 
		and 
		$p_1W^{\GF\HG}=p_1(\bar{\HG}^r\otimes\id)W^{\GF\ran\HG}=W^{\GF\ran\HG}p_1$. 
		
		By taking $\clin{(\K(G)^*\otimes\id)\Ad V^{G}_{12}(-)_{1345}}$, 
		it follows 
		$\Ad W^{\GF}_{12}{}^*\sigma_{12} ((\Ind_{\HG}A)\outensor{F}B)
		=\Ind_{\HG}(A\outensor{F}B)$. 
		It is easy to check that this equality preserves left $G$- and $\hat{E}$-coactions. 
	\end{proof}
	
	\subsection{Imprimitivity} \label{ssec:impr}
	
	For a proper homomorphism $\HG:H\to G$ of regular locally compact quantum groups, 
	sometimes we want to know when the functor $\HG_*$ becomes genuine $\Ind_\HG$, for the purpose of calculation. 
	For left $G$-C*-algebras $(A,\alpha)$, $(B,\beta)$ and a left $G$-$(A,B)$-correspondence $(\E,\pi)$ with non-degenerate $\pi$, 
	the canonical non-degenerate left $H$-$*$-homomorphism $\iota:\K_B(\E)\oplus B\to \K_B(\E\oplus B)$ induces 
	an injective non-degenerate left $G$-$*$-homomorphism 
	$\id\otimes\iota:\Ind_\HG \K_B(\E)\oplus \Ind_\HG B \to \Ind_\HG \K_B(\E\oplus B)$. 
	Via this map we identify $\Ind_\HG \K_B(\E)\oplus \Ind_\HG B \subset \Ind_\HG \K_B(\E\oplus B)$. 
	We define 
	$\Ind_\HG \E:= 1_{\Ind_\HG \K_B(\E)} (\Ind_{\HG} \K_B(\E\oplus B)) 1_{\Ind_\HG B}$, 
	and this is a well-defined right $(\Ind_\HG B)$-Hilbert module with a canonical continuous left $G$-coaction 
	(which is different from an induced corepresentation in nature). 
	By definition we also have a non-degenerate left $G$-$*$-homomorphism 
	$\mu:\Ind_\HG \K_B(\E)\to \LC_B(\Ind_\HG \E)$, 
	and $\Ind_\HG (\E,\pi):=(\Ind_\HG \E, \mu(\id\otimes\pi))$ is a well-defined left $G$-$(A,B)$-correspondence with non-degenerate $\Ind_\HG\pi:=\mu(\id\otimes\pi)$. 
	
	When $\HG$ gives a closed quantum subgroup, 
	the imprimitivity theorem for reduced crossed products~\cite[Theorem~7.3]{Vaes:impr} assures $\HG_*A$ is naturally isomorphic to $\Ind_\HG A$ for any $A\in\Cor^H$. 
	For morphisms $(\E,\pi):A\to B$ and $(\F,\varpi):B\to C$ in $\Cor^H$, 
	we can check that 
	$(\Ind_\HG \E) \tensor{\Ind_\HG\varpi}(\Ind_\HG \F)=\Ind_\HG (\E\tensor{\varpi}\F)$ 
	up to a left $G$-unitary equivalence, 
	by the construction of $\Ind_\HG$ using $I^\HG$. 
	
	For a general proper $\HG$, this is no longer true since $\Ind_\HG$ need not preserve equivariant Morita equivalence. 
	For example, consider $\HG:=1_{K\to 1}:K\to 1$ for a compact quantum group $K$. 
	Then the left $K$-C*-algebras $\C$ and $(\K(K), \Ad W^{K}_{12}{}^*(-)_2)$ are left $K$-Morita equivalent, 
	but $\Ind_\HG \C=\C$ and 
	$\Ind_\HG\K(K)\isom \Ind_\HG C^r_0(K)\outensor{\hat{K}^{\op}}C^r_0(\hat{K})\isom C^r_0(\hat{K})$ 
	are not Morita equivalent unless $K$ is trivial. 
	To get the well-behavior of $\Ind_\HG$, we consider a relative variant of freeness as follows. 
	
	\begin{prop}\label{prop:impr}
		Consider a proper homomorphism $\HG:H\to G$ of regular locally compact quantum groups. 
		Let $\Cor^H_{\HG}$ be the full subcategory of $\Cor^H$ whose objects are left $H$-C*-algebras satisfying 
		\begin{align*}
			&
			\clin{\alpha(A)((\lambda_H\HG^{u}C^u_0(G))\otimes A)}=C^r_0(H)\otimes A . 
		\end{align*}
		Then 
		$\Ind_\HG$ gives a well-defined functor $\Cor^H_{\HG}\to\Cor^G$ 
		which is naturally isomorphic to $\HG_*\restriction_{\Cor^H_\HG}$. 
	\end{prop}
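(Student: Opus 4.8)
The plan is to reduce everything to the canonical decomposition $\HG = (\ran\HG)\bar\HG$ from the remark after \cref{def:proper}, in which $\ran\HG\colon\Ran\HG\to G$ gives a closed quantum subgroup and $\hat{\bar{\HG}}$ gives an open quantum subgroup. First I would note that it suffices to produce a natural isomorphism $\Ind_\HG \Rightarrow \HG_*$ of the underlying assignments on objects and morphisms of $\Cor^H_\HG$: since $\HG_*$ is already a functor, functoriality of $\Ind_\HG$ then follows by transport of structure. Next comes the bookkeeping. Since $C^u_0(\Ran\HG) = (\ran\HG)^u(C^u_0(G))$ we have $\HG^u(C^u_0(G)) = \bar{\HG}^u(C^u_0(\Ran\HG))$, so $\Cor^H_\HG$ coincides with the subcategory defined by the same condition with $\bar\HG$ in place of $\HG$; moreover, as $\bar\HG^r$ exists with $\bar\HG^r\lambda_{\Ran\HG} = \lambda_H\bar\HG^u$, the defining condition rewrites as $\clin{\alpha(A)(\bar{\HG}^rC^r_0(\Ran\HG)\otimes A)} = C^r_0(H)\otimes A$. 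On the other hand $(\ran\HG)^u$ is surjective, so $\lambda_{\Ran\HG}(\ran\HG)^uC^u_0(G) = C^r_0(\Ran\HG)$ and the analogous condition for $\ran\HG$ is automatic, i.e.\ the corresponding subcategory of $\Cor^{\Ran\HG}$ is all of $\Cor^{\Ran\HG}$. Applying \cref{thm:indstages} and \cref{cor:properind} to linking algebras gives $\Ind_\HG \cong \Ind_{\ran\HG}\circ\Ind_{\bar\HG}$ wherever the composite is defined, and $\HG_* \cong (\ran\HG)_*\circ\bar{\HG}_*$ by \cref{rem:functors}(3); hence it is enough to treat $\ran\HG$ and $\bar\HG$ separately and compose.

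For $\ran\HG$ this is the closed quantum subgroup case recalled just before the statement: the imprimitivity theorem for reduced crossed products \cite[Theorem~7.3]{Vaes:impr} (which needs only regularity, cf.\ \cref{rem:strreg}) gives a natural isomorphism $\Ind_{\ran\HG} \cong (\ran\HG)_*$ of functors $\Cor^{\Ran\HG}\to\Cor^G$, the compatibility with inner tensor products coming from the construction via $I^{\ran\HG}$.

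For $\bar\HG$ is where the new work lies. By \cref{lem:propersurjind}, on objects $\Ind_{\bar\HG}A = \alpha^{\bar{\HG}}(A^{\bar{\HG}}) \cong A^{\bar{\HG}}$ in $\Calg^{\Ran\HG}$; writing $p = p^{\hat{\bar{\HG}}}$ and $P = p^*p$, one checks (using $\bar\HG^r = p^*(-)p$ and $\alpha(a) = P_1\alpha(a)P_1$ for $a\in A^{\bar{\HG}}$, both read off from the proof of \cref{lem:propersurjind}) that $\bar\HG^*\alpha^{\bar{\HG}}$ restricts on $A^{\bar{\HG}}\subset A$ to $\alpha$, so $A^{\bar{\HG}}\hookrightarrow A$ is a nondegenerate left $H$-$*$-homomorphism. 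The heart of the matter is to produce, for $A\in\Cor^H_{\bar\HG}$, an equivariant Morita equivalence $\bar{\HG}_*A \sim A^{\bar{\HG}}$ in $\Cor^{\Ran\HG}$ --- a quantum analogue of Green--Rieffel imprimitivity whose ``free proper / saturated'' hypothesis is exactly the condition $\clin{\alpha(A)(\bar{\HG}^rC^r_0(\Ran\HG)\otimes A)} = C^r_0(H)\otimes A$ defining $\Cor^H_{\bar\HG}$. Concretely I would unwind the formula $\bar{\HG}_*A = \hat{\Ran\HG}^{\op}\redltimes\hat{\bar{\HG}}^{\op}{}^*(H\redltimes A)$: restriction along the open quantum subgroup $\hat{\bar{\HG}}^{\op}$ cuts $H\redltimes A$ down to the corner $P_1(H\redltimes A)P_1 = \clin{C^r_0(\hat{H})_1\alpha(A^{\bar{\HG}})}$, and after forming the Baaj--Skandalis dual crossed product the condition on $A$ forces the resulting algebra to be equivariantly Morita equivalent to $\Ind_{\bar\HG}A$. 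Repeating the construction with the linking algebra $\K_B(\E\oplus B)$ of a left $H$-$(A,B)$-correspondence $(\E,\pi)$ makes $\Ind_{\bar\HG}$ well defined on morphisms of $\Cor^H_{\bar\HG}$, the compatibility with inner tensor products, $\Ind_{\bar\HG}(\E\tensor{\varpi}\F) \cong (\Ind_{\bar\HG}\E)\tensor{\Ind_{\bar\HG}\varpi}(\Ind_{\bar\HG}\F)$, again reducing to the saturation condition --- this time for the middle C*-algebra $B$, which is precisely why the hypothesis $B\in\Cor^H_{\bar\HG}$ is imposed on all objects. Naturality of $\Ind_{\bar\HG} \cong \bar{\HG}_*\restriction$ follows from naturality in $A$ of the constructions involved.

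Composing the three natural isomorphisms yields $\Ind_\HG = \Ind_{\ran\HG}\circ\Ind_{\bar\HG} \cong (\ran\HG)_*\circ\bar{\HG}_* = \HG_*\restriction_{\Cor^H_\HG}$, and by transport of structure $\Ind_\HG$ is a well-defined functor $\Cor^H_\HG\to\Cor^G$. The main obstacle I anticipate is the $\bar\HG$ (open-dual) case: both establishing the equivariant Morita equivalence $\bar{\HG}_*A\sim A^{\bar{\HG}}$ out of the saturation condition, and --- more delicately --- verifying compatibility of $\Ind_{\bar\HG}$ with inner tensor products in $\Cor^H_{\bar\HG}$, where the cut-down has to be carried out through the middle term and it is the hypothesis on $B$ that makes the natural comparison map an isomorphism.
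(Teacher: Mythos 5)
Your proposal is correct and follows essentially the same route as the paper: decompose $\HG=(\ran\HG)\bar{\HG}$ via \cref{thm:indstages}, quote Vaes's imprimitivity for the closed-subgroup factor, and in the open-dual case use the saturation condition on the middle algebra to prove $(\E\tensor{\varpi}\F)^{\HG}=\E^{\HG}\tensor{\varpi}\F^{\HG}$, identifying the resulting functor with $\HG_*$ via Baaj--Skandalis duality and the fact that $\hat{H}^{\op}\redltimes H\redltimes A$ always lies in $\Cor^H_\HG$. The only cosmetic difference is that you aim first at a Morita equivalence $\bar{\HG}_*A\sim A^{\bar{\HG}}$ and then transport structure, whereas the paper establishes the tensor-product compatibility directly and deduces the natural isomorphism afterwards; the underlying computation is the same.
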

	
	We note in the situation of the proposition above, 
	$(A,\alpha)\in\Cor^H$ is an object of $\Cor^H_\HG$ if and only if 
	it is an object of $\Cor^H_{\bar{\HG}}$ for $\bar{\HG}:H\to \Ran\HG$, which is equivalent to say 
	$\clin{\alpha(A)(\bar{\HG}^{r}(C^r_0(\Ran \HG))\otimes A)}=C^r_0(H)\otimes A$. 
	We also note that  
	for a regular locally compact quantum group $F$ and a left $F$-C*-algebra $B$ it holds 
	$\bigl[(F\redltimes B)_{12}
	\Ad \hat{V}^{F}_{13}(F\redltimes B)_{12}\bigr] 
	=(F\redltimes B)\otimes C^r_0(\hat{F})$. 
	In particular, $\hat{H}^{\op}\redltimes H\redltimes A\in \Cor^H_{\HG}$ for any left $H$-C*-algebra $A$. 
	For a right Hilbert $A$-module $\E$ and an $(A,B)$-correspondence $(\F,\varpi)$, 
	we write $\E \underset{\varpi}{\odot} \F$ for the tensor product of bimodule in a purely algebraic sense. 
	
	\begin{prf}
		As we saw, the result holds when $\HG$ gives a closed quantum subgroup. 
		By decomposing $\HG=(\ran \HG)\bar{\HG}$ and using \cref{thm:indstages}, 
		it suffices to consider the case when $\hat{\HG}$ gives an open quantum subgroup. 
		
		Let $A,B\in\Cor^H$. 
		Take a left $H$-Hilbert $A$-module $(\E,\kappa_\E)$ and a morphism $(\F,\varpi):A\to B$ in $\Cor^H$ with its left $H$-coaction $\kappa_\F$. 
		It is easy to see $(\E^\HG,\kappa_\E^\HG)$ is a left $G$-Hilbert $A^\HG$-module, and $(\F^\HG,\varpi):A^\HG\to B^\HG$ is a morphism in $\Cor^H$. 
		We can naturally identify 
		$\E^\HG\tensor{\varpi}\F^\HG \subset (\E\tensor{\varpi}\F)^{\HG}$. 
		
		Now suppose $A\in\Cor^H_{\HG}$. 
		It is enough to show that 
		$(\E\tensor{\varpi}\F)^{\HG}= \E^\HG\tensor{\varpi}\F^\HG$ 
		holds as subsets of $\E\tensor{\varpi}\F$. 
		Indeed, from this it will follow that $\Ind_\HG$ defines a well-defined functor 
		$\Cor^H_\HG\to\Cor^G$, 
		which must be naturally isomorphic to 
		$\HG_*\isom \Ind_\HG (\hat{H}^{\op}\redltimes H\redltimes (-))$ 
		via Baaj--Skandalis duality 
		because $\hat{H}^{\op}\redltimes H\redltimes A\in \Cor^H_\HG$ for any $A\in \Cor^H$. 
		Since $A\in\Cor^H_{\HG}$, we have 
		\begin{align*}
			&
			\clin{\kappa_\E(\E)(\HG^{r}C^r_0(G)\otimes A)}
			=\clin{\kappa_\E(\E)\alpha(A)(\HG^{r}C^r_0(G)\otimes A)}
			\\&
			=\clin{\kappa_\E(\E)(C^r_0(H)\otimes A)}
			=C^r_0(H)\otimes \E . 
		\end{align*}
		Thus when we put $p:=p^{\hat{\HG}}$, 
		it follows 
		\begin{align*}
			&
			\Bigl[\E^\HG \underset{\varpi}{\odot} \F\Bigr]
			=
			\Bigl[(\K(G)^*\otimes \id_{\E\tensor{\varpi}\F})
				\Bigl(\Bigl(C^r_0(G)\otimes \E^\HG\Bigr)
				\underset{\id_{C^r_0(G)}\otimes \varpi}{\odot}
				\Bigl(C^r_0(G)\otimes (\varpi(A)\F)\Bigr)\Bigr)\Bigr]
			\\&
			=
			\Bigl[(\K(G)^*\otimes \id_{\E\tensor{\varpi}\F})
				\Bigl(\Bigl(p_1\kappa_\E(\E)p_1^* 
				(C^r_0(G)\otimes A)\Bigr)
				\underset{\id_{C^r_0(G)}\otimes \varpi}{\odot}
				(C^r_0(G)\otimes \F)\Bigr)\Bigr]
			\\&
			=
			\Bigl[(\K(G)^*\otimes \id_{\E\tensor{\varpi}\F})
				\Bigl((pC^r_0(H)p^*\otimes \E) 
				\underset{\id_{C^r_0(G)}\otimes \varpi}{\odot}
				(C^r_0(G)\otimes \F)\Bigr)\Bigr]
			=\E\tensor{\varpi}\F . 
		\end{align*}
		Therefore when we write $\kappa$ for the left $H$-coaction of $\E\tensor{\varpi}\F$, we obtain 
		\begin{align*}
			&
			(\E\tensor{\varpi}\F)^\HG
			=
			\Bigl[(\K(G)^*\otimes \id_{\E\tensor{\varpi}\F})\Bigl(
				p_1\kappa(\E\tensor{\varpi}\F)p_1^*\Bigr)\Bigr]
			=
			\Bigl[(\K(G)^*\otimes \id_{\E\tensor{\varpi}\F})\Bigl(
				p_1\kappa(\E^\HG\underset{\varpi}{\odot}\F)p_1^* \Bigr)\Bigr]
			\\&
			=
			\Bigl[(\K(G)^*\otimes \id_{\E\tensor{\varpi}\F})\Bigl(
				(C^r_0(G)_1p_1\kappa_\E(\E^\HG)p_1^*) \underset{\id_{C^r_0(G)}\otimes \varpi}{\odot} 
				(p_1 (\HG^rC^r_0(G))_1 \kappa_{\F}(\F) p_1^*) \Bigr)\Bigr] 
			\\&
			=\E^\HG\tensor{\varpi}\F^\HG . 
			\tag*{$\square$}
		\end{align*}
	\end{prf}
	
	\begin{rem}\label{rem:impr}
		Consider a proper homomorphism $\HG:H\to G$ of regular locally compact quantum groups. 
		Let $\Cor^H_\bullet$ be the full subcategory of $\Cor^H$ whose objects are left $H$-C*-algebra $A$, such that 
		$\HG$ fits into some diagram of the form \cref{diagram:square} with proper $\FE$, 
		satisfying \cref{cond:square} 
		and $\FH^*A\in\Cor^F_\FE$. 
		We write \cref{diagram:square}$_A$ for such a diagram. 
		Then $\Ind_\HG$ still gives a well-defined functor $\Cor^H_\bullet\to \Cor^G$ which is naturally isomorphic to 
		$\HG_*\restriction_{\Cor^H_\bullet}$. 
		We can see this by combining the argument of the analogue of base change and the proof of \cref{prop:impr}. 
		Indeed, by (2) of \cref{lem:basechangedecomp}, 
		we may assume $\HG$ and $\FE$ in \cref{diagram:square}$_A$ for each $A\in\Cor^H_\bullet$ give duals of open quantum subgroups. 
		Take a morphism $(\F,\varpi):A\to B$ in $\Cor^H$ 
		with $A\in\Cor^H_\bullet$, 
		and a left $H$-Hilbert $A$-module $\E$. 
		Then we can see $\theta$ in \cref{diagram:square}$_A$ satisfies 
		$(\E\tensor{\varpi}\F)^{\HG}=(\E\tensor{\varpi}\F)^{\FE}
		=\E^\FE\tensor{\varpi}\F^\FE = \E^\HG\tensor{\varpi}\F^\HG$
		by the argument of \cref{prop:impr} for $\Cor^F_\FE$ and (b) of \cref{thm:basechange}. 
		From this the claim follows. 
	\end{rem}
	
	\subsection{Adjointness}\label{ssec:adjoint}
	
	As the last topic of this section, we record the adjointness of induced coactions and restriction. 
	For a while, we fix a proper homomorphism $\HG:H\to G$ of regular locally compact quantum groups 
	which has a \emph{cocompact} image in the sense of $\Ind_\HG \C$ being unital. 
	Suppose $\HG^r$ exists, and 
	let $\CC\in\{\Calg,\Cor,\KK\}$, where we additionally assume separability of $C^r_0(G)$ and $C^r_0(H)$ when $\CC=\KK$. 
	
	When $\HG$ gives a cocompact closed quantum subgroup and $G$ is coamenable, it is proved in~\cite[Proposition~4.7]{Nest-Voigt:eqpd} that 
	there is an adjunction $\HG^*\dashv\HG_*$ as functors between $\KK^G$ and $\KK^H$, 
	as the quantum version of the result for locally compact groups in~\cite{Meyer-Nest:bctri}. 
	We note that their proof is quite analogous to Frobenius reciprocity for unitary representations of a compact group and its closed subgroup 
	(see~\cite{Folland:book} for example). 
	By slightly modifying their construction of the counit denoted by $\kappa$ below, we can loosen the assumption on the coamenability of $G$ into the existence of $\HG^r$ as follows. 
	
	First, let $\CC=\Calg$. 
	Consider a proper homomorphism $\HG:H\to G$ with the conditions above. 
	For all $(A,\alpha)\in\Calg^G$ and $(B,\beta)\in\Calg^H$ we have 
	\begin{align*}
		&
		(\id\otimes\alpha)\Ind_\HG \HG^* A
		= \Ind_\HG (\C_\HG\outensor{G}A) 
		= \Ad W^{G}_{12}{}^*\sigma_{12}((\Ind_\HG \C_\HG)\outensor{G}A)
		\\&
		\supset \Ad W^{G}_{12}{}^*(1_{\K(G)}\otimes\alpha(A))
		= (\id\otimes\alpha)\alpha(A) 
	\end{align*}
	by \cref{prop:projfml}, and 
	\begin{align*}
		&
		(\HG^{r}\otimes \id)\Ind_\HG B
		=\clin{(\K(H)^*\otimes \HG^r\otimes \id) \bigl((\HG^*\Delta_G\otimes \id)\Ind_\HG B\bigr)}
		\\&
		=\clin{(\K(H)^*\otimes \id\otimes\id) \bigl((\HG^{r}\otimes \beta)\Ind_\HG B\bigr)}
		\\&
		=\bigl[(\K(H)^*\otimes \id\otimes\id) (\id\otimes \beta) \bigl((\HG^rC^r_0(G))\otimes B^{\bar{\HG}}\bigr)\bigr]
		\subset \beta(B) 
	\end{align*}
	by \cref{cor:properind}. 
	Thus the well-defined non-degenerate left $G$- and $H$-$*$-homomorphisms 
	$\eta_A : A \isom \alpha(A) \subset \Ind_\HG \HG^* A$ and
	$\kappa_B : \HG^*\Ind_\HG B\xrarr{\HG^{r}\otimes \id_B} \beta(B)\isom B$ 
	give the natural transformations
	$\eta:\id_{\Calg^G}\to \HG_*\HG^*$ and 
	$\kappa:\HG^*\HG_* \to\id_{\Calg^H}$, respectively. 
	For all $(A,\alpha)\in\Calg^G$ and $(B,\beta)\in\Calg^H$, it holds 
	\begin{align*}
		&
		\kappa_{\HG^*A}\circ\HG^*(\eta_{A}) 
		= (\HG^*\alpha)^{-1}(\HG^{r}\otimes \id_A)\circ\alpha=\id_A 
		:\HG^*A\to \HG^*A, 
	\end{align*}
	and
	\begin{align*}
		&
		\Ind_\HG(\kappa_B)\circ\eta_{\Ind_\HG B}
		= (\id_{\K(G)}\otimes(\beta^{-1}(\HG^r\otimes\id_B))) 
		\circ (\Delta_G\otimes\id_B) 
		\\&
		=(\id_{\K(G)}\otimes\beta)^{-1} 
		\circ (\Delta_G\rpb\HG\otimes\id_B) 
		=\id_{\Ind_\HG B} 
		: \Ind_\HG B\to \Ind_\HG B. 
	\end{align*}
	Therefore $\eta$ and $\kappa$ are the unit and counit which assure $\HG^*\dashv\HG_*$ as functors between $\Calg^G$ and $\Calg^H$. 
	
	Now suppose that $\CC=\Cor$ or $\KK$, and 
	that $\HG$ gives a closed quantum subgroup. 
	Then \cref{prop:impr} and the universality of $\KK^H$ show 
	$\HG^*\dashv \HG_*$ as functors between $\CC^G$ and $\CC^H$, as claimed above. 
	
	Next, we seek such adjointness 
	when $\hat{\HG}$ gives an open quantum subgroups and $\CC=\Cor$ or $\KK$. 
	When $\CC=\KK$, 
	it is proved as a special case of~\cite[Theorem~4.8]{Voigt:cpxss} that $\hat{\HG}_*\dashv \hat{\HG}^*$ as functors between $\KK^G$ and $\KK^H$. 
	We remark that the proof still works for $\CC=\Cor$. 
	Thus by Baaj--Skandalis duality it follows $\HG^*\dashv \HG_*$ as functors between $\CC^G$ and $\CC^H$, where $\CC=\Cor$ or $\KK$. 
	
	By combining the arguments so far, we obtain the following. 
	
	\begin{prop}\label{prop:adjoint}
		Let $\HG:H\to G$ be a proper homomorphism of regular locally compact quantum groups with $\Ind_\HG \C$ being unital, 
		and $\CC\in\{\Calg,\Cor,\KK\}$, where we additionally assume separability of $C^r_0(G)$ and $C^r_0(H)$ when $\CC=\KK$. 
		Suppose $\HG^r$ exists. 
		Then $\HG^*\dashv \HG_*$ as functors between $\CC^G$ and $\CC^H$. 
	\qed\end{prop}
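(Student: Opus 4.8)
The plan is to handle $\CC=\Calg$ directly and then reduce the cases $\CC=\Cor,\KK$ to the two building blocks of the proper homomorphism $\HG$, composing the resulting adjunctions. For $\CC=\Calg$ there is essentially nothing further to do: the discussion preceding the statement already produces, out of \cref{prop:projfml} and \cref{cor:properind}, a unit $\eta:\id_{\Calg^G}\to\HG_*\HG^*$ and a counit $\kappa:\HG^*\HG_*\to\id_{\Calg^H}$ together with the two triangle identities $\kappa_{\HG^*A}\circ\HG^*(\eta_A)=\id_{\HG^*A}$ and $\Ind_\HG(\kappa_B)\circ\eta_{\Ind_\HG B}=\id_{\Ind_\HG B}$, and these tools apply to any proper $\HG$ for which $\HG^r$ exists; one only records that $(\eta,\kappa)$ constitute the adjunction $\HG^*\dashv\HG_*$.

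For $\CC=\Cor$ or $\KK$, the plan is to use the canonical factorization $\HG=(\ran\HG)\bar{\HG}$ from the remark after \cref{def:proper}, where $\ran\HG$ gives a closed quantum subgroup and $\hat{\bar{\HG}}$ gives an open quantum subgroup, and to compose adjunctions for the two factors. First I would check that the hypotheses descend to each factor: $\Ind_{\ran\HG}\C=\Ind_\HG\C$ is unital by \cref{cor:properind}, $\Ind_{\bar{\HG}}\C\isom\C$ is always unital by \cref{lem:propersurjind}, $\bar{\HG}^r$ exists because $\hat{\bar{\HG}}$ gives an open---hence closed---quantum subgroup, and $(\ran\HG)^r$ then exists by a routine factorization of $\HG^r\lambda_G$ through $\lambda_G$ using injectivity of $\bar{\HG}^r$. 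For the factor $\ran\HG$: since it gives a closed quantum subgroup one has $\lambda_{\Ran\HG}(\ran\HG)^uC^u_0(G)=C^r_0(\Ran\HG)$, so the subcategory appearing in \cref{prop:impr} is all of $\Cor^{\Ran\HG}$ and \cref{prop:impr} identifies $\Ind_{\ran\HG}$ with $(\ran\HG)_*$; together with the $\Calg$-level adjunction and the universal property of $\KK$ recalled above this yields $(\ran\HG)^*\dashv(\ran\HG)_*$ on $\CC$. For the factor $\bar{\HG}$: the adjunction $\bar{\HG}^*\dashv\bar{\HG}_*$ on $\CC$ was already obtained above from \cite[Theorem~4.8]{Voigt:cpxss} (whose proof is equally valid for $\Cor$) via Baaj--Skandalis duality. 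Since adjunctions compose and $\HG^*\isom\bar{\HG}^*(\ran\HG)^*$ while $\HG_*\isom(\ran\HG)_*\bar{\HG}_*$ by \cref{thm:indstages} and (3) of \cref{rem:functors}, we conclude $\HG^*\dashv\HG_*$.

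The main obstacle I anticipate is bookkeeping rather than a conceptual difficulty: one must verify that the natural isomorphisms $\HG_*\isom(\ran\HG)_*\bar{\HG}_*$ and $\HG^*\isom\bar{\HG}^*(\ran\HG)^*$ are coherent with the composite unit and counit built from the two pieces, and, in the $\Cor$ and $\KK$ cases, that the a priori distinct descriptions of $\HG_*$---via induced coactions as in \cref{prop:projfml} together with (2) of \cref{eg:twiten}, via $\hat{G}^{\op}\redltimes\hat{\HG}^{\op}{}^*(H\redltimes-)$, and via the two factors---all agree up to natural isomorphism. Once that identification is pinned down, the triangle identities for the composite follow formally from those of the factors.
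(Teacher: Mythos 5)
Your proposal follows essentially the same route as the paper: the $\Calg$ case is handled directly by the unit/counit constructed from \cref{prop:projfml} and \cref{cor:properind}, and for $\Cor$ and $\KK$ the adjunction is assembled from the closed-quantum-subgroup factor (via \cref{prop:impr} and the universality of $\KK$) and the dual-of-open factor (via \cite[Theorem~4.8]{Voigt:cpxss} and Baaj--Skandalis duality), composed along the decomposition $\HG=(\ran\HG)\bar{\HG}$. The extra bookkeeping you flag (hypotheses descending to the factors, coherence of $\HG_*\isom(\ran\HG)_*\bar{\HG}_*$) is exactly what the paper leaves implicit in "combining the arguments so far," and your checks of those points are correct.
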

	
	As already mentioned, this adjointness in the case of a closed quantum subgroup can be considered as an analogue of Frobenius reciprocity. 
	On the other hand, let $\HG:=1_{K\to 1}:K\to 1$ for a compact quantum group $K$ with separable $C^r_0(K)$, 
	as a special case when $\hat{\HG}$ gives an open quantum subgroup. 
	Then $(1_{K\to1})^*\dashv (1_{K\to1})_*$ means that 
	$KK^K(A,B)\isom KK(A,K\redltimes B)$ holds for any $A\in\KK$ and $B\in\KK^K$, 
	which is exactly (the quantum version of) Green--Julg theorem,~\cite[Th{\'e}or{\`e}me 5.10]{Vergnioux:thesis}. 
	
	Consider a homomorphism $\HG:H\to G$ of regular locally compact quantum groups which gives a closed quantum subgroup. 
	In~\cite{Nest-Voigt:eqpd}, they considered induced coactions along $\HG$ to give a functor $\CC^{\DD(H)}\to \CC^{\DD(G)}$. 
	In our notation, this is given by 
	$(\HG\bowtie\id)_* (\id\bowtie\hat{\HG})^*:\CC^{\DD(H)}\to \CC^{\DD(\HG)}\to\CC^{\DD(G)}$. 
	As an immediate consequence from the argument of \cref{prop:adjoint}, we have the following adjointability. 
	
	\begin{cor}
		Let $\HG :H\to G$ be a homomorphism of regular locally compact quantum groups giving an open quantum subgroup, and
		$\CC\in\{\Calg,\Cor,\KK\}$, where we additionally assume separability of $C^r_0(G)$ and $C^r_0(H)$ when $\CC=\KK$. 
		Then there is a right adjoint for 
		$(\HG\bowtie\id)_* (\id\bowtie\hat{\HG})^*:\CC^{\DD(H)}\to \CC^{\DD(G)}$. 
	\end{cor}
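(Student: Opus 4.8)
The plan is to factor the functor $\Phi:=(\HG\bowtie\id)_*(\id\bowtie\hat\HG)^*$ as the composite
\[\CC^{\DD(H)}\xrightarrow{(\id\bowtie\hat\HG)^*}\CC^{\DD(\HG)}\xrightarrow{(\HG\bowtie\id)_*}\CC^{\DD(G)},\]
to exhibit a right adjoint for each of the two factors, and then to take the composite of these right adjoints in the reverse order as a right adjoint for $\Phi$.

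First I would record what the two homomorphisms $\id\bowtie\hat\HG:\DD(\HG)\to\DD(H)$ and $\HG\bowtie\id:\DD(\HG)\to\DD(G)$ look like, using the double crossed product formalism of \cref{sec:appendix} (as applied in \cref{eg:doublehom} and \cref{eg:doublebasechange}). Since $\HG$ gives an open quantum subgroup, $\HG\bowtie\id$ again gives an open quantum subgroup. On the other hand, $\HG$ giving an open quantum subgroup means $\HG^r$ exists and surjects onto $L^\infty(H)$, so the dual $\hat\HG:\hat G\to\hat H$ is a proper homomorphism with $\Ran\hat\HG=\hat H$ whose own dual $\HG$ gives an open quantum subgroup; hence $\id\bowtie\hat\HG$ is a proper homomorphism with image all of $\DD(H)$, so that $\Ind_{\id\bowtie\hat\HG}\C=\C$ is unital, $(\id\bowtie\hat\HG)^r$ exists, and $\widehat{\id\bowtie\hat\HG}$ gives an open quantum subgroup.

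Next I would feed these observations into the arguments of \cref{prop:adjoint}. For the first factor, $\id\bowtie\hat\HG$ is proper with $\Ind_{\id\bowtie\hat\HG}\C$ unital and with $(\id\bowtie\hat\HG)^r$ existing — equivalently, $\widehat{\id\bowtie\hat\HG}$ gives an open quantum subgroup — so \cref{prop:adjoint}, for each of the three choices of $\CC$, gives $(\id\bowtie\hat\HG)^*\dashv(\id\bowtie\hat\HG)_*$; thus $(\id\bowtie\hat\HG)_*$ is a right adjoint of $(\id\bowtie\hat\HG)^*$. For the second factor, $\HG\bowtie\id$ gives an open quantum subgroup, so its dual $\widehat{\HG\bowtie\id}$ is proper with full image and its own dual gives an open quantum subgroup; applying the case just treated to $\widehat{\HG\bowtie\id}$ gives $(\widehat{\HG\bowtie\id})^*\dashv(\widehat{\HG\bowtie\id})_*$, and transporting this adjunction through Baaj--Skandalis duality — exactly as in the part of the proof of \cref{prop:adjoint} where $\hat\HG$ gives an open quantum subgroup, for $\CC=\Cor$ or $\KK$, with a direct unit--counit construction handling $\CC=\Calg$ — yields $(\HG\bowtie\id)_*\dashv(\HG\bowtie\id)^*$; thus $(\HG\bowtie\id)^*$ is a right adjoint of $(\HG\bowtie\id)_*$. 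Composing, $(\id\bowtie\hat\HG)_*(\HG\bowtie\id)^*$ is a right adjoint of $\Phi$.

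The step I expect to be the main obstacle is the bookkeeping in the middle: one must check carefully, via the formalism of \cref{sec:appendix}, that $\HG\bowtie\id$ and $\id\bowtie\hat\HG$ together with their Pontryagin duals fall exactly into the situations for which \cref{prop:adjoint} produces a genuine right adjoint — a proper homomorphism with cocompact image and an existing reduced homomorphism, or, dually, a homomorphism whose dual gives an open quantum subgroup — and, in particular, that the case $\CC=\Calg$ of $(\HG\bowtie\id)_*\dashv(\HG\bowtie\id)^*$ is obtained directly rather than through Baaj--Skandalis duality. Once these identifications are in place, everything else is formal.
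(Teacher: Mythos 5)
Your argument is essentially the paper's own proof, which consists of exactly the two observations you make: $\HG\bowtie\id$ and the dual of $\id\bowtie\hat{\HG}$ give open quantum subgroups (\cref{eg:doublehom}), hence $(\HG\bowtie\id)_*\dashv(\HG\bowtie\id)^*$ and $(\id\bowtie\hat{\HG})^*\dashv(\id\bowtie\hat{\HG})_*$, and the right adjoint is the composite $(\id\bowtie\hat{\HG})_*(\HG\bowtie\id)^*$; your verification that $\id\bowtie\hat{\HG}$ meets the hypotheses of \cref{prop:adjoint} is correct. Two remarks on how you justify the second factor. First, for $\CC=\Cor$ or $\KK$ the adjunction $(\HG\bowtie\id)_*\dashv(\HG\bowtie\id)^*$ is the statement of Voigt's theorem quoted in \cref{ssec:adjoint}, applied directly to the open quantum subgroup $\HG\bowtie\id$ of $\DD(G)$; your detour through the dual of $\HG\bowtie\id$ followed by a second Baaj--Skandalis dualization merely undoes the dualization already performed inside the proof of \cref{prop:adjoint}, so it is harmless but redundant. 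Second, your claim that ``a direct unit--counit construction'' handles $\CC=\Calg$ for this factor does not work as stated: the direct construction in \cref{ssec:adjoint} produces the adjunction with \emph{restriction} on the left and \emph{induction} on the right for a proper cocompact homomorphism, which is the opposite direction from the $(\HG\bowtie\id)_*\dashv(\HG\bowtie\id)^*$ needed here, and Baaj--Skandalis duality is not an equivalence at the level of $\Calg$, so neither route you cite yields the $\Calg$ case of this adjunction. The paper's one-line proof is equally silent on this point, so this is a gap you share with the paper rather than a divergence from its route; but be aware that induction from an open (quantum) subgroup being left adjoint to restriction is genuinely problematic in $\Calg$ (already for $\{e\}\subset\mathbb{Z}$ a hom-set count shows it fails), whereas for $\CC=\Cor$ and $\KK$ everything you wrote is fine.
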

	
	\begin{proof}
		Note that $\HG\bowtie\id$ and the dual of $\id\bowtie\hat{\HG}$ give open quantum subgroups by \cref{eg:doublehom}. 
		Thus the statement follows from the fact that 
		$(\HG\bowtie\id)_*\dashv(\HG\bowtie\id)^*$ and 
		$(\id\bowtie\hat{\HG})^*\dashv(\id\bowtie\hat{\HG})_*$. 
	\end{proof}
	
	\section{Example: double crossed products with a discrete abelian group}\label{sec:eg}
	
	In this section, we stick to the following situation. 
	
	\begin{condition}\label{cond:twistab}
		Let $F,G,H$ be regular locally compact quantum groups and $\Gamma$ be a discrete abelian group. 
		Consider the following sequences of homomorphisms of locally compact quantum groups. 
		\begin{align*}
			\xymatrix@R=0.4em{
				1\ar[r] & 
				\hat{\Gamma} \ar@<0.3ex>[r]^-{\EG} & 
				H \ar[r]^-{\HG}\ar@<0.3ex>[l]^-{\pi} & 
				G \ar[r] & 
				1, 
				\\
				1\ar[r] & 
				\hat{\Gamma} \ar@<0.3ex>[r]^-{\FH} & 
				H \ar[r]^-{\GF}\ar@<0.3ex>[l]^-{\pi} & 
				F \ar[r] & 
				1. 
			}
		\end{align*}
		Moreover, we assume they are ``split exact" (with common $\pi$) in the sense of satisfying the three conditions as follows. 
		\begin{itemize}
			\item
			$\EG, \hat{\HG}, \FH, \hat{\GF}$ give closed quantum subgroups. 
			\item
			$(C^r_0(H)\rpb\EG)^{\hat{\Gamma}}=\HG^rC^r_0(G)$, 
			and $(C^r_0(H)\rpb\FH)^{\hat{\Gamma}}=\GF^rC^r_0(F)$. 
			\item
			$\pi\EG=\id_{\hat{\Gamma}}=\pi\FH$. 
		\end{itemize}
	\end{condition}
	
	\begin{rem}
		Assume the situation of \cref{cond:twistab}. 
		By (3) of \cref{eg:doublehom} we have 
		$\HG^rC^r_0(G)=
		\Ind_{\hat{\Gamma}\to 1}\EG^*C^r_0(H)$, 
		and similarly $\GF^rC^r_0(F)=
		\Ind_{\hat{\Gamma}\to 1}\FH^*C^r_0(H)$. 
		By using this and the commutativity of $C_0(\Gamma)$, 
		it follows for any $x\in (\HG^rC^r_0(G))\otimes C_0(\Gamma)$, 
		\begin{align*}
			&
			\Ad W^{\EG}_{12}{}^*\Ad W^\pi_{23} x_{23}
			=\Ad (\hat{W}^{\Gamma}_{13}W^\pi_{23}W^{\EG}_{12}{}^*) x_{23}
			=\Ad W^\pi_{23} x_{23}, 
			\\
			&
			\Ad W^{\EG}_{12}{}^*\Ad W^\pi_{23}{}^* x_{23}
			=\Ad (W^\pi_{23}{}^*\hat{W}^{\Gamma}_{13}{}^*W^{\EG}_{12}{}^*) x_{23}
			=\Ad W^\pi_{23}{}^* x_{23}. 
		\end{align*}
		Therefore $\Ad W^\pi$ gives a $*$-automorphism on $(\HG^r C^r_0(G))\otimes C_0(\Gamma)$, 
		and induces a matching $\m$ on $G^{\op}$ and $\Gamma$. 
		By the same reason we also have a matching $\n$ on $G^{\op}$ and $\Gamma$ coming from a $*$-automorphism $\Ad W^\pi$ on 
		$(\GF^rC^r_0(F))\otimes C_0(\Gamma)$. 
		By (3) of \cref{eg:doublehom} we see $\hat{\HG\bowtie\id}$ and $\hat{\GF\bowtie\id}$ 
		are well-defined homomorphisms which give open quantum subgroups. 
	\end{rem}
	
	When $F,G,H$ are compact, such a situation arises from a more general treatment in~\cite{Bichon-Neshveyev-Yamashita:grtwi}. 
	Here we give an example coming from finite groups. 
	
	\begin{eg}\label{eg:dihedral}
		Let $n\in\Z_{>0}$. 
		Consider an action of $\Gamma:=\Z/2\Z$ on a group $\hat{G}:=\Z/2n\Z$ given by 
		$\Z/2\Z\ni 1\mapsto \bigl(1\mapsto -1\bigr)\in \Aut(\Z/2n\Z)$. 
		The semi-direct product $\hat{H}:=\Z/2n\Z\rtimes\Z/2\Z$ is the dihedral group $D_{2n}=\bra a,b \,|\, a^{2n}=b^{2}=1, bab^{-1}=a^{-1} \ket$.  
		This has a subgroup $\hat{F}:= D_n\isom \bra a^2, b\ket\subset D_{2n}$. 
		Then we have the following two split exact sequences, 
		\begin{align*}
			\xymatrix@R=0.4em{
				1\ar[r] & 
				\Z/2n\Z \ar[r]^-{\subset} & 
				D_{2n} \ar@<0.3ex>[r] & 
				\Z/2\Z \ar[r]\ar@<0.3ex>[l]^-{\hat{\pi}} & 
				1, 
				\\
				1\ar[r] & 
				D_n \ar[r]^-{\subset} & 
				D_{2n} \ar@<0.3ex>[r] & 
				\Z/2\Z \ar[r]\ar@<0.3ex>[l]^-{\hat{\pi}} & 
				1, 
			}
		\end{align*}
		where $\hat{\pi}:\Z/2\Z\ni 1\mapsto ab\in D_{2n}$. 
		By taking these duals, we get a pair of sequences satisfying \cref{cond:twistab}. 
		Then a calculation shows $\DD(\m)\isom D_{2n}$. 
		We can also see that 
		$C^r_0(\DD(\n))\isom C^*(D_{n})\otimes C(\Z/2\Z)$. 
		We calculate the representation rings of their duals as 
		$R(\hat{D_{2n}})\isom K_0(C(D_{2n}))\isom \Z^{\oplus 4n}$ 
		and $R(\hat{\DD(\n)})\isom K_0(C^*(D_{n})\otimes C(\Z/2\Z))
		\isom R(D_n)^{\oplus 2}$. 
		Here the rank of $R(D_n)$ over $\Z$ is $\frac{n-1}{2}+2$ if $n$ is odd, and $\frac{n}{2}+3$ if $n$ is even. 
		In particular, 
		$\hat{\DD(\m)}$ and $\hat{\DD(\n)}$ 
		are not monoidally equivalent if $n\geq 3$. 
	\end{eg}
	
	Similarly we can also get a situation of \cref{cond:twistab} with $(\hat{G},\hat{H})=(D_{2n}, Q_{4n})$, where 
	$Q_{4n}:=\bra a,c \,|\, a^{2n}=1, a^{n}=c^2, cac^{-1}=a^{-1} \ket$ is the generalized quaternion group. 
	Indeed, by letting 
	\begin{align*}
		&
		\hat{H}:=\bra a,b,c \,|\, a^{2n}=b^2=1, a^{n}=c^2, bab^{-1}=cac^{-1}=a^{-1}, bcb^{-1}=c^{-1} \ket , 
	\end{align*}
	we see $D_{2n}\isom \bra a,b\ket\subset \hat{H}$ and $Q_{4n} \isom \bra a,c\ket \subset \hat{H}$. 
	We can give $\pi:H\to \hat{\Gamma}:=\hat{\Z/2\Z}$ by $\hat{\pi}:\Z/2\Z\ni 1\mapsto bc\in \hat{H}$. 
	
	When $F,G$ are compact quantum groups, various aspects of them are compared in 
	\cite{Bichon-Neshveyev-Yamashita:grtwi},~\cite{Bichon-Neshveyev-Yamashita:grmod}. 
	In terms of double crossed products and induced coactions, 
	we are going to compare them in our settings of \cref{cond:twistab}. 
	See~\cite[Section~3]{Bichon-Neshveyev-Yamashita:grmod} for the closely related result in a more algebraic situation. 
	We will also see such a comparison can be done in the levels of $\CC=\Cor$ and $\KK$. 
	
	\begin{thm}\label{thm:twistab}
		Consider the situation of \cref{cond:twistab} 
		and let $\CC\in\{\Calg,\Cor,\KK\}$, where we additionally assume separability of $C^r_0(H)$ when $\CC=\KK$. 
		Consider the following functors 
		\begin{align*}
			\xymatrix@C=3.5em@R=0.4em{
				T_\m: 
				\CC^{\DD(\pi)} \ar[r]^-{\Gamma\redltimes-} &
				\CC^{\DD(\pi)} 
				\ar[r]^-{(\HG\bowtie\id)_*} &
				\CC^{\DD(\m)}, 
				\\
				T_\n: 
				\CC^{\DD(\pi)} \ar[r]^-{\Gamma\redltimes-} &
				\CC^{\DD(\pi)} 
				\ar[r]^-{(\GF\bowtie\id)_*} &
				\CC^{\DD(\n)}. 
			}
		\end{align*}
		Here for any $\pi$-Yetter--Drinfeld C*-algebra $(A,\alpha,\hat{\alpha})$, we have equipped 
		$\Gamma\redltimes A$ with a structure of $\pi$-Yetter--Drinfeld C*-algebra via 
		the canonical $*$-isomorphism $\Gamma\redltimes A\isom A\outensor{\hat{\Gamma}}C(\hat{\Gamma})$, 
		by considering $C^r_0(\hat{\Gamma})$ 
		as a $\hat{\Gamma}$-Yetter--Drinfeld C*-algebra, 
		with the left $\hat{\Gamma}$-coaction $\Delta_{\hat{\Gamma}}$ and the trivial left $\Gamma$-coaction. 
		Then 
		$T_\n (\HG\bowtie\id_\Gamma)^*$ and $T_\m (\GF\bowtie\id_\Gamma)^*$ are mutually inverse categorical equivalences 
		between $\CC^{\DD(\m)}$ and $\CC^{\DD(\n)}$. 
		If $\CC=\KK$, they are triangulated. 
	\end{thm}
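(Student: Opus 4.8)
\begin{prf}
Write $\Phi:=T_\n(\HG\bowtie\id_\Gamma)^*:\CC^{\DD(\m)}\to\CC^{\DD(\n)}$ and $\Psi:=T_\m(\GF\bowtie\id_\Gamma)^*:\CC^{\DD(\n)}\to\CC^{\DD(\m)}$. By \cref{eg:doublehom} the duals of $\HG\bowtie\id_\Gamma$ and $\GF\bowtie\id_\Gamma$ give open quantum subgroups; in particular these homomorphisms are proper, so the functors $(\HG\bowtie\id_\Gamma)^*$, $(\HG\bowtie\id_\Gamma)_*$, $(\GF\bowtie\id_\Gamma)^*$, $(\GF\bowtie\id_\Gamma)_*$, and hence $\Phi$ and $\Psi$, are defined on $\CC$ for every $\CC\in\{\Calg,\Cor,\KK\}$ by \cref{rem:functors}. \cref{cond:twistab} is symmetric under exchanging $(\HG,\EG,G,\m)$ with $(\GF,\FH,F,\n)$ while keeping $\pi$, so it suffices to produce a natural isomorphism $\Psi\Phi\isom\id_{\CC^{\DD(\m)}}$; then $\Phi\Psi\isom\id_{\CC^{\DD(\n)}}$ follows by symmetry. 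Expanding,
\[
\Psi\Phi=(\HG\bowtie\id_\Gamma)_*\circ(\Gamma\redltimes-)\circ(\GF\bowtie\id_\Gamma)^*\circ(\GF\bowtie\id_\Gamma)_*\circ(\Gamma\redltimes-)\circ(\HG\bowtie\id_\Gamma)^*,
\]
and the heart of the matter is to show that this six-fold composite of functors telescopes to the identity.

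The plan is to carry this out first for $\CC=\Calg$ using the machinery of \cref{sec:basechange}, and then to transport the conclusion to $\Cor$ and $\KK$. Two features of \cref{cond:twistab} are what make the composite collapse. First, the homomorphisms $\HG\EG$ and $\GF\FH$ from $\hat\Gamma$ are trivial (these are the ``kernels'' of the two exact sequences), so each $\hat\Gamma$-coaction obtained by restricting a $\DD(\m)$- or $\DD(\n)$-coaction along $\EG$ or $\FH$ is trivial; this renders untwisted several of the twisted tensor products hidden inside $\Gamma\redltimes-$ (recall $\Gamma\redltimes A\isom A\outensor{\hat\Gamma}C^r_0(\hat\Gamma)$), so that the two applications of $\Gamma\redltimes-$ become ordinary reduced crossed products by $\Gamma$ on the relevant legs. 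Second, $\pi\EG=\id_{\hat\Gamma}=\pi\FH$, so these two $\Gamma$-legs are genuinely Pontryagin-dual to the $\hat\Gamma$-data, and a Takesaki--Takai argument should make them cancel. Concretely I would invoke \cref{thm:basechange} --- in the guise of \cref{eg:ydbasechange} and \cref{eg:exactbasechange}, which apply precisely because the fixed-point identities $(C^r_0(H)\rpb\EG)^{\hat\Gamma}=\HG^rC^r_0(G)$ and $(C^r_0(H)\rpb\FH)^{\hat\Gamma}=\GF^rC^r_0(F)$ are assumed --- to rewrite $(\GF\bowtie\id_\Gamma)_*\circ(\Gamma\redltimes-)$ and $(\Gamma\redltimes-)\circ(\GF\bowtie\id_\Gamma)^*$, and likewise their $\HG$-counterparts, as induced-coaction functors along single proper homomorphisms; \cref{thm:indstages} then composes the chain into one induction, whose underlying composite functor is, via $\pi\EG=\id$ and $\pi\FH=\id$, naturally isomorphic to $\id_{\Calg^{\DD(\m)}}$. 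This would settle the case $\CC=\Calg$.

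For $\CC=\Cor$ and $\CC=\KK$ the same chain is in play, and all its constituents --- the induction functors $(\HG\bowtie\id_\Gamma)_*$ and $(\GF\bowtie\id_\Gamma)_*$, the restrictions, the crossed products $\Gamma\redltimes-$, and the twisted tensor products used in \cref{eg:ydbasechange} --- are defined on $\Cor$ and $\KK$ as in \cref{rem:functors}. Since the natural transformations constructed above are assembled from induced-coaction functors, crossed products and twisted tensor products, their being isomorphisms on $\Cor$ and on $\KK$ reduces to \cref{prop:projfml} (compatibility of induction with twisted tensor products), \cref{prop:impr} together with \cref{rem:impr} (induction agreeing with the genuine functor $\Ind$ on the subcategories where the constituents live), and, for $\KK$, the universal property of $\KK^G$ recalled before \cref{rem:functors}; this last also shows the equivalences are triangulated, as $\Gamma\redltimes-$ and the induced-coaction functors are exact and commute with suspension. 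The main obstacle I anticipate is not formal but bookkeeping: carrying the $H$-, $G$- or $F$-, and $\Gamma$-coaction data of \cref{def:categories}-objects correctly through the six-fold composite, and --- the genuinely delicate point --- checking that the squares of homomorphisms among the various $\DD(\cdot)$'s that arise satisfy condition (b) of \cref{thm:basechange}, not merely \cref{cond:square}, so that base change may legitimately be applied; once that is secured, the Takesaki--Takai cancellation and the telescoping should be routine.
\end{prf}
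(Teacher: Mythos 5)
Your overall scaffolding matches the paper's: reduce by symmetry to one composite, treat $\CC=\Calg$ first by an explicit analysis, then transport to $\Cor$ via \cref{rem:impr} and to $\KK$ via the universal property. You also correctly identify \cref{eg:doublebasechange} and \cref{eg:exactbasechange} as the tools that compute the fixed-point functors $(-)^{\GF\bowtie\id}=(-)^{\GF}=(\FH^*(-))^{\hat\Gamma\to 1}$, and your worry about condition (b) of \cref{thm:basechange} is actually moot: since $\hat\Gamma$ is compact, (d${}^+$) of \cref{rem:hmgquot} holds automatically for the relevant squares.

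However, there is a genuine gap at the heart of the argument. The composite $\Psi\Phi$ does not ``telescope into one induction'' via \cref{thm:indstages}: it is not an induction along a single homomorphism, because the two occurrences of $\Gamma\redltimes-$ are not restrictions or inductions along any arrow in the diagram. What the paper actually does is compute $\tilde T_\n(A):=(\Gamma\redltimes-)\circ(\GF\bowtie\id)^*\circ T'_\n(A)$ explicitly in terms of the spectral subspaces ${}_t(\FH^*A)$, $t\in\Gamma$, and then exhibit a concrete $*$-isomorphism $(\epsilon_{\hat\Gamma}\otimes\id\otimes\id)\Ad W^{\Gamma}_{12}:\tilde T_\n(A)\xrarr{\sim}\Gamma\redltimes A$ sending $(u^s\otimes u^t{}^*)_{12}(\id\otimes\hat\alpha)\hat\alpha(a)\mapsto u^{st^{-1}}_1\hat\alpha(a)$, together with an explicitly constructed inverse built from $\hat W^\Gamma$; combining with $\hat\alpha:A\xrarr{\sim}(\Gamma\redltimes(\HG\bowtie\id)^*A)^{\HG\bowtie\id}$ gives the unit of the equivalence. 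This Takesaki--Takai step is the actual content of the theorem, uses the abelianness of $\Gamma$ and $\pi\EG=\id=\pi\FH$ in an essential way, and requires verifying equivariance for both the $H$- and $\Gamma$-legs of the Yetter--Drinfeld structure; deferring it as ``routine bookkeeping'' leaves the proof incomplete. (A smaller inaccuracy: the $\hat\Gamma$-coactions obtained by restricting along $\FH$ or $\EG$ are \emph{not} trivial in general --- their nontrivial spectral decomposition over $\Gamma$ is exactly what the explicit isomorphism manipulates.)
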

	
	In the proof, the unitary in $C(\hat{\Gamma})=C^*(\Gamma)$ corresponding to each $t\in\Gamma$ is denoted by $u^t:L^2(\Gamma)\ni f\mapsto f(t^{-1}(-))\in L^2(\Gamma)$. 
	Then we have $W^\Gamma=\sum\limits_{t\in\Gamma}\delta_t\otimes u^t\in \U(C_0(\Gamma)\otimes C^*(\Gamma))$. 
	We also canonically identify $\Irr(\hat{\Gamma})=\Gamma$. 
	
	\begin{proof}
		By symmetry, it is enough to show that 
		$T'_\n:=(\Gamma\redltimes-)^{\GF\bowtie\id}:\CC^{\DD(\pi)}\to \CC^{\DD(\n)}$ is a well-defined functor 
		which is naturally isomorphic to $T_\n$, and that 
		$\id_{\CC^{\DD(\m)}}\isom T'_\m(\GF\bowtie\id)^*T'_\n(\HG\bowtie\id)^*$ 
		when we define $T'_\m:\CC^{\DD(\pi)}\to \CC^{\DD(\m)}$ similarly.
		We also write $\tilde{T}_\n:=
		(\Gamma\redltimes-)\circ(\GF\bowtie\id)^*\circ T'_\n :\CC^{\DD(\pi)}\to \CC^{\DD(\pi)}$. 
		
		First, we prove them when $\CC=\Calg$. 
		Clearly $T'_\n$ is well-defined and $T'_\n\isom T_\n$. 
		For any $\pi$-Yetter--Drinfeld C*-algebra $(A,\alpha,\hat{\alpha})$, 
		it holds $A^{\GF\bowtie\id}=A^{\GF}=(\FH^*A)^{\hat{\Gamma}\to 1}$ 
		by \cref{eg:doublebasechange} and \cref{eg:exactbasechange}. 
		We note that the $\pi$-Yetter--Drinfeld structure of $\Gamma\redltimes A$ is given by the left $H$-coaction 
		$u^t_1{}^*\hat{\alpha}(a)\mapsto 
		(\pi^r(u^t{}^*)\otimes u^t{}^*)_{12} (\id\otimes\hat{\alpha})\alpha(a)$
		and the left $\Gamma$-coaction 
		$u^t_1{}^*\hat{\alpha}(a)\mapsto 
		u^t_2{}^*(\id\otimes\hat{\alpha})\hat{\alpha}(a)$, 
		where $t\in\Gamma$, $a\in A$. 
		From \cref{ssec:cptind}, we recall 
		${}_{t}(\FH^*A):=\{a\in A \,|\, 
		(\FH^*\alpha)(a)\in u^t\otimes A \}$
		for each $t\in\Gamma$, 
		and we can describe 
		\begin{align*}
			&
			T'_\n(A)=
			\biggl[\bigoplus\limits_{t\in\Gamma}
			u^t_1{}^*\hat{\alpha}({}_{t}(\FH^*A))\biggr] 
			\subset \M(\K(\Gamma)\otimes A) . 
		\end{align*}
		The left $\DD(\n)$-coaction of $T'_\n(A)$ is canonically determined by the left $\DD(\pi)$-coaction of $(\GF\bowtie\id)^*T'_\n(A)$ 
		as a subset of $\Gamma\redltimes A$. 
		Thus we obtain 
		\begin{align}\label{eq:thm:twistab1}
			&
			\tilde{T}_\n(A)=
			\biggl[\bigoplus\limits_{s,t\in\Gamma}
			(u^s\otimes u^t{}^*)_{12} (\id\otimes\hat{\alpha})\hat{\alpha}({}_{t}(\FH^*A))\biggr]
			\subset \M(\K(\Gamma)\otimes\K(\Gamma)\otimes A). 
		\end{align}
		We have a left $H$- and $\Gamma$-$*$-homomorphism 
		\begin{align}\label{eq:thm:twistab2}
			&
			(\epsilon_{\hat{\Gamma}}\otimes\id\otimes\id) \Ad W^{\Gamma}_{12} : 
			\tilde{T}_\n(A) \ni 
			(u^s\otimes u^t{}^*)_{12} (\id\otimes\hat{\alpha})\hat{\alpha}(a)
			\mapsto
			u^{st^{-1}}_1\hat{\alpha}(a)
			\in \Gamma\redltimes A , 
		\end{align}
		where $s,t\in\Gamma$, $a\in{}_t(\FH^* A)$, and 
		$\epsilon_{\hat{\Gamma}}$ is the counit of $C^*(\Gamma)$. 
		This map is actually a $*$-isomorphism, whose inverse can be given by 
		\begin{align*}
			&
			\Gamma\redltimes A 
			\xrarr{\id\otimes\hat{\alpha}} 
			\M\bigl((\Gamma\redltimes C_0(\Gamma))\otimes A\bigr) 
			\xrarr{\id\otimes\id\otimes (\FH^*\alpha)}
			\M\bigl((\Gamma\redltimes C_0(\Gamma)) 
			\otimes C^*(\Gamma) \otimes A\bigr) 
			\\
			&
			\xrarr{\Ad(\hat{W}^{\Gamma}_{13}{}^*\hat{W}^{\Gamma}_{23})} 
			\M\bigl(\K(\Gamma)\otimes \K(\Gamma)\otimes C^*(\Gamma)\otimes A \bigr)
			\xrarr{\id\otimes\id\otimes\epsilon_{\hat{\Gamma}}\otimes\id}
			\M\bigl(\K(\Gamma)\otimes\K(\Gamma)\otimes A\bigr) , 
		\end{align*}
		where we have used the fact that for any $s,t\in \Gamma$ it holds 
		\begin{align*}
			&
			\Ad (\hat{W}^{\Gamma}_{13}{}^*\hat{W}^{\Gamma}_{23}) (\Delta_{\Gamma}(\delta_s)\otimes u^t)
			=(u^t\otimes u^t{}^*\otimes 1)(\Delta_{\Gamma}(\delta_s)\otimes u^t) . 
		\end{align*}
		By combining \cref{eq:thm:twistab2} with the left $\DD(\m)$-$*$-isomorphism 
		$\hat{\alpha}:A\to (\Gamma\redltimes (\HG\bowtie\id)^*A)^{\HG\bowtie\id}$, 
		we get a natural isomorphism 
		$\eta: (\tilde{T}_\n(\HG\bowtie\id)^*(-))^{\HG\bowtie\id} \xrarr{\sim}\id_{\Calg^{\DD(\m)}}$. 
		Indeed, for any left $\DD(\m)$-$*$-homomorphism $f:A\to B$ in $\Calg^{\DD(\m)}$, we have 
		$(\tilde{T}_\n (\HG\bowtie\id)^*(f))^{\HG\bowtie\id} =\id_{\K(\Gamma)}\otimes\id_{\K(\Gamma)}\otimes f$. 
		
		Next, we show the case of $\CC=\Cor$. 
		We observe $\FH^*(\Gamma\redltimes A)\in \Cor^{\hat{\Gamma}}_{\hat{\Gamma}\to 1}$ 
		for any $\pi$-Yetter--Drinfeld C*-algebra $(A,\alpha,\hat{\alpha})$, since 
		\begin{align*}
			&
			\Bigl[\bigl(\Gamma\redltimes A\bigr)_{23} 
				\Bigl( \bigl( \Delta_{\hat{\Gamma}}C^r_0(\hat{\Gamma}) \bigr)_{12} 
				\bigl( (\id\otimes\hat{\alpha})(\FH^*\alpha)(A) \bigr) \Bigr)\Bigr]
			\\&
			=
			\bigl[\hat{\alpha}(A)_{23} 
				\bigl( C^r_0(\hat{\Gamma})\otimes C^r_0(\hat{\Gamma}) \bigr)_{12} 
				\bigl( (\id\otimes\hat{\alpha})(\FH^*\alpha)(A) \bigr) \bigr] 
			=
			C^r_0(\hat{\Gamma})\otimes (\Gamma\redltimes A). 
		\end{align*}
		Thus \cref{eg:exactbasechange} and \cref{rem:impr} imply $T'_\n$ is a well-defined functor with $T_\n\isom T'_\n$. 
		Now it is enough to show $\id_{\Cor^{\DD(\m)}}\isom T'_\m(\GF\bowtie\id)^*T'_\n(\HG\bowtie\id)^*$. 
		But for any morphism $(\F,\varpi):A\to B$ in $\Cor^{\DD(\m)}$, 
		we can describe $\tilde{T}_\n (\HG\bowtie\id)^*(\F)$ 
		as a corner of \cref{eq:thm:twistab1} for $\K_{B}(\F\oplus B)$. 
		By using this we can check that $\eta$ constructed in the case of $\CC=\Calg$ still gives a natural isomorphism 
		$(\tilde{T}_\n (\HG\bowtie\id)^*(-))^{\HG\bowtie\id} \xrarr{\sim}\id_{\Cor^{\DD(\m)}}$. 
		
		Finally, we consider the case of $\CC=\KK$. 
		Note that the separability of $C^r_0(H)$ implies separability of $C^r_0(G), C^r_0(F), C_0(\Gamma)$ by \cref{cond:twistab}. 
		As a functor of $\Calg^{\DD(\pi)}\to\Calg^{\DD(\n)}$, 
		$T'_\n$ preserves split exact sequences and tensor products with the closed interval $[0,1]$. 
		We also know from the previous case of $\CC=\Cor$ that $T'_\n$ preserves equivariant imprimitivity bimodules. 
		Therefore from the universal property of $\KK^{\DD(\pi)}$ it follows $T'_\n : \KK^{\DD(\pi)}\to\KK^{\DD(\n)}$ is well-defined. 
		It also follows from Baaj--Skandalis duality and $\eta$ in the case of $\CC=\Calg$ 
		that $T'_\n\isom T_\n$ and $(\tilde{T}_\n (\HG\bowtie\id)(-))^{\HG\bowtie\id} \isom\id_{\KK^{\DD(\m)}}$. 
	\end{proof}
	
	\begin{cor}\label{cor:fiberfunc}
		Consider the situation of \cref{cond:twistab}, 
		and suppose $H$ is compact and $\Gamma$ is finite. 
		Let $\CC\in\{\Cor,\KK\}$, where we additionally assume separability of $C^r_0(H)$ when $\CC=\KK$. 
		Then there is a categorical equivalence $\CC^{\hat{\DD(\m)}^{\op}}\simeq \CC^{\hat{\DD(\n)}^{\op}}$ 
		such that it fits into the horizontal part of 
		the following diagram, 
		which commutes up to some natural isomorphism, 
		\begin{align}\label{diagram:fiber}
			\vcenter{\xymatrix{
				\CC^{\hat{\DD(\m)}^{\op}} 
				\ar[drrr]_-{(1_{1\to\hat{\DD(\m)}^{\op}})^*\qquad} & &
				\CC^{\DD(\m)} \ar[ll]_-{\DD(\m)\redltimes-}
				\ar[rr]^-{T_\n(\HG\bowtie\id_\Gamma)^*} & &
				\CC^{\DD(\n)} \ar[rr]^-{\DD(\n)\redltimes-} & & 
				\CC^{\hat{\DD(\n)}^{\op}} \ar[dlll]^-{\qquad(1_{1\to\hat{\DD(\n)}^{\op}})^*} \\
				&&& \CC^{1} &&& . 
			}}
		\end{align}
	\end{cor}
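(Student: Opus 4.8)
The plan is to transport the categorical equivalence $T_\n(\HG\bowtie\id_\Gamma)^*\colon\CC^{\DD(\m)}\xrarr{\sim}\CC^{\DD(\n)}$ of \cref{thm:twistab} through Baaj--Skandalis duality, and then to verify compatibility with the forgetful functors by unwinding everything into an identity between reduced crossed products. Since $H$ is compact and $\Gamma$ is finite, $G$ and $F$ are compact, and the double crossed products $\DD(\m)$, $\DD(\n)$, $\DD(\pi)$ are compact, in particular regular; hence $\hat{\DD(\m)}^{\op}$ and $\hat{\DD(\n)}^{\op}$ are discrete and Baaj--Skandalis duality (\cite{Baaj-Skandalis:eqkk}) furnishes categorical equivalences $\DD(\m)\redltimes(-)\colon\CC^{\DD(\m)}\xrarr{\sim}\CC^{\hat{\DD(\m)}^{\op}}$ and $\DD(\n)\redltimes(-)\colon\CC^{\DD(\n)}\xrarr{\sim}\CC^{\hat{\DD(\n)}^{\op}}$, which are triangulated when $\CC=\KK$. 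I then set
\begin{align*}
	\Phi:=\bigl(\DD(\n)\redltimes(-)\bigr)\circ T_\n(\HG\bowtie\id_\Gamma)^*\circ\bigl(\DD(\m)\redltimes(-)\bigr)^{-1}\colon\CC^{\hat{\DD(\m)}^{\op}}\longrightarrow\CC^{\hat{\DD(\n)}^{\op}},
\end{align*}
which by \cref{thm:twistab} is a categorical equivalence, triangulated when $\CC=\KK$, and which by construction satisfies $\Phi\circ(\DD(\m)\redltimes(-))\isom(\DD(\n)\redltimes(-))\circ T_\n(\HG\bowtie\id_\Gamma)^*$, i.e.\ fills the horizontal part of \cref{diagram:fiber}.

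It remains to produce the natural isomorphism making the triangle over $\CC^1$ commute. Unwinding the definition of $(-)_*$ in \cref{rem:functors}, for any regular locally compact quantum group $L$ one has $(1_{1\to\hat L^{\op}})^*\circ(L\redltimes(-))=(1_{L\to1})_*$ as functors $\CC^L\to\CC^1$; hence the asserted commutativity amounts to a natural isomorphism $(1_{\DD(\m)\to1})_*\isom(1_{\DD(\n)\to1})_*\circ T_\n(\HG\bowtie\id_\Gamma)^*$ of functors $\CC^{\DD(\m)}\to\CC^1$. Expanding $T_\n=(\GF\bowtie\id_\Gamma)_*\circ(\Gamma\redltimes(-))$, using compatibility of $(-)_*$ with composition of homomorphisms and $1_{\DD(\n)\to1}\circ(\GF\bowtie\id_\Gamma)=1_{\DD(\pi)\to1}$, the right-hand side becomes $(1_{\DD(\pi)\to1})_*\circ(\Gamma\redltimes(-))\circ(\HG\bowtie\id_\Gamma)^*$. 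So everything reduces to a natural isomorphism of C*-algebras
\begin{align*}
	\DD(\m)\redltimes A\;\isom\;\DD(\pi)\redltimes\bigl(\Gamma\redltimes(\HG\bowtie\id_\Gamma)^*A\bigr),\qquad A\in\CC^{\DD(\m)},
\end{align*}
as objects of $\CC^1$.

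This last isomorphism I would establish by a direct crossed-product computation. By the remark after \cref{cond:twistab}, $\widehat{\HG\bowtie\id_\Gamma}$ gives an open quantum subgroup, so the first split exact sequence of \cref{cond:twistab} presents $\HG\bowtie\id_\Gamma\colon\DD(\pi)\to\DD(\m)$ as the quotient of $\DD(\pi)$ by the finite normal quantum subgroup $\hat\Gamma$, on which the pullback $(\HG\bowtie\id_\Gamma)^*A$ carries the trivial coaction. Feeding this, the standard identification $L\redltimes C^r_0(L)\isom\K(L^2(L))$, and the explicit structure of $\Gamma\redltimes(-)$ fixed in the proof of \cref{thm:twistab} (via $\Gamma\redltimes(-)\isom(-)\outensor{\hat\Gamma}C^r_0(\hat\Gamma)$, where $C^r_0(\hat\Gamma)$ carries the coaction $\Delta_{\hat\Gamma}$ and the trivial $\Gamma$-coaction) into Baaj--Skandalis duality, the outer $\Gamma$-crossed product compensates exactly, up to the stabilization $-\otimes\K(\Gamma)$, for the replacement of $\DD(\m)$ by $\DD(\pi)$; and $-\otimes\K(\Gamma)$ is an isomorphism in $\Cor$ and in $\KK$. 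Naturality in $A$ is automatic from functoriality of the constructions, and when $\CC=\KK$ the equivalence $\Phi$ is triangulated as a composite of triangulated functors.

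The main obstacle is precisely this last computation: one must make the bookkeeping of the various $\op$'s, and of the two copies of $\Gamma$ (the one internal to $\DD(\pi)$ versus the outer crossed product), completely precise, and in particular check that under the Yetter--Drinfeld structure chosen in \cref{thm:twistab} the relevant internal action of $\Gamma$ on the factor produced by the trivially acting kernel $\hat\Gamma$ really is the translation action, so that the two stabilizations cancel. Once the case $\CC=\Cor$ is settled this way, the passage to $\CC=\KK$ is routine via the universal property of $\KK^{\DD(\m)}$ recalled before \cref{rem:functors}, since all functors in sight preserve split exact sequences, homotopies, and equivariant imprimitivity bimodules.
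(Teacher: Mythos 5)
Your construction of the horizontal equivalence via Baaj--Skandalis duality, and your reduction of the triangle to a natural isomorphism $(1_{\DD(\m)\to1})_*\isom(1_{\DD(\n)\to1})_*\circ T_\n(\HG\bowtie\id_\Gamma)^*$ of functors $\CC^{\DD(\m)}\to\CC^1$, are both correct and match the paper up to that point. The gap is in how you then propose to verify this identity: you reduce it further to an isomorphism $\DD(\m)\redltimes A\isom\DD(\pi)\redltimes\bigl(\Gamma\redltimes(\HG\bowtie\id_\Gamma)^*A\bigr)$ of iterated reduced crossed products, and then you do not prove it --- you yourself flag the required bookkeeping (the interaction between the $\Gamma$-coaction internal to the $\pi$-Yetter--Drinfeld structure and the outer $\Gamma$-crossed product, and the identification of the resulting $\hat\Gamma$-factor with a translation coaction so that the stabilizations cancel) as ``the main obstacle.'' That computation is exactly the content of the statement, so as written the proposal asserts the key step rather than establishing it. Note also that the inner $\Gamma\redltimes(-)$ here is taken with respect to a coaction that is \emph{part of} the $\DD(\pi)$-structure, and the new $\DD(\pi)$-coaction on $\Gamma\redltimes C$ is the nontrivial one fixed in the proof of \cref{thm:twistab}, so no off-the-shelf iterated-crossed-product or Green-imprimitivity identity applies directly; a genuine verification is needed.

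The paper avoids this computation entirely by a different device: since $H$ is compact and $\Gamma$ is finite, $\DD(\m)$ and $\DD(\n)$ are compact, so by Green--Julg (the case of \cref{prop:adjoint} where the dual homomorphism gives an open quantum subgroup) both composite functors $\CC^{\DD(\m)}\to\CC^1$ in \cref{diagram:fiber} admit left adjoints, namely $(1_{\DD(\m)\to1})^*$ and $T'_\m(\GF\bowtie\id)^*(1_{\DD(\n)\to1})^*$ respectively (using that $T_\n(\HG\bowtie\id)^*$ and $T_\m(\GF\bowtie\id)^*$ are mutually inverse by \cref{thm:twistab}). By uniqueness of adjoints it then suffices to compare these two \emph{pullback} functors on trivially coacted algebras, where the explicit form of $T'_\m$ immediately gives $T'_\m(A)\isom A$ with trivial coaction. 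If you want to keep your direct route, you must actually carry out the crossed-product identification; otherwise I would recommend switching to the adjoint argument, which turns the hard comparison of induction-type functors into a trivial comparison of restriction-type functors.
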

	
	Note that by \cref{eg:dihedral}, $\hat{\DD(\m)}^{\op}$ and $\hat{\DD(\n)}^{\op}$ need not be isomorphic, 
	even in the finite dimensional case. 
	In this sense, the corollary can be interpreted as a kind of no-go theorem for reconstructing quantum groups using $\Cor$ and $\KK$. 
	We remark this categorical equivalence need not send 
	$\C\in \CC^{\hat{\DD(\m)}^{\op}}$ to $\C\in \CC^{\hat{\DD(\n)}^{\op}}$, since 
	the representation rings of $\hat{\DD(\m)}^{\op}$ and $\hat{\DD(\n)}^{\op}$ can be different. 
	
	\begin{proof}
		By Baaj--Skandalis duality and \cref{thm:twistab}, 
		the statement reduces to the commutativity of the diagram \cref{diagram:fiber} up to some natural isomorphism. 
		It suffices to show that the left adjoints of two composed functors $\CC^{\DD(\m)}\to \CC^{1}$ in the \cref{diagram:fiber} exist and are naturally isomorphic. 
		
		Note that $(1_{1\to\hat{\DD(\m)}^{\op}})^*(\DD(\m)\redltimes-)\isom (1_{\DD(\m)\to 1})_* : \CC^{\DD(\m)}\to \CC^{1}$ admits its left adjoint, and 
		this is given by 
		$(1_{\DD(\m)\to 1})^*:\CC^1\to \CC^{\DD(\n)}$ 
		because of Green--Julg theorem. 
		By symmetry we also have $(1_{\DD(\n)\to 1})^* \dashv (1_{1\to\hat{\DD(\n)}^{\op}})^*(\DD(\n)\redltimes-)$. 
		Therefore it is enough to show $
		(1_{\DD(\m)\to 1})^*\isom 
		T'_\m(\GF\bowtie\id)^*(1_{\DD(\n)\to 1})^* 
		: \CC^1\to \CC^{\DD(\m)}$ 
		by \cref{thm:twistab}. 
		But by the construction of $T'_\m$, it is easy to see that 
		for any $A\in\CC^{\DD(\pi)}$ whose coaction is trivial, 
		$T'_\m(A)\in \CC^{\DD(\m)}$ also has a trivial coaction and canonically $*$-isomorphic to $A$. 
	\end{proof}
	
	Finally, we briefly discuss the implications for the quantum analogue of torsion phenomena and Baum--Connes conjecture. 
	Let $\Lambda$ be a discrete quantum group. 
	In the context of the quantum analogue of Baum--Connes conjecture, 
	finite dimensional left $\hat{\Lambda}$-C*-algebras up to left $\hat{\Lambda}$-Morita equivalence 
	are considered to reflect the torsion phenomena of $\Lambda$ (or $\Lambda^{\op}$). 
	In~\cite{Meyer:homological}, torsion-freeness for $\Lambda$ is defined 
	to state the quantum analogue of Baum--Connes conjecture 
	for such a discrete quantum group. 
	Recently a formulation of the quantum analogue of Baum--Connes conjecture 
	for $\Lambda$ which need not be torsion-free 
	is proposed in~\cite{Arano-Skalski:bcconj}. 
	Both works use certain decompositions of the triangulated category $\KK^{\hat{\Lambda}}$. 
	
	Now consider the situation of \cref{cond:twistab} with $H$ being compact and $\Gamma$ being finite, 
	and we have $\CC^{\DD(\m)}\simeq\CC^{\DD(\n)}$ as in \cref{thm:twistab}. 
	It is easy to see $\Cor^{\DD(\m)}\simeq\Cor^{\DD(\n)}$ preserve finite dimensionality of equivariant C*-algebras, 
	and there is a bijective correspondence between 
	the equivariant Morita equivalence classes of finite dimensional left $\DD(\m)$-C*-algebras, and those for $\DD(\n)$, 
	which preserves finite direct sums. 
	Also when $C^r_0(H)$ is separable, 
	we can see the triangulated equivalence $\KK^{\DD(\m)}\simeq \KK^{\DD(\n)}$ preserves 
	the decompositions of the two triangulated categories given by \cite[Corollary~5.1]{Arano-Skalski:bcconj}.

	\appendix
	\section{Constructions for locally compact quantum groups and coactions}\label{sec:appendix}
	In this section we gather some constructions which are not directly related to the general theory of induced coactions. 
	For a C*-algebra $A$, we write $\Aut(A):=\{f:A\to A\,|\, f$\ is a $*$-automorphism $\}$. 
	\subsection{Double crossed products and homomorphisms}\label{ssec:doublecrossed}
	We collect some definitions and properties about double crossed products of locally compact quantum groups by following~\cite{Baaj-Vaes:doublecrossed}. 
	
	\begin{defn}\label{def:matching}
		Let $G,H$ be locally compact quantum groups and 
		$\m\in \Aut(L^\infty(H)\barotimes L^{\infty}(G))$. 
		\begin{enumerate}
			\item
			The map $\m$ is called a \emph{matching} of $H$ and $G$ if 
			\begin{align*}
				&
				(\Delta_H\barotimes \id)\m=\m_{23}\m_{13}(\Delta_H\barotimes \id), 
				\quad\mathrm{and}\quad
				(\id\barotimes \Delta_G)\m=\m_{13}\m_{12}(\id\barotimes \Delta_G). 
			\end{align*}
			\item
			When $\m$ is a matching of $H$ and $G$, 
			the well-defined von Neumann bialgebra
			\[\DD(\m):=
			( L^\infty(H)\barotimes  L^\infty(G), 
			(\id\barotimes (\sigma\m)\barotimes \id)
			(\Delta_H^{\cop}\barotimes \Delta_G) )\] 
			is called the \emph{double crossed product associated with $\m$}. 
		\end{enumerate}
	\end{defn}
	
	When $\m$ is a matching of locally compact quantum groups $H$ and $G$, the von Neumann bialgebra $\DD(\m)$ is a locally compact quantum group by~\cite[Theorem~5.3]{Baaj-Vaes:doublecrossed}. 
	If $G,H$ are regular, then $\DD(\m)$ is also regular by~\cite[Proposition~9.3]{Baaj-Vaes:doublecrossed}. 
	If $G,H$ are regular and the restriction of $\m$ gives a $*$-automorphism on $C^r_0(H)\otimes C^r_0(G)$, 
	then $C^r_0(\DD(\m))=C^r_0(H)\otimes C^r_0(G)$ 
	(as subsets of $L^\infty(H)\barotimes L^{\infty}(G)$) 
	by~\cite[Proposition~9.2]{Baaj-Vaes:doublecrossed}. 
	
	\begin{eg}\label{eg:quantumdouble}
		Let $\HG:H\to G$ be a homomorphism of locally compact quantum groups. 
		Then $\m:=\Ad W^{\HG} \in \Aut(C^r_0(H)\otimes C^r_0(\hat{G}))$ 
		gives a matching of $H^{\op}$ and $\hat{G}$. 
		Then we have a locally compact quantum group $\DD(\Ad W^{\HG})$ called the \emph{generalized quantum double}, 
		for which we simply write $\DD(\HG)$. 
		When $\HG=\id_G$ we also write $\DD(G)=\DD(\id_G)$ for short, which is called the quantum double of $G$. 
		If $G,H$ are regular, then 
		$C^r_0(\DD(\HG))=C^r_0(H)\otimes C^r_0(\hat{G})$. 
		See Section~8 of~\cite{Baaj-Vaes:doublecrossed} for detail. 
	\end{eg}
	
	Before stating the next proposition, we note some facts needed from~\cite{Baaj-Vaes:doublecrossed}. 
	For a matching $\m$ of locally compact quantum groups $H$ and $G$, there is a locally compact quantum group 
	$\G=(H\ltimes L^\infty(G), \Delta_{\G})$ called the \emph{bicrossed product} associated with $\m$, 
	where the left coaction of $H$ on $L^\infty (G)$ is given by $\m(-)_2$. 
	See~\cite{Vaes-Vainerman:bicrossed} for detail. 
	We write $J^{\m}:=J^{\G}$ and $\hat{J}^{\m}:=\hat{J}^{\G}$. 
	Then the unitary 
	$Z^{\m}:=J^{\m}\hat{J}^{\m}(\hat{J}^{H}J^{H}\otimes \hat{J}^{G}J^{G})$ 
	in $\B(L^2(H)\otimes L^2(G))$ satisfies $\Ad Z=\m$. 
	The multiplicative unitary of $\DD(\m)$ can be described as 
	$W^{\DD(\m)}:=V^H_{31}{}^*Z^{\m}_{34}{}^*W^G_{24}Z^{\m}_{34}$ 
	in $\B((L^2(H)\otimes L^2(G))^{\otimes 2})$. 
	Also, the unitary $U^{\m}:=J^{\m}(\hat{J}^{H}\otimes J^G)$ satisfies 
	$U^{\m}\in L^\infty(H)\barotimes\B(L^2(G))$, 
	$(\Delta_H\otimes\id)(U^{\m})=U^{\m}_{23}U^{\m}_{13}$, and
	$\m(1\otimes x)=\Ad U^{\m}(1\otimes x)$ for any $x\in L^\infty(G)$. 
	
	\begin{prop}\label{prop:doublehom}
		Let $F,G,H$ be locally compact quantum groups, $\m$ be a matching of $G$ and $F$, and $\n$ be a matching of $H$ and $F$. 
		Suppose a homomorphism $\HG:H\to G$ satisfies 
		\begin{align}\label{eq:prop:doublehom1}
			(\Delta_{G}\rpb\HG\barotimes\id)\m
			=\n_{23}\m_{13}(\Delta_{G}\rpb\HG\barotimes\id). 
		\end{align}
		Let (P) be each one of the following properties about homomorphisms: 
		being proper, giving a closed quantum subgroup, and giving the dual of an open quantum subgroup. 
		Then the followings hold. 
		\begin{enumerate}
			\item
			Then there is a well-defined homomorphism $\HG^{\op}\bowtie\id_F:\DD(\n)\to\DD(\m)$
			satisfying 
			\begin{align}\label{eq:prop:doublehom2}
				(\HG^{\op}\bowtie\id_F)^*\Delta_{\DD(\m)}
				=(\id\barotimes(\sigma\m)\barotimes\id)(\HG^{\op}{}^*\Delta_{G^{\op}}\barotimes\Delta_F) , 
			\end{align}
			\begin{align}\label{eq:prop:doublehom3}
				\Delta_{\DD(\m)}\rpb(\HG^{\op}\bowtie\id_F)
				=(\id\barotimes(\sigma\n)\barotimes\id)(\Delta_{G^{\op}}\rpb\HG^{\op}\barotimes\Delta_F) . 
			\end{align}
			\item
			If $\hat{\HG}$ has (P), then $\hat{\HG^{\op}\bowtie\id}$ also has (P). 
			\item
			Suppose 
			$(\HG^*\Delta_G\otimes\id)U^{\m}=U^{\m}_{23}U^{\n}_{13}$. 
			If $\HG$ has (P), 
			then $\HG^{\op}\bowtie\id$ also has (P). 
		\end{enumerate}
	\end{prop}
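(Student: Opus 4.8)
The plan is to construct the homomorphism $\psi:=\HG^{\op}\bowtie\id_F$ by imposing \cref{eq:prop:doublehom2} as its defining relation. Recall that, by definition of the double crossed product, $\Delta_{\DD(\m)}=(\id\barotimes(\sigma\m)\barotimes\id)(\Delta_{G}^{\cop}\barotimes\Delta_F)$ on $L^\infty(G)\barotimes L^\infty(F)$, and similarly for $\DD(\n)$ on $L^\infty(H)\barotimes L^\infty(F)$. First I would set $\delta:=(\id\barotimes(\sigma\m)\barotimes\id)(\HG^{\op}{}^*\Delta_{G^{\op}}\barotimes\Delta_F)$ and observe that it is a non-degenerate normal $*$-homomorphism $L^\infty(\DD(\m))\to L^\infty(\DD(\n))\barotimes L^\infty(\DD(\m))$, since $\HG^{\op}{}^*\Delta_{G^{\op}}$ is one and $\sigma\m$ is a $*$-automorphism. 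Then I would verify the two coassociativity identities
$$(\id\barotimes\Delta_{\DD(\m)})\delta=(\delta\barotimes\id)\Delta_{\DD(\m)},\qquad(\Delta_{\DD(\n)}\barotimes\id)\delta=(\id\barotimes\delta)\delta,$$
which identify $\delta$ with $\psi^*\Delta_{\DD(\m)}$ for a unique homomorphism $\psi:\DD(\n)\to\DD(\m)$. After expanding the comultiplications and regrouping the matching twists, each identity reduces to coassociativity of $\Delta_F$, the two coassociativity relations of $\HG^{\op}{}^*\Delta_{G^{\op}}$, the matching identities for $\m$ and $\n$, and crucially the hypothesis \cref{eq:prop:doublehom1}, which is precisely the relation that lets one push the $\HG$-leg through a $\sigma\m$-twist and turn it into a $\sigma\n$-twist. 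Finally \cref{eq:prop:doublehom3} follows by applying $\sigma(R^{\DD(\n)}\otimes R^{\DD(\m)})$ to \cref{eq:prop:doublehom2}, using the general identity $\sigma(R^{\DD(\n)}\otimes R^{\DD(\m)})(\psi^*\Delta_{\DD(\m)})=(\Delta_{\DD(\m)}\rpb\psi)R^{\DD(\m)}$ and the description of $R^{\DD(\m)}$ from~\cite{Baaj-Vaes:doublecrossed}.

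For (2), I would first unwind what is being claimed: the hypothesis that $\hat\HG$ has (P) means $\HG^r$ exists and its normal extension $L^\infty(G)\to L^\infty(H)$ is unital, unital and injective, or unital and surjective, according as (P) is properness, a closed quantum subgroup, or the dual of an open quantum subgroup; and the conclusion that $\hat\psi$ has (P) is the same statement with $\psi^r$ in place of $\HG^r$. Hence it suffices to show $\psi^r$ exists and, under the identifications $L^\infty(\DD(\m))=L^\infty(G)\barotimes L^\infty(F)$ and $L^\infty(\DD(\n))=L^\infty(H)\barotimes L^\infty(F)$, coincides with $\HG^r\barotimes\id_{L^\infty(F)}$, which exists precisely because $\HG^r$ does (and $(\HG^{\op})^r=\HG^r$ as normal maps). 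I would verify this by matching $\HG^r\barotimes\id_{L^\infty(F)}$ against the $\psi$ from (1), e.g.\ via the associated bicharacter or via the compatibility $\psi^r\lambda_{\DD(\m)}=\lambda_{\DD(\n)}\psi^u$ together with $\HG^r\lambda_G=\lambda_H\HG^u$. The conclusion is then immediate, since amplifying a normal unital (respectively injective, surjective) $*$-homomorphism by $\id_{L^\infty(F)}$ preserves each of these properties.

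Part (3) is the dual situation: now the hypothesis that $\HG$ has (P) concerns the normal extension of $\hat\HG^r$, and the conclusion concerns $\hat\psi^r$. Here the role played by \cref{eq:prop:doublehom1} in (1) is taken over by the extra hypothesis $(\HG^*\Delta_G\otimes\id)U^{\m}=U^{\m}_{23}U^{\n}_{13}$: since $U^{\m}\in L^\infty(G)\barotimes\B(L^2(F))$ satisfies $(\Delta_G\otimes\id)U^{\m}=U^{\m}_{23}U^{\m}_{13}$ and implements the second-leg part of the matching, this identity is exactly the compatibility of $\HG$ with the implementers needed to assemble the dual picture. Concretely, using $W^{\DD(\m)}=V^G_{31}{}^*Z^{\m}_{34}{}^*W^F_{24}Z^{\m}_{34}$ and its analogue for $\DD(\n)$, together with $V^{\HG}=(\hat\HG^{r}{}'\otimes\id)V^H$ (valid since $\HG$ having (P) forces $\hat\HG^r$ to exist), I would build the implementer of $\hat\psi$ out of $V^{\HG}$ and the trivial implementer of $\id_F$, with the $U^{\m}$-identity guaranteeing that the gluing through $Z^{\m}$ and $Z^{\n}$ is consistent; this exhibits $\hat\psi^{r}{}'$ as $\hat\HG^{r}{}'$ amplified by $\id$ on the $\hat F$-leg, so that (P) for $\hat\psi^r$ follows from (P) for $\hat\HG^r$ as in (2). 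Alternatively one identifies $\hat{\DD(\m)}$, $\hat{\DD(\n)}$ and $\hat\psi$ via the duality of~\cite{Baaj-Vaes:doublecrossed} and argues exactly as in (2).

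\textbf{Main obstacle.} The conceptual picture is uniform — $\psi$ is nothing but $\HG$ on the first leg and $\id_F$ on the second, and amplifying by identities preserves all the relevant algebraic properties — so the real difficulty is the sheer volume of leg-numbering together with the opposite/co-opposite and unitary-antipode bookkeeping required to keep the matching twists $\sigma\m,\sigma\n$ and the implementers $U^{\m},Z^{\m}$ straight through the coassociativity computations in (1) and through the dual identification in (3). A subsidiary point is to check that the von Neumann algebraic map $\HG^r\barotimes\id$ of (2) (and its analogue in (3)) really does restrict to a map between the reduced C*-algebras, since $G$, $H$, $F$ are not assumed regular here.
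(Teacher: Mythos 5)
Your overall strategy --- $\HG^{\op}\bowtie\id_F$ is ``$\HG$ on the first leg, the identity on the second'', and amplifying the relevant normal extension by $\id_{L^\infty(F)}$ preserves unitality, injectivity and surjectivity --- is the paper's strategy, and your part (2) coincides with the paper's proof (the existence of $\psi^r$ at the reduced level follows as in the remark after \cref{def:proper}, via the argument of \cite[Proposition~2.3]{Kalantar-Kasprzak-Skalski:open}). In (1) you diverge in two places. The paper does not invoke the characterization of homomorphisms by coassociative maps: it explicitly constructs the unitary $X=V^{\HG}_{31}{}^*Z^{\m}_{34}{}^*W^F_{24}Z^{\m}_{34}$, proves two pentagon-type identities for it, and concludes that $X$ is a bicharacter (via \cite[Lemma~3.4]{Meyer-Roy-Woronowicz:qghom}), so that $X=W^{\HG^{\op}\bowtie\id_F}$. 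This matters for two reasons: since $\m$ is only an automorphism of $L^\infty(H)\barotimes L^{\infty}(G)$ and no regularity is assumed, it is not clear that your $\delta$ restricts to a non-degenerate map at the reduced C*-level, which is what the Meyer--Roy--Woronowicz correspondence between left quantum group homomorphisms and bicharacters requires; and the explicit $W^{\HG^{\op}\bowtie\id_F}$ is used again in part (3). For \cref{eq:prop:doublehom3} the paper also avoids your unitary-antipode route (which would need an explicit formula for $R^{\DD(\m)}$ in terms of $\m$): it shows $(\Delta_R\barotimes\id)\Delta_{\DD(\m)}=(\Delta_{\DD(\m)}\rpb(\HG^{\op}\bowtie\id_F)\barotimes\id)\Delta_{\DD(\m)}$ using \cref{eq:prop:doublehom1} and cancels the outer comultiplication by slicing.

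The genuine gap is in (3). You assert that the $U^{\m}$-identity makes ``the gluing through $Z^{\m}$ and $Z^{\n}$ consistent'' and that this exhibits $\hat{\psi}^{r}{}'$ as $\hat{\HG}^{r}{}'$ amplified by the identity on the $\hat{F}$-leg, but $L^\infty(\hat{\DD(\m)})=(L^\infty(\hat{G})')_1\vee\Ad Z^{\m}{}^*L^\infty(\hat{F})_2$ is not a tensor product, so there is no amplification to speak of: the entire content of (3) is to prove that the prescription $x_1\mapsto\hat{\HG}^{r}{}'(x)_1$, $\Ad Z^{\n}{}^*(y_2)\mapsto\Ad Z^{\m}{}^*(y_2)$ extends to a single normal $*$-homomorphism on the von Neumann algebra generated by these two non-commuting subalgebras. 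The paper achieves this by conjugating with $J^{\n}Z^{\n}(-)^*Z^{\n}{}^*J^{\n}$ to pass to the cocycle crossed products $H\underset{\Ad U^{\n}}{\ltimes}\B(L^2(F))$ and $G\underset{\Ad U^{\m}}{\ltimes}\B(L^2(F))$, and then writing the map between them as the composition of $\Ad (W^{\HG}_{12}U^{\m}_{23})(-)_{13}$, $\hat{\HG}^r\barotimes\id$, and $\Ad \hat{V}^{G}_{21}{}^*$; the hypothesis $(\HG^*\Delta_G\otimes\id)U^{\m}=U^{\m}_{23}U^{\n}_{13}$ enters precisely to make the first of these maps land in the correct algebra. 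Your alternative of ``arguing exactly as in (2)'' after dualizing faces the same obstruction. The strategy you describe points in the right direction, but the construction that makes it work is the missing step.
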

	
	\begin{proof}
		First, the unitaries 
		$Y:=Z^{\m}_{23}{}^*W^F_{13}Z^{\m}_{23} 
		\in\B(L^2(F)\otimes L^2(G)\otimes L^2(F))$ and 
		$X:=V^{\HG}_{31}{}^*Y_{234} 
		\in\B(L^2(H)\otimes L^2(F)\otimes L^2(G)\otimes L^2(F))$ satisfy 
		\begin{align}\label{eq:prop:doublehom4}
			&
			\begin{aligned}
				&
				X_{1234}{}^*W^{\DD(\m)}_{3456}X_{1234}
				=
				Y_{234}{}^*
				V^{\HG}_{31}V^{G}_{53}{}^*V^{\HG}_{31}{}^*
				Y_{456}Y_{234}
				\\&
				=
				Y_{234}{}^*
				V^{\HG}_{51}{}^*V^{G}_{57}V^{G}_{57}{}^*
				V^{G}_{53}{}^*
				Y_{456}Y_{234}
				=
				V^{\HG}_{51}{}^*V^{G}_{57}
				Y_{234}{}^*
				V^{G}_{37}V^{G}_{53}{}^*V^{G}_{37}{}^*
				Y_{456}Y_{234}
				\\&
				=
				V^{\HG}_{51}{}^*V^{G}_{57}
				W^{\DD(\m)}_{7234}{}^*W^{\DD(\m)}_{3456}W^{\DD(\m)}_{7234}
				=
				V^{\HG}_{51}{}^*V^{G}_{57}
				W^{\DD(\m)}_{7256}W^{\DD(\m)}_{3456}
				=
				X_{1256}W^{\DD(\m)}_{3456}.
			\end{aligned}
		\end{align}
		We write $\Delta_L$ and $\Delta_R$ for the right hand sides of \cref{eq:prop:doublehom2} and \cref{eq:prop:doublehom3}, respectively. 
		Then we have 
		\begin{align*}
			&
			\Ad X^*(-)_{34}
			=\sigma_{23}\m_{24}^{-1}(\id\otimes\Delta_{F})\m_{23}(\HG^{\op}{}^*\Delta_{G}^{\cop}\otimes\id)
			=\Delta_L
			\quad\mathrm{on}\quad L^\infty(\DD(\m)) .
		\end{align*}
		It follows from \cref{eq:prop:doublehom4} 
		$(\Delta_L\barotimes\id)\Delta_{\DD(\m)}=(\id\barotimes\Delta_{\DD(\m)})\Delta_L$. 
		We can also calculate by the definition of a matching and \cref{eq:prop:doublehom1} that 
		$(\Delta_{\DD(\n)}\barotimes\id)\Delta_L
		=(\id\barotimes\Delta_L)\Delta_L$ on $L^\infty(\DD(\m))$. 
		This shows
		\begin{align}\label{eq:prop:doublehom5}
			&
			\begin{aligned}
				&
				W^{\DD(\n)}_{1234}{}^*X_{3478}W^{\DD(\n)}_{1234}
				=(\Delta_{\DD(\n)}{}_{12}\Delta_L{}_{12}(W^{\DD(\m)}))(W^{\DD(\m)}_{5678}{}^*)
				=(\Delta_L{}_{34}\Delta_L{}_{12}(W^{\DD(\m)}))(W^{\DD(\m)}_{5678}{}^*)
				\\&
				=(\Delta_L{}_{34}(X_{1256}W^{\DD(\m)}_{3456}))(W^{\DD(\m)}_{5678}{}^*)
				=X_{1278}X_{3478}. 
			\end{aligned}
		\end{align}
		By \cref{eq:prop:doublehom4} and \cref{eq:prop:doublehom5}, 
		it follows $X\in\M(C^r_0(\DD(\n))\otimes C^r_0(\hat{\DD(\m)}))$ and $X$ is a bicharacter (cf.~\cite[Lemma~3.4]{Meyer-Roy-Woronowicz:qghom}). 
		Therefore there is a unique homomorphism $\Phi:\DD(\n)\to\DD(\m)$
		such that $\Phi^*\Delta_{\DD(\m)}=\Delta_L$ and $X=W^\Phi$. 
		It follows 
		that $(\Delta_{\DD(\m)}\rpb\Phi\barotimes\id)\Delta_{\DD(\m)}
		=(\id\barotimes\Phi^*\Delta_{\DD(\m)})\Delta_{\DD(\m)}$. 
		We can also see by using \cref{eq:prop:doublehom1} to see 
		$(\Delta_R\barotimes\id)\Delta_{\DD(\m)}=(\id\barotimes\Delta_L)\Delta_{\DD(\m)}$, 
		and thus $(\Delta_R\barotimes\id)\Delta_{\DD(\m)}=(\Delta_{\DD(\m)}\rpb\Phi\barotimes\id)\Delta_{\DD(\m)}$. 
		Since $\left((\id\otimes\K(\DD(\m))^*)\Delta_{\DD(\m)}(L^\infty(\DD(\m)))\right)''=L^\infty(\DD(\m))$, 
		it follows $\Delta_R=\Delta_{\DD(\m)}\rpb\Phi$. 
		Thus (1) is proved. 
		
		We show (2). 
		When $\hat{\HG}$ is proper, the well-defined normal $*$-homomorphism 
		$\HG^r\barotimes\id:L^\infty(G)\barotimes L^\infty(F)\to L^\infty(H)\barotimes L^\infty(F)$ 
		clearly satisfies 
		$\Delta_L=(\HG^r\barotimes\id\barotimes\id\barotimes\id)\Delta_{\DD(\m)}$ and therefore $\HG^r\barotimes\id$ gives $\Phi^r$. 
		If moreover $\HG^r$ is surjective or injective, so is $\HG^r\barotimes\id$. 
		
		We consider (3). 
		We assume $\HG$ is proper and we show $\Phi$ is proper. 
		Since $V^\HG=(\hat{\HG}^r{}'\otimes\id)V^{H}$, 
		the comparison of $(\omega\otimes\id\otimes\id) W^{\DD(\n)}$ and $(\omega\otimes\id\otimes\id)W^{\Phi}$ for each $\omega\in\B(L^2(\DD(\n)))_*$ 
		shows it is enough to show 
		that $\Ad (Z^{\m}{}^*Z^{\n}):\Ad Z^{\n}{}^*L^\infty(\hat{F})_2\to \Ad Z^{\m}{}^*L^\infty(\hat{F})_2$ 
		and $\hat{\HG}^r{}':L^\infty(\hat{H})'\to L^\infty(\hat{G})'$ 
		extend to some well-defined normal $*$-homomorphism 
		\begin{align}\label{eq:prop:doublehom6}
			&
			L^\infty(\hat{\DD(\n)})
			=
			(L^\infty(\hat{H})')_1 \vee \Ad Z^{\n}{}^*L^\infty(\hat{F})_2
			\to 
			(L^\infty(\hat{G})')_1 \vee \Ad Z^{\m}{}^*L^\infty(\hat{F})_2
			=
			L^\infty(\hat{\DD(\m)}) . 
		\end{align}
		Indeed, this map will give $\hat{\Phi}^r$. 
		Then if $\hat{\HG}^r$ is surjective, $\hat{\Phi}^r$ will be also surjective. 
		
		When we write $\G$ for the bicrossed product associated with $\m$, we have for all $x\in L^\infty(\hat{G})'$, 
		\begin{align*}
			&
			J^{\m}Z^{\m}x_1^*Z^{\m}{}^*J^{\m}
			=
			R^{\G}(R^{\hat{G}}(\hat{J}^{G}x^*\hat{J}^{G})_1)
			\in L^{\infty}(\hat{G})_1\vee \Ad U^{\m}L^{\infty}(F)_2 =L^\infty(\G), 
		\end{align*}
		and for all $y\in L^\infty(\hat{F})$, 
		\begin{align*}
			&
			J^{\m}Z^{\m}(\Ad Z^{\m}{}^*y_2)^*Z^{\m}{}^*J^{\m}
			=U^{\m} R^{\hat{F}}(y)_2 U^{\m}{}^* \in \Ad U^{\m}L^\infty(\hat{F})_2. 
		\end{align*}
		Similar equalities hold for $\n$. 
		By~\cite[Proposition~5.45]{Kustermans-Vaes:lcqg}, 
		it is enough to show that 
		$\HG^r$ and $\id_{\B(L^2(F))}$ extend to some well-defined normal $*$-homomorphism
		$H\underset{\Ad U^{\n}}{\ltimes}\B(L^2(F)) \to G\underset{\Ad U^{\m}}{\ltimes}\B(L^2(F))$ via the commutative diagram 
		\begin{align*}
			\xymatrix@R=1.5em{
				L^\infty(\hat{\DD(\n)})
				\ar@{-->}[r]^-{\cref{eq:prop:doublehom6}}\ar[d]_-{J^{\n}Z^{\n}(-)^*Z^{\n}{}^*J^{\n}} &
				L^\infty(\hat{\DD(\m)})\ar[d]^-{J^{\m}Z^{\m}(-)^*Z^{\m}{}^*J^{\m}}
				\\
				H\underset{\Ad U^{\n}}{\ltimes}\B(L^2(F)) \ar[r] & G\underset{\Ad U^{\m}}{\ltimes}\B(L^2(F)) . 
			}
		\end{align*}
		If $W^{\HG}_{12}{}^* U^{\m}_{23}W^{\HG}_{12}=U^{\m}_{23}U^{\n}_{13}$, 
		it follows that 
		\begin{align*}
			&
			L^\infty(\hat{H})_1\vee \Ad U^{\n}\B(L^2(F))_2
			\xrarr{\Ad (W^{\HG}_{12}U^{\m}_{23})(-)_{13}}
			(\hat{\HG}^*\Delta_{\hat{H}}L^\infty(\hat{H}))_{21}\vee \Ad U^{\m}_{23}\B(L^2(F))_{3}
			\\&
			\xrarr{\hat{\HG}^r\barotimes\id}
			(\Delta_{\hat{G}}L^\infty(\hat{G}))_{21}\vee \Ad U^{\m}_{23}\B(L^2(F))_{3}
			\xrarr{\Ad \hat{V}^{G}_{21}{}^*}
			1\otimes \left(L^\infty(\hat{G})\vee \Ad U^{\m}\B(L^2(F))\right)
		\end{align*} 
		gives the desired map. 
		Finally, from this map and the commutative diagram above we can see that $\hat{\Phi}^r$ is injective if so is $\hat{\HG}^r$. 
	\end{proof}
	
	\begin{rem}\label{rem:doublehom}
		Let $F,G,H$ be locally compact quantum groups and $\m$ be a matching on $H$ and $G$. 
		\begin{enumerate}
			\item
			$\m^{\cop}:=\sigma\m\sigma \in \Aut(L^\infty(G)\barotimes L^\infty(H))$ 
			is a matching on $G$ and $H$. 
			Moreover, the $*$-isomorphism 
			$\sigma:L^\infty(H)\barotimes L^\infty(G)
			\xrarr{\sim}L^\infty(G)\barotimes L^\infty(H)$ gives 
			$\DD(\m^{\cop})\isom \DD(\m)^{\op}$ as locally compact quantum groups. 
			\item
			Let $\n$ be a matching on $H$ and $F$ and $\GF:G\to F$ be a homomorphism such that 
			$(\id\barotimes\Delta_F\rpb\GF)\n=\m_{13}\n_{12}$. 
			Then the matching $\n^{\cop}$ on $F$ and $H$, and the matching $\m^{\cop}$ on $G$ and $H$ satisfy 
			$(\Delta_F\rpb\GF\barotimes\id)\n^{\cop}=\m^{\cop}_{23}\n^{\cop}_{13}(\Delta_F\rpb\GF\barotimes\id)$. 
			Therefore by \cref{prop:doublehom}, there is a homomorphism $\GF^{\op}\bowtie\id:\DD(\m^{\cop})\to \DD(\n^{\cop})$. 
			Via isomorphism of locally compact quantum groups in (1) we get a homomorphism 
			$\DD(\m)\to \DD(\n)$, for which we write $\id\bowtie\GF$. 
		\end{enumerate}
	\end{rem}
	
	\begin{eg}\label{eg:doublehom}
		Let $F,G,H$ be locally compact quantum groups. 
		\begin{enumerate}
			\item
			When $\m$ is a matching on $H$ and $G$, 
			it is shown in~\cite{Baaj-Vaes:doublecrossed} that $H^{\op}$ and $G$ are closed quantum subgroups of $\DD(\m)$. 
			This can be also seen as the special case of \cref{prop:doublehom}. 
			Indeed, since if we consider $\id_{L^\infty(G)}$ as a matching on $1$ and $G$, we get $U^{\id_{L^\infty(G)}}=J^{G}J^{G}=1$ 
			and the assumptions in \cref{prop:doublehom} are satisfied in a trivial way. 
			This shows $G=\DD(\id_{L^\infty(G)})\to\DD(\m)$ gives a closed quantum subgroup, 
			and similarly for $H^{\op}$ by the previous remark. 
			\item
			Let $\HG:H\to G$, $\GF:G\to F$ be homomorphisms. 
			Consider the matching $\m:=\Ad W^{\GF}$ on $G^{\op}$ and $\hat{F}$, 
			and the matching $\n:=\Ad W^{\GF\HG}$ on $H^{\op}$ and $\hat{F}$. 
			We see 
			$((\sigma\HG^*\Delta_G)\otimes\id)W^{\GF}=W^{\GF\HG}_{23}W^{\GF}_{13}$. 
			Also~\cite[Proposition~8.1]{Baaj-Vaes:doublecrossed} says 
			$U^{\m}=W^{\GF}(\hat{J}^{G^{\op}}\otimes \hat{J}^{F})W^{\GF}{}^*(\hat{J}^{G^{\op}}\otimes \hat{J}^{F})
			=W^{\GF}V^{\GF}_{21}$, 
			and similarly $U^{\n}=W^{\GF\HG}V^{\GF\HG}_{21}$. 
			Here note that the modular conjugation of the left Haar weight on 
			$(L^\infty(\hat{G^{\op}}),\Delta_{\hat{G^{\op}}})
			=(L^\infty(\hat{G})', 
			\hat{J}^G(\Delta_{\hat{G}}(\hat{J}_G(-)\hat{J}^G))\hat{J}^G)$ equals to that of $G$. 
			Thus 
			\begin{align*}
				&
				W^{\HG}_{12}{}^*U^{\m}_{23}W^{\HG}_{12}
				=W^{\GF\HG}_{13}W^{\GF}_{23}V^{\GF\HG}_{31}V^{\GF}_{32}
				=W^{\GF\HG}_{13}V^{\GF\HG}_{31}W^{\GF}_{23}V^{\GF}_{32}
				=U^{\n}_{13}U^{\m}_{23} . 
			\end{align*}
			Therefore \cref{prop:doublehom} gives a homomorphism 
			$\HG\bowtie\id:\DD(\GF\HG)\to\DD(\GF)$, and
			$\HG\bowtie\id$ or its dual has the same properties (P) in \cref{prop:doublehom} if so does $\HG$ or $\hat{\HG}$. 
			Similarly, we also have a homomorphism 
			$\id\bowtie\hat{\GF}:\DD(\GF\HG)\to\DD(\HG)$. 
			\item 
			Consider a sequence of homomorphisms of locally compact quantum groups $1\to K\xrarr{\iota} H\xrarr{\HG} G\to 1$. 
			Let $\m$ be a matching on $G$ and $F$, and $\n$ be a matching on $H$ and $F$ satisfying \cref{eq:prop:doublehom1}. 
			Suppose $K$ is compact and this sequence is \emph{exact} in the sense of satisfying the two conditions next. 
			\begin{itemize}
				\item
				$\iota$ and $\hat{\HG}$ give closed quantum subgroups. 
				\item
				$C^r_0(H/K)=\HG^r(C^r_0(G))$. 
			\end{itemize}
			Here we note 
			$\HG^rC^r_0(G)=\HG^rR^GC^r_0(G)=R^H\HG^rC^r_0(G)=
			R^H((C^r_0(H)\rpb\iota)^K)=\Ind_{K\to 1}\iota^*C^r_0(H)$. 
			Then by the normality of \cref{eq:coten1}, we have
			$L^\infty(H/K)=\HG^r(L^\infty(G))$. 
			It follows that $\hat{\HG}$ gives an open quantum subgroup 
			by~\cite[Theorem~7.2]{Kalantar-Kasprzak-Skalski:open}
			and~\cite[Theorem~2.11]{Vaes-Vainerman:lowdim}. 
			By putting $\tilde{\iota}:=(\id_{H^{\op}}\bowtie1_{1\to F})\iota^{\op}$, 
			we show the sequence of induced homomorphisms 
			\begin{align*}
				\xymatrix@C=3em{
					1\ar[r] & 
					K^{\op}\ar[r]^-{\tilde{\iota}} &
					\DD(\n)\ar[r]^-{\HG^{\op}\bowtie\id_{F}} &
					\DD(\m)\ar[r] &
					1
				}
			\end{align*}
			is also exact in the sense above. 
			Indeed, $\tilde{\iota}$ and $\hat{\HG^{\op}\bowtie\id_{F}}$ 
			give closed quantum subgroups by (1) and \cref{prop:doublehom}. 
			Since $\HG^{\op}\bowtie\id_{F}$ has $(\HG^{\op}\bowtie\id_{F})^r$, it is easy to show 
			$\Ind_{K\to1}\tilde{\iota}^*C^r_0(\DD(\n))
			=(\HG^{\op}\bowtie\id_{F})^r C^r_0(\DD(\m))$. 
			Then by the similar reasoning as above shows $C^r_0(\DD(\n)/K^{\op})
			=(\HG^{\op}\bowtie\id_{F})^r C^r_0(\DD(\m))$. 
			Especially, it follows $\hat{\HG^{\op}\bowtie\id_{F}}$ gives an open quantum subgroup. 
		\end{enumerate}
	\end{eg}
	
	\subsection{Yetter--Drinfeld C*-algebras and twisted tensor products}\label{ssec:yd}
	In this section, we collect some definitions and properties about 
	Yetter--Drinfeld C*-algebras, 
	as another picture for left coactions of a generalized quantum double, 
	and their twisted tensor products by following~\cite{Nest-Voigt:eqpd},~\cite{Roy:double},~\cite{Meyer-Roy-Woronowicz:twiten}. 
	
	\begin{defn}\label{def:yd}
		Let $\HG:H\to G$ be a homomorphism of locally compact quantum groups. 
		\begin{enumerate}
			\item
			Let $A$ be a C*-algebra, $\alpha$ be a continuous left $H$-coaction on $A$ and $\hat{\alpha}$ be a continuous left $\hat{G}$-coaction on $A$. 
			Then $A=(A,\alpha,\hat{\alpha})$ is a \emph{(left) $\HG$-Yetter--Drinfeld C*-algebra} if the following diagram commutes, 
			
			\begin{align*}
				\xymatrix@R=0.4em{
					& \M({C^r_0(H)\otimes A}) \ar[r]^-{\id\otimes \hat{\alpha}} & 
					\M({C^r_0(H)\otimes C^r_0(\hat{G})\otimes A}) \ar@(r,u)[dr]^-{\Ad W^{\HG}_{12}} &
					\\
					A\ar@(ur,l)[ur]^-{\alpha}\ar@(dr,l)[dr]_-{\hat{\alpha}} &&&
					\M({C^r_0(H)\otimes C^r_0(\hat{G})\otimes A}). 
					\\
					& \M({C^r_0(\hat{G})\otimes A}) \ar[r]_-{\id\otimes \alpha} & 
					\M({C^r_0(\hat{G})\otimes C^r_0(H)\otimes A}) \ar@(r,d)[ur]_-{\sigma\otimes \id} &
				}
			\end{align*}
			\item
			Let $(A,\alpha,\hat{\alpha})$ and $(B,\beta,\hat{\beta})$ be a $\HG$-Yetter--Drinfeld C*-algebra, 
			$\E$ be a right Hilbert $A$-module, 
			and $\kappa$ and $\hat{\kappa}$ be continuous left $H$- and $\hat{G}$-coaction on $\K_B(\E\oplus B)$ 
			that are compatible with $\alpha$ and $\hat{\alpha}$, respectively. 
			Then $\E=(\E,\kappa,\hat{\kappa})$ is a \emph{$\HG$-Yetter--Drinfeld Hilbert $B$-module} 
			if $\K_B(\E\oplus B)$ is a $\HG$-Yetter--Drinfeld C*-algebra. 
			An $(A,B)$-correspondence $(\E,\pi)$ is a \emph{$\HG$-Yetter--Drinfeld $(A,B)$-correspondence}
			if $\E$ is a $\HG$-Yetter--Drinfeld Hilbert $B$-module and 
			$\pi:A\to\LC_B(\E)$ is a left $H$- and $\hat{G}$-$*$-homomorphism. 
			We abbreviate the word $\id_G$-Yetter--Drinfeld as $G$-Yetter--Drinfeld. 
		\end{enumerate} 
	\end{defn}
	
	For a left $\DD(\HG)$-C*-algebra $A$, 
	we can get a left $H$- and $\hat{G}$-coactions of $A$ 
	by the restrictions along the closed quantum subgroups $H$ and $\hat{G}$ of $\DD(\HG)$, respectively. 
	Then $A$ becomes a $\HG$-Yetter--Drinfeld C*-algebra.  
	Conversely, for a $\HG$-Yetter--Drinfeld C*-algebra $(A,\alpha,\hat{\alpha})$, it holds 
	$(A,(\id\otimes\hat{\alpha})\alpha)$ is a left $\DD(\HG)$-C*-algebra. 
	Thus a left $\DD(\HG)$-C*-algebra and a $\HG$-Yetter--Drinfeld C*-algebra are equivalent notions. 
	See~\cite[Proposition~7.6]{Roy:double} for detail. 
	A $*$-homomorphism of left $\DD(\HG)$-C*-algebras is a left $\DD(\HG)$-$*$-homomorphism if and only if 
	it is a left $H$- and $\hat{G}$-$*$-homomorphism via this correspondence. 
	
	We will make use of the notion of twisted tensor products in \cref{sec:MK} and \cref{sec:eg}. 
	We list its fundamental properties as follows. 
	All of them are contained in~\cite{Meyer-Roy-Woronowicz:twiten} and~\cite{Nest-Voigt:eqpd}, 
	or can be shown easily by using the techniques in them. 
	As for the notation of the categories in (3), see \cref{def:categories} and the remark after that. 
	
	\begin{prop}\label{prop:twiten}
		Let $\HG:H\to G$, $\GF:G\to F$, $\FE:F\to E$ be homomorphisms of regular locally compact quantum groups, and 
		$(A,\alpha,\hat{\alpha})$, $(B,\beta,\hat{\beta})$, $(C,\gamma,\hat{\gamma})$ 
		be left $\HG$-, $\GF$-, $\FE$-Yetter--Drinfeld C*-algebras, respectively. 
		Then the followings hold. 
		\begin{enumerate}
			\item
			$A\outensor{G}B:=\clin{\hat{\alpha}(A)_{12}\beta(B)_{13}}
			\subset \mult{A\otimes \K(G)\otimes B}$ is a non-degenerate C*-subalgebra. 
			Moreover, 
			\begin{align*}
				&
				(A\outensor{G}B, 
				\Ad W^{\HG}_{12}{}^* \sigma_{12}(\id\otimes\alpha\otimes\id), 
				\sigma_{12}\sigma_{23}\Ad W^{\GF}_{13}(\id\otimes\id\otimes \hat{\beta}))
			\end{align*}
			is a $\GF\HG$-Yetter--Drinfeld C*-algebra. 
			We call $A\outensor{G}B$ the \emph{twisted tensor product of $A$ and $B$}. 
			\item
			$\sigma_{12}\sigma_{23}\Ad W^{\GF}_{13}:
			A\outensor{G}(B\outensor{F}C) \xrarr{\sim}
			(A\outensor{G}B)\outensor{F}C$ 
			is a well-defined left $H$- and $\hat{E}$-$*$-isomorphism. 
			\item
			Let $\CC\in\{\Calg,\Cor,\KK\}$, where we additionally assume separability of $C^r_0(G)$ when $\CC=\KK$. 
			Then twisted tensor products with $A$ induces a well-defined functor 
			$A\outensor{G}-:\CC^{\DD(\GF)}\to\CC^{\DD(\GF\HG)}$. 
			Similarly we have the well-defined functor 
			$-\outensor{G}B:\CC^{\DD(\HG)}\to\CC^{\DD(\GF\HG)}$. 
			If $\CC=\KK$, these functors are triangulated. 
		$\sq$\end{enumerate} 
	\end{prop}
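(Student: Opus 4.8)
The plan is to transplant the constructions of~\cite{Meyer-Roy-Woronowicz:twiten} and~\cite{Nest-Voigt:eqpd}, which establish these statements when $\HG=\GF=\FE=\id$, monitoring where the homomorphisms enter; the one genuinely new input is the bicharacter composition rule $W^{\GF}_{23}W^{\HG}_{12}=W^{\HG}_{12}W^{\GF\HG}_{13}W^{\GF}_{23}$ (and $W^{\hat{\GF}}=\Sigma W^{\GF}{}^*\Sigma$), which replaces the pentagon and hexagon relations used there. For (1), that $A\outensor{G}B=\clin{\hat{\alpha}(A)_{12}\beta(B)_{13}}$ is closed under the involution is immediate. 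Closure under multiplication and non-degeneracy I would deduce from a braiding identity
\[
\clin{\beta(B)_{13}\,\hat{\alpha}(A)_{12}}=\clin{\hat{\alpha}(A)_{12}\,\beta(B)_{13}}
\]
inside $\mult{A\otimes\K(G)\otimes B}$, from which $X:=A\outensor{G}B$ satisfies $\clin{XX}=X$ and $X^*=X$ at once, since $\hat{\alpha}(A)$ and $\beta(B)$ are $*$-algebras. This identity follows from the continuity of the $\hat{G}$-coaction $\hat{\alpha}$ and the $G$-coaction $\beta$ together with the regularity of $G$ --- which gives $\clin{C^r_0(\hat{G})C^r_0(G)}=\clin{C^r_0(G)C^r_0(\hat{G})}=\K(G)$ --- exactly as in~\cite{Meyer-Roy-Woronowicz:twiten}; non-degeneracy is then read off from $\clin{(C^r_0(\hat{G})\otimes 1_A)\hat{\alpha}(A)}=C^r_0(\hat{G})\otimes A$ and its analogue for $\beta$. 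It then remains to check that the three maps in the statement are well-defined continuous coactions landing in the correct multiplier algebras, and that together they satisfy the Yetter--Drinfeld compatibility of \cref{def:yd}: that $\Ad W^{\HG}_{12}{}^*\sigma_{12}(\id\otimes\alpha\otimes\id)$ is a left $H$-coaction, and $\sigma_{12}\sigma_{23}\Ad W^{\GF}_{13}(\id\otimes\id\otimes\hat{\beta})$ a left $\hat{F}$-coaction, uses only that $W^{\HG},W^{\GF}$ are bicharacters and that $\alpha,\hat{\beta}$ are coactions, while the $\GF\HG$-Yetter--Drinfeld condition is a diagram chase feeding in the Yetter--Drinfeld conditions of $A$ and of $B$ together with the composition rule above.

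For (2), I would write both $A\outensor{G}(B\outensor{F}C)$ and $(A\outensor{G}B)\outensor{F}C$ as closed spans of products of the coaction images $\hat{\alpha}(A),\beta(B),\hat{\beta}(B),\gamma(C),\hat{\gamma}(C)$ inside a common $\mult{A\otimes\K(G)\otimes\K(F)\otimes B\otimes C}$ (after permuting legs) and verify directly that $\Ad W^{\GF}_{13}$ followed by $\sigma_{12}\sigma_{23}$ carries one onto the other --- the same computation as in~\cite{Nest-Voigt:eqpd} for identity homomorphisms, with $W^{\GF}$ in place of the ambient multiplicative unitary --- after which equivariance for the left $H$- and $\hat{E}$-coactions is routine.

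For (3), once (1) is in place functoriality on $\Calg$ is formal: a $\DD(\GF)$-equivariant $*$-homomorphism $f\colon B\to B'$ induces $\id_A\otimes\id_{\K(G)}\otimes f$, which is a well-defined $\DD(\GF\HG)$-equivariant $*$-homomorphism of twisted tensor products by the non-degeneracy from (1) (argued as in \cref{rem:indmulthom}); this assignment preserves composition and split exact sequences, and $-\outensor{G}B$ is handled identically. For $\CC=\Cor$ one passes to linking algebras, so that $A\outensor{G}(\E,\pi)$ is obtained as a corner of $A\outensor{G}\K_{B'}(\E\oplus B')$; here regularity guarantees automatic continuity of the module coactions involved, as recalled in \cref{ssec:coaction}. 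For $\CC=\KK$ one invokes the universal property of $\KK^{\DD(\GF)}$ recalled before \cref{rem:functors}: it suffices that $A\outensor{G}-\colon\Calg^{\DD(\GF)}\to\Calg^{\DD(\GF\HG)}$ be homotopy invariant, split exact, and $\K(\HC)$-stable, each of which reduces to the case of identity homomorphisms treated in~\cite{Nest-Voigt:eqpd}; triangulatedness of the induced functor follows since twisted tensor product commutes with suspensions and mapping cones. The \emph{main obstacle} I anticipate is the first step above --- obtaining the braiding identity cleanly, and above all the diagram chase for the $\GF\HG$-Yetter--Drinfeld condition, as that is the one place where all three Yetter--Drinfeld hypotheses and the bicharacter composition relation must be combined at once.
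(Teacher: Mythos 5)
The paper offers no proof of this proposition: it carries the end-of-proof mark and the preceding sentence defers everything to \cite{Meyer-Roy-Woronowicz:twiten} and \cite{Nest-Voigt:eqpd}, and your sketch is a faithful outline of exactly those techniques, correctly isolating the two points where genuine work is needed --- the commutation identity $\clin{\beta(B)_{13}\hat{\alpha}(A)_{12}}=\clin{\hat{\alpha}(A)_{12}\beta(B)_{13}}$ (which in this Drinfeld-double-type situation is the Heisenberg-pair argument of \cite{Meyer-Roy-Woronowicz:twiten}, needing only continuity of the two coactions and regularity, not the Yetter--Drinfeld structures) and the substitution of the bicharacter composition rule $W^{\GF}_{23}W^{\HG}_{12}=W^{\HG}_{12}W^{\GF\HG}_{13}W^{\GF}_{23}$ for the pentagon relation in the Yetter--Drinfeld and associativity checks. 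So your proposal is correct and is essentially the same (omitted) argument the paper has in mind.
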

	
	\begin{eg}\label{eg:twiten}
		Let $\HG:H\to G$ be a homomorphism of locally compact quantum groups, and $(A,\alpha)$ be a left $G$-C*-algebra. 
		\begin{enumerate}
			\item
			We write $\C_\HG$ to be $\C$ endowed with the trivial structure of $\HG$-Yetter--Drinfeld C*-algebra. 
			Then $\alpha:\HG^*A\xrarr{\sim}\C_\HG\outensor{G}A$ 
			is a well-defined left $H$-$*$-isomorphism. 
			Similarly, for a left $\hat{H}$-C*-algebra $(B,\beta)$, we have the left $\hat{G}$-$*$-isomorphism 
			$\beta:\hat{\HG}^*B\xrarr{\sim} B\outensor{H}\C_{\HG}$. 
			\item
			Consider the $1_{\hat{G}^{\op}\to G}$-Yetter--Drinfeld C*-algebra 
			$(C^r_0(\hat{G}), \Delta_{\hat{G}}^{\cop}, \Delta_{\hat{G}})$. 
			Then 
			\begin{align*}
				&
				\Ad \hat{V}^{G}_{12}(-)_{13} : G\redltimes A\xrarr{\sim} C^r_0(\hat{G})\outensor{G}A
			\end{align*}
			is a well-defined left $\hat{G}^{\op}$-$*$-isomorphism, 
			if we regard the right $\hat{G}$-coaction of $G\redltimes A$ as a left $\hat{G}^{\op}$-coaction. 
			See Section~6.3 in~\cite{Meyer-Roy-Woronowicz:twiten} for detail. 
			When $G$ is regular, the Baaj--Skandalis duality~\cite{Baaj-Skandalis:mltu} says 
			$C^r_0(G^{\op})\outensor{\hat{G}^{\op}}C^r_0(\hat{G})\outensor{G}A
			\isom \hat{G}^{\op}\redltimes G\redltimes A\isom \K_A(L^2(G)\otimes A)$ naturally. 
			Here we have identified $C^r_0\bigl(\hat{\hat{G}^{\op}}\bigr)
			=J^GC^r_0(G)J^G\isom C^r_0(G^{\op})$ 
			via $\Ad (J^G\hat{J}^{G})$. 
		\end{enumerate}
	\end{eg}

	\providecommand{\bysame}{\leavevmode\hbox to3em{\hrulefill}\thinspace}

\end{document}